%%%%%%%%%%%%%%%%%%%% author.tex %%%%%%%%%%%%%%%%%%%%%%%%%%%%%%%%%%%
% sample root file for your "contribution" to a contributed volume
% Use this file as a template for your own input.
%%%%%%%%%%%%%%%% Springer %%%%%%%%%%%%%%%%%%%%%%%%%%%%%%%%%%
% RECOMMENDED %%%%%%%%%%%%%%%%%%%%%%%%%%%%%%%%%%%%%%%%%%%%%%%%%%%
% choose options for [] as required from the list
% in the Reference Guide
% see the list of further useful packages
% in the Reference Guide
% used for the subject index
% please use the style svind.ist with
% your makeindex program
%\input{tcilatex}
%\def\func#1{\mathop{\rm #1}\nolimits}
%\usepackage{tikz-cd}
%\usepackage{faktor}
%\usepackage[nottoc]{tocbibind}
%\usepackage{microtype}
%\usepackage{capt-of}
%\input{tcilatex}
% Use theo as driver counter
% Change subtheo counter for alpha output
%\renewcommand*{\thmalpha}{\Alph{theorem}}
%\input{tcilatex}

\documentclass{article}
%%%%%%%%%%%%%%%%%%%%%%%%%%%%%%%%%%%%%%%%%%%%%%%%%%%%%%%%%%%%%%%%%%%%%%%%%%%%%%%%%%%%%%%%%%%%%%%%%%%%%%%%%%%%%%%%%%%%%%%%%%%%%%%%%%%%%%%%%%%%%%%%%%%%%%%%%%%%%%%%%%%%%%%%%%%%%%%%%%%%%%%%%%%%%%%%%%%%%%%%%%%%%%%%%%%%%%%%%%%%%%%%%%%%%%%%%%%%%%%%%%%%%%%%%%%%
\usepackage{amssymb}
\usepackage{amsfonts}
\usepackage{amsmath}
\usepackage{graphicx,nicefrac}
\usepackage[letterpaper, total={6in, 8in}]{geometry}
\usepackage{hyperref}

\setcounter{MaxMatrixCols}{10}
%TCIDATA{OutputFilter=LATEX.DLL}
%TCIDATA{Version=5.50.0.2953}
%TCIDATA{<META NAME="SaveForMode" CONTENT="1">}
%TCIDATA{BibliographyScheme=BibTeX}
%TCIDATA{Created=Thursday, May 23, 2019 11:02:59}
%TCIDATA{LastRevised=Monday, February 27, 2023 12:31:46}
%TCIDATA{<META NAME="GraphicsSave" CONTENT="32">}
%TCIDATA{<META NAME="DocumentShell" CONTENT="Standard LaTeX\Blank - Standard LaTeX Article">}
%TCIDATA{Language=American English}
%TCIDATA{CSTFile=40 LaTeX article.cst}

\newtheorem{theorem}{Theorem}[section]
\newtheorem{acknowledgement}[theorem]{Acknowledgement}

\newtheorem{corollary}[theorem]{Corollary}

\newtheorem{definition}[theorem]{Definition}
\newtheorem{example}[theorem]{Example}

\newtheorem{lemma}[theorem]{Lemma}

\newtheorem{remark}[theorem]{Remark}

\newenvironment{proof}[1][Proof]{\noindent\textbf{#1.} }{\ \rule{0.5em}{0.5em}}

\newtheorem{thmalpha}{Theorem}[theorem]

\begin{document}

\title{The Coulomb gauge in non-associative gauge theory}
\author{Sergey Grigorian}
\maketitle

\begin{abstract}
The aim of this paper is to extend existence results for the Coulomb gauge
from standard gauge theory to a non-associative setting. Non-associative
gauge theory is based on smooth loops, which are the non-associative analogs
of Lie groups. The main components of the theory include a
finite-dimensional smooth loop $\mathbb{L}$, its tangent algebra $\mathfrak{l%
},$ a finite-dimensional Lie group $\Psi $, that is the pseudoautomorphism
group of $\mathbb{L}$, a smooth manifold $M$ with a principal $\Psi $-bundle 
$\mathcal{P}$, and associated bundles $\mathcal{Q}$ and $\mathcal{A}$ with
fibers $\mathbb{L}$ and $\mathfrak{l}$, respectively. A configuration in
this theory is defined as a pair $\left( s,\omega \right) $, where $s$ is a
section of $\mathbb{Q}$ and $\omega $ is a connection on $\mathcal{P}$. The
torsion $T^{\left( s,\omega \right) }$ is the key object in the theory, with
a role similar to that of a connection in standard gauge theory. The
original motivation for this study comes from $G_{2}$-geometry, and the
questions of existence of $G_{2}$-structures with particular torsion types.
In particular, given a fixed connection, we prove existence of
configurations with divergence-free torsion, given a sufficiently small
torsion in a Sobolev norm.
\end{abstract}

\tableofcontents

%
%{} 
%% Use \titlerunning{Short Title} for an abbreviated version of
%% your contribution title if the original one is too long
%
%% Use \authorrunning{Short Title} for an abbreviated version of
%% your contribution title if the original one is too long
%\institute{Sergey Grigorian \at University of Texas Rio Grande Valley, Edinburg, TX 78539, USA \email{sergey.grigorian@utrgv.edu}
%} %
%% Use the package "url.sty" to avoid
%% problems with special characters
%% used in your e-mail or web address
%%

\section{Introduction}

\setcounter{equation}{0}The main goal of this work is to extend results on
existence of Coulomb gauge transformations from standard gauge theory to a 
\emph{non-associative} setting. One of highly successful areas at the
intersection of differential geometry, analysis, and mathematical physics is
gauge theory, which, as it is well-known, is the study of connections on
bundles with particular Lie groups as the structure groups. In \cite%
{GrigorianLoops}, the author initiated a theory of smooth loops, which are
non-associative analogs of Lie groups, and began the development of gauge
theory based on loops, i.e. a non-associative gauge theory. The key example
of a non-associative smooth loop is the loop of unit octonions. A
non-associative gauge theory has the following components:

\begin{enumerate}
\item A finite-dimensional smooth (right) loop $\mathbb{L}$, i.e. a smooth
manifold with a right multiplication diffeomorphism $R_{p}$ defined for
every $p\in \mathbb{L}$ and a distinguished identity element $1\in \mathbb{L}%
,$ with tangent algebra $\mathfrak{l}$ at identity.

\item A finite-dimensional Lie group $\Psi $ that is the \emph{%
pseudoautomorphism} group of $\mathbb{L}$, a non-associative generalization
of an automorphism group.

\item A smooth manifold $M$ with a principal $\Psi $-bundle $\mathcal{P},$
and associated bundles $\mathcal{Q}$ and $\mathcal{A}$ with fibers $\mathbb{L%
}$ and $\mathfrak{l},$ respectively.

\item A configuration is defined by a pair $\left( s,\omega \right) ,$ where 
$s$ is a section of $\mathcal{Q}$ and $\omega $ is a connection on $\mathcal{%
P}.$ Together they define the \emph{torsion }$T^{\left( s,\omega \right) },$
which is an $\mathcal{A}$-valued $1$-form on $M.$ The torsion is then the
key object in the theory, in the same way that a connection is the key
object in standard gauge theory.

\item In addition to standard gauge transformations of $\omega $ by $\Psi ,$
we now also have transformations of $s$ induced by loop multiplication. Both
of these kinds of transformations induce transformations of the torsion.
\end{enumerate}

The original motivation for studying non-associative gauge theories comes
from $G_{2}$-geometry \cite{GrigorianOctobundle}. A $G_{2}$-structure on a $7
$-dimensional Riemannian manifold is a reduction of the structure group of
the orthornormal frame bundle from $SO\left( 7\right) $ to $G_{2},$ which is
the automorphism group of the octonions. A further relationship between $%
G_{2}$-structures and the octonions is that unit norm sections of an
octonion bundle parametrize $G_{2}$-structures that are associated with the
same metric, also known as \emph{isometric }$G_{2}$-structures. A defining
characteristic of a $G_{2}$-structure is its torsion, and one of the
unanswered questions in $G_{2}$-geometry is which torsion types are
admissible within a fixed metric class. One of the main goals in the theory
of $G_{2}$-structures is to obtain existence results for torsion-free $G_{2}$%
-structures, similar to the Yau's Theorem \cite{CalabiYau}, that settled the
existence question for Calabi-Yau manifolds. While this goal is formulated
in terms of $G_{2}$-structures, the real question is the existence of a
Riemannian metric with holonomy group equal to $G_{2}.$ The fact that for
each metric there exists an entire family of compatible $G_{2}$-structures
presents a degeneracy in this problem. Some of the existing approaches
involve Laplacian flows of $G_{2}$-structures \cite%
{BagagliniFernandezFino1,BagagliniFino1,bryant-2003,BryantXu,GaoChen1,GrigorianSU3flow,GrigorianCoflow,KarigiannisMcKayTsui,LotayWei1,LotayWei2,LotayWei1a}%
, with the hope of a flow eventually converging to a torsion-free $G_{2}$%
-structure. As shown in \cite{GrigorianCoflow}, the Laplacian flow of a
generic $G_{2}$-structure has a component that moves within a metric class,
and that component is precisely given by $\mathop{\rm div}T.$ Laplacian
flows have been more successful for \emph{closed }$G_{2}$-structures, in
which case $\mathop{\rm div}T$ automatically vanishes, and thus the
degeneracy is resolved. More generally, however, this degeneracy is the
source of non-parabolicity of Laplacian flows, such as in the case of \emph{%
co-closed} $G_{2}$-structures \cite{GrigorianCoflow}. Therefore, the
condition $\mathop{\rm div}T=0$ can be regarded as a \emph{gauge-fixing
condition}. Moreover, in \cite{GrigorianOctobundle}, it was found that on a
compact manifold, $G_{2}$-structures with $\mathop{\rm div}T=0$ are
precisely the critical points of the $L^{2}$-norm of the torsion when
restricted to a fixed metric class. As shown in \cite{GrigorianOctobundle,
GrigorianLoops}, within the loop bundle framework, this is the precise
analog of the Coulomb gauge condition.

Existence of $G_{2}$-structures with divergence-free torsion has been
studied from different perspectives by several authors: Bagaglini in \cite%
{Bagaglini2}; Dwivedi, Gianniotis, and Karigiannis in \cite{DGKisoflow}; the
author in \cite{GrigorianIsoflow}; Loubeau and S\'{a} Earp in \cite%
{SaEarpLoubeau}. All these approaches relied on a flow of isometric $G_{2}$%
-structures (or more generally, geometric structures in \cite{SaEarpLoubeau}
and \cite{SaEarpetAlflows}), and have shown existence of a $G_{2}$-structure
with divergence-free torsion as a long-term limit of the flow, given
sufficiently small pointwise initial torsion or another quantity, the
entropy.

The interpretation of isometric $G_{2}$-structures as an octonionic
non-associative gauge theory allows to adapt some gauge theory techniques in
this setting. Moreover, without much additional effort, more general loops
can be considered, with potential wider-reaching applications.

In gauge theory there are a number of versions of local and global existence
results for connections in the Coulomb gauge, depending on the desired
regularity \cite%
{DonaldsonKronheimer,Feehan2001,Feehan2017,Feehan2019,Feehan2020,UhlenbeckConnection,WehrheimBook}%
. In this paper we use the Quantitative Implicit Function Theorem for Banach
Spaces, as given in \cite{Feehan2020}, to prove the following main result.

\begin{thmalpha}
\label{thmA}Suppose $\mathbb{L}$ is a smooth compact loop with tangent
algebra $\mathfrak{l}$ and pseudoautomorphism group $\Psi .$ Let $\left(
M,g\right) $ be a closed, smooth Riemannian manifold of dimension $n\geq 2,$
and let $\mathcal{P}$ be a $\Psi $-principal bundle over $M$ and let $%
\mathcal{A}$ be the associated vector bundle to $\mathcal{P}$ with fibers
isomorphic to $\mathfrak{l}$. Let $\omega $ be a smooth connection on $%
\mathcal{P}.$ Also, suppose $k$ is a non-negative integer and $r\geq 0$ such
that $kr>n.$ Then, there exist constants $\delta \in (0,1]$ and $K\in \left(
0,\infty \right) ,$ such that if $s\in \Gamma \left( \mathcal{Q}\right) $ is
a smooth defining section for which 
\begin{equation*}
\left\Vert T^{\left( s,\omega \right) }\right\Vert _{W^{k-1,r}}<\delta ,
\end{equation*}%
then there exists a section $A\in W^{k,r}\left( \mathcal{Q}^{\prime }\right)
,$ such that 
\begin{equation*}
\left( d^{\omega }\right) ^{\ast }T^{\left( As,\omega \right) }=0
\end{equation*}%
and 
\begin{equation}
\left\Vert T^{\left( As,\omega \right) }\right\Vert _{W^{k-1,r}}<K\left\Vert
T^{\left( s,\omega \right) }\right\Vert _{W^{k-1,r}}\left( 1+\left\Vert
T^{\left( s,\omega \right) }\right\Vert _{W^{k-1,r}}^{k-1}\right) .
\end{equation}%
If moreover, $\left( k-1\right) r\geq n,$ then $A$ is smooth.
\end{thmalpha}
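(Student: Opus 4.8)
The plan is to recast the Coulomb gauge condition as the vanishing of a nonlinear map between Sobolev spaces and then apply the Quantitative Implicit Function Theorem of \cite{Feehan2020}. Fixing $\omega$ and $s$, I would parametrize the admissible transformations $A$ near the identity by sections $\xi$ of $\mathcal{A}$ through the loop exponential, writing $A = A_\xi$ with $A_0 = 1$. Using the transformation law for the torsion under loop multiplication, $T^{(A_\xi s,\omega)}$ is a smooth nonlinear expression in $\xi$, its covariant derivative $d^\omega\xi$, and the background torsion $T^{(s,\omega)}$, with schematic expansion $T^{(A_\xi s,\omega)} = T^{(s,\omega)} + d^\omega\xi + Q(\xi, d^\omega\xi, T^{(s,\omega)})$, where $Q$ collects the higher-order terms. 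I would then define
\[
\mathcal{F}(\xi) := (d^\omega)^\ast T^{(A_\xi s,\omega)}, \qquad \mathcal{F}: W^{k,r}(\mathcal{A}) \longrightarrow W^{k-2,r}(\mathcal{A}),
\]
and seek $\xi$ with $\mathcal{F}(\xi)=0$; the target regularity is forced by $A\in W^{k,r}$, $T\in W^{k-1,r}$, and the fact that $(d^\omega)^\ast$ lowers the order by one.

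The linearization of $\mathcal{F}$ in $\xi$ at $\xi=0$ has leading part the covariant Hodge Laplacian $\Delta^\omega = (d^\omega)^\ast d^\omega$ on $\Omega^0(\mathcal{A})$, together with a correction of order $\|T^{(s,\omega)}\|$ arising from the $[\,\cdot\,,T^{(s,\omega)}]$-type term in the differentiated transformation law. On the closed manifold $M$ the operator $\Delta^\omega$ is elliptic, formally self-adjoint, and Fredholm of index zero, with kernel the $\omega$-parallel sections; since every divergence $(d^\omega)^\ast\alpha$ is automatically $L^2$-orthogonal to parallel sections, $\mathcal{F}$ takes values in that orthogonal complement. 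I would therefore restrict $\xi$ (equivalently $A$) to the complementary subspace, which is precisely what the bundle $\mathcal{Q}'$ encodes, and on which $\Delta^\omega$ is an isomorphism $W^{k,r}\to W^{k-2,r}$ with the elliptic estimate $\|\xi\|_{W^{k,r}} \le C\|\Delta^\omega\xi\|_{W^{k-2,r}}$. For $\delta$ small the order-$\|T\|$ correction is a small perturbation that preserves this bounded invertibility, supplying the uniform control on $(D\mathcal{F}|_0)^{-1}$ demanded by the quantitative theorem.

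The analytic heart of the argument is controlling the nonlinear remainder $Q$, and here the hypothesis $kr>n$ is essential: it makes $W^{k,r}(\mathcal{A})$ a Banach algebra acting continuously on $W^{k-1,r}$ and $W^{k-2,r}$, so that compositions with the smooth (but non-associative) loop multiplication and the pseudoautomorphism action map the relevant Sobolev spaces into themselves and are Lipschitz on bounded balls. With these multiplicative estimates I would verify that $\mathcal{F}$ is continuously differentiable with $D\mathcal{F}$ Lipschitz near $\xi=0$, and that $\|\mathcal{F}(0)\|_{W^{k-2,r}} = \|(d^\omega)^\ast T^{(s,\omega)}\|_{W^{k-2,r}} \le C\|T^{(s,\omega)}\|_{W^{k-1,r}}$ is small. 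Choosing $\delta$ so small that $\|T^{(s,\omega)}\|_{W^{k-1,r}}<\delta$ keeps the data inside the radius where the quantitative implicit function theorem applies then yields $\xi$, hence $A=A_\xi\in W^{k,r}(\mathcal{Q}')$, with $(d^\omega)^\ast T^{(As,\omega)}=0$ and the linear bound $\|\xi\|_{W^{k,r}}\le C\|T^{(s,\omega)}\|_{W^{k-1,r}}$. Substituting this back into the expansion of $T^{(As,\omega)}$ and estimating the polynomial nonlinearity in the $W^{k-1,r}$ norm produces the stated bound, the factor $1+\|T^{(s,\omega)}\|_{W^{k-1,r}}^{k-1}$ reflecting the highest-order product terms.

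Finally, the smoothness assertion under $(k-1)r\ge n$ would follow by elliptic bootstrapping: once $\xi$ solves $\Delta^\omega\xi = -Q(\xi,\ldots)$, the condition $(k-1)r\ge n$ makes $W^{k-1,r}$ continuous and multiplicatively closed, so elliptic regularity for $\Delta^\omega$ can be iterated to gain derivatives indefinitely, upgrading $\xi$ and hence $A$ to a smooth section. I expect the main obstacle to be neither the implicit function theorem nor the bootstrapping, but the correct derivation and linearization of the torsion transformation law $T^{(A_\xi s,\omega)}$ in the non-associative setting: the failure of associativity replaces the clean conjugation formula of ordinary gauge theory with a more intricate expression governed by the pseudoautomorphism action, and verifying that its linear part is genuinely $d^\omega$ while the remainder obeys the needed multiplicative estimates is where the loop-algebraic structure must be used most carefully.
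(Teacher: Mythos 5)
Your proposal follows essentially the same route as the paper's proof: the Coulomb condition is recast as $\left(d^{\omega}\right)^{\ast}T^{\left(\exp_{s}\left(\xi\right)s,\omega\right)}=0$ for $\xi\in W^{k,r}\left(\mathcal{A}\right)$ restricted to $\left(\ker\Delta^{\left(\omega\right)}\right)^{\perp}$, the linearization is the covariant Laplacian, the quantitative implicit function theorem of \cite{Feehan2020} supplies $\xi$ with $\left\Vert \xi\right\Vert_{W^{k,r}}\lesssim\left\Vert T^{\left(s,\omega\right)}\right\Vert_{W^{k-1,r}}$, the torsion estimate of Lemma \ref{lemTAsest} gives the polynomial bound, and elliptic bootstrapping gives smoothness when $\left(k-1\right)r\geq n$. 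The only bookkeeping difference is that the paper introduces an auxiliary parameter $a$ so that $G\left(a,\xi\right)$ vanishes exactly at the origin (as the implicit function theorem requires) and only afterwards sets $a=T^{\left(s,\omega\right)}$, whereas your single-variable map $\mathcal{F}$ has $\mathcal{F}\left(0\right)$ merely small, so you would need to treat the background torsion as the first variable to invoke the theorem literally.
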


For $G_{2}$-structures, this gives the following result for existence of
smooth $G_{2}$-structures with divergence-free torsion.

\begin{thmalpha}
\label{thmB}Suppose $M$ is a closed $7$-dimensional manifold with a smooth $%
G_{2}$-structure $\varphi $ with torsion $T$ with respect to the Levi-Civita
connection $\nabla .$ Suppose $U\mathbb{O}M$ is the corresponding unit
octonion bundle. Also, suppose $k$ is a positive integer and $r$ is a
positive real number such that $kr>7.$ Then, there exist constants $\delta
\in (0,1]$ and $K\in \left( 0,\infty \right) ,$ such that if $T$ satisfies 
\begin{equation*}
\left\Vert T\right\Vert _{W^{k,r}}<\delta ,
\end{equation*}%
then there exists a smooth section $V\in \Gamma \left( U\mathbb{O}M\right) ,$
such that 
\begin{equation*}
\mathop{\rm div}T^{\left( V\right) }=0
\end{equation*}%
and 
\begin{equation}
\left\Vert T^{\left( V\right) }\right\Vert _{W^{k,r}}<K\left\Vert
T\right\Vert _{W^{k,r}}\left( 1+\left\Vert T\right\Vert
_{W^{k,r}}^{k}\right) .
\end{equation}
\end{thmalpha}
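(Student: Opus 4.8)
The plan is to obtain Theorem~\ref{thmB} as a direct specialization of Theorem~\ref{thmA} to the octonionic non-associative gauge theory, together with a shift of the Sobolev index that lands the output in the smooth range. First I would invoke the dictionary established in \cite{GrigorianOctobundle, GrigorianLoops} identifying the $G_{2}$-setting with the abstract loop setting: take $\mathbb{L}$ to be the compact loop of unit octonions, so that $\mathfrak{l}$ is the algebra of imaginary octonions and $\Psi$ is its pseudoautomorphism group; take $M$ to be the given closed $7$-manifold and $\mathcal{P}$ the $\Psi$-bundle carrying the connection $\omega$ induced by the Levi-Civita connection $\nabla$; and let $s\in\Gamma(\mathcal{Q})$ be the defining section of the unit octonion bundle $\mathcal{Q}=U\mathbb{O}M$ corresponding to the $G_{2}$-structure $\varphi$. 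Under this dictionary the $G_{2}$-torsion $T$ equals the gauge-theoretic torsion $T^{(s,\omega)}$, a loop element $A$ acts on $s$ to produce a new unit octonion section $V=As$ (automatically of unit norm, since $\mathbb{L}$ is closed under multiplication), and $\mathop{\rm div}T$ coincides with $\left(d^{\omega}\right)^{\ast}$ applied to $T^{(s,\omega)}$.

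With this dictionary in place, I would apply Theorem~\ref{thmA} with $n=7$ and with its integer parameter set to $k+1$, where $k$ is the parameter of Theorem~\ref{thmB}. The hypothesis $(k+1)r>7$ then follows from the assumption $kr>7$, since $(k+1)r=kr+r>7$. The smallness condition of Theorem~\ref{thmA} reads $\left\Vert T^{(s,\omega)}\right\Vert_{W^{(k+1)-1,r}}=\left\Vert T\right\Vert_{W^{k,r}}<\delta$, which is exactly the hypothesis of Theorem~\ref{thmB}. Theorem~\ref{thmA} then supplies a section $A\in W^{k+1,r}(\mathcal{Q}^{\prime})$ with $\left(d^{\omega}\right)^{\ast}T^{(As,\omega)}=0$ and
\begin{equation*}
\left\Vert T^{(As,\omega)}\right\Vert_{W^{k,r}}<K\left\Vert T^{(s,\omega)}\right\Vert_{W^{k,r}}\left(1+\left\Vert T^{(s,\omega)}\right\Vert_{W^{k,r}}^{k}\right),
\end{equation*}
since the exponent $(k+1)-1=k$. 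Translating back through the dictionary, this is precisely $\mathop{\rm div}T^{(V)}=0$ together with the claimed bound on $\left\Vert T^{(V)}\right\Vert_{W^{k,r}}$, with $V=As$.

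It remains to upgrade $V$ to a smooth section, and this is where the index shift pays off. The smoothness clause of Theorem~\ref{thmA} requires $\left((k+1)-1\right)r\geq n$, i.e. $kr\geq 7$, which holds because $kr>7$ by hypothesis; hence $A$ is smooth. Since the original $G_{2}$-structure, and therefore $s$, is smooth, and loop multiplication is a smooth map, the product $V=As$ is smooth as well. This explains the role of the hypothesis: the single condition $kr>7$ simultaneously guarantees the existence requirement $(k+1)r>7$ of Theorem~\ref{thmA} and meets its smoothness threshold $kr\geq 7$, so that no separate regularity assumption is needed in Theorem~\ref{thmB}.

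The main obstacle is not analytic but one of bookkeeping: one must carefully confirm the dictionary, in particular that the codifferential $\left(d^{\omega}\right)^{\ast}$ of the abstract torsion coincides with the Riemannian divergence $\mathop{\rm div}$ of the $G_{2}$-torsion, and that the loop action $s\mapsto As$ corresponds to deforming $\varphi$ within its fixed metric class, i.e. through isometric $G_{2}$-structures. Both of these are structural features of the octonionic model recorded in \cite{GrigorianOctobundle, GrigorianLoops}, so once they are cited the remaining work is the short verification that the hypotheses of Theorem~\ref{thmA} translate correctly and that the index shift places the conclusion in the smooth category.
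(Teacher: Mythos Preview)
Your proposal is correct and follows the same route as the paper: Section~\ref{secG2} sets up exactly the dictionary you describe and then obtains Theorem~\ref{thmB} (restated as Theorem~\ref{thmMainG2}) as an immediate specialization of Theorem~\ref{thmA}/\ref{thmMain}, with your index shift $k\mapsto k+1$ being precisely what converts the $W^{k-1,r}$ bound and the smoothness threshold $(k-1)r\geq n$ of Theorem~\ref{thmA} into the $W^{k,r}$ bound and the single hypothesis $kr>7$ of Theorem~\ref{thmB}. One small bookkeeping correction: in the paper's conventions $\mathcal{Q}=U\mathcal{S}$ is the unit \emph{spinor} bundle while $\mathcal{Q}^{\prime}=U\mathbb{O}M$ is the unit octonion bundle, so the section $V\in\Gamma(U\mathbb{O}M)$ of Theorem~\ref{thmB} is the $A\in\Gamma(\mathcal{Q}^{\prime})$ produced by Theorem~\ref{thmA} (with $T^{(V)}=T^{(As,\omega)}$), not the product $As$; this does not affect your argument.
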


The results presented in this paper are of interest and importance in their
own right, but perhaps even more crucially, they show that some well-known
results and techniques from classical gauge theory can be reinterpreted and
adapted in a non-associative setting. In particular, this may open the door
to some analogues of Uhlenbeck compactness and a better understanding of the
torsion of non-associative gauge theories. Furthermore, a non-associative
version of Yang-Mills equations can be considered. Moreover, any such
advances will give immediate results in $G_{2}$-geometry.

The structure of this paper is the following. In Section \ref{sectLoop}, we
give an overview of smooth loops, extending \cite{GrigorianLoops}. We give
the basic properties of a smooth loop $\mathbb{L}$, define the
pseudoautomorphism group $\Psi $ and the tangent algebra $\mathfrak{l}$ at
identity. The algebra $\mathfrak{l}$ is a generalization of a Lie algebra,
but due to the non-associativity of $\mathbb{L}$ does not satisfy the Jacobi
identity. Similarly as for Lie algebras, there is a notion of an exponential
map. There is however a family of brackets $\left[ \cdot ,\cdot \right]
^{\left( s\right) }$ on $\mathfrak{l}$, defined for each point $s\in \mathbb{%
L}.$ For later use, we also give estimates for the exponential and adjoint
maps. In particular, we analyze solutions of the following initial value
problem for $\mathfrak{l}$-valued maps $\eta \left( t\right) :$ 
\begin{equation}
\left\{ 
\begin{array}{c}
\frac{d\eta \left( t\right) }{dt}=\left[ \xi ,\eta \left( t\right) \right]
^{\left( \exp _{s}\left( t\xi \right) s\right) } \\ 
\eta \left( 0\right) =\eta _{0}%
\end{array}%
\right. ,  \label{keyode}
\end{equation}%
where $\xi \in \mathfrak{l}$ and $s\in \mathbb{L}.$

In Section \ref{sectMaps}, we switch attention to loop-valued maps. In
particular, given a smooth manifold $M,$ consider a map $s:M\longrightarrow 
\mathbb{L}.$ Using this map, we may define products of $\mathbb{L}$-valued
maps and brackets of $\mathfrak{l}$-valued maps. Then, using the right
quotient, translating the differential $ds$ to the tangent space at $1\in 
\mathbb{L},$ we obtain an $\mathfrak{l}$-valued $1$-form $\theta _{s}$ on $M,
$ which is the analogue of the Darboux derivative of Lie group-valued maps 
\cite{SharpeBook}. The differentials of various operations defined by $s$
are then expressed in terms of $\theta _{s}.$ Suppose $A\left( t\right)
=\exp _{s}\left( t\xi \right) $ for some $\mathfrak{l}$-valued map $\xi .$
We show that $\theta _{A\left( t\right) s}$ satisfies a non-homogeneous
version of (\ref{keyode}). 

Further, we define Sobolev spaces of maps from $M$, and show in Lemma \ref%
{lemSobLoops} that, similarly as for Lie groups, $s\in W^{k,r}\left( M,%
\mathbb{L}\right) $ if and only if $\theta _{s}\in W^{k-1,r}\left( M,T^{\ast
}M\otimes \mathfrak{l}\right) .$  Using the evolution equation satisfied by $%
\theta _{A\left( t\right) s}$ then allows us to obtain Sobolev space
estimates of $\theta _{A\left( t\right) s}$ and other quantities that
satisfy equations based on (\ref{keyode}). 

\begin{thmalpha}
\label{thmC}Let $M$ be a compact Riemannian manifold and $\mathbb{L}$ is a
smooth compact loop. Suppose $kr>n=\dim M.$ Let $s\in W^{k,r}\left( M,%
\mathbb{L}\right) $ and $\xi \in W^{k,r}\left( M,\mathfrak{l}\right) ,$ and
suppose $A=\exp _{s}\left( \xi \right) .$ Then, 
\begin{equation}
\left\Vert \theta _{As}\right\Vert _{W^{k-1,r}}\lesssim e^{Ck\left\Vert \xi
\right\Vert _{C^{0}}}\left( \Theta ^{k}+\Theta \right) ,
\end{equation}%
where $\Theta =\left\Vert \theta _{s}\right\Vert _{W^{k-1,r}}+\left\Vert \xi
\right\Vert _{W^{k,r}}.$

Similarly, if $X=X\left( 1\right) ,$ where $X\left( t\right) $ is $1$%
-parameter family of $\mathfrak{l}$-valued maps that satisfies 
\begin{equation*}
\left\{ 
\begin{array}{c}
\frac{dX\left( t\right) }{dt}=\left[ \xi ,X\left( t\right) \right] ^{A\left(
t\right) s}+Y \\ 
X\left( 0\right) =X_{0},%
\end{array}%
\right. 
\end{equation*}%
for $\mathfrak{l}$-valued maps $X_{0}$ and $Y$, then, 
\begin{equation}
\left\Vert X\right\Vert _{W^{k,r}}\lesssim e^{C\left( k+1\right) \left\Vert
\xi \right\Vert _{C^{0}}}\left( \left\Vert X_{0}\right\Vert
_{W^{k,r}}+\left\Vert Y\right\Vert _{W^{k,r}}\right) \left( \Theta
^{k+1}+\Theta \right) .
\end{equation}
\end{thmalpha}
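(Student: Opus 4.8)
The plan is to recognize both displayed estimates as consequences of a single Gronwall-type argument carried out level by level in the order of spatial differentiation. The quantity $\theta_{A(t)s}$ was shown in Section \ref{sectMaps} to satisfy a non-homogeneous version of the initial value problem (\ref{keyode}), namely $\tfrac{d}{dt}\theta_{A(t)s} = [\xi,\theta_{A(t)s}]^{A(t)s} + S$ with $\theta_{A(0)s} = \theta_s$ (as $A(0)=1$) and a source $S$ built from $\xi$ and $\theta_s$; this is exactly the second initial value problem with $X=\theta_{As}$, $X_0=\theta_s$ and $Y=S$. The two estimates thus share the same mechanism, the only difference being the differentiation order at which they are measured: at Sobolev order $m$ the scheme will produce the factor $e^{C(m+1)\|\xi\|_{C^0}}$ and the polynomial $\Theta^{m+1}+\Theta$, which gives the first estimate at $m=k-1$ and the second at $m=k$. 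I would therefore prove the $\theta_{As}$-estimate first, since its $W^{k-1,r}$-bound is precisely what is needed to control the $x$-derivatives of the base-point map appearing in the coefficient of the general $X$-equation.

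First I would record the algebraic inputs. Because $\mathbb{L}$ is compact and smooth, the bracket $[\cdot,\cdot]^{(p)}$ and all of its derivatives in the base point $p\in\mathbb{L}$ are uniformly bounded, so pointwise $|[\xi,X]^{(p)}|\le C|\xi|\,|X|$ uniformly in $p$, and the $x$-derivatives of $x\mapsto[\xi,X]^{A(t)s(x)}$ expand by the Leibniz and chain rules into sums of products of derivatives of $\xi$, derivatives of $X$, and derivatives of the base-point map, the last being expressed through $\theta_{A(t)s}$ and its derivatives. Since $kr>n$ the space $W^{k,r}(M)$ is a Banach algebra, $\xi$ and $X$ embed into $C^0$, and the Moser/tame product inequality $\|fg\|_{W^{j,r}}\lesssim\|f\|_{C^0}\|g\|_{W^{j,r}}+\|f\|_{W^{j,r}}\|g\|_{C^0}$ is available for $0\le j\le k$. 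These are the tools that convert the pointwise bracket bounds into Sobolev bounds.

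The core is an induction on the differentiation order $j$. At the base level $j=0$, applying scalar Gronwall pointwise in $x$ to $\tfrac{d}{dt}|X|\le C\|\xi\|_{C^0}|X|+|Y|$ and integrating in $L^r(M)$ gives $\|X(1)\|_{L^r}\lesssim e^{C\|\xi\|_{C^0}}(\|X_0\|_{L^r}+\|Y\|_{L^r})$, producing the exponential factor and the linear dependence on the data. At level $j$, differentiating the equation $j$ times yields a linear equation for $\partial^j X$ whose top-order part is $[\xi,\partial^j X]^{A(t)s}$ (Gronwall rate $\sim\|\xi\|_{C^0}$) plus a source consisting of strictly lower-order derivatives of $X$ together with derivatives of the base-point map; by the tame estimates, the inductive hypothesis, and the already-established $\theta_{As}$-bound this source is controlled by the data and by $\Theta$, and one further Gronwall step multiplies the accumulated exponential by $e^{C\|\xi\|_{C^0}}$. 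Running $j$ from $0$ to $k$ compounds $k+1$ such factors into $e^{C(k+1)\|\xi\|_{C^0}}$, while each passage through the $\theta_{A(t)s}$-dependent coefficient contributes one additional power of $\Theta$, so the powers accumulate to $\Theta^{k+1}+\Theta$; the same argument run to order $k-1$ for $\theta_{As}$ itself gives $e^{Ck\|\xi\|_{C^0}}(\Theta^k+\Theta)$.

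The main obstacle is twofold. First there is the self-coupling through the base point: controlling $X$ (and $\theta_{As}$) in top Sobolev norm requires controlling the $x$-derivatives of the bracket coefficient, which are governed by $\theta_{A(t)s}$ — the very object the first estimate bounds — and for $\theta_{As}$ this coupling is genuinely nonlinear, since $\theta_{As}$ occurs both as the unknown and, through the base point, inside its own coefficient; this is why the $\theta_{As}$-estimate must be bootstrapped by the same induction on order before the general estimate is attempted. Second, and more delicate, only $kr>n$ is assumed, so $W^{k-1,r}$ is not itself an algebra and $\theta_{As}$ is not a priori in $C^0$; the tame estimates must therefore be organized so that the factor placed in $C^0$ is always one that is genuinely bounded — typically $\xi$ or $X$, which embed into $C^0$ by $kr>n$, rather than a derivative of $\theta_{As}$. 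Keeping this bookkeeping tight is what forces the final exponent to be exactly $C(k+1)$ and the top power to be exactly $\Theta^{k+1}$ rather than something larger.
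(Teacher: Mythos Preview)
Your overall strategy matches the paper's: both proofs proceed by induction on the order of spatial differentiation, deriving at each level a linear ODE in $t$ whose top-order part has coefficient $\mathop{\rm ad}\nolimits_{\xi}^{(A(t)s)}$ and whose source involves only lower-order data; the paper writes the solution explicitly via the evolution operator $U_{t\xi}^{(s)}$ and its bound (\ref{Uest}), which is equivalent to your Gronwall step. The paper first obtains \emph{pointwise} estimates $|\nabla^{j}\theta_{A(t)s}|\lesssim e^{C(j+1)t|\xi|}p_{j}(t)$ (Lemma \ref{lemThetaPWest}) and $|\nabla^{j}X|\lesssim e^{C(j+1)t|\xi|}\sum_{j'+j''=j}p_{j'-1}(|\nabla^{j''}X_{0}|+t|\nabla^{j''}Y|)$ (Lemma \ref{lemNablaX}), where $p_{j}$ is an explicit polynomial in the pointwise derivatives of $\theta_{s}$ and $d\xi$, and only then takes $L^{r}$-norms. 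So your scheme and the paper's are the same in outline.

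The one place where your toolkit does not quite close the argument is the product estimate. You invoke the tame Moser inequality $\|fg\|_{W^{j,r}}\lesssim\|f\|_{C^{0}}\|g\|_{W^{j,r}}+\|f\|_{W^{j,r}}\|g\|_{C^{0}}$, which needs at least one factor in $C^{0}$. But the monomials appearing in $p_{j}$ are products $|\theta_{s}|^{i_{1}}|\nabla\theta_{s}|^{i_{2}}\cdots|\nabla^{j}\theta_{s}|^{i_{j+1}}|d\xi|^{k_{1}}\cdots$ with $\sum m i_{m}+\sum m k_{m}=j+1$, and under the hypothesis $kr>n$ alone none of these factors need lie in $C^{0}$ (for instance $\theta_{s}\in W^{k-1,r}$ is not continuous in general). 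Your own diagnosis of this obstacle is correct, but the remedy ``always place $\xi$ or $X$ in $C^{0}$'' does not handle these purely $\theta_{s}$--$d\xi$ monomials. The paper instead uses a H\"older/Sobolev product lemma (Lemma \ref{lemSobProd}): if $\sum_{j} q_{j}m_{j}\le k$ then $\bigl\|\prod_{j}A_{j}^{m_{j}}\bigr\|_{L^{r}}\lesssim\prod_{j}\|A_{j}\|_{W^{k-q_{j},r}}^{m_{j}}$, obtained by choosing H\"older exponents $p_{j}=kr/(q_{j}m_{j})$ (after a harmless rescaling) and embedding $W^{k-q_{j},r}\hookrightarrow L^{m_{j}p_{j}}$. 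Applied with weight $q=m$ on each factor $|\nabla^{m-1}\theta_{s}|$ or $|\nabla^{m-1}d\xi|$, this turns $\|p_{j-1}\|_{L^{r}}$ directly into a polynomial of degree $j$ in $\|\theta_{s}\|_{W^{k-1,r}}$ and $\|\xi\|_{W^{k,r}}$, giving the stated $\Theta^{k}+\Theta$. Replacing your tame Moser step by this Gagliardo--Nirenberg-type estimate makes your argument go through and coincide with the paper's.
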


In Section \ref{sectGauge}, we introduce a principal $\Psi $-bundle $%
\mathcal{P}$ over a compact manifold $M$, and then apply the above results
to $\Psi $-equivariant maps from $\mathcal{P}$ to a loop $\mathbb{L}$ and
other related spaces. This immediately then allows to consider sections of
bundles over $M$ that are associated to $\mathcal{P}.$ In particular,
suppose we have a connection $\omega $ on $\mathcal{P}$ and suppose $s$ is a
section of the associated loop bundle $\mathcal{Q},$ with fibers
diffeomorphic to $\mathbb{L}.$ It uniquely corresponds to a $\Psi $%
-equivariant map $\tilde{s}:\mathcal{P}\longrightarrow \mathbb{L},$ and thus
we obtain the equivariant $\mathfrak{l}$-valued $1$-form $\theta _{\tilde{s}}
$ on $\mathcal{P}.$ On the other hand, the connection $\omega $ defines a
decomposition of $T\mathcal{P}$ into vertical and horizontal subspaces.
Therefore, we may compose $\theta _{\tilde{s}}$ with the horizontal
projection to obtain a \emph{basic}, i.e. horizontal and equivariant $%
\mathfrak{l}$-valued $1$-form on $\mathcal{P}.$ This then corresponds to a
section of a bundle over $M,$ and gives us the \emph{torsion }$T^{\left(
s,\omega \right) }$ of the configuration $\left( s,\omega \right) .$
Defining fiberwise loop multiplication, we see that all the possible
configurations with a fixed $\omega $ may be obtained by multiplying $s$ by
some section $A.$ Therefore, the \emph{loop gauge transformations} are
precisely the transformations $s\mapsto As.$ Moreover, as it was already
known previously, \cite%
{GrigorianOctobundle,GrigorianIsoflow,GrigorianIsoFlowSurvey,GrigorianLoops,SaEarpLoubeau}%
, given appropriate algebraic conditions on the loop, the critical points of
the functional $s\mapsto \int_{M}$ $\left\vert T^{\left( s,\omega \right)
}\right\vert ^{2}\mathop{\rm vol}\nolimits$ are precisely the sections $s$
for which $\left( d^{\omega }\right) ^{\ast }T^{\left( s,\omega \right) }=0,$
which relates to the previous discussion on divergence-free torsion and the
Coulomb gauge. 

Considering the transformations of $s$ of the form $s\mapsto \exp _{s}\left(
\xi \right) s$, for $\mathfrak{l}$-valued sections $\xi $, and using the
loop exponential map, as developed in Section \ref{sectLoop}, the quantity $%
T^{\left( \exp _{s}\left( t\xi \right) s\,,\omega \right) }$ is then shown
to satisfy an ODE with the homogeneous part of the form (\ref{keyode}). This
allows to rewrite $T^{\left( \exp _{s}\left( t\xi \right) s\,,\omega \right)
}$ in terms of the evolution operator for (\ref{keyode}). The equation $%
\left( d^{\omega }\right) ^{\ast }T^{\left( \exp _{s}\left( \xi \right)
s,\omega \right) }=0$ is then written as a second-order PDE for $\xi .$ This
operator then defines a smooth functions between appropriate Banach spaces
of sections of vector bundles, which allows to apply the Implicit Function
Theorem from \cite{Feehan2020}, to show existence of solutions for
sufficiently small initial $T^{\left( s,\omega \right) }$ in an appropriate
Sobolev norm, and thus prove Theorem \ref{thmA}.

In Section \ref{secG2}, we carefully apply the general theory of smooth
loops to the particular case of $G_{2}$-structures, and then Theorem \ref%
{thmB} follows as an immediate corollary of Theorem \ref{thmA}.

\subsubsection*{Acknowledgements}

The author is supported by the National Science Foundation grant DMS-1811754.

\section{Smooth Loops}

\setcounter{equation}{0}\label{sectLoop}For a detailed introduction to
smooth loops, the reader is referred to \cite{GrigorianLoops}. The reader
can also refer to \cite%
{HofmannStrambach,KiechleKloops,NagyStrambachBook,SabininBook,SmithJDHQuasiReps}
for a discussion of these concepts.

\begin{definition}
A \emph{loop }$\mathbb{L}$ is a set with a binary operation $p\cdot q$ with
identity $1,$ and compatible left and right quotients $p\backslash q$ and $%
p/q$, respectively.
\end{definition}

In particular, existence of quotients is equivalent to saying that for any $%
q\in \mathbb{L},$ the left and right product maps $L_{q}$ and $R_{q}$ are
invertible maps. Restricting to the smooth category, we obtain the
definition of a smooth loop.

\begin{definition}
A \emph{smooth loop }is a smooth manifold $\mathbb{L}$ with a loop structure
such that the left and right product maps are diffeomorphisms of $\mathbb{L}%
. $
\end{definition}

\begin{remark}
In this paper we will not use the left quotient, so in fact everything that
follows also holds true for smooth \emph{right }loops, i.e. where only the
right quotient is defined, but the left product is not necessarily
invertible. However, for brevity, we will keep referring to \emph{loops},
rather than \emph{right loops}. As Example \ref{transversalEx} below shows,
smooth right loops are plentiful and easy to construct.
\end{remark}

\begin{example}
\label{transversalEx} Suppose $G$ is a Lie group with a Lie subgroup $H$ and
consider the left quotient $K=\scalebox{-1}[1]{\nicefrac{%
\scalebox{-1}[1]{$G$}}{\scalebox{-1}[1]{$H$}}}$. Suppose $\sigma
:K\longrightarrow G$ is a section of $G$, regarded as a bundle over $K.$ In
particular, $\sigma $ maps each right coset to a particular representative
of that coset. Suppose $\sigma \left( H\right) =1.$ A product structure on $K
$ is then given by 
\begin{equation}
\left( Ha\right) \circ \left( Hb\right) =Ha\left( \sigma \left( Hb\right)
\right) .  \label{transversal}
\end{equation}%
Consider the equation $\left( Hx\right) \circ \left( Ha\right) =Hb,$ Since $%
\sigma $ is a section, we can see right away that we have a unique solution $%
Hx=Hb\left( \sigma \left( Ha\right) \right) ^{-1}.$ Thus, $\left( \mathbb{L}%
,\circ \right) $ has \emph{right }division, and is thus a \emph{right loop }%
\cite{KiechleKloops,NagyStrambachBook}. To define left division, and hence
to obtain a full loop structure, more structure is needed.
\end{example}

\begin{definition}
\label{defPseudo}A \emph{pseudoautomorphism} of a smooth loop $\mathbb{L}$
is a diffeomorphism $h:\mathbb{L}\longrightarrow \mathbb{L}$ for which there
exists another diffeomorphism $h^{\prime }:\mathbb{L}\longrightarrow \mathbb{%
L}$, known as the partial pseudoautomorphism corresponding to $h,$ such that
for any $p,q\in \mathbb{L}$, 
\begin{equation}
h\left( pq\right) =h^{\prime }\left( p\right) h\left( q\right) .
\label{pseudoauto}
\end{equation}
\end{definition}

In particular, $h^{\prime }=R_{h\left( 1\right) }^{-1}\circ h$ and $%
h^{\prime }\left( 1\right) =1.$ The element $h\left( 1\right) \in \mathbb{L}$
is the \emph{companion} of $h^{\prime }.$ From (\ref{pseudoauto}), we also
see the following property of $h^{\prime }$ with respect to quotients: 
\begin{equation}
h^{\prime }\left( p/q\right) =h\left( p\right) /h\left( q\right) .
\label{PsAutquot2}
\end{equation}

It is easy to see that the sets of pseudoautomorphisms and partial
pseudoautomorphisms are both groups. Denote the former by $\Psi $ and the
latter by $\Psi ^{\prime }$. We also see that the \emph{automorphism }group
of the loop $\mathbb{L}$ is the subgroup $H\subset \Psi $ which is the
stabilizer of $1\in \mathbb{L}.$

\begin{remark}
To avoid introducing additional notation, but at the risk of some ambiguity,
we will use the same notation $\mathbb{L}$ to denote the underlying
manifold, the loop, and the $G$-set with the full action of $\Psi $.
However, since $\mathbb{L}$ also admits the action of $\Psi ^{\prime },$ if
a distinction between the $G$-sets is needed, we will use $\mathbb{L}%
^{\prime }$ to denote the set $\mathbb{L}$ with the action of $\Psi ^{\prime
},$
\end{remark}

Let $r\in \mathbb{L}$, then we may define a modified product $\circ _{r}$ on 
$\mathbb{L}$ via $p\circ _{r}q={\left( p\cdot qr\right)}/{r}$, so that $%
\mathbb{L}$ equipped with product $\circ _{r}$ will be denoted by $\left( 
\mathbb{L},\circ _{r}\right) ,$ the corresponding quotient will be denoted
by $/_{r}.$ We have the following properties \cite{GrigorianLoops}.

\begin{lemma}
\label{lemPseudoHom}Let $h\in \Psi $. Then, for any $p,q,r\in \mathbb{L},$ 
\begin{equation}
h^{\prime }\left( p\circ _{r}q\right) =h^{\prime }\left( p\right) \circ
_{h\left( r\right) }h^{\prime }\left( q\right) \ \ \ \ \ h^{\prime }\left(
p/_{r}q\right) =h^{\prime }\left( p\right) /_{h\left( r\right) }h^{\prime
}\left( q\right) .  \label{PsiAct}
\end{equation}%
Also, for any $A\in \mathbb{L},$ 
\begin{equation}
\left( p\circ _{r}\left( q\circ _{r}A\right) \right) /_{r}A=p\circ _{Ar}q.
\label{Arprod}
\end{equation}
\end{lemma}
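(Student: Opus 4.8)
The plan is to prove all three identities by direct computation from the definitions, relying only on the defining properties of the right quotient, namely $(xy)/y=x$ and $(x/y)\,y=x$, together with the pseudoautomorphism identity \eqref{pseudoauto} and its quotient counterpart \eqref{PsAutquot2}. The one principle to hold onto throughout is that associativity is unavailable, so every product must be parenthesized and no rebracketing is permitted unless it is justified by one of the quotient axioms.

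First I would pin down the explicit form of the modified quotient $/_r$ dual to $\circ_r$. Since $/_r$ is by definition the right quotient for the product $\circ_r$, I claim that $c/_r b=(cr)/(br)$. To confirm this is the correct inverse operation I would check $(a\circ_r b)/_r b=a$ and $(a/_r b)\circ_r b=a$ by expanding $a\circ_r b=(a\cdot br)/r$ and simplifying with $(x/r)\,r=x$ and $(xy)/y=x$; both collapse to $a$, which fixes the formula. This explicit form is what makes the remaining manipulations mechanical.

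For the first identity in \eqref{PsiAct} I would expand the left-hand side as $h'(p\circ_r q)=h'\bigl((p\cdot qr)/r\bigr)$, apply \eqref{PsAutquot2} to pass $h'$ through the quotient as $h(p\cdot qr)/h(r)$, and then apply \eqref{pseudoauto} twice --- first to $p\cdot(qr)$ and then to $q\cdot r$ --- to obtain $\bigl(h'(p)\cdot(h'(q)\cdot h(r))\bigr)/h(r)$. This is exactly $h'(p)\circ_{h(r)}h'(q)$ by the definition of $\circ_{h(r)}$, giving the claim. The quotient version is handled the same way: writing $p/_r q=(pr)/(qr)$, applying \eqref{PsAutquot2} and then \eqref{pseudoauto} to numerator and denominator yields $\bigl(h'(p)\,h(r)\bigr)/\bigl(h'(q)\,h(r)\bigr)$, which is $h'(p)/_{h(r)}h'(q)$.

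For \eqref{Arprod} I would expand the inner product $q\circ_r A=(q\cdot Ar)/r$, use $(x/r)\,r=x$ to simplify $(q\circ_r A)\,r=q\cdot Ar$, and thereby rewrite $p\circ_r(q\circ_r A)=\bigl(p\cdot(q\cdot Ar)\bigr)/r$. Applying the formula for $/_r$ with denominator $A$ and cancelling the inner $/r$ against an adjacent factor $r$ then gives $\bigl(p\cdot(q\cdot Ar)\bigr)/(Ar)$, which is precisely $p\circ_{Ar}q$ by the definition of the modified product with parameter $Ar$. I do not expect a genuine obstacle here; the only point demanding care is that each cancellation must use a quotient axiom literally --- matching the denominator of a quotient to an adjacent right factor --- rather than a disguised appeal to associativity, and that the auxiliary facts $h'(1)=1$ and $h'=R_{h(1)}^{-1}\circ h$ are not secretly required. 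Indeed they are not, since everything reduces to \eqref{pseudoauto}, \eqref{PsAutquot2}, and the two right-quotient axioms.
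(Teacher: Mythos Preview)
Your proof is correct. The paper does not actually supply a proof of this lemma in the text; it simply attributes the result to \cite{GrigorianLoops} and states it without argument. Your direct computation from the quotient axioms $(x/y)y=x$, $(xy)/y=x$, the pseudoautomorphism identity \eqref{pseudoauto}, and its quotient form \eqref{PsAutquot2} is exactly the natural verification, and each step is sound. In particular, your derivation of the explicit formula $c/_r b=(cr)/(br)$ is the key observation that makes both the second part of \eqref{PsiAct} and the identity \eqref{Arprod} transparent; the paper does not state this formula explicitly, so your writeup is in that sense more self-contained than the paper itself.
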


\begin{lemma}
\label{lemQuotient}Suppose $A\left( t\right) $ and $B\left( t\right) $ are
smooth curves in $\mathbb{L}$ with $A\left( t_{0}\right) =A_{0}$ and $%
B\left( t_{0}\right) =B_{0}$, then 
\begin{equation}
\left. \frac{d}{dt}A\left( t\right) /B\left( t\right) \right\vert
_{t=t_{0}}=\left. \frac{d}{dt}A\left( t\right) /B_{0}\right\vert
_{t=t_{0}}-\left. \frac{d}{dt}\left( A_{0}/B_{0}\cdot B\left( t\right)
\right) /B_{0}\right\vert _{t=t_{0}}.  \label{ddtrighquot}
\end{equation}
\end{lemma}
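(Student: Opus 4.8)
The plan is to treat the right quotient as a smooth map of two variables and apply the multivariable chain rule, then handle the two resulting partial derivatives separately. Write $F:\mathbb{L}\times \mathbb{L}\to \mathbb{L}$ for the map $F(p,q)=p/q=R_{q}^{-1}(p)$; smoothness of $F$ in both arguments follows from the smooth loop axioms, since each $R_{q}$ is a diffeomorphism depending smoothly on $q$. Applying the chain rule to $t\mapsto F(A(t),B(t))$ at $t_{0}$ splits the derivative into the partial in the first slot plus the partial in the second slot, which gives
\[
\left.\frac{d}{dt}A(t)/B(t)\right|_{t_{0}}=\left.\frac{d}{dt}A(t)/B_{0}\right|_{t_{0}}+\left.\frac{d}{dt}A_{0}/B(t)\right|_{t_{0}}.
\]
The first term already matches the first term of the claimed identity, so it remains to show that the second term equals $-\left.\frac{d}{dt}\bigl(A_{0}/B_{0}\cdot B(t)\bigr)/B_{0}\right|_{t_{0}}$.

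For this I would exploit the defining relation of the right quotient. Setting $u(t)=A_{0}/B(t)$, we have $u(t)\cdot B(t)=A_{0}$ for all $t$ near $t_{0}$, so the product map $m(p,q)=p\cdot q$ satisfies $m(u(t),B(t))\equiv A_{0}$. Differentiating this constant at $t_{0}$, and using that the partial of $m$ in its first slot at $(u_{0},B_{0})$ is the differential $(R_{B_{0}})_{\ast}$ (where $u_{0}=A_{0}/B_{0}$), while the partial in its second slot applied to $B'(t_{0})$ is exactly $\left.\frac{d}{dt}\bigl(u_{0}\cdot B(t)\bigr)\right|_{t_{0}}=\left.\frac{d}{dt}\bigl(A_{0}/B_{0}\cdot B(t)\bigr)\right|_{t_{0}}$, yields
\[
(R_{B_{0}})_{\ast}\!\left[u'(t_{0})\right]=-\left.\frac{d}{dt}\bigl(A_{0}/B_{0}\cdot B(t)\bigr)\right|_{t_{0}}.
\]

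Finally I would invert $(R_{B_{0}})_{\ast}$. Since $R_{B_{0}}$ is a diffeomorphism, its differential is invertible and $(R_{B_{0}})_{\ast}^{-1}=(R_{B_{0}}^{-1})_{\ast}$ is the differential of the quotient map $p\mapsto p/B_{0}$. Applying this to the displayed equation and recognizing its right-hand side as the velocity at $t_{0}$ of the curve $t\mapsto A_{0}/B_{0}\cdot B(t)$, the chain rule identifies $(R_{B_{0}}^{-1})_{\ast}$ of that velocity with $\left.\frac{d}{dt}\bigl(A_{0}/B_{0}\cdot B(t)\bigr)/B_{0}\right|_{t_{0}}$, giving $u'(t_{0})=-\left.\frac{d}{dt}\bigl(A_{0}/B_{0}\cdot B(t)\bigr)/B_{0}\right|_{t_{0}}$, as required. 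The step requiring the most care is the bookkeeping of basepoints together with the identification of the differential of $\cdot\,/B_{0}$ as the inverse of $(R_{B_{0}})_{\ast}$; this is precisely where the loop axiom (invertibility of right multiplication) is used in place of the group inverse, and once the defining relation $u(t)\cdot B(t)=A_{0}$ is differentiated, the remainder is a routine application of the two-variable chain rule.
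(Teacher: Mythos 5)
Your argument is correct. Note that the paper does not actually prove this lemma itself --- it is imported from \cite{GrigorianLoops} without proof --- so there is no in-paper argument to compare against; but your derivation is complete and sound: the two-variable chain rule applied to $F(p,q)=p/q$ splits the derivative into the two partial terms, and differentiating the defining relation $u(t)\cdot B(t)=A_{0}$ for $u(t)=A_{0}/B(t)$, then inverting $(R_{B_{0}})_{\ast}$ via $(R_{B_{0}}^{-1})_{\ast}$ (which is exactly the differential of $p\mapsto p/B_{0}$), converts the second partial into $-\left.\frac{d}{dt}\bigl(A_{0}/B_{0}\cdot B(t)\bigr)/B_{0}\right\vert_{t=t_{0}}$ with all basepoints matching. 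The only point worth flagging is that joint smoothness of $(p,q)\mapsto p/q$ is not literally stated in the paper's definition of a smooth loop (which only asserts that each $R_{q}$ is a diffeomorphism); it follows from the implicit function theorem applied to the multiplication map, or can be taken as part of the smooth loop axioms, and the paper uses it implicitly throughout, so this is a harmless omission.
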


Consider the tangent space $\mathfrak{l:=}T_{1}\mathbb{L}$ at $1\in \mathbb{L%
}.$ For any $q\in \mathbb{L}$, the pushforward $\left( R_{q}\right) _{\ast }$
of the right translation map $R_{q}$ defines a linear isomorphism from $%
\mathfrak{l}$ to $T_{q}\mathbb{L}.$ In particular, let us denote the linear
map $\left. \left( R_{q}\right) _{\ast }\right\vert _{1}:\mathfrak{l}%
\longrightarrow T_{q}\mathbb{L}$ by $\rho _{q}$, and correspondingly, $%
\left. \left( R_{q}^{-1}\right) _{\ast }\right\vert _{1}:T_{q}\mathbb{L}%
\longrightarrow \mathfrak{l}$ by $\rho _{q}^{-1}.$ Similarly, for left
multiplication, define $\lambda _{q}=\left. \left( L_{q}\right) _{\ast
}\right\vert _{1}:\mathfrak{l}\longrightarrow T_{q}\mathbb{L}.$ On a smooth
right loop, $\lambda _{q}$ will not necessarily be invertible. The
corresponding maps with respect to the product $\circ _{r}$ will be denoted
by $R_{q}^{\left( r\right) },$ $\rho _{q}^{\left( r\right) },$ $\lambda
_{q}^{\left( r\right) }.$

\begin{definition}
For any $\xi \in \mathfrak{l,}$ define the\emph{\ fundamental vector field }$%
\rho \left( \xi \right) $ for any $q\in \mathbb{L},$ $\rho \left( \xi
\right) _{q}=\rho _{q}\left( \xi \right) .$
\end{definition}

The above definition of the fundamental vector field is the analog of a
right-invariant vector field in Lie theory. However, in the loop case,
although this vector field is canonical in some sense, it is not \emph{%
invariant} under right translations. We use fundamental vector fields to
define the loop exponential map.

\begin{definition}
Suppose $\mathbb{L}$ is a smooth loop and suppose $q\in \mathbb{L}.$ Then,
given $\xi \in \mathfrak{l},$ for sufficiently small $t,$ define 
\begin{equation}
p\left( t\right) =\exp _{q}\left( t\xi \right) q.
\end{equation}%
to be the solution of the equation 
\begin{equation}
\left\{ 
\begin{array}{c}
\frac{dp\left( t\right) }{dt}=\left. \rho \left( \xi \right) \right\vert
_{p\left( t\right) } \\ 
p\left( 0\right) =q%
\end{array}%
\right. .  \label{floweq4}
\end{equation}%
Equivalently, $\tilde{p}\left( t\right) =\exp _{q}\left( t\xi \right) $
satisfies 
\begin{equation}
\left\{ 
\begin{array}{c}
\frac{d\tilde{p}\left( t\right) }{dt}=\left. \rho ^{\left( q\right) }\left(
\xi \right) \right\vert _{\tilde{p}\left( t\right) } \\ 
\tilde{p}\left( 0\right) =1%
\end{array}%
\right. .  \label{floweq5}
\end{equation}
\end{definition}

\begin{remark}
In general, the solution $\exp _{q}$ will only be defined in a neighborhood
of $0\in \mathfrak{l},$ however as shown in \cite{Kuzmin1971,Malcev1955}, if
the loop $\mathbb{L}$ is power-associative, so that powers of an element $%
p\in \mathbb{L}$ associate, then $p\left( nh\right) =p\left( h\right) ^{n}$
can be defined unambiguously. We will show this from a different perspective
further below. This can then be used to define the solution $p\left(
t\right) $ for all $t,$ and thus this extends $\exp _{q}$ to all of $%
\mathfrak{l}.$
\end{remark}

Let us consider $d\exp _{q}.$ From the definition of $\exp _{q}$, for any $%
\xi \in \mathfrak{l,}$ have 
\begin{equation}
\left. d\exp _{q}\right\vert _{0}\left( \xi \right) =\left. \frac{d}{dt}\exp
_{q}\left( t\xi \right) \right\vert _{t=0}=\xi .  \label{dexp}
\end{equation}%
In particular, $\exp _{s}$ is smooth and since the identity map is a linear
isomorphism, by the Inverse Function Theorem, we have the following.

\begin{lemma}
\label{lemexpdiffeo}For any $q\in \mathbb{L},$ the map $\exp _{q}:\mathfrak{l%
}\longrightarrow \mathbb{L}$ is a local diffeomorphism around $0\in 
\mathfrak{l}$.
\end{lemma}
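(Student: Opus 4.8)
The statement is essentially immediate from equation (\ref{dexp}), and the plan is to present it as a direct application of the Inverse Function Theorem. I need two ingredients: smoothness of $\exp _{q}$ near $0$, and invertibility of its differential at $0$. The second is already in hand, since by (\ref{dexp}) we have $\left. d\exp _{q}\right\vert _{0}\left( \xi \right) =\xi $ for every $\xi \in \mathfrak{l}$, so $\left. d\exp _{q}\right\vert _{0}=\mathrm{id}_{\mathfrak{l}}$ as a map $\mathfrak{l}=T_{0}\mathfrak{l}\longrightarrow T_{1}\mathbb{L}=\mathfrak{l}$, which is plainly a linear isomorphism. Moreover $\exp _{q}\left( 0\right) =1$, because the solution of (\ref{floweq5}) with $\xi =0$ is the constant curve at $1$. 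Granting smoothness, the Inverse Function Theorem then produces a neighborhood of $0\in \mathfrak{l}$ on which $\exp _{q}$ restricts to a diffeomorphism onto an open subset of $\mathbb{L}$, which is exactly the claim.

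The only substantive point is therefore the smoothness of $\exp _{q}:\mathfrak{l}\longrightarrow \mathbb{L}$, and this is where I would focus the argument. I would obtain it by realizing $\exp _{q}$ as the time-$1$ value of a flow with smooth parameter dependence. Recall that $\exp _{q}\left( \xi \right) =\tilde{p}\left( 1\right) $, where $\tilde{p}\left( t\right) =\exp _{q}\left( t\xi \right) $ solves the initial value problem (\ref{floweq5}) with right-hand side $\left. \rho ^{\left( q\right) }\left( \xi \right) \right\vert _{\tilde{p}\left( t\right) }$. Adjoining the trivial equation $\dot{\xi}=0$ turns this into an autonomous system on the product manifold $\mathfrak{l}\times \mathbb{L}$, driven by the vector field $\left( \xi ,p\right) \mapsto \left( 0,\rho _{p}^{\left( q\right) }\left( \xi \right) \right) $. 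Since $\rho _{p}^{\left( q\right) }=\left. \left( R_{p}^{\left( q\right) }\right) _{\ast }\right\vert _{1}$ varies smoothly with $p$ — because $R_{p}^{\left( q\right) }$ is a diffeomorphism depending smoothly on $p$ — and is linear in $\xi $, this vector field is smooth on $\mathfrak{l}\times \mathbb{L}$. Smooth dependence of flows on initial data then makes the flow jointly smooth in $\left( t,\xi \right) $ wherever it is defined; evaluating at the initial point $\left( \xi ,1\right) $ and at $t=1$, for $\xi $ in the neighborhood of $0$ on which the solution persists up to time $1$, yields smoothness of $\xi \mapsto \exp _{q}\left( \xi \right) $.

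I expect no genuine obstacle here: the lemma is short, and the single technical step, smoothness of $\exp _{q}$, reduces to the standard theorem on smooth dependence of ODE solutions on parameters once (\ref{floweq5}) is recognized as a parametrized flow with smooth right-hand side. In particular, no non-associativity enters this local statement, and the argument parallels the Lie-group case verbatim.
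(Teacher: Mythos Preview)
Your proposal is correct and follows essentially the same approach as the paper: compute $\left. d\exp_q\right|_0=\mathrm{id}_{\mathfrak{l}}$ from (\ref{dexp}), note smoothness of $\exp_q$, and apply the Inverse Function Theorem. The paper simply asserts smoothness in one clause, whereas you helpfully spell out why it holds via smooth dependence of ODE solutions on parameters.
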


\begin{remark}
To distinguish the exponential map on $\mathfrak{l}$ from the exponential
map on $\mathfrak{p,}$ we will use a subscript to denote with respect to
which element of $\mathbb{L}$ the exponential map is used. The exponential
map on $\mathfrak{p}$ will be without the subscript.
\end{remark}

On smooth loops, we can define an analog of the Lie group Maurer-Cartan form.

\begin{definition}[\protect\cite{GrigorianLoops}]
The Maurer-Cartan form $\theta $ is an $\mathfrak{l}$-valued $1$-form on $%
\mathbb{L}$, such that for any vector field $X$, and any $p\in \mathbb{L},$ $%
\left. \theta \left( X\right) \right\vert _{p}=\rho _{p}^{-1}\left(
X_{p}\right) \in \mathfrak{l}.$ Equivalently, for any $\xi \in \mathfrak{l},$
$\theta \left( \rho \left( \xi \right) \right) =\xi .$
\end{definition}

The loop Maurer-Cartan form allows us to define brackets on $\mathfrak{l.}$
For each $p\in \mathbb{L}$ define the bracket $\left[ \cdot ,\cdot \right]
^{\left( p\right) }$ given for any $\xi ,\eta \in \mathfrak{l}$ by 
\begin{equation*}
\left[ \xi ,\eta \right] ^{\left( p\right) }=-\left. \theta \left( \left[
\rho \left( \xi \right) ,\rho \left( \eta \right) \right] \right)
\right\vert _{p}.
\end{equation*}%
As shown in \cite[Theorem 3.7]{GrigorianLoops}, we can equivalently define 
\begin{eqnarray}
\left[ \xi ,\gamma \right] ^{\left( p\right) } &=&\left. \frac{d}{dt}\left( %
\mathop{\rm Ad}\nolimits_{\exp \left( t\xi \right) }^{\left( p\right)
}\gamma \right) \right\vert _{t=0}  \notag \\
&=&\left. \frac{d^{2}}{dtd\tau }\exp \left( t\xi \right) \circ _{p}\exp
\left( \tau \gamma \right) \right\vert _{t,\tau =0}  \label{brack2deriv} \\
&&-\left. \frac{d^{2}}{dtd\tau }\exp \left( \tau \gamma \right) \circ
_{p}\exp \left( t\xi \right) \right\vert _{t,\tau =0},  \notag
\end{eqnarray}%
where, for $p,q\in \mathbb{L}$, $\mathop{\rm Ad}\nolimits_{q}^{\left(
p\right) }:\mathfrak{l}\longrightarrow \mathfrak{l}$ is the differential at $%
1\in \mathbb{L}$ of the conjugation map $r\mapsto \left( q\circ _{p}r\right)
/_{p}q\in \mathbb{L}.$

\begin{remark}
In \cite{GrigorianLoops}, the conjugation map $r\mapsto \left( q\circ
_{p}r\right) /_{p}q$ was denoted by $\mathop{\rm Ad}\nolimits_{q}^{\left(
p\right) },$ and its differential as $\left( \mathop{\rm Ad}%
\nolimits_{q}^{\left( p\right) }\right) _{\ast }. $ However here we adopt
notation that is more in line with standard usage in Lie theory.
\end{remark}

\begin{definition}
The vector space $\mathfrak{l}$ equipped with the bracket $\left[ \cdot
,\cdot \right] ^{\left( p\right) }$ is known as the \emph{loop tangent
algebra }$\mathfrak{l}^{\left( p\right) }.$
\end{definition}

Define the \emph{bracket function }$b:\mathbb{L}\longrightarrow \mathfrak{l}%
\otimes \Lambda ^{2}\mathfrak{l}^{\ast }$ to be the map that takes $p\mapsto %
\left[ \cdot ,\cdot \right] ^{\left( p\right) }\in \mathfrak{l}\otimes
\Lambda ^{2}\mathfrak{l}^{\ast }$, so that $b\left( \theta ,\theta \right) $
is an $\mathfrak{l}$-valued $2$-form on $\mathbb{L}$, i.e. $b\left( \theta
,\theta \right) \in \Omega ^{2}\left( \mathfrak{l}\right) $.

\begin{definition}
For any $\eta ,\gamma ,\xi \in \mathfrak{l},$ and $p\in \mathbb{L},$ the%
\emph{\ associator }$\left[ \cdot ,\cdot ,\cdot \right] ^{\left( p\right) }$
on $\mathfrak{l}^{\left( p\right) }$ given by 
\begin{eqnarray}
\left[ \eta ,\gamma ,\xi \right] ^{\left( p\right) } &=&\left. \frac{d^{3}}{%
dtd\tau d\tau ^{\prime }}\exp \left( \tau \eta \right) \circ _{p}\left( \exp
\left( \tau ^{\prime }\gamma \right) \circ _{p}\exp \left( t\xi \right)
\right) \right\vert _{t,\tau ,\tau ^{\prime }=0}  \label{Lalgassoc} \\
&&-\left. \frac{d^{3}}{dtd\tau d\tau ^{\prime }}\left( \exp \left( \tau \eta
\right) \circ _{p}\exp \left( \tau ^{\prime }\gamma \right) \right) \circ
_{p}\exp \left( t\xi \right) \right\vert _{t,\tau ,\tau ^{\prime }=0}. 
\notag
\end{eqnarray}%
Moreover, define mixed associators between elements of $\mathbb{L}$ and $%
\mathfrak{l}$. An $\left( \mathbb{L},\mathbb{L},\mathfrak{l}\right) $%
-associator is defined for any $p,q\in \mathbb{L}$ and $\xi \in \mathfrak{l}$
as 
\begin{equation}
\left[ p,q,\xi \right] ^{\left( s\right) }=\left( L_{p}^{\left( s\right)
}\circ L_{q}^{\left( s\right) }\right) _{\ast }\xi -\left( L_{p\circ
_{s}q}^{\left( s\right) }\right) _{\ast }\xi \in T_{p\circ _{s}q}\mathbb{L}
\label{pqxiassoc}
\end{equation}%
and an $\left( \mathbb{L},\mathfrak{l},\mathfrak{l}\right) $-associator is
defined for an $p\in \mathbb{L}$ and $\eta ,\xi \in \mathfrak{l}$ as 
\begin{eqnarray}
\left[ p,\eta ,\xi \right] ^{\left( s\right) } &=&\left. \frac{d}{dtd\tau }%
\left( p\circ _{s}\left( \exp \left( t\eta \right) \circ _{s}\exp \left(
\tau \xi \right) \right) \right) \right\vert _{t\,,\tau =0}  \notag \\
&&-\left. \frac{d}{dtd\tau }\left( \left( p\circ _{s}\exp \left( t\eta
\right) \right) \circ _{s}\exp \left( \tau \xi \right) \right) \right\vert
_{t,\tau =0},  \label{etapxiassoc}
\end{eqnarray}%
where we see that $\left[ p,\eta ,\xi \right] ^{\left( s\right) }\in T_{p}%
\mathbb{L}.$ Similarly, for other combinations. Also define the \emph{%
left-alternating associator }$a:\mathbb{L}\longrightarrow \mathfrak{l}%
\otimes \Lambda ^{2}\mathfrak{l}^{\ast }\otimes \mathfrak{l}^{\ast },$ given
by 
\begin{equation}
a_{p}\left( \eta ,\gamma ,\xi \right) =\left[ \eta ,\gamma ,\xi \right]
^{\left( p\right) }-\left[ \gamma ,\eta ,\xi \right] ^{\left( p\right) }.
\label{ap}
\end{equation}%
which we can call the \emph{left-alternating associator}.
\end{definition}

\begin{remark}
From the definitions of the associators, it is easy to see that if $\left( 
\mathbb{L},\circ _{s}\right) $ is power-associative, given $\xi \in 
\mathfrak{l},$ associators with any combinations of $\xi $ and $\exp
_{s}\left( t\xi \right) ,$ for any values of $t$, in the three entries, will
vanish. For example, 
%TCIMACRO{\TeXButton{beginsubeq}{\begin{subequations}}}%
%BeginExpansion
\begin{subequations}%
%EndExpansion
\label{powerassoc}%
\begin{eqnarray}
\left[ \xi ,\xi ,\xi \right] ^{\left( s\right) } &=&0 \\
\lbrack \xi ,\xi ,\exp _{s}\left( t\xi \right) ]^{\left( s\right) } &=&0 \\
\left[ \xi ,\exp _{s}\left( t\xi \right) ,\exp _{s}\left( \tau \xi \right) %
\right] ^{\left( s\right) } &=&0,
\end{eqnarray}%
%TCIMACRO{\TeXButton{endsubeq}{\end{subequations}} }%
%BeginExpansion
\end{subequations}
%EndExpansion
as well as any permutations.

Similarly, if $\left( \mathbb{L},\circ _{s}\right) $ is
left-power-associative, then associators with any combination of $\xi $ and $%
\exp _{s}\left( t\xi \right) $ in the first two entries will vanish, for
example 
%TCIMACRO{\TeXButton{beginsubeq}{\begin{subequations}}}%
%BeginExpansion
\begin{subequations}%
%EndExpansion
\begin{eqnarray}
\left[ \xi ,\xi ,\eta \right] ^{\left( s\right) } &=&0 \\
\lbrack \xi ,\exp _{s}\left( t\xi \right) ,\eta ]^{\left( s\right) } &=&0 \\
\left[ \exp _{s}\left( t\xi \right) ,\exp _{s}\left( \tau \xi \right) ,\eta %
\right] ^{\left( s\right) } &=&0,
\end{eqnarray}%
%TCIMACRO{\TeXButton{endsubeq}{\end{subequations}} }%
%BeginExpansion
\end{subequations}
%EndExpansion
for any $\eta \in \mathfrak{l}$ and similarly with the third entry replaced
by an element of $\mathbb{L}.$
\end{remark}

From \cite{GrigorianLoops} we cite several useful properties of these
brackets and associators.

\begin{theorem}[{\protect\cite[Theorem 3.20]{GrigorianLoops}}]
Suppose $p,s\in \mathbb{L}$, and $\xi ,\eta \in \mathfrak{l}.$ Then the
bracket $\left[ \cdot ,\cdot \right] ^{\left( ps\right) }$ is related to $%
\left[ \cdot ,\cdot \right] ^{\left( s\right) }$ via the expression 
\begin{equation}
\left[ \xi ,\eta \right] ^{\left( ps\right) }=\left[ \xi ,\eta \right]
^{\left( s\right) }+\left( \rho _{p}^{\left( s\right) }\right)
^{-1}a_{s}\left( \xi ,\eta ,p\right) .  \label{Adbrack1a}
\end{equation}
\end{theorem}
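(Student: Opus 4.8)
The plan is to reduce the $\circ_{ps}$-bracket to the $\circ_{s}$-structure by means of the base-point-change identity (\ref{Arprod}). Setting $r=s$ and $A=p$ there gives, for all $x,y\in\mathbb{L}$,
\[
x\circ_{ps}y=\left( x\circ_{s}\left( y\circ_{s}p\right) \right) /_{s}p .
\]
Substituting $x=\exp(t\xi)$ and $y=\exp(\tau\eta)$ into the commutator definition (\ref{brack2deriv}), the bracket $[\xi,\eta]^{(ps)}$ becomes the difference of the two ordered mixed second derivatives of $(\exp(t\xi)\circ_{s}(\exp(\tau\eta)\circ_{s}p))/_{s}p$ at $t=\tau=0$. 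Thus the whole computation is organized around differentiating the $\circ_{s}$-product with the element $p$ first inserted on the right and then removed by the quotient $/_{s}p$.

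Write $N(t,\tau)=\exp(t\xi)\circ_{s}(\exp(\tau\eta)\circ_{s}p)$ and let $\tilde N(t,\tau)=\exp(\tau\eta)\circ_{s}(\exp(t\xi)\circ_{s}p)$ be its swap, so that $[\xi,\eta]^{(ps)}=\partial_{t}\partial_{\tau}(N/_{s}p-\tilde N/_{s}p)|_{0}$. The two nonlinear maps appearing are the $\circ_{s}$-right translation $R_{p}^{(s)}$ and the quotient $\Phi=(\cdot)/_{s}p=(R_{p}^{(s)})^{-1}$, whose differentials at $1$ and at $p$ are $\rho_{p}^{(s)}$ and $(\rho_{p}^{(s)})^{-1}$ respectively. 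The key elementary device is a jet-difference principle: if two surfaces through $p$ share the same value and the same first partials at the origin, then the difference of their mixed second partials is a well-defined tangent vector at $p$, and composition with a smooth map commutes with this difference through $d\Phi_{p}$. I would verify that $N$ and $\tilde N$ have identical first-order data at the origin, namely $\partial_{t}=\rho_{p}^{(s)}(\xi)$ and $\partial_{\tau}=\rho_{p}^{(s)}(\eta)$; consequently the second-order contributions of $\Phi$ are the same for both and cancel in the difference, leaving
\[
[\xi,\eta]^{(ps)}=(\rho_{p}^{(s)})^{-1}\,\partial_{t}\partial_{\tau}(N-\tilde N)|_{0} .
\]

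To evaluate the remaining tangent vector in $T_{p}\mathbb{L}$, I would compare $N$ with its reassociated companion $M(t,\tau)=(\exp(t\xi)\circ_{s}\exp(\tau\eta))\circ_{s}p=R_{p}^{(s)}(\exp(t\xi)\circ_{s}\exp(\tau\eta))$, and likewise $\tilde N$ with $\tilde M$. Since $N$ and $M$ again agree to first order at the origin, the reassociation defects $\partial_{t}\partial_{\tau}(N-M)|_{0}$ and $\partial_{t}\partial_{\tau}(\tilde N-\tilde M)|_{0}$ are precisely the mixed associators with $p$ in the third slot, and their difference is the left-alternating associator $a_{s}(\xi,\eta,p)$ of (\ref{ap}), landing in $T_{p}\mathbb{L}$. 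On the other hand $M$ and $\tilde M$ differ only through the $\circ_{s}$-commutator of $\exp(t\xi)$ and $\exp(\tau\eta)$, which another application of the jet-difference principle together with (\ref{brack2deriv}) reassembles into $\rho_{p}^{(s)}([\xi,\eta]^{(s)})$. Adding the two contributions and applying $(\rho_{p}^{(s)})^{-1}$ gives $[\xi,\eta]^{(s)}+(\rho_{p}^{(s)})^{-1}a_{s}(\xi,\eta,p)$, as claimed.

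I expect the main obstacle to be the bookkeeping in the cancellation step: one must confirm that the first-order data of $N,\tilde N,M,\tilde M$ genuinely coincide at the origin, so that the nonlinear quotient $/_{s}p$ contributes only through its linearization $(\rho_{p}^{(s)})^{-1}$ and all second-order terms drop out under the antisymmetrization. One must also fix the sign and slot conventions of the $(\mathfrak{l},\mathfrak{l},\mathbb{L})$-associator (compare (\ref{etapxiassoc})) carefully, so that the reassociation defect is exactly $a_{s}(\xi,\eta,p)$ and not a permuted or oppositely-signed variant.
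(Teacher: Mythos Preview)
The paper does not actually prove this statement; it is quoted from \cite[Theorem 3.20]{GrigorianLoops} without proof. So there is no in-paper argument to compare against.

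That said, your proposal is sound and is essentially the natural direct proof. The reduction via the base-change identity (\ref{Arprod}) is the right starting point, and your ``jet-difference principle'' is exactly what is needed: since $N$, $\tilde N$, $M$, $\tilde M$ all agree at the origin with value $p$ and have the same first partials $\rho_{p}^{(s)}(\xi)$, $\rho_{p}^{(s)}(\eta)$, the antisymmetrized mixed second partial is a genuine tangent vector at $p$ and pushes forward linearly under $(\rho_{p}^{(s)})^{-1}$. The decomposition into the reassociation defect (yielding $a_{s}(\xi,\eta,p)$ by the analogue of (\ref{etapxiassoc}) with $p$ in the third slot) plus the $\circ_{s}$-commutator (yielding $\rho_{p}^{(s)}[\xi,\eta]^{(s)}$) is correct. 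Your caveat about sign and slot conventions is well taken: the paper's $(\mathfrak{l},\mathfrak{l},\mathbb{L})$-associator is not written out explicitly, so you should state precisely that $[\xi,\eta,p]^{(s)}-[\eta,\xi,p]^{(s)}=a_{s}(\xi,\eta,p)$ in $T_{p}\mathbb{L}$, consistent with (\ref{ap}). One alternative, closer in spirit to what the present paper does prove, is to use (\ref{ddtbrack}) along the path $A(t)=\exp_{s}(t\xi')s$ from $s$ to $ps$ and integrate; your finite, algebraic argument is cleaner.
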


\begin{theorem}[{\protect\cite[Theorem 3.10]{GrigorianLoops}}]
\label{thmMC}The form $\theta $ satisfies 
\begin{equation}
d\theta =\frac{1}{2}b\left( \theta ,\theta \right) ,  \label{dtheta}
\end{equation}%
where wedge product of $1$-forms is implied. Also, for any $\xi ,\eta \in 
\mathfrak{l},$ we have 
\begin{equation}
db\left( \xi ,\eta \right) =a\left( \xi ,\eta ,\theta \right) .  \label{db2}
\end{equation}

It follows that $\xi ,\eta ,\gamma \in \mathfrak{l,}$ the generalized Jacobi
identity is satisfied: 
\begin{equation}
\mathop{\rm Jac}\nolimits^{\left( s\right) }\left( \xi ,\eta ,\gamma \right)
=a_{s}\left( \xi ,\eta ,\gamma \right) +a_{s}\left( \eta ,\gamma ,\xi
\right) +a_{s}\left( \gamma ,\xi ,\eta \right) ,  \label{Jac2}
\end{equation}%
where 
\begin{equation}
\mathop{\rm Jac}\nolimits^{\left( s\right) }\left( \xi ,\eta ,\gamma \right)
=\left[ \xi ,\left[ \eta ,\gamma \right] ^{\left( s\right) }\right] ^{\left(
s\right) }+\left[ \eta ,\left[ \gamma ,\xi \right] ^{\left( s\right) }\right]
^{\left( s\right) }+\left[ \gamma ,\left[ \xi ,\eta \right] ^{\left(
s\right) }\right] ^{\left( s\right) }.  \label{Jac}
\end{equation}
\end{theorem}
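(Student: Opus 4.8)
The plan is to establish the three identities in turn: (\ref{dtheta}) and (\ref{db2}) directly from the definitions of $\theta$ and the brackets, and (\ref{Jac2}) as a formal consequence obtained by applying $d$ to the Maurer--Cartan equation. Throughout I would test every identity against the fundamental vector fields $\rho(\xi)$, since $\rho_p\colon\mathfrak{l}\to T_p\mathbb{L}$ is a linear isomorphism, so the $\rho(\xi)$ span $T_p\mathbb{L}$ at each point and an identity of forms holds as soon as it holds on all such tuples. For (\ref{dtheta}) I would evaluate both sides on a pair $\rho(\xi),\rho(\eta)$ with $\xi,\eta\in\mathfrak{l}$ fixed, using the invariant formula $d\theta(X,Y)=X(\theta(Y))-Y(\theta(X))-\theta([X,Y])$. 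Because $\theta(\rho(\xi))\equiv\xi$ and $\theta(\rho(\eta))\equiv\eta$ are constant $\mathfrak{l}$-valued functions, the first two terms vanish, leaving $d\theta(\rho(\xi),\rho(\eta))=-\theta([\rho(\xi),\rho(\eta)])=[\xi,\eta]^{(p)}$ by the definition of the bracket. On the right, with the convention that $\tfrac12 b(\theta,\theta)(X,Y)=b(\theta(X),\theta(Y))$, one gets $\tfrac12 b(\theta,\theta)(\rho(\xi),\rho(\eta))=b_p(\xi,\eta)=[\xi,\eta]^{(p)}$, so the two sides agree. This is the routine part.

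For (\ref{db2}) I would differentiate the $\mathfrak{l}$-valued function $b(\xi,\eta)\colon p\mapsto[\xi,\eta]^{(p)}$ along $\rho(\zeta)$. By (\ref{floweq4}) the flow of $\rho(\zeta)$ through $s$ is $t\mapsto\exp_s(t\zeta)s$, so $(db(\xi,\eta))_s(\rho(\zeta))=\frac{d}{dt}\big|_0[\xi,\eta]^{(\exp_s(t\zeta)s)}$. Here I would invoke (\ref{Adbrack1a}) with base $s$ and $p=\exp_s(t\zeta)$ to write $[\xi,\eta]^{(\exp_s(t\zeta)s)}=[\xi,\eta]^{(s)}+(\rho^{(s)}_{\exp_s(t\zeta)})^{-1}a_s(\xi,\eta,\exp_s(t\zeta))$. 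Differentiating at $t=0$, the constant term drops; since $\exp_s(0)=1$, $\rho^{(s)}_1=\mathrm{id}$, and the mixed associator vanishes when an entry is the identity, the Leibniz rule kills the contribution from $\frac{d}{dt}(\rho^{(s)}_{\exp_s(t\zeta)})^{-1}$, leaving $\frac{d}{dt}\big|_0 a_s(\xi,\eta,\exp_s(t\zeta))$. The key step is to identify this with the pure left-alternating associator $a_s(\xi,\eta,\zeta)$: comparing the definitions (\ref{etapxiassoc}) and (\ref{Lalgassoc}) of the mixed and algebra associators, replacing an $\mathbb{L}$-slot by $\exp_s(t\,\cdot\,)$ and differentiating, with $\frac{d}{dt}\big|_0\exp_s(t\zeta)=\zeta$ from (\ref{dexp}), converts the loop entry into the algebra entry $\zeta$. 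Since $a(\xi,\eta,\theta)_s(\rho(\zeta))=a_s(\xi,\eta,\zeta)$ as well, the two $1$-forms agree.

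For (\ref{Jac2}) I would apply $d$ to (\ref{dtheta}). Writing $\beta:=b(\theta,\theta)=2\,d\theta$, we have $d\beta=2\,d^2\theta=0$, and I would expand $0=\tfrac12 d\beta(\rho(\xi),\rho(\eta),\rho(\gamma))$ with the global formula for the exterior derivative of a $2$-form. The three derivative terms $X(\tfrac12\beta(Y,Z))$, etc., are directional derivatives of bracket functions and, by (\ref{db2}), produce $a_s(\eta,\gamma,\xi)-a_s(\xi,\gamma,\eta)+a_s(\xi,\eta,\gamma)$; using the antisymmetry of $a_s$ in its first two slots, this is the cyclic sum $a_s(\xi,\eta,\gamma)+a_s(\eta,\gamma,\xi)+a_s(\gamma,\xi,\eta)$. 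The three bracket terms $\tfrac12\beta([X,Y],Z)$, etc., are evaluated using $[\rho(\xi),\rho(\eta)]_p=-\rho_p([\xi,\eta]^{(p)})$ together with $\tfrac12\beta(\rho(\zeta_1),\rho(\zeta_2))=[\zeta_1,\zeta_2]^{(p)}$, and collapse to $-\mathop{\rm Jac}\nolimits^{(s)}(\xi,\eta,\gamma)$ after rewriting every $[[\,\cdot\,,\cdot\,],\cdot\,]$ as $-[\,\cdot\,,[\,\cdot\,,\cdot\,]]$ via antisymmetry of the bracket. Setting the total to zero then yields (\ref{Jac2}).

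The main obstacle is the bookkeeping in the last two steps: in (\ref{db2}), the clean identification of the derivative of the mixed $(\mathfrak{l},\mathfrak{l},\mathbb{L})$-associator with the algebra associator; and in (\ref{Jac2}), tracking the signs produced by the exterior-derivative formula, by the antisymmetry of the bracket, and by the left-alternating symmetry of $a_s$, so that the derivative terms assemble into exactly the cyclic associator sum and the bracket terms into exactly $-\mathop{\rm Jac}\nolimits^{(s)}$. Everything else is formal once the sign conventions, in particular the normalization of $b(\theta,\theta)$, are fixed.
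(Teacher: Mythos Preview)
The paper does not give its own proof of this theorem; it is cited from \cite[Theorem 3.10]{GrigorianLoops} without argument, so there is nothing to compare against directly. Your argument is correct and is the natural one: test (\ref{dtheta}) on fundamental fields, differentiate the bracket function for (\ref{db2}), and deduce (\ref{Jac2}) from $d^2\theta=0$ applied to (\ref{dtheta}).

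One simplification is available in your treatment of (\ref{db2}). Rather than passing through (\ref{Adbrack1a}) and then having to argue that the $t$-derivative of the mixed $(\mathfrak{l},\mathfrak{l},\mathbb{L})$-associator $a_s(\xi,\eta,\exp_s(t\zeta))$ at $t=0$ equals the algebra associator $a_s(\xi,\eta,\zeta)$, you can invoke (\ref{ddtbrack}) from Lemma~\ref{corLoppalghom}, which the paper \emph{does} prove. Taking $A(t)=\exp_s(t\zeta)s$ there gives $A_0=s$ and $\left.\tfrac{d}{dt}A(t)/A_0\right|_{t=0}=\zeta$, so (\ref{ddtbrack}) yields $(db(\xi,\eta))_s(\rho(\zeta))=a_s(\xi,\eta,\zeta)$ in one step. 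The paper's proof of (\ref{ddtbrack}) goes through (\ref{ddtmodprod}) and the definition (\ref{Lalgassoc}), which is exactly the identification you describe as the ``key step,'' already packaged for you.
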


\begin{remark}
Equation (\ref{dtheta}) is the loop Maurer-Cartan equation. The key
difference from the Maurer-Cartan equation on Lie groups is that on
non-associative loops, $b\left( s\right) $ is non-constant on $\mathbb{L},$
unlike on Lie groups, where there is a unique bracket on the Lie algebra,
and hence $b\left( s\right) $ is constant. In particular, the non-constant $b
$ leads to a non-trivial associator (\ref{db2}) and the failure of the
standard Jacobi identity to hold.
\end{remark}

With respect to the action of $\Psi ,$ the bracket and the associator
satisfy the following properties.

\begin{lemma}
\label{corLoppalghom}If $h\in \Psi \mathbb{\ }$and $q\in \mathbb{L}$, then,
for any $\xi ,\eta ,\gamma \in \mathfrak{l}$, 
\begin{eqnarray*}
h_{\ast }^{\prime }\left[ \xi ,\eta \right] ^{\left( q\right) } &=&\left[
h_{\ast }^{\prime }\xi ,h_{\ast }^{\prime }\eta \right] ^{h\left( q\right) }
\\
h_{\ast }^{\prime }\left[ \xi ,\eta ,\gamma \right] ^{\left( q\right) } &=&%
\left[ h_{\ast }^{\prime }\xi ,h_{\ast }^{\prime }\eta ,h_{\ast }^{\prime
}\gamma \right] ^{h\left( q\right) }.
\end{eqnarray*}%
If $A\left( t\right) $ is a path on $\mathbb{L},$ with $A\left( t_{0}\right)
=A_{0},$ and $\left. \frac{d}{dt}A\left( t\right) /A_{0}\right\vert
_{t=t_{0}}=\xi \in \mathfrak{l,}$ then for any $p,q\in \mathbb{L}$, 
\begin{equation}
\left. \frac{d}{dt}p\circ _{A\left( t\right) }q\right\vert _{t=t_{0}}=\left[
p,q,\xi \right] ^{\left( A_{0}\right) }\in T_{p\circ _{A_{0}}q}\mathbb{L}.
\label{ddtmodprod}
\end{equation}%
Also, for any $\eta ,\gamma \in \mathfrak{l},$ 
\begin{equation}
\left. \frac{d}{dt}\left[ \eta ,\gamma \right] ^{A\left( t\right)
}\right\vert _{t=t_{0}}=a_{A_{0}}\left( \eta ,\gamma ,\xi \right)
\label{ddtbrack}
\end{equation}
\end{lemma}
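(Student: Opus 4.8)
The plan is to treat the four assertions separately: the two equivariance formulas are direct consequences of the $\Psi$-equivariance of the modified products recorded in \eqref{PsiAct}, while the two differentiation formulas \eqref{ddtmodprod} and \eqref{ddtbrack} carry the real content. For the bracket equivariance I would work from the first line of \eqref{brack2deriv}, writing $[\xi,\eta]^{(q)}=\left.\frac{d}{dt}\bigl(\mathop{\rm Ad}\nolimits^{(q)}_{\exp(t\xi)}\eta\bigr)\right\vert_{t=0}$, since $\mathop{\rm Ad}\nolimits^{(q)}_{a}$ is an honest linear endomorphism of $\mathfrak{l}$ (the differential at $1$ of the map $r\mapsto(a\circ_q r)/_q a$, which fixes $1$) and so avoids any ambiguity of bare second derivatives. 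The decisive intermediate step is the intertwining identity $h'_\ast\circ\mathop{\rm Ad}\nolimits^{(q)}_{a}=\mathop{\rm Ad}\nolimits^{(h(q))}_{h'(a)}\circ h'_\ast$, which follows by applying $h'$ to $(a\circ_q r)/_q a$ and using the two halves of \eqref{PsiAct} together with $h'(1)=1$. Differentiating in $t$ with $a=\exp(t\xi)$, and noting that $h'\circ\exp(t\xi)$ is a curve through $1$ with velocity $h'_\ast\xi$, yields the first formula. The associator equivariance goes the same way: the combination in \eqref{Lalgassoc} is an intrinsic tangent vector at $1$, so the diffeomorphism differential $h'_\ast$ passes through the defining derivative combination, and applying \eqref{PsiAct} once for each occurrence of $\circ_q$ produces the associator at $h(q)$ with all arguments replaced by $h'_\ast$ of themselves.

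The core of the lemma is \eqref{ddtmodprod}. Here I would set $B(t)=A(t)/A_0$, so that $B(t_0)=1$ and, by hypothesis, $B'(t_0)=\xi$, and then invoke \eqref{Arprod} with $r=A_0$ and $A=B(t)$ to rewrite
\[
p\circ_{A(t)}q=\bigl(p\circ_{A_0}(q\circ_{A_0}B(t))\bigr)/_{A_0}B(t).
\]
This is the pivotal move: it trades the awkward dependence of $\circ_{A(t)}$ on a varying \emph{base point} for a varying \emph{factor} $B(t)$ inside the \emph{fixed} loop $(\mathbb{L},\circ_{A_0})$. Differentiating the right-hand side at $t=t_0$ by applying Lemma \ref{lemQuotient} to the loop $(\mathbb{L},\circ_{A_0})$, and using the simplification $x/_{A_0}1=x$ that comes from $B(t_0)=1$, the two terms of \eqref{ddtrighquot} collapse to $\bigl(L^{(A_0)}_{p}\circ L^{(A_0)}_{q}\bigr)_\ast\xi$ and $-\bigl(L^{(A_0)}_{p\circ_{A_0}q}\bigr)_\ast\xi$, which is exactly $[p,q,\xi]^{(A_0)}$ by \eqref{pqxiassoc}.

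Finally, \eqref{ddtbrack} is obtained by differentiating the base point in the bracket. Writing $[\eta,\gamma]^{(s)}$ through \eqref{brack2deriv} as the mixed derivative $\partial_u\partial_v$ at $u,v=0$ of $\exp(u\eta)\circ_s\exp(v\gamma)-\exp(v\gamma)\circ_s\exp(u\eta)$, I would interchange $\frac{d}{dt}\vert_{t_0}$ with $\partial_u\partial_v$ (legitimate by smoothness in $(t,u,v)$) and apply the formula \eqref{ddtmodprod} just established to each factor $\circ_{A(t)}$. This produces $[\exp(u\eta),\exp(v\gamma),\xi]^{(A_0)}$ and $[\exp(v\gamma),\exp(u\eta),\xi]^{(A_0)}$. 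Taking $\partial_u\partial_v\vert_{u,v=0}$ and comparing with \eqref{Lalgassoc} identifies these mixed derivatives with $[\eta,\gamma,\xi]^{(A_0)}$ and $[\gamma,\eta,\xi]^{(A_0)}$ respectively, so their difference is precisely $a_{A_0}(\eta,\gamma,\xi)$ by \eqref{ap}.

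I expect the main obstacle to be \eqref{ddtmodprod}: once one recognizes that \eqref{Arprod} converts base-point variation into a factor in a fixed loop, the computation is short, whereas a naive direct differentiation of $(p\cdot qA(t))/A(t)$ is unwieldy. The step \eqref{ddtbrack} is then mostly careful bookkeeping — justifying the interchange of the $t$-derivative with the bracket-defining derivatives, and keeping the pairing $u\leftrightarrow\eta$, $v\leftrightarrow\gamma$ straight so that the antisymmetrization of the two terms lands on the correct slots of the full associator.
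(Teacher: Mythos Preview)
Your argument is correct and close in spirit to the paper's, but your handling of \eqref{ddtmodprod} differs in a way worth noting. The paper differentiates the raw expression $p\circ_{A(t)}q=(p\cdot(q\cdot A(t)))/A(t)$ directly via \eqref{ddtrighquot}, and only afterwards rewrites the two resulting terms as $p\circ_{A_0}(q\circ_{A_0}(A(t)/A_0))$ and $(p\circ_{A_0}q)\circ_{A_0}(A(t)/A_0)$, recognizing their $t$-derivatives as the two sides of \eqref{pqxiassoc}. You instead invoke \eqref{Arprod} at the outset to place the entire expression inside the fixed loop $(\mathbb{L},\circ_{A_0})$ before differentiating; this is cleaner, since the quotient formula is then applied with $B(t_0)=1$ and the simplification $x/_{A_0}1=x$ makes the identification with \eqref{pqxiassoc} immediate. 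Both routes use the same two ingredients (the quotient derivative and the passage to the $A_0$-product), just in opposite order. For the equivariance formulas the paper simply cites \cite[Lemma 3.17]{GrigorianLoops}, so your sketch via the intertwining $h'_\ast\circ\mathop{\rm Ad}\nolimits^{(q)}_{a}=\mathop{\rm Ad}\nolimits^{(h(q))}_{h'(a)}\circ h'_\ast$ actually supplies more detail; and for \eqref{ddtbrack} your argument matches the paper's line for line.
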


\begin{proof}
The first part is given in \cite[Lemma 3.17]{GrigorianLoops}. To show (\ref%
{ddtmodprod}), consider%
\begin{eqnarray*}
\left. \frac{d}{dt}p\circ _{A\left( t\right) }q\right\vert _{t=t_{0}}
&=&\left. \frac{d}{dt}\left( p\left( qA\left( t\right) \right) \right)
/A\left( t\right) \right\vert _{t=t_{0}} \\
&=&\left. \frac{d}{dt}\left( p\left( qA\left( t\right) \right) \right)
/A_{0}\right\vert _{t=t_{0}}-\left. \frac{d}{dt}\left( \left( p\circ
_{A_{0}}q\right) \cdot A\left( t\right) \right) /A_{0}\right\vert _{t=t_{0}},
\end{eqnarray*}%
where we've used (\ref{ddtrighquot}). Now, 
\begin{eqnarray*}
\left( p\left( qA\left( t\right) \right) \right) /A_{0} &=&\left( p\left(
q\left( A\left( t\right) /A_{0}\cdot A_{0}\right) /A_{0}\cdot A_{0}\right)
\right) /A_{0} \\
&=&p\circ _{A_{0}}\left( q\circ _{A_{0}}\left( A\left( t\right)
/A_{0}\right) \right) \\
\left( \left( p\circ _{A_{0}}q\right) \cdot A\left( t\right) \right) /A_{0}
&=&\left( p\circ _{A_{0}}q\right) \circ _{A_{0}}\left( A\left( t\right)
/A_{0}\right) .
\end{eqnarray*}%
Hence, 
\begin{eqnarray*}
\left. \frac{d}{dt}p\circ _{\left( t\right) }q\right\vert _{t=t_{0}}
&=&\left. \frac{d}{dt}p\circ _{A_{0}}\left( q\circ _{A_{0}}\left( A\left(
t\right) /A_{0}\right) \right) \right\vert _{t=0} \\
&&-\left. \frac{d}{dt}\left( p\circ _{A_{0}}q\right) \circ _{A_{0}}\left(
A\left( t\right) /A_{0}\right) \right\vert _{t=0}.
\end{eqnarray*}%
To show (\ref{ddtbrack}), we could use (\ref{db2}), but more directly, we
can obtain it from the definition, using (\ref{brack2deriv}): 
\begin{eqnarray}
\left[ \xi ,\gamma \right] ^{\left( A\left( t\right) \right) } &=&\left. 
\frac{d^{2}}{d\tau d\tau ^{\prime }}\exp \left( \tau \eta \right) \circ
_{A\left( t\right) }\exp \left( \tau ^{\prime }\gamma \right) \right\vert
_{\tau ,\tau ^{\prime }=0} \\
&&-\left. \frac{d^{2}}{d\tau d\tau ^{\prime }}\exp \left( \tau ^{\prime
}\gamma \right) \circ _{A\left( t\right) }\exp \left( \tau \eta \right)
\right\vert _{\tau ,\tau ^{\prime }=0}.  \notag
\end{eqnarray}%
Then, from (\ref{ddtmodprod}), 
\begin{eqnarray*}
\frac{d}{dt}\left. \exp \left( \tau \eta \right) \circ _{A\left( t\right)
}\exp \left( \tau ^{\prime }\gamma \right) \right\vert _{t=t_{0}} &=&\left[
\exp \left( \tau \eta \right) ,\exp \left( \tau ^{\prime }\gamma \right)
,\xi \right] ^{\left( A_{0}\right) } \\
\frac{d}{dt}\left. \exp \left( \tau ^{\prime }\gamma \right) \circ _{A\left(
t\right) }\exp \left( \tau \eta \right) \right\vert _{t=t_{0}} &=&\left[
\exp \left( \tau ^{\prime }\gamma \right) ,\exp \left( \tau \eta \right)
,\xi \right] ^{\left( A_{0}\right) }
\end{eqnarray*}%
and from the definition (\ref{Lalgassoc}), we obtain (\ref{ddtbrack}).
\end{proof}

Let $\xi \in \mathfrak{l}$ and $s\in \mathbb{L}.$ Also let $A\left( t\right)
=\exp _{s}\left( t\xi \right) $ for $t$ in some interval $I\subset \mathbb{R}%
\ $that contains $0$. Then consider a family $\eta \left( t\right) \in 
\mathfrak{l}$ that satisfies the following initial value problem:%
\begin{equation}
\left\{ 
\begin{array}{c}
\frac{d\eta \left( t\right) }{dt}=\left[ \xi ,\eta \left( t\right) \right]
^{\left( A\left( t\right) s\right) } \\ 
\eta \left( 0\right) =\eta _{0}%
\end{array}%
\right. .  \label{etabrackeq}
\end{equation}%
In other words, this is linear first-order ODE $\dot{\eta}=\mathop{\rm ad}%
\nolimits_{\xi }^{\left( A\left( t\right) s\right) }\eta $ , so for all $%
t\in I$ there exists an evolution operator $U_{\xi }^{\left( s\right)
}\left( t\right) \in GL\left( \mathfrak{l}\right) $, with $U_{\xi }^{\left(
s\right) }\left( 0\right) =\mathop{\rm id}\nolimits_{\mathfrak{l}},$ such
that 
\begin{equation}
\eta \left( t\right) =U_{\xi }^{\left( s\right) }\left( t\right) \eta _{0}.
\label{Utsol}
\end{equation}%
From standard ODE theory, recall that if $\tau ^{\prime },\tau ^{\prime
\prime }\in I,$ then $U_{\xi }^{\left( s\right) }\left( \tau ^{\prime \prime
}\right) U_{\xi }^{\left( s\right) }\left( \tau ^{\prime }\right) ^{-1}$ is
the evolution operator from $\tau ^{\prime }$ to $\tau ^{\prime \prime }$
and is given by: 
\begin{eqnarray}
U_{\xi }^{\left( s\right) }\left( \tau ^{\prime \prime }\right) U_{\xi
}^{\left( s\right) }\left( \tau ^{\prime }\right) ^{-1} &=&\mathop{\rm id}%
\nolimits_{\mathfrak{l}}+\int_{\tau ^{\prime }}^{\tau ^{\prime \prime }}%
\mathop{\rm ad}\nolimits_{\xi }^{\left( \left( \exp _{s}t_{1}\xi \right)
s\right) }dt_{1}  \label{Utsol2} \\
&&+\sum_{n=2}^{\infty }\int_{\tau ^{\prime }}^{\tau ^{\prime \prime
}}\int_{\tau ^{\prime }}^{t_{n}}...\int_{\tau ^{\prime }}^{t_{2}}\mathop{\rm
ad}\nolimits_{\xi }^{\left( \left( \exp _{s}t_{n}\xi \right) s\right) }...%
\mathop{\rm ad}\nolimits_{\xi }^{\left( \left( \exp _{s}t_{1}\xi \right)
s\right) }dt_{1}...dt_{n}.  \notag
\end{eqnarray}%
The following properties of $U_{\xi }^{\left( s\right) }\left( t\right) $
follow immediately.

\begin{lemma}
\label{lemUprop}The evolution operator $U_{\xi }^{\left( s\right) }\left(
t\right) $ satisfies the following properties:

\begin{enumerate}
\item $U_{\tau \xi }^{\left( s\right) }\left( t\right) =U_{\xi }^{\left(
s\right) }\left( \tau t\right) ,$ for any $t$ and $\tau ,$ as long as $\exp
_{s}\left( t\xi \right) $ and $\exp _{s}\left( \tau t\xi \right) $ are both
defined.

\item $U_{\xi }^{\left( s\right) }\left( t\right) \xi =\xi .$

\item If $\mathbb{L}$ is compact, and $\mathfrak{l}$ is equipped with an
inner product, then in a compatible operator norm, there exists a constant $%
C=\sup_{s\in \mathbb{L}}\left\vert b_{s}\right\vert ,$ such 
\begin{equation}
\left\vert U_{\xi }^{\left( s\right) }\left( \tau ^{\prime \prime }\right)
U_{\xi }^{\left( s\right) }\left( \tau ^{\prime }\right) ^{-1}-\mathop{\rm
id}\nolimits_{\mathfrak{l}}\right\vert \leq e^{C\left\vert \tau ^{\prime
\prime }-\tau ^{\prime }\right\vert \left\vert \xi \right\vert }-1.
\label{Uest}
\end{equation}
\end{enumerate}
\end{lemma}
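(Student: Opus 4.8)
The plan is to read off all three statements from the linear ODE $\dot{\eta}=\mathop{\rm ad}\nolimits_{\xi }^{\left( A(t)s\right) }\eta $ and uniqueness of solutions, using only that each bracket $[\cdot ,\cdot ]^{\left( p\right) }=b_{p}$ is bilinear and skew-symmetric. For (1) I would rescale time: starting from $\zeta (u)=U_{\xi }^{\left( s\right) }(u)\eta _{0}$, which solves $\dot{\zeta}(u)=[\xi ,\zeta (u)]^{\left( \exp _{s}(u\xi )s\right) }$, set $\eta (t)=\zeta (\tau t)$. Differentiating and pulling the scalar $\tau $ through the first slot of the bracket gives $\dot{\eta}(t)=\tau \lbrack \xi ,\eta (t)]^{\left( \exp _{s}(\tau t\xi )s\right) }=[\tau \xi ,\eta (t)]^{\left( \exp _{s}(t(\tau \xi ))s\right) }$, which is exactly the defining equation of the $\tau \xi $-flow, with the same initial value. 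Uniqueness then forces $\eta (t)=U_{\tau \xi }^{\left( s\right) }(t)\eta _{0}$, and since $\eta _{0}$ is arbitrary, $U_{\tau \xi }^{\left( s\right) }(t)=U_{\xi }^{\left( s\right) }(\tau t)$ wherever both exponentials are defined. For (2) I would note that the constant path $\eta (t)\equiv \xi $ already solves the equation, since $[\xi ,\xi ]^{\left( A(t)s\right) }=0$ by skew-symmetry; uniqueness then gives $U_{\xi }^{\left( s\right) }(t)\xi =\xi $.

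For (3), I would estimate the Volterra series (\ref{Utsol2}) term by term in the submultiplicative operator norm induced by the inner product on $\mathfrak{l}$. The key pointwise bound is $|\mathop{\rm ad}\nolimits_{\xi }^{\left( p\right) }|\leq |b_{p}||\xi |\leq C|\xi |$, where $C=\sup_{p\in \mathbb{L}}|b_{p}|$ is finite because $b$ is smooth (hence continuous) and $\mathbb{L}$ is compact. The $n$-th iterated integral runs over the simplex $\{\tau ^{\prime }\leq t_{1}\leq \cdots \leq t_{n}\leq \tau ^{\prime \prime }\}$, whose volume is $|\tau ^{\prime \prime }-\tau ^{\prime }|^{n}/n!$, so its norm is at most $(C|\xi |)^{n}|\tau ^{\prime \prime }-\tau ^{\prime }|^{n}/n!$. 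Summing over $n\geq 1$ produces $\sum_{n\geq 1}(C|\xi |\,|\tau ^{\prime \prime }-\tau ^{\prime }|)^{n}/n!=e^{C|\xi |\,|\tau ^{\prime \prime }-\tau ^{\prime }|}-1$, which is exactly (\ref{Uest}).

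The only genuinely quantitative step is the last one, and its two essential ingredients are the uniform bound $C=\sup_{\mathbb{L}}|b|<\infty $ coming from compactness of $\mathbb{L}$, and the $1/n!$ simplex-volume factor, which is what converts the naive geometric series into the sharp $e^{\cdots }-1$. Properties (1) and (2) carry no real difficulty, being immediate from bilinearity, skew-symmetry, and ODE uniqueness.
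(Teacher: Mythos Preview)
Your proof is correct and follows essentially the same route as the paper. For item 1 the paper simply says ``follows from a change of variables,'' which is exactly your time-rescaling; for item 2 the paper sets $X(t)=U_{\xi}^{(s)}(t)\xi-\xi$ and shows it solves the homogeneous equation with zero initial data, which is the contrapositive of your observation that the constant path $\eta(t)\equiv\xi$ is already a solution; and for item 3 the paper compresses your term-by-term Dyson-series estimate into the single inequality $\exp\bigl(\int_{\tau'}^{\tau''}|\mathop{\rm ad}\nolimits_{\xi}^{((\exp_{s}t\xi)s)}|\,dt\bigr)-1$, which unpacks to precisely your simplex-volume computation.
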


\begin{proof}
Item 1 follows from a change of variables in (\ref{etabrackeq}). For item 2,
consider 
\begin{equation*}
X\left( t\right) =U_{\xi }^{\left( s\right) }\left( t\right) \xi -\xi .
\end{equation*}%
Then, 
\begin{eqnarray*}
\frac{dX\left( t\right) }{dt} &=&\frac{d\left( U_{\xi }^{\left( s\right)
}\left( t\right) \xi \right) }{dt}=\left[ \xi ,U_{\xi }^{\left( s\right)
}\left( t\right) \xi \right] ^{\left( A\left( t\right) s\right) } \\
&=&\left[ \xi ,X\left( t\right) \right] ^{\left( A\left( t\right) s\right) },
\end{eqnarray*}%
since $\left[ \xi ,\xi \right] ^{\left( A\left( t\right) \right) }=0.$
Hence, $X\left( t\right) =U_{\xi }^{\left( s\right) }\left( t\right) X\left(
0\right) ,$ but $X\left( 0\right) =0,$ so $X\left( t\right) =0$ for all $t$.

For the estimate, from (\ref{Utsol2}), we obtain 
\begin{eqnarray*}
\left\vert U_{\xi }^{\left( s\right) }\left( \tau ^{\prime \prime }\right)
U_{\xi }^{\left( s\right) }\left( \tau ^{\prime }\right) ^{-1}-\mathop{\rm
id}\nolimits_{\mathfrak{l}}\right\vert &\leq &\exp \left( \int_{\tau
^{\prime }}^{\tau ^{\prime \prime }}\left\vert \mathop{\rm ad}\nolimits_{\xi
}^{\left( \left( \exp _{s}t\xi \right) s\right) }\right\vert dt\right) -1 \\
&\leq &\exp \left( \left\vert \tau ^{\prime \prime }-\tau ^{\prime
}\right\vert \left\vert \xi \right\vert \sup_{s\in \mathbb{L}}\left\vert
b_{s}\right\vert \right) -1
\end{eqnarray*}%
Now, $s\mapsto \left\vert b_{s}\right\vert $ is a smooth real-valued map on
a compact manifold, and is hence bounded. Therefore, there exists a constant 
$C=\sup_{s\in \mathbb{L}}\left\vert b_{s}\right\vert $ and hence $\sup_{t\in %
\left[ 0,1\right] }\left\vert b_{\left( \exp _{s}t\xi \right) s}\right\vert
\leq C.$ Thus, 
\begin{equation*}
\left\vert U_{\xi }^{\left( s\right) }\left( \tau ^{\prime \prime }\right)
U_{\xi }^{\left( s\right) }\left( \tau ^{\prime }\right) ^{-1}-\mathop{\rm
id}\nolimits_{\mathfrak{l}}\right\vert \leq e^{C\left\vert \tau ^{\prime
\prime }-\tau ^{\prime }\right\vert \left\vert \xi \right\vert }-1.
\end{equation*}
\end{proof}

\begin{remark}
Since $U_{\xi }^{\left( s\right) }\left( t\right) =U_{t\xi }^{\left(
s\right) }\left( 1\right) ,$ for brevity let us denote the operator $U_{\xi
}^{\left( s\right) }\left( t\right) $ by $U_{t\xi }^{\left( s\right) }.$
\end{remark}

If $\mathfrak{l}$ is a Lie algebra, then $\mathop{\rm ad}\nolimits_{\xi
}^{\left( A\left( t\right) s\right) }=\mathop{\rm ad}\nolimits_{\xi }$ is
independent of $t,$ and then $U_{t\xi }^{\left( s\right) }=\exp \left( t%
\mathop{\rm ad}\nolimits_{\xi }\right) =\mathop{\rm Ad}\nolimits_{\exp t\xi
}.$ In the non-associative case, this is no longer true in general, but
needs additional assumptions, as Theorem \ref{thmAd} below shows.

\begin{theorem}
\label{thmAd}Let $s\in \mathbb{L}$, $\xi ,\eta \in \mathfrak{l,}$ and $%
A\left( t\right) =\exp _{s}\left( t\xi \right) .$ Suppose $U_{t\xi }^{\left(
s\right) }$ is the evolution operator for the equation (\ref{etabrackeq}) as
in (\ref{Utsol}). Then, 
\begin{equation}
\mathop{\rm Ad}\nolimits_{A\left( t\right) }^{\left( s\right) }\eta =U_{t\xi
}^{\left( s\right) }\eta +U_{t\xi }^{\left( s\right) }\int_{0}^{t}\left(
U_{\tau \xi }^{\left( s\right) }\right) ^{-1}\left( \rho _{A\left( \tau
\right) }^{\left( s\right) }\right) ^{-1}\left( \left[ \xi ,A\left( \tau
\right) ,\eta \right] ^{\left( s\right) }\right) d\tau .  \label{AdAtsol}
\end{equation}%
Moreover,

\begin{enumerate}
\item If $\mathbb{L}$ is compact, and $\mathfrak{l}$ is equipped with an
inner product, then in a compatible operator norm, there exists a constant $%
C $ that depends only on $\mathbb{L},$ such that, 
\begin{equation}
\left\vert \mathop{\rm Ad}\nolimits_{A\left( t\right) }^{\left( s\right)
}-U_{t\xi }^{\left( s\right) }\right\vert \leq C\left( e^{C\left\vert \xi
\right\vert t}-1\right) .  \label{Adest}
\end{equation}

\item If $\left( \mathbb{L},\circ _{s}\right) $ is left-power-alternative,
then 
\begin{equation}
\mathop{\rm Ad}\nolimits_{A\left( t\right) }^{\left( s\right) }=U_{t\xi
}^{\left( s\right) }.  \label{AdU1}
\end{equation}

\item If $\left( \mathbb{L},\circ _{s}\right) $ is both
left-power-alternative and right-power-alternative, then%
\begin{equation}
\mathop{\rm Ad}\nolimits_{A\left( t\right) }^{\left( s\right) }=\exp \left( t%
\mathop{\rm ad}\nolimits_{\xi }^{\left( s\right) }\right) .  \label{AdU2}
\end{equation}
\end{enumerate}
\end{theorem}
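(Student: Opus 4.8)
The plan is to show that both sides of (\ref{AdAtsol}) solve one and the same inhomogeneous linear ODE and then appeal to uniqueness. Set $F(t)=\mathop{\rm Ad}\nolimits_{A(t)}^{(s)}\eta$. The initial value is immediate, since $A(0)=\exp_s(0)=1$ and conjugation by $1$ is the identity, so $F(0)=\mathop{\rm Ad}\nolimits_{1}^{(s)}\eta=\eta$. The heart of the argument is to establish that
\[
\frac{dF(t)}{dt}=\left[\xi ,F(t)\right]^{(A(t)s)}+\left(\rho_{A(t)}^{(s)}\right)^{-1}\left[\xi ,A(t),\eta\right]^{(s)},\qquad F(0)=\eta .
\]
The homogeneous operator here is exactly $\mathop{\rm ad}\nolimits_{\xi}^{(A(t)s)}$, whose evolution operator is $U_{t\xi}^{(s)}$ by (\ref{Utsol}). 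Granting this, I differentiate the right-hand side of (\ref{AdAtsol}), using $\frac{d}{dt}U_{t\xi}^{(s)}=\mathop{\rm ad}\nolimits_{\xi}^{(A(t)s)}U_{t\xi}^{(s)}$, and check that it also satisfies the displayed ODE with the same initial value $\eta$; this is just the variation-of-parameters (Duhamel) representation of the forcing term $g(\tau)=(\rho_{A(\tau)}^{(s)})^{-1}[\xi ,A(\tau),\eta]^{(s)}$. Uniqueness for linear ODEs then gives (\ref{AdAtsol}).

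The main obstacle is deriving the ODE for $F$. I would write $F(t)=\frac{d}{d\tau}\big|_{\tau=0}\big(A(t)\circ_s\exp_s(\tau\eta)\big)/_sA(t)$ and differentiate in $t$, interchanging the $t$- and $\tau$-derivatives by smoothness. For fixed $\tau$, the $t$-derivative of the conjugate $(A(t)\circ_s r)/_sA(t)$ is handled by the quotient differentiation rule (\ref{ddtrighquot}) applied in the loop $(\mathbb{L},\circ_s)$, together with the flow property of the exponential (\ref{floweq5}), namely $\dot A(t)=\rho^{(s)}(\xi)|_{A(t)}$, equivalently $(\rho_{A(t)}^{(s)})^{-1}\dot A(t)=\xi$. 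The left factor and the divisor both vary with $t$; collecting their contributions and taking the remaining $\tau$-derivative at $\tau=0$, the associative part assembles into the infinitesimal conjugation (\ref{brack2deriv}), but evaluated at the base point shifted along the flow to $A(t)s$, which yields the homogeneous term $[\xi ,F(t)]^{(A(t)s)}$. The defect produced by the non-associativity of the three factors $\xi$, $A(t)$, $\eta$ is precisely the mixed associator $[\xi ,A(t),\eta]^{(s)}$ (of the type introduced around (\ref{pqxiassoc})--(\ref{etapxiassoc})), translated back to $\mathfrak{l}$ by $(\rho_{A(t)}^{(s)})^{-1}$. Separating genuine bracket contributions from associator defects is the delicate bookkeeping here, and one may alternatively cross-check the base-point shift using (\ref{ddtmodprod}) and (\ref{ddtbrack}).

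For the estimate (\ref{Adest}), I subtract $U_{t\xi}^{(s)}\eta$ in (\ref{AdAtsol}), so the difference is $U_{t\xi}^{(s)}\int_0^t(U_{\tau\xi}^{(s)})^{-1}(\rho_{A(\tau)}^{(s)})^{-1}[\xi ,A(\tau),\eta]^{(s)}\,d\tau$. Since $\mathbb{L}$ is compact, the bilinear map $(\xi ,\eta)\mapsto(\rho_{A(\tau)}^{(s)})^{-1}[\xi ,A(\tau),\eta]^{(s)}$ is bounded uniformly in $\tau$ and in the base point, giving $|(\rho_{A(\tau)}^{(s)})^{-1}[\xi ,A(\tau),\eta]^{(s)}|\le C_1|\xi||\eta|$. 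Bounding the propagator $U_{t\xi}^{(s)}(U_{\tau\xi}^{(s)})^{-1}$ by $e^{C|\xi|(t-\tau)}$ via (\ref{Uest}) and integrating $\int_0^t e^{C|\xi|(t-\tau)}\,d\tau=(e^{C|\xi|t}-1)/(C|\xi|)$, the two powers of $|\xi|$ cancel and the operator-norm bound $|\mathop{\rm Ad}\nolimits_{A(t)}^{(s)}-U_{t\xi}^{(s)}|\le (C_1/C)(e^{C|\xi|t}-1)$ results; adjusting the constant to a single $C$ gives (\ref{Adest}).

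Finally, the special cases follow by forcing the associator terms to vanish. If $(\mathbb{L},\circ_s)$ is left-power-alternative, then in $[\xi ,A(\tau),\eta]^{(s)}=[\xi ,\exp_s(\tau\xi),\eta]^{(s)}$ the first two entries $\xi$ and $\exp_s(\tau\xi)$ are both powers of $\xi$, so the associator vanishes under left-power-alternativity, exactly as for the associators recorded in the remark containing (\ref{powerassoc}); the integral in (\ref{AdAtsol}) drops out and (\ref{AdU1}) follows. For (\ref{AdU2}) I additionally invoke (\ref{Adbrack1a}) with $p=A(t)$, which reads $[\xi ,\eta]^{(A(t)s)}=[\xi ,\eta]^{(s)}+(\rho_{A(t)}^{(s)})^{-1}a_s(\xi ,\eta ,A(t))$. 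Under both left- and right-power-alternativity the alternating associator $a_s(\xi ,\eta ,\exp_s(t\xi))$ from (\ref{ap}) vanishes: its summand $[\xi ,\eta ,\exp_s(t\xi)]^{(s)}$ is killed by bringing $\exp_s(t\xi)$ adjacent to $\xi$ via right-alternativity and applying left-power-alternativity, while $[\eta ,\xi ,\exp_s(t\xi)]^{(s)}$ has its last two entries equal to powers of $\xi$ and dies by right-power-alternativity. Hence $\mathop{\rm ad}\nolimits_{\xi}^{(A(t)s)}=\mathop{\rm ad}\nolimits_{\xi}^{(s)}$ is independent of $t$, so its evolution operator is the genuine exponential $U_{t\xi}^{(s)}=\exp(t\,\mathop{\rm ad}\nolimits_{\xi}^{(s)})$, which together with (\ref{AdU1}) yields (\ref{AdU2}).
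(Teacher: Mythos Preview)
Your proposal is correct and follows essentially the same route as the paper: set $x(t)=\mathop{\rm Ad}\nolimits_{A(t)}^{(s)}\eta$, use the quotient rule (\ref{ddtrighquot}) together with the flow equation (\ref{floweq5}) to derive the inhomogeneous linear ODE with homogeneous part $\mathop{\rm ad}\nolimits_\xi^{(A(t)s)}$ and forcing term the mixed associator $(\rho_{A(t)}^{(s)})^{-1}[\xi,A(t),\eta]^{(s)}$, solve by Duhamel, then obtain (\ref{Adest}) via (\ref{Uest}) and compactness, and dispose of the alternative cases via (\ref{Adbrack1a}) and vanishing of the relevant associators. The paper's displayed ODE (\ref{Adeq}) carries the opposite sign on the inhomogeneous term from yours, but your sign is the one consistent with the stated formula (\ref{AdAtsol}).
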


\begin{proof}
Let $x\left( t\right) =\mathop{\rm Ad}\nolimits_{A\left( t\right) }^{\left(
s\right) }\eta $ and note that $x\left( 0\right) =\eta .$ Then, consider the
derivative of $x\left( t\right) .$ Let $B\left( t\right) =\exp _{s}\left(
t\eta \right) .$ From (\ref{floweq5}) we have 
\begin{equation*}
\frac{dA\left( t\right) }{dt}=\rho _{A\left( t\right) }^{\left( s\right)
}\left( \eta \right) =\left. \frac{d}{d\tau }B\left( \tau \right) \circ
_{s}A\left( t\right) \right\vert _{\tau =0}
\end{equation*}%
Then, using (\ref{ddtrighquot}) we have 
\begin{eqnarray}
\frac{dx\left( t\right) }{dt} &=&\left. \frac{d^{2}}{dtd\tau }\left( A\left(
t\right) \circ _{s}\left( B\left( \tau \right) \right) \right) /_{s}A\left(
t\right) \right\vert _{\tau =0}  \notag \\
&=&\left. \frac{d^{2}}{d\tau d\tau ^{\prime }}\left( A\left( \tau ^{\prime
}\right) \circ _{s}A\left( t\right) \right) \circ _{s}\left( B\left( \tau
\right) \right) /_{s}A\left( t\right) \right\vert _{\tau ^{\prime },\tau =0}
\notag \\
&&-\left. \frac{d^{2}}{d\tau d\tau ^{\prime }}(\left( \left( A\left(
t\right) \circ _{s}\left( B\left( \tau \right) \right) \right) /_{s}A\left(
t\right) \right) \circ _{s}\left( A\left( \tau ^{\prime }\right) \circ
_{s}A\left( t\right) \right) )/_{s}A\left( t\right) \right\vert _{\tau
^{\prime },\tau =0}  \notag \\
&=&-\left( \rho _{A\left( t\right) }^{\left( s\right) }\right) ^{-1}\left[
\xi ,A\left( t\right) ,\eta \right] ^{\left( s\right) }+\left. \frac{d}{%
d\tau ^{\prime }}\left( A\left( \tau ^{\prime }\right) \circ _{s}\left( %
\mathop{\rm Ad}\nolimits_{A\left( t\right) }^{\left( s\right) }\eta \circ
_{s}A\left( t\right) \right) \right) /_{s}A\left( t\right) \right\vert
_{\tau ^{\prime }=0}  \label{dxt1} \\
&&-\left. \frac{d}{d\tau }(\mathop{\rm Ad}\nolimits_{A\left( t\right)
}^{\left( s\right) }\eta \circ _{s}\left( A\left( \tau ^{\prime }\right)
\circ _{s}A\left( t\right) \right) )/_{s}A\left( t\right) \right\vert _{\tau
=0}.  \notag
\end{eqnarray}%
Using (\ref{Arprod}), the second term in (\ref{dxt1}) becomes 
\begin{equation*}
\left. \frac{d}{d\tau ^{\prime }}\left( A\left( \tau ^{\prime }\right) \circ
_{s}\left( \mathop{\rm Ad}\nolimits_{A\left( t\right) }^{\left( s\right)
}\eta \circ _{s}A\left( t\right) \right) \right) /_{s}A\left( t\right)
\right\vert _{\tau ^{\prime }=0}=\left. \frac{d}{d\tau ^{\prime }}A\left(
\tau ^{\prime }\right) \circ _{A\left( t\right) s}x\left( t\right)
\right\vert _{\tau ^{\prime }=0}
\end{equation*}%
and similarly, the third term in (\ref{dxt1}) becomes 
\begin{equation*}
\left. \frac{d}{d\tau }(\mathop{\rm Ad}\nolimits_{A\left( t\right) }^{\left(
s\right) }\eta \circ _{s}\left( A\left( \tau ^{\prime }\right) \circ
_{s}A\left( t\right) \right) )/_{s}A\left( t\right) \right\vert _{\tau
=0}=\left. \frac{d}{d\tau ^{\prime }}x\left( t\right) \circ _{A\left(
t\right) s}A\left( \tau ^{\prime }\right) \right\vert _{\tau ^{\prime }=0}.
\end{equation*}
Using (\ref{brack2deriv}) we then conclude that 
\begin{equation}
\frac{dx\left( t\right) }{dt}=\left[ \xi ,x\left( t\right) \right] ^{\left(
A\left( t\right) s\right) }-\left( \rho _{A\left( t\right) }^{\left(
s\right) }\right) ^{-1}\left[ \xi ,A\left( t\right) ,\eta \right] ^{\left(
s\right) }.  \label{Adeq}
\end{equation}%
This is an inhomogeneous linear first order ODE. The homogeneous part is
precisely (\ref{etabrackeq}), and hence we obtain precisely (\ref{AdAtsol}).

For the estimate, suppose $\mathbb{L}$ is compact. Then, using (\ref{AdAtsol}%
) and (\ref{Uest}), we have 
\begin{eqnarray*}
\left\vert \mathop{\rm Ad}\nolimits_{A\left( t\right) }^{\left( s\right)
}-U_{t\xi }^{\left( s\right) }\right\vert &\leq &\left\vert \xi \right\vert
\int_{0}^{t}\left\vert U_{t\xi }^{\left( s\right) }\left( U_{\tau \xi
}^{\left( s\right) }\right) ^{-1}\right\vert \left\vert \left( \rho
_{A\left( \tau \right) }^{\left( s\right) }\right) ^{-1}\left( \left[ \cdot
,A\left( \tau \right) ,\cdot \right] ^{\left( s\right) }\right) \right\vert
d\tau \\
&\leq &\left\vert \xi \right\vert \int_{0}^{t}e^{C\left\vert \xi \right\vert
\left( t-\tau \right) }\left\vert \left( \rho _{A\left( \tau \right)
}^{\left( s\right) }\right) ^{-1}\left( \left[ \cdot ,A\left( \tau \right)
,\cdot \right] ^{\left( s\right) }\right) \right\vert d\tau .
\end{eqnarray*}%
However, $\left( A,s\right) \mapsto \left\vert \rho _{A}^{-1}\left( \left[
\cdot ,A,\cdot \right] ^{\left( s\right) }\right) \right\vert $ is a
real-valued function on a compact manifold, and hence there exists a
constant $C^{\prime }$ which is the supremum of this function over $\mathbb{L%
}\times \mathbb{L}.$ Hence, 
\begin{eqnarray*}
\left\vert \mathop{\rm Ad}\nolimits_{A\left( t\right) }^{\left( s\right)
}-U_{t\xi }^{\left( s\right) }\right\vert &\leq &C^{\prime }\left\vert \xi
\right\vert \int_{0}^{t}e^{C\left\vert \xi \right\vert \left( t-\tau \right)
}d\tau \\
&=&\frac{C^{\prime }}{C}\left( e^{C\left\vert \xi \right\vert t}-1\right) .
\end{eqnarray*}%
Renaming the constant $C,$ we get (\ref{Adest}).

Now if $\left( \mathbb{L},\circ _{s}\right) $ is left-alternative, the
second term on the right hand side of (\ref{Adeq}) vanishes, since 
\begin{equation*}
\left[ \xi ,A\left( t\right) ,\eta \right] ^{\left( s\right) }=\left[ \xi
,\exp _{s}\left( t\xi \right) ,\eta \right] ^{\left( s\right) }=0.
\end{equation*}%
Then, $\mathop{\rm Ad}\nolimits_{A\left( t\right) }^{\left( s\right) }\eta $
satisfies the homogeneous equation, so the solution is just $\mathop{\rm Ad}%
\nolimits_{A\left( t\right) }^{\left( s\right) }\eta =U_{t\xi }^{\left(
s\right) }\eta .$

From (\ref{Adbrack1a}), we have 
\begin{equation*}
\left[ \xi ,x\left( t\right) \right] ^{\left( A\left( t\right) s\right) }=%
\left[ \xi ,x\left( t\right) \right] ^{\left( s\right) }+\left( \rho
_{A\left( t\right) }^{\left( s\right) }\right) ^{-1}a_{s}\left( \xi ,x\left(
t\right) ,A\left( t\right) \right) .
\end{equation*}%
If $\left( \mathbb{L},\circ _{s}\right) $ is left-power-alternative, then
first of all, the associator is skew-symmetric in the first two entries, so 
\begin{equation*}
a_{s}\left( \xi ,x\left( t\right) ,A\left( t\right) \right) =-2\left[
x\left( t\right) ,\xi ,A\left( t\right) \right] ^{\left( s\right) },
\end{equation*}%
however due to right-power-alternativity, $\left[ x\left( t\right) ,\xi
,\exp _{s}\left( t\xi \right) \right] ^{\left( s\right) }=0.$ Hence, in this
case, 
\begin{equation*}
\left[ \xi ,x\left( t\right) \right] ^{\left( A\left( t\right) s\right) }=%
\left[ \xi ,x\left( t\right) \right] ^{\left( s\right) }.
\end{equation*}%
Then, $x\left( t\right) $ satisfies the first order homogeneous ODE with
constant coefficients:%
\begin{equation*}
\frac{dx\left( t\right) }{dt}=\mathop{\rm ad}\nolimits_{\xi }^{\left(
s\right) }\left( x\left( t\right) \right) ,
\end{equation*}%
so $U_{\xi }\left( t\right) =\exp \left( t\mathop{\rm ad}\nolimits_{\xi
}^{\left( s\right) }\right) $ and hence the solution is now 
\begin{equation*}
x\left( t\right) =\exp \left( t\mathop{\rm ad}\nolimits_{\xi }^{\left(
s\right) }\right) \eta .
\end{equation*}
\end{proof}

\begin{corollary}
\label{corPowAssoc}If $\left( \mathbb{L},\circ _{s}\right) $ is
power-associative, then

\begin{enumerate}
\item $\mathop{\rm Ad}\nolimits_{\exp _{s}\left( t\xi \right) }^{\left(
s\right) }\xi =\xi $,

\item $\exp _{s}\left( 2t\xi \right) =\exp _{s}\left( t\xi \right) \circ
_{s}\exp _{s}\left( t\xi \right) .$
\end{enumerate}
\end{corollary}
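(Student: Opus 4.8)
The plan is to dispatch the two items separately, reducing each to the vanishing of associators whose entries are drawn from $\xi$ and $\exp_s(t\xi)$, which is exactly what power-associativity of $\left(\mathbb{L},\circ_s\right)$ guarantees through the identities in (\ref{powerassoc}).

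For item 1, I would substitute $\eta=\xi$ and $A(t)=\exp_s(t\xi)$ into the integral formula (\ref{AdAtsol}) of Theorem \ref{thmAd}. Its leading term is $U_{t\xi}^{(s)}\xi$, which equals $\xi$ by item 2 of Lemma \ref{lemUprop}. The integrand of the remaining term carries the mixed $\left(\mathfrak{l},\mathbb{L},\mathfrak{l}\right)$-associator $\left[\xi,A(\tau),\xi\right]^{(s)}=\left[\xi,\exp_s(\tau\xi),\xi\right]^{(s)}$; its three slots are occupied by $\xi$ and $\exp_s(\tau\xi)$, so it is a permutation of one of the vanishing associators in (\ref{powerassoc}) and is identically zero. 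The integral therefore drops out and $\mathop{\rm Ad}\nolimits_{\exp_s(t\xi)}^{(s)}\xi=\xi$.

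For item 2, I would first prove the one-parameter subgroup law $\exp_s(u\xi)\circ_s\exp_s(v\xi)=\exp_s\!\left((u+v)\xi\right)$ on the interval where $\exp_s$ is defined, and then set $u=v=t$. Writing $c(w)=\exp_s(w\xi)$, equation (\ref{floweq5}) gives $c'(w)=\rho^{(s)}_{c(w)}(\xi)$, and $1$ is the two-sided identity for $\circ_s$. Fix $v$ and put $\phi(u)=c(u)\circ_s c(v)=R^{(s)}_{c(v)}(c(u))$, so that $\phi(0)=c(v)$. Differentiating in $u$ and representing $\rho^{(s)}_{c(u)}(\xi)$ by the curve $\epsilon\mapsto c(\epsilon)\circ_s c(u)$ yields
\[
\phi'(u)=\left.\frac{d}{d\epsilon}\Bigl(\bigl(c(\epsilon)\circ_s c(u)\bigr)\circ_s c(v)\Bigr)\right|_{\epsilon=0},
\]
whereas $\rho^{(s)}_{\phi(u)}(\xi)=\left.\frac{d}{d\epsilon}\bigl(c(\epsilon)\circ_s(c(u)\circ_s c(v))\bigr)\right|_{\epsilon=0}$. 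The difference of these two vectors is precisely the mixed $\left(\mathfrak{l},\mathbb{L},\mathbb{L}\right)$-associator $\left[\xi,c(u),c(v)\right]^{(s)}=\left[\xi,\exp_s(u\xi),\exp_s(v\xi)\right]^{(s)}$, which vanishes by (\ref{powerassoc}). Hence $\phi$ and $u\mapsto\exp_s\!\left((u+v)\xi\right)$ are integral curves of the fundamental vector field $\rho^{(s)}(\xi)$ sharing the initial value $\exp_s(v\xi)$, so uniqueness of integral curves forces them to coincide; specializing to $u=v=t$ gives $\exp_s(t\xi)\circ_s\exp_s(t\xi)=\exp_s(2t\xi)$.

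The computations are short, and I expect the only real obstacle to be the bookkeeping: one must verify that the two cross-term differences are genuinely the mixed associators of (\ref{pqxiassoc})--(\ref{etapxiassoc}), so that the power-associativity identities (\ref{powerassoc}) apply verbatim, and that representing the fundamental vector $\rho^{(s)}_{c(u)}(\xi)$ by $\epsilon\mapsto c(\epsilon)\circ_s c(u)$ is legitimate (this curve has the correct value and velocity at $\epsilon=0$). Matching the initial conditions via the $\circ_s$-identity $1$ then completes the argument.
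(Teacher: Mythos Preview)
Your proof of item~1 is essentially identical to the paper's: both plug $\eta=\xi$ into the integral formula~(\ref{AdAtsol}), kill the integrand via $[\xi,\exp_s(\tau\xi),\xi]^{(s)}=0$, and invoke $U_{t\xi}^{(s)}\xi=\xi$ from Lemma~\ref{lemUprop}.

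For item~2 your route differs from the paper's in an instructive way. The paper sets $r(t)=\exp_s(t\xi)\circ_s\exp_s(t\xi)$ and differentiates both factors at once, obtaining two terms that it must then recombine; this requires both the associator vanishing \emph{and} item~1 (to commute $\xi$ past $\exp_s(t\xi)$), yielding $\dot r=2\xi\circ_s r$ and hence $r(t)=\exp_s(2t\xi)$. You instead freeze one factor and vary the other, so only a single mixed associator $[\xi,\exp_s(u\xi),\exp_s(v\xi)]^{(s)}$ appears, and its vanishing alone suffices. Your argument is therefore slightly more economical---it does not use item~1---and it directly produces the full addition law $\exp_s(u\xi)\circ_s\exp_s(v\xi)=\exp_s((u+v)\xi)$, which the paper records separately as~(\ref{expadd}) in the remark following the corollary. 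Both arguments are correct; yours trades the paper's symmetric two-factor derivative for a cleaner one-parameter flow comparison.
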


\begin{proof}
From (\ref{AdAtsol}), 
\begin{equation}
\mathop{\rm Ad}\nolimits_{A\left( t\right) }^{\left( s\right) }\xi =U_{t\xi
}^{\left( s\right) }\xi +U_{t\xi }^{\left( s\right) }\int_{0}^{t}\left(
U_{t\xi }^{\left( s\right) }\right) ^{-1}\rho _{A\left( \tau \right)
}^{-1}\left( \left[ \xi ,A\left( \tau \right) ,\xi \right] ^{\left( s\right)
}\right) d\tau .
\end{equation}%
However, by power-associativity (\ref{powerassoc}), 
\begin{equation*}
\left[ \xi ,A\left( \tau \right) ,\xi \right] ^{\left( s\right) }=\left[ \xi
,\exp _{s}\left( \tau \xi \right) ,\xi \right] ^{\left( s\right) }=0.
\end{equation*}%
Hence, using Lemma \ref{lemUprop}, we obtain 
\begin{equation*}
\mathop{\rm Ad}\nolimits_{A\left( t\right) }^{\left( s\right) }\xi =U_{t\xi
}^{\left( s\right) }\xi =\xi .
\end{equation*}%
For the second part, define 
\begin{equation*}
r\left( t\right) =\exp _{s}\left( t\xi \right) \circ \exp _{s}\left( t\xi
\right) .
\end{equation*}%
Then, informally, we write 
\begin{eqnarray*}
\frac{dr\left( t\right) }{dt} &=&\exp _{s}\left( t\xi \right) \circ
_{s}\left( \xi \circ _{s}\exp _{s}\left( t\xi \right) \right) \\
&&+\left( \xi \circ _{s}\exp _{s}\left( t\xi \right) \right) \circ _{s}\exp
_{s}\left( t\xi \right) .
\end{eqnarray*}%
Now using power-associativity, we see that $\xi $ associates with $\exp
_{s}\left( t\xi \right) $ and since $\mathop{\rm Ad}\nolimits_{\exp
_{s}\left( t\xi \right) }^{\left( s\right) }\xi =\xi ,$ moreover $\xi $ and $%
\exp _{s}\left( t\xi \right) $ commute. Hence, we can rewrite 
\begin{equation*}
\frac{dr\left( t\right) }{dt}=2\xi \circ _{s}\left( \exp _{s}\left( t\xi
\right) \circ _{s}\exp _{s}\left( t\xi \right) \right) =2\xi \circ
_{s}r\left( t\right) .
\end{equation*}%
The solution is thus 
\begin{equation*}
r\left( t\right) =\exp _{s}\left( 2t\xi \right) ,
\end{equation*}%
so by uniqueness of solutions we then have the needed equality.
\end{proof}

\begin{remark}
Corollary \ref{corPowAssoc} thus shows that indeed, power-associativity
allows to extend $\exp _{s}\left( t\xi \right) $ for all $t$. This is a
slightly different proof of this fact compared to \cite%
{Kuzmin1971,Malcev1955}. The result from Corollary \ref{corPowAssoc} also
allows to conclude that if $\left( \mathbb{L},\circ _{s}\right) $ is
power-associative, then 
\begin{equation}
\exp _{s}\left( t\xi \right) \circ _{s}\exp _{s}\left( \tau \xi \right)
=\exp _{s}\left( \left( t+\tau \right) \xi \right) .  \label{expadd}
\end{equation}
\end{remark}

\begin{theorem}
\label{thmdtexp}Suppose $\xi \left( t\right) $ is a path in $\mathfrak{l,}$
with derivative $\dot{\xi}\left( t\right) $, then 
\begin{equation*}
\frac{d}{dt}\left( \exp _{s}\left( \xi \left( t\right) \right) \right) =\rho
_{\exp _{s}\left( \xi \left( t\right) \right) }^{\left( s\right) }U_{\xi
\left( t\right) }^{\left( s\right) }\int_{0}^{1}\left( U_{\tau \xi \left(
t\right) }^{\left( s\right) }\right) ^{-1}\dot{\xi}\left( t\right) d\tau .
\end{equation*}%
Moreover,

\begin{enumerate}
\item If $\left( \mathbb{L},\circ _{s}\right) $ is left-power-alternative,
then 
\begin{equation*}
\frac{d}{dt}\left( \exp _{s}\left( \xi \left( t\right) \right) \right)
=\lambda _{\exp _{s}\left( \xi \left( t\right) \right) }^{\left( s\right)
}\int_{0}^{1}\left( \mathop{\rm Ad}\nolimits_{\exp _{s}\tau \xi \left(
t\right) }^{\left( s\right) }\right) ^{-1}\dot{\xi}\left( t\right) d\tau .
\end{equation*}

\item If $\left( \mathbb{L},\circ _{s}\right) $ is both
left-power-alternative and right-power-alternative, then%
\begin{equation*}
\frac{d}{dt}\left( \exp _{s}\left( \xi \left( t\right) \right) \right)
=\lambda _{\exp _{s}\left( \xi \left( t\right) \right) }^{\left( s\right)
}\int_{0}^{1}\exp \left( -\tau \mathop{\rm ad}\nolimits_{\xi \left( t\right)
}^{\left( s\right) }\right) \dot{\xi}\left( t\right) d\tau .
\end{equation*}
\end{enumerate}
\end{theorem}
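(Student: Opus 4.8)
The plan is to realize $\exp_s(\xi(t))$ as the time-one value of a two-parameter family and reduce the statement to the inhomogeneous linear ODE (\ref{etabrackeq}), whose evolution operator is $U_{\tau\xi}^{(s)}$. Fix $t$, write $\xi=\xi(t)$ and $\dot\xi=\dot\xi(t)$, and set $F(\tau)=\exp_s(\tau\xi)$, so by (\ref{floweq5}) we have $\partial_\tau F=\rho_{F}^{(s)}(\xi)$ with $F(0)=1$. Introduce the $\mathfrak{l}$-valued quantity
\begin{equation*}
\zeta(\tau)=\left(\rho_{F(\tau)}^{(s)}\right)^{-1}\partial_t F(\tau),
\end{equation*}
that is, the pullback of the Maurer--Cartan form $\theta^{(s)}$ of the modified loop $\left(\mathbb{L},\circ_s\right)$ evaluated on $\partial_t$, while $\theta^{(s)}(\partial_\tau F)=\xi$. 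Since $F(0)=1$ independently of $t$, we have $\zeta(0)=0$, and by construction $\partial_t\exp_s(\xi(t))=\rho_{\exp_s(\xi(t))}^{(s)}\zeta(1)$; thus it suffices to identify $\zeta(1)$.

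The key step is to derive the evolution equation for $\zeta$ in $\tau$. I would apply the loop Maurer--Cartan equation (\ref{dtheta}) to the product $\circ_s$: the bracket function $b^{(s)}$ of $\left(\mathbb{L},\circ_s\right)$ at a point $q$ is $\left[\cdot,\cdot\right]^{(qs)}$, because by (\ref{Arprod}) the $\circ_s$-modification by $q$ is precisely $\circ_{qs}$. Pulling $d\theta^{(s)}=\tfrac12 b^{(s)}(\theta^{(s)},\theta^{(s)})$ back along $F$, evaluating on $(\partial_t,\partial_\tau)$, and using equality of mixed partials together with $\theta^{(s)}(\partial_\tau F)=\xi$ (constant in $\tau$) and $\theta^{(s)}(\partial_t F)=\zeta$, the antisymmetry $[\zeta,\xi]=-[\xi,\zeta]$ yields
\begin{equation*}
\partial_\tau\zeta=\left[\xi,\zeta\right]^{(F(\tau)s)}+\dot\xi,\qquad \zeta(0)=0.
\end{equation*}
The homogeneous part is exactly (\ref{etabrackeq}), whose fundamental solution is $U_{\tau\xi}^{(s)}$ by (\ref{Utsol}); so variation of parameters gives $\zeta(\tau)=U_{\tau\xi}^{(s)}\int_0^\tau\left(U_{\sigma\xi}^{(s)}\right)^{-1}\dot\xi\,d\sigma$. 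Setting $\tau=1$ and applying $\rho_{\exp_s(\xi(t))}^{(s)}$ produces the main formula.

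For the refinements, I would substitute the special forms of the evolution operator from Theorem \ref{thmAd}. Under left-power-alternativity, $U_{\tau\xi}^{(s)}=\mathop{\rm Ad}\nolimits_{\exp_s(\tau\xi)}^{(s)}$ by (\ref{AdU1}); combining this with the identity $\rho_A^{(s)}\circ\mathop{\rm Ad}\nolimits_A^{(s)}=\lambda_A^{(s)}$, which holds because the conjugation $r\mapsto\left(A\circ_s r\right)/_s A$ factors as $\left(\rho_A^{(s)}\right)^{-1}\circ\lambda_A^{(s)}$, collapses the prefactor $\rho_{\exp_s\xi}^{(s)}U_\xi^{(s)}$ to $\lambda_{\exp_s\xi}^{(s)}$ and gives item 1. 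If in addition $\left(\mathbb{L},\circ_s\right)$ is right-power-alternative, then (\ref{AdU2}) gives $\mathop{\rm Ad}\nolimits_{\exp_s(\tau\xi)}^{(s)}=\exp\left(\tau\mathop{\rm ad}\nolimits_\xi^{(s)}\right)$, whose inverse is $\exp\left(-\tau\mathop{\rm ad}\nolimits_\xi^{(s)}\right)$, yielding item 2.

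I expect the main obstacle to be the rigorous derivation of the $\zeta$-ODE: one must correctly identify the bracket function of the modified loop as $q\mapsto\left[\cdot,\cdot\right]^{(qs)}$ and track signs and orientations in the pullback of the Maurer--Cartan equation, so that the inhomogeneous term $\dot\xi$ and the sign of $\left[\xi,\zeta\right]^{(F(\tau)s)}$ come out correctly. An alternative, more computational route that avoids the pullback is to differentiate $\partial_\tau F=\rho_F^{(s)}(\xi)$ in $t$ directly using (\ref{ddtrighquot}), (\ref{Arprod}), and (\ref{brack2deriv}), exactly as in the proof of Theorem \ref{thmAd}; this is more tedious but stays entirely within the tools already developed in Section \ref{sectLoop}.
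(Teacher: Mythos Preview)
Your proposal is correct and follows the same overall architecture as the paper: introduce the auxiliary quantity $\zeta(\tau)=\left(\rho_{\exp_s(\tau\xi)}^{(s)}\right)^{-1}\partial_t\exp_s(\tau\xi)$ (the paper calls it $\Gamma(\tau,t)$), show it satisfies the inhomogeneous ODE $\partial_\tau\zeta=[\xi,\zeta]^{(\exp_s(\tau\xi)s)}+\dot\xi$ with $\zeta(0)=0$, solve by variation of parameters using $U_{\tau\xi}^{(s)}$, and then specialize via (\ref{AdU1}) and (\ref{AdU2}). The one genuine difference is in how you derive the ODE: the paper differentiates $\partial_\tau F=\rho_F^{(s)}(\xi)$ in $t$ by hand using (\ref{ddtrighquot}) and (\ref{Arprod}) -- precisely the ``alternative, more computational route'' you mention at the end -- whereas you obtain it by pulling back the Maurer--Cartan equation of $(\mathbb{L},\circ_s)$ and correctly identifying its bracket function at $q$ as $[\cdot,\cdot]^{(qs)}$ via (\ref{Arprod}). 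Your route is cleaner and more conceptual, and it makes transparent why the bracket appears with the shifted superscript; the paper's route is more elementary in that it avoids invoking the structure equation on the modified loop. Your explicit use of $\rho_A^{(s)}\circ\mathop{\rm Ad}\nolimits_A^{(s)}=\lambda_A^{(s)}$ to pass from $\rho$ to $\lambda$ in item~1 also spells out a step the paper leaves implicit.
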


\begin{proof}
Using a similar approach as in the Lie group case, let 
\begin{equation*}
\Gamma \left( \tau ,t\right) =\left( \rho _{\exp _{s}\left( \tau \xi \left(
t\right) \right) }^{\left( s\right) }\right) ^{-1}\frac{\partial }{\partial t%
}\exp _{s}\left( \tau \xi \left( t\right) \right) .
\end{equation*}%
We can write this (somewhat informally) as 
\begin{equation}
\Gamma \left( \tau ,t\right) =\left( \frac{\partial }{\partial t}\exp
_{s}\left( \tau \xi \left( t\right) \right) \right) /_{s}\exp _{s}\left(
\tau \xi \left( t\right) \right)
\end{equation}%
Note that $\Gamma \left( 0,t\right) =0.$ Then, consider 
\begin{eqnarray*}
\frac{\partial \Gamma }{\partial \tau } &=&\left( \frac{\partial }{\partial t%
}\left( \xi \left( t\right) \circ _{s}\exp _{s}\left( \tau \xi \left(
t\right) \right) \right) \right) /_{s}\exp _{s}\left( \tau \xi \left(
t\right) \right) \\
&&-\left( \Gamma \left( \tau ,t\right) \circ _{s}\left( \xi \left( t\right)
\circ _{s}\exp _{s}\left( \tau \xi \left( t\right) \right) \right) \right)
/_{s}\exp _{s}\left( \tau \xi \left( t\right) \right) \\
&=&\frac{\partial \xi \left( t\right) }{\partial t}+\left( \xi \left(
t\right) \circ _{s}\frac{\partial }{\partial t}\exp _{s}\left( \tau \xi
\left( t\right) \right) \right) /_{s}\exp _{s}\left( \tau \xi \left(
t\right) \right) \\
&&-\Gamma \left( \tau ,t\right) \circ _{\exp _{s}\left( \tau \xi \left(
t\right) \right) s}\xi \left( t\right) \\
&=&\frac{\partial \xi \left( t\right) }{\partial t}+\left[ \xi \left(
t\right) ,\Gamma \left( \tau ,t\right) \right] ^{\exp _{s}\left( \tau \xi
\left( t\right) \right) s}.
\end{eqnarray*}%
For each $t,$ the homogeneous part of ODE is precisely (\ref{etabrackeq}),
and since the initial condition is $\Gamma \left( 0,t\right) =0,$ we find
that the solution of the inhomogeneous equation is 
\begin{equation*}
\Gamma \left( \tau ,t\right) =U_{\tau \xi \left( t\right) }^{\left( s\right)
}\int_{0}^{\tau }\left( U_{\tau ^{\prime }\xi \left( t\right) }^{\left(
s\right) }\right) ^{-1}\frac{\partial \xi \left( t\right) }{\partial t}d\tau
^{\prime }.
\end{equation*}%
Setting $\tau =1$, we obtain 
\begin{equation*}
\left( \rho _{\exp _{s}\left( \xi \left( t\right) \right) }^{\left( s\right)
}\right) ^{-1}\frac{\partial }{\partial t}\exp _{s}\left( \xi \left(
t\right) \right) =U_{\xi \left( t\right) }^{\left( s\right) }\left( 1\right)
\int_{0}^{1}U_{\xi \left( t\right) }^{\left( s\right) }\left( \tau ^{\prime
}\right) ^{-1}\frac{\partial \xi \left( t\right) }{\partial t}d\tau ^{\prime
}.
\end{equation*}%
The special cases now follow immediately from (\ref{AdU1}) and (\ref{AdU2}).
\end{proof}

\begin{corollary}
Let $\xi ,\eta \in \mathfrak{l},$ then 
\begin{equation}
\left( \rho _{\exp _{s}\xi }^{\left( s\right) }\right) ^{-1}\left. d\exp
_{s}\right\vert _{\xi }\left( \eta \right) =U_{\xi }^{\left( s\right)
}\left( 1\right) \int_{0}^{1}U_{\xi }^{\left( s\right) }\left( \tau ^{\prime
}\right) ^{-1}\eta d\tau ^{\prime }.  \label{dexp1}
\end{equation}%
Moreover, if $\mathbb{L}$ is compact, given a norm on $\mathfrak{l}$ and a
corresponding operator norm, 
\begin{equation}
\left\vert \left( \rho _{\exp _{s}\xi }^{\left( s\right) }\right)
^{-1}\left. d\exp _{s}\right\vert _{\xi }-\mathop{\rm id}\nolimits_{%
\mathfrak{l}}\right\vert \leq e^{C\left\vert \xi \right\vert }-1,
\label{opnorm}
\end{equation}%
where $C>0$ is a constant that depends on $\mathbb{L}.$
\end{corollary}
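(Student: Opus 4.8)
The plan is to read off both assertions from Theorem \ref{thmdtexp} by specializing the path to an affine one, and then to control the resulting endomorphism of $\mathfrak{l}$ using the evolution-operator estimate (\ref{Uest}) from Lemma \ref{lemUprop}.

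For the formula (\ref{dexp1}), I would take the affine path $\xi \left( t\right) =\xi +t\eta $ in $\mathfrak{l}$, which satisfies $\xi \left( 0\right) =\xi $ and has constant derivative $\dot{\xi}\left( t\right) \equiv \eta $. Since $\exp _{s}$ is smooth (Lemma \ref{lemexpdiffeo}), the chain rule gives $\left. d\exp _{s}\right\vert _{\xi }\left( \eta \right) =\left. \frac{d}{dt}\exp _{s}\left( \xi \left( t\right) \right) \right\vert _{t=0}$. Applying Theorem \ref{thmdtexp} and evaluating at $t=0$ yields
\[
\left. d\exp _{s}\right\vert _{\xi }\left( \eta \right) =\rho _{\exp _{s}\xi }^{\left( s\right) }U_{\xi }^{\left( s\right) }\left( 1\right) \int_{0}^{1}\left( U_{\tau \xi }^{\left( s\right) }\right) ^{-1}\eta \,d\tau ,
\]
and composing on the left with $\left( \rho _{\exp _{s}\xi }^{\left( s\right) }\right) ^{-1}$ produces exactly (\ref{dexp1}), after rewriting $U_{\tau \xi }^{\left( s\right) }=U_{\xi }^{\left( s\right) }\left( \tau \right) $ in accordance with the preceding remark.

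For the estimate (\ref{opnorm}), I would factor $\eta $ out of (\ref{dexp1}) and read it as an identity of endomorphisms of $\mathfrak{l}$, namely $\left( \rho _{\exp _{s}\xi }^{\left( s\right) }\right) ^{-1}\left. d\exp _{s}\right\vert _{\xi }=U_{\xi }^{\left( s\right) }\left( 1\right) \int_{0}^{1}U_{\xi }^{\left( s\right) }\left( \tau \right) ^{-1}\,d\tau $. Writing $\mathop{\rm id}\nolimits_{\mathfrak{l}}=\int_{0}^{1}\mathop{\rm id}\nolimits_{\mathfrak{l}}\,d\tau $ and pulling the constant operator $U_{\xi }^{\left( s\right) }\left( 1\right) $ inside the integral, the difference becomes $\int_{0}^{1}\left( U_{\xi }^{\left( s\right) }\left( 1\right) U_{\xi }^{\left( s\right) }\left( \tau \right) ^{-1}-\mathop{\rm id}\nolimits_{\mathfrak{l}}\right) d\tau $. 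By the cocycle property of the evolution operator for (\ref{etabrackeq}), the composite $U_{\xi }^{\left( s\right) }\left( 1\right) U_{\xi }^{\left( s\right) }\left( \tau \right) ^{-1}$ is precisely the evolution from time $\tau $ to time $1$, so (\ref{Uest}) with $\tau ^{\prime \prime }=1$ and $\tau ^{\prime }=\tau $ bounds its distance to the identity by $e^{C\left( 1-\tau \right) \left\vert \xi \right\vert }-1$. The triangle inequality for operator-valued integrals together with the crude bound $e^{C\left( 1-\tau \right) \left\vert \xi \right\vert }-1\leq e^{C\left\vert \xi \right\vert }-1$, valid for $\tau \in \left[ 0,1\right] $, then gives (\ref{opnorm}) upon integrating over $\tau $.

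I do not expect a genuine obstacle, since the substance is already carried by Theorem \ref{thmdtexp} and Lemma \ref{lemUprop}; the work is bookkeeping. The two points needing mild care are (i) the interchange of the operator norm with the $\tau $-integral, which is just the standard estimate $\left\vert \int_{0}^{1}F\left( \tau \right) d\tau \right\vert \leq \int_{0}^{1}\left\vert F\left( \tau \right) \right\vert d\tau $ for continuous operator-valued $F$, and (ii) invoking the cocycle property correctly so that $U_{\xi }^{\left( s\right) }\left( 1\right) U_{\xi }^{\left( s\right) }\left( \tau \right) ^{-1}$ is exactly the composite to which (\ref{Uest}) applies. A sharper bound $\frac{e^{C\left\vert \xi \right\vert }-1}{C\left\vert \xi \right\vert }-1$ is in fact obtainable from evaluating the integral exactly, but the stated cruder bound suffices and keeps the statement clean.
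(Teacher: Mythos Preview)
Your proposal is correct and follows essentially the same route as the paper's proof: the paper also says that (\ref{dexp1}) follows directly from Theorem \ref{thmdtexp}, then writes the difference $\left(\rho_{\exp_s\xi}^{(s)}\right)^{-1}\left.d\exp_s\right|_\xi - \mathop{\rm id}_{\mathfrak{l}}$ as the integral $\int_0^1\bigl(U_\xi^{(s)}(U_{\tau'\xi}^{(s)})^{-1}-\mathop{\rm id}_{\mathfrak{l}}\bigr)\,d\tau'$ and appeals to (\ref{Uest}). Your write-up simply makes explicit the affine path and the crude bound $e^{C(1-\tau)|\xi|}-1\leq e^{C|\xi|}-1$ that the paper leaves implicit.
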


\begin{proof}
The expression (\ref{dexp1}) follows directly from Theorem \ref{thmdtexp}.
We thus have 
\begin{equation}
\left( \rho _{\exp _{s}\xi }^{\left( s\right) }\right) ^{-1}\left. d\exp
_{s}\right\vert _{\xi }-\mathop{\rm id}\nolimits_{\mathfrak{l}%
}=\int_{0}^{1}\left( U_{\xi }^{\left( s\right) }\left( U_{\tau ^{\prime }\xi
}^{\left( s\right) }\right) ^{-1}-\mathop{\rm id}\nolimits_{l}\right) d\tau
^{\prime }.  \label{dexpmid}
\end{equation}%
From (\ref{dexpmid}) we then obtain (\ref{opnorm}).
\end{proof}

Let us now explore the dependence of $\exp _{s}$ on $s.$ In particular,
suppose we have a smooth $1$-parameter family $s\left( \tau \right) ,$ with $%
s\left( 0\right) =s\in \mathbb{L}$ and $\left. \frac{d}{d\tau }s\left( \tau
\right) \right\vert _{\tau =0}=\rho _{s}\left( \eta \right) \in T_{s}\mathbb{%
L}\mathfrak{,}$ with $\eta \in \mathfrak{l}.$ For each $\tau ,$ $p\left(
t,\tau \right) =\exp _{s\left( \tau \right) }\left( t\xi \right) $ satisfies 
\begin{equation}
\left\{ 
\begin{array}{c}
\frac{dp\left( t,\tau \right) }{dt}=\left. \rho ^{\left( s\left( \tau
\right) \right) }\left( \xi \right) \right\vert _{p\left( t,\tau \right) }
\\ 
p\left( 0,\tau \right) =1%
\end{array}%
\right. .  \label{pttau}
\end{equation}%
Now for each $t,$ $\left. \frac{d}{d\tau }p\left( t,\tau \right) \right\vert
_{\tau =0}\in T_{p\left( t,0\right) }\mathbb{L},$ so define 
\begin{eqnarray}
\sigma \left( t\right) &=&\left( \rho _{p\left( t,0\right) }^{\left(
s\right) }\right) ^{-1}\left. \frac{d}{d\tau }p\left( t,\tau \right)
\right\vert _{\tau =0}  \label{sigma} \\
&=&\left. \frac{d}{d\tau }\left( p\left( t,\tau \right) /_{s}p\left(
t,0\right) \right) \right\vert _{\tau =0},  \notag
\end{eqnarray}%
so that in particular, $\sigma \left( t\right) \in \mathfrak{l}.$

\begin{lemma}
\label{lemSigma}Let $A\left( t\right) =\exp _{s}\left( t\xi \right) $, then
the quantity $\sigma \left( t\right) $ is given by 
\begin{equation}
\sigma \left( t\right) =\left( U_{t\xi }^{\left( s\right) }-\mathop{\rm Ad}%
\nolimits_{A\left( t\right) }^{\left( s\right) }\right) \eta .
\end{equation}%
In particular, if $\left( \mathbb{L},\circ _{s}\right) $ is
left-power-alternative, then $\sigma \left( t\right) =0$ for all $t.$
\end{lemma}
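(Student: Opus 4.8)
The plan is to show that $\sigma(t)$ satisfies an inhomogeneous linear first-order ODE in $t$ whose homogeneous part is exactly the equation (\ref{etabrackeq}) governing the evolution operator $U_{t\xi}^{\left( s\right) }$, and then to solve it by variation of parameters and compare the result with the representation (\ref{AdAtsol}) of $\mathop{\rm Ad}\nolimits_{A\left( t\right) }^{\left( s\right) }\eta$ from Theorem \ref{thmAd}. First I would record that $p\left( t,0\right) =\exp _{s}\left( t\xi \right) =A\left( t\right) $, so that $\sigma \left( t\right) =\left( \rho _{A\left( t\right) }^{\left( s\right) }\right) ^{-1}\partial _{\tau }p\left( t,\tau \right) |_{\tau =0}$, and that $\sigma \left( 0\right) =0$ since $p\left( 0,\tau \right) =1$ for all $\tau $.

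The core computation is to differentiate the defining flow (\ref{pttau}), namely $\partial _{t}p\left( t,\tau \right) =\rho ^{\left( s\left( \tau \right) \right) }\left( \xi \right) |_{p\left( t,\tau \right) }$, with respect to $\tau $ at $\tau =0$, and then translate the resulting vector in $T_{A\left( t\right) }\mathbb{L}$ back to $\mathfrak{l}$ via $\left( \rho _{A\left( t\right) }^{\left( s\right) }\right) ^{-1}$. The right-hand side carries $\tau $-dependence in two distinct ways: through the evaluation point $p\left( t,\tau \right) $, and through the base point $s\left( \tau \right) $ of the modified product $\circ _{s\left( \tau \right) }$. The contribution from the evaluation point, after translating back to $\mathfrak{l}$, should reproduce the bracket term $\left[ \xi ,\sigma \left( t\right) \right] ^{\left( A\left( t\right) s\right) }$, i.e. the homogeneous part of (\ref{etabrackeq}); getting the base point of this bracket to come out at $A\left( t\right) s$ rather than at $s$ requires the base-point shift identity (\ref{Adbrack1a}) together with the quotient rule (\ref{ddtrighquot}), exactly as in the computation of $\frac{dx\left( t\right) }{dt}$ in the proof of Theorem \ref{thmAd} that produced (\ref{Adeq}). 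The contribution from the base point is where $\eta $ enters: since $\frac{d}{d\tau }s\left( \tau \right) |_{\tau =0}=\rho _{s}\left( \eta \right) $, differentiating $\circ _{s\left( \tau \right) }$-quantities in the base point is governed precisely by (\ref{ddtmodprod}) and (\ref{ddtbrack}) of Lemma \ref{corLoppalghom}, and yields the associator source $\left( \rho _{A\left( t\right) }^{\left( s\right) }\right) ^{-1}\left[ \xi ,A\left( t\right) ,\eta \right] ^{\left( s\right) }$.

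Assembling the two pieces, $\sigma $ satisfies an inhomogeneous linear ODE with $\sigma \left( 0\right) =0$ whose homogeneous operator is that of (\ref{etabrackeq}) and whose source is the associator term above. Solving by variation of parameters against the evolution operator $U_{t\xi }^{\left( s\right) }$ produces exactly the integral $U_{t\xi }^{\left( s\right) }\int _{0}^{t}\left( U_{\tau \xi }^{\left( s\right) }\right) ^{-1}\left( \rho _{A\left( \tau \right) }^{\left( s\right) }\right) ^{-1}\left[ \xi ,A\left( \tau \right) ,\eta \right] ^{\left( s\right) }d\tau $ that appears in (\ref{AdAtsol}). Comparing with (\ref{AdAtsol}) then identifies $\sigma \left( t\right) $ with $U_{t\xi }^{\left( s\right) }\eta -\mathop{\rm Ad}\nolimits_{A\left( t\right) }^{\left( s\right) }\eta $, which is the claimed identity $\sigma \left( t\right) =\left( U_{t\xi }^{\left( s\right) }-\mathop{\rm Ad}\nolimits_{A\left( t\right) }^{\left( s\right) }\right) \eta $. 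The final assertion is then immediate: if $\left( \mathbb{L},\circ _{s}\right) $ is left-power-alternative, then (\ref{AdU1}) gives $\mathop{\rm Ad}\nolimits_{A\left( t\right) }^{\left( s\right) }=U_{t\xi }^{\left( s\right) }$, so $\sigma \left( t\right) =0$ for all $t$.

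The step I expect to be the main obstacle is the clean separation of the two sources of $\tau$-dependence in the flow equation and the precise bookkeeping of each resulting term. In particular, the homogeneous bracket must be pinned at $A\left( t\right) s$ (which, as in Theorem \ref{thmAd}, forces use of (\ref{Adbrack1a}) and (\ref{ddtrighquot})), and the associator source must carry the correct sign so that the comparison with (\ref{AdAtsol}) yields $\left( U-\mathop{\rm Ad}\nolimits\right) \eta $ and not $\left( \mathop{\rm Ad}\nolimits-U\right) \eta $; the vanishing initial condition $\sigma \left( 0\right) =0$, together with the known formula (\ref{AdAtsol}), fixes this sign unambiguously.
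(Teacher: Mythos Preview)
Your proposal is correct and follows essentially the same approach as the paper: derive an ODE for $\sigma(t)$ by differentiating the flow equation (\ref{pttau}) in $\tau$, separate the contributions from the moving evaluation point and the moving base point to obtain $\dot{\sigma}=[\xi,\sigma]^{(A(t)s)}+(\rho_{A(t)}^{(s)})^{-1}[\xi,A(t),\eta]^{(s)}$ with $\sigma(0)=0$, recognize this as the equation (\ref{Adeq}) satisfied by $-\mathop{\rm Ad}\nolimits_{A(t)}^{(s)}\eta$, and conclude via (\ref{AdAtsol}) and then (\ref{AdU1}). One small remark: the paper gets the bracket to land at $A(t)s$ not via (\ref{Adbrack1a}) but by rewriting the nested $\circ_s$-products using (\ref{Arprod}) (just as in the proof of Theorem \ref{thmAd}), which is a slightly more direct route to the same end.
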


\begin{proof}
From (\ref{pttau}), we have 
\begin{eqnarray*}
\frac{dp\left( t,\tau \right) }{dt} &=&\left. \rho ^{\left( s\left( \tau
\right) \right) }\left( \xi \right) \right\vert _{p\left( t,\tau \right) } \\
&=&\left. \frac{d}{d\varepsilon }\exp _{s\left( \tau \right) }\left(
\varepsilon \xi \right) \circ _{s\left( \tau \right) }p\left( t,\tau \right)
\right\vert _{\varepsilon =0} \\
&=&\left. \frac{d}{d\varepsilon }p\left( \varepsilon ,\tau \right) \circ
_{s\left( \tau \right) }p\left( t,\tau \right) \right\vert _{\varepsilon =0}%
\text{,}
\end{eqnarray*}%
and therefore, 
\begin{eqnarray}
\frac{d\sigma \left( t\right) }{dt} &=&\left. \frac{d^{2}}{dtd\tau }\left(
p\left( t,\tau \right) /_{s}p\left( t,0\right) \right) \right\vert _{\tau =0}
\notag \\
&=&\left. \frac{d^{2}}{d\varepsilon d\tau }\left( p\left( \varepsilon ,\tau
\right) \circ _{s\left( \tau \right) }p\left( t,\tau \right) \right)
/_{s}p\left( t,0\right) \right\vert _{\varepsilon ,\tau =0}  \label{dsigma}
\\
&&-\left. \frac{d^{2}}{d\varepsilon d\tau }\left( p\left( t,\tau \right)
/_{s}p\left( t,0\right) \circ _{s}\left( p\left( \varepsilon ,0\right) \circ
_{s}p\left( t,0\right) \right) \right) /_{s}p\left( t,0\right) \right\vert
_{\varepsilon ,\tau =0},  \notag
\end{eqnarray}%
where we used the derivative of the quotient formula (\ref{ddtrighquot}) and
also the fact that $p\left( 0,\tau \right) =1$ for all $\tau .$ Noting that
for any $\tau $, $\left. \frac{d}{d\varepsilon }p\left( \varepsilon ,\tau
\right) \right\vert _{\varepsilon =0}=\xi ,$ consider the first term of (\ref%
{dsigma}): 
\begin{eqnarray*}
\left. \frac{d^{2}}{d\varepsilon d\tau }\left( p\left( \varepsilon ,\tau
\right) \circ _{s\left( \tau \right) }p\left( t,\tau \right) \right)
/_{s}p\left( t,0\right) \right\vert _{\varepsilon ,\tau =0} &=&\left. \frac{%
d^{2}}{d\varepsilon d\tau }\left( p\left( \varepsilon ,0\right) \circ
_{s\left( \tau \right) }p\left( t,\tau \right) \right) /_{s}p\left(
t,0\right) \right\vert _{\varepsilon ,\tau =0} \\
&=&\left. \frac{d^{2}}{d\varepsilon d\tau }\left( p\left( \varepsilon
,0\right) \circ _{s\left( \tau \right) }p\left( t,0\right) \right)
/_{s}p\left( t,0\right) \right\vert _{\varepsilon ,\tau =0} \\
&&+\left. \frac{d^{2}}{d\varepsilon d\tau }\left( p\left( \varepsilon
,0\right) \circ _{s}p\left( t,\tau \right) \right) /_{s}p\left( t,0\right)
\right\vert _{\varepsilon ,\tau =0}.
\end{eqnarray*}%
Now, 
\begin{eqnarray*}
\left. \frac{d^{2}}{d\varepsilon d\tau }\left( p\left( \varepsilon ,0\right)
\circ _{s\left( \tau \right) }p\left( t,0\right) \right) /_{s}p\left(
t,0\right) \right\vert _{\varepsilon ,\tau =0} &=&\left( \rho _{\exp
_{s}\left( t\xi \right) }^{\left( s\right) }\right) ^{-1}\left[ \xi ,\exp
_{s}\left( t\xi \right) ,\eta \right] ^{\left( s\right) } \\
\left. \frac{d^{2}}{d\varepsilon d\tau }\left( p\left( \varepsilon ,0\right)
\circ _{s}p\left( t,\tau \right) \right) /_{s}p\left( t,0\right) \right\vert
_{\varepsilon ,\tau =0} &=&\left. \frac{d^{2}}{d\varepsilon d\tau }\left(
p\left( \varepsilon ,0\right) \circ _{s}\left( p\left( t,\tau \right)
/_{s}p\left( t,0\right) \right) \circ _{s}p\left( t,0\right) \right)
/_{s}p\left( t,0\right) \right\vert _{\varepsilon ,\tau =0} \\
&=&\left. \frac{d^{2}}{d\varepsilon d\tau }p\left( \varepsilon ,0\right)
\circ _{p\left( t,0\right) s}\left( p\left( t,\tau \right) /_{s}p\left(
t,0\right) \right) \right\vert _{\varepsilon ,\tau =0}.
\end{eqnarray*}%
The second term of (\ref{dsigma}) gives 
\begin{equation*}
\left. \frac{d^{2}}{d\varepsilon d\tau }\left( p\left( t,\tau \right)
/_{s}p\left( t,0\right) \circ _{s}\left( p\left( \varepsilon ,0\right) \circ
_{s}p\left( t,0\right) \right) \right) /_{s}p\left( t,0\right) \right\vert
_{\varepsilon ,\tau =0}=\left. \frac{d^{2}}{d\varepsilon d\tau }\left(
p\left( t,\tau \right) /_{s}p\left( t,0\right) \right) \circ _{p\left(
t,0\right) s}p\left( \varepsilon ,0\right) \right\vert _{\varepsilon ,\tau
=0}.
\end{equation*}%
Overall, (\ref{dsigma}) becomes 
\begin{eqnarray*}
\frac{d\sigma \left( t\right) }{dt} &=&\left. \frac{d^{2}}{d\varepsilon
d\tau }p\left( \varepsilon ,0\right) \circ _{p\left( t,0\right) s}\left(
p\left( t,\tau \right) /_{s}p\left( t,0\right) \right) \right\vert
_{\varepsilon ,\tau =0} \\
&&-\left. \frac{d^{2}}{d\varepsilon d\tau }\left( p\left( t,\tau \right)
/_{s}p\left( t,0\right) \right) \circ _{p\left( t,0\right) s}p\left(
\varepsilon ,0\right) \right\vert _{\varepsilon ,\tau =0} \\
&&+\left( \rho _{\exp _{s}\left( t\xi \right) }^{\left( s\right) }\right)
^{-1}\left[ \xi ,\exp _{s}\left( t\xi \right) ,\eta \right] ^{\left(
s\right) } \\
&=&\left[ \xi ,\sigma \left( t\right) \right] ^{\left( \exp _{s}\left( t\xi
\right) s\right) }+\left( \rho _{\exp _{s}\left( t\xi \right) }^{\left(
s\right) }\right) ^{-1}\left[ \xi ,\exp _{s}\left( t\xi \right) ,\eta \right]
^{\left( s\right) }.
\end{eqnarray*}%
This is precisely the equation (\ref{Adeq}) satisfied by $-\mathop{\rm Ad}%
\nolimits_{A\left( t\right) }^{\left( s\right) }\eta ,$ however with initial
condition $\sigma \left( 0\right) =0.$ Therefore, the solution is 
\begin{equation*}
\sigma \left( t\right) =\left( U_{t\xi }^{\left( s\right) }-\mathop{\rm Ad}%
\nolimits_{A\left( t\right) }^{\left( s\right) }\right) \eta .
\end{equation*}%
If $\left( \mathbb{L},\circ _{s}\right) $ is left-power-associative, then
from Theorem \ref{thmAd}, $\mathop{\rm Ad}\nolimits_{A\left( t\right)
}^{\left( s\right) }=U_{t\xi }^{\left( s\right) },$ and thus $\sigma \left(
t\right) =0$ for all $t$.

We will assume that group $\Psi $ of pseudoautomorphisms of $\mathbb{L}$ is
a finite-dimensional Lie group, and suppose the Lie algebras of $\Psi $ and $%
H_{s}=\mathop{\rm Aut}\nolimits\left( \mathbb{L},\circ _{s}\right) $ are $%
\mathfrak{p}$ and $\mathfrak{h}_{s}$, respectively. In particular, $%
\mathfrak{h}_{s}$ is a Lie subalgebra of $\mathfrak{p}$. Also, we will
assume that $\Psi $ acts transitively on $\mathbb{L}$. The action of $\Psi $
on $\mathbb{L}\ $induces an action of the Lie algebra $\mathfrak{p}$ on $%
\mathfrak{l}$, which we will denote by $\cdot .$
\end{proof}

\begin{definition}
Define the map $\varphi :\mathbb{L}\longrightarrow $ $\mathfrak{l}\otimes 
\mathfrak{p}^{\ast }$ such that for each $s\in \mathbb{L}$ and $\gamma \in 
\mathfrak{p}$, 
\begin{equation}
\varphi _{s}\left( \gamma \right) =\left. \frac{d}{dt}{\left( \exp \left(
t\gamma \right) \left( s\right) \right)}/{s}\right\vert _{t=0}\in \mathfrak{%
l.}  \label{phis}
\end{equation}
\end{definition}

\begin{lemma}[{\protect\cite[Theorem 3.25]{GrigorianLoops}}]
\label{lemGammahatsurj} The map $\varphi $ as in (\ref{phis}) is equivariant
with respect to corresponding actions of $\Psi ,$ in particular for $h\in
\Psi ,$ $s\in \mathbb{L}$, $\gamma \in \mathfrak{p},$ we have%
\begin{equation}
\varphi _{h\left( s\right) }\left( \left( \mathop{\rm Ad}\nolimits%
_{h}\right) _{\ast }\gamma \right) =\left( h^{\prime }\right) _{\ast
}\varphi _{s}\left( \gamma \right) .  \label{phihs}
\end{equation}%
Moreover, the image of $\varphi _{s}$ is $\mathfrak{l}^{\left( s\right) }$
and the kernel is $\mathfrak{h}_{s}$, and hence, $\mathfrak{p\cong h}%
_{s}\oplus \mathfrak{l}^{\left( s\right) }$.
\end{lemma}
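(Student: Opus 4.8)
The plan is to establish the three claims of the lemma---the naturality identity (\ref{phihs}), the computation of $\ker\varphi_{s}$ and $\mathrm{im}\,\varphi_{s}$, and the resulting splitting---in turn, using only the pseudoautomorphism identities of Section~\ref{sectLoop} together with the transitivity hypothesis on $\Psi$. For the equivariance (\ref{phihs}), I would compute directly. Writing $\exp\left( t\left( \mathop{\rm Ad}\nolimits_{h}\right) _{\ast}\gamma\right) =h\,\exp\left( t\gamma\right) h^{-1}$ in $\Psi$ and using $h^{-1}\left( h\left( s\right) \right) =s$, the orbit curve through $h\left( s\right) $ is $\exp\left( t\left( \mathop{\rm Ad}\nolimits_{h}\right) _{\ast}\gamma\right) \left( h\left( s\right) \right) =h\!\left( \exp\left( t\gamma\right) \left( s\right) \right) $. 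Taking the right quotient by $h\left( s\right) $ and applying (\ref{PsAutquot2}) with $p=\exp\left( t\gamma\right) \left( s\right) $ and $q=s$ gives
\begin{equation*}
h\!\left( \exp\left( t\gamma\right) \left( s\right) \right) /h\left( s\right) =h^{\prime}\!\left( \exp\left( t\gamma\right) \left( s\right) /s\right) .
\end{equation*}
Since $\left( h^{\prime}\right) _{\ast}$ is linear, differentiating at $t=0$ and pulling it through the derivative produces exactly $\varphi_{h\left( s\right) }\!\left( \left( \mathop{\rm Ad}\nolimits_{h}\right) _{\ast}\gamma\right) =\left( h^{\prime}\right) _{\ast}\varphi_{s}\left( \gamma\right) $.

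For the kernel, I would rewrite $\varphi_{s}\left( \gamma\right) =\rho_{s}^{-1}\zeta_{\gamma}$, where $\zeta_{\gamma}=\left. \frac{d}{dt}\exp\left( t\gamma\right) \left( s\right) \right\vert _{t=0}\in T_{s}\mathbb{L}$ is the value at $s$ of the fundamental vector field of the $\Psi$-action generated by $\gamma$. As $\rho_{s}^{-1}$ is a linear isomorphism, $\gamma\in\ker\varphi_{s}$ if and only if $\zeta_{\gamma}=0$, i.e. the flow of $\gamma$ fixes $s$, i.e. $\gamma$ lies in the Lie algebra of the isotropy group $\mathrm{Stab}_{\Psi}\left( s\right) $. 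Identifying this isotropy group with $H_{s}=\mathop{\rm Aut}\nolimits\left( \mathbb{L},\circ_{s}\right) $ is exactly where Lemma~\ref{lemPseudoHom} enters: if $h\left( s\right) =s$, then (\ref{PsiAct}) with $r=s$ reads $h^{\prime}\left( p\circ_{s}q\right) =h^{\prime}\left( p\right) \circ_{h\left( s\right) }h^{\prime}\left( q\right) =h^{\prime}\left( p\right) \circ_{s}h^{\prime}\left( q\right) $, so that the associated partial pseudoautomorphism $h^{\prime}$ (a diffeomorphism fixing $1$) is a genuine automorphism of $\left( \mathbb{L},\circ_{s}\right) $. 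This identifies the isotropy algebra with $\mathfrak{h}_{s}=\mathrm{Lie}\,H_{s}$, giving $\ker\varphi_{s}=\mathfrak{h}_{s}$.

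For the image and the splitting, I would invoke transitivity: since $\Psi$ acts transitively, $\mathbb{L}\cong\Psi/\mathrm{Stab}_{\Psi}\left( s\right) $ and the orbit map $h\mapsto h\left( s\right) $ is a surjective submersion, so its differential $\gamma\mapsto\zeta_{\gamma}$ is onto $T_{s}\mathbb{L}$; composing with the isomorphism $\rho_{s}^{-1}$ shows that $\varphi_{s}$ surjects onto $\mathfrak{l}$. Because $\mathfrak{l}^{\left( s\right) }$ is $\mathfrak{l}$ as a vector space, $\mathrm{im}\,\varphi_{s}=\mathfrak{l}^{\left( s\right) }$. The decomposition $\mathfrak{p}\cong\mathfrak{h}_{s}\oplus\mathfrak{l}^{\left( s\right) }$ is then the rank-nullity statement for $\varphi_{s}:\mathfrak{p}\to\mathfrak{l}$: restricting $\varphi_{s}$ to any vector-space complement of $\ker\varphi_{s}=\mathfrak{h}_{s}$ yields an isomorphism onto $\mathrm{im}\,\varphi_{s}=\mathfrak{l}^{\left( s\right) }$.

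The equivariance computation and the surjectivity from transitivity are routine; the step needing genuine care is the identification $\ker\varphi_{s}=\mathfrak{h}_{s}$, i.e. that the isotropy group $\mathrm{Stab}_{\Psi}\left( s\right) $ and $\mathop{\rm Aut}\nolimits\left( \mathbb{L},\circ_{s}\right) $ share the Lie algebra $\mathfrak{h}_{s}$. The direction supplied by Lemma~\ref{lemPseudoHom} (isotropy elements induce $\circ_{s}$-automorphisms through $h^{\prime}$) is clean, but checking that the correspondence $h\mapsto h^{\prime}$ realizes $\mathrm{Stab}_{\Psi}\left( s\right) $ as $\mathop{\rm Aut}\nolimits\left( \mathbb{L},\circ_{s}\right) $ at the Lie-algebra level---in particular that no $\circ_{s}$-automorphisms are missed---is the main obstacle, and is presumably where the substance of \cite[Theorem 3.25]{GrigorianLoops} resides.
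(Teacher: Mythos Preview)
The paper does not supply its own proof of this lemma; it is quoted from \cite[Theorem 3.25]{GrigorianLoops}, so there is nothing in the paper to compare against directly. Your argument is correct and is the natural one: the equivariance via $\exp\bigl(t(\mathop{\rm Ad}\nolimits_h)_*\gamma\bigr)=h\exp(t\gamma)h^{-1}$ together with (\ref{PsAutquot2}) is exactly right, and the surjectivity from transitivity plus rank--nullity is standard.

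The step you flag---that $\mathrm{Stab}_\Psi(s)$ and $H_s=\mathop{\rm Aut}\nolimits(\mathbb{L},\circ_s)$ share the Lie algebra $\mathfrak{h}_s$---can in fact be resolved completely, and is a group isomorphism rather than merely an infinitesimal one. The inverse of your map $h\mapsto h'$ is explicit: given $\alpha\in\mathop{\rm Aut}\nolimits(\mathbb{L},\circ_s)$, set $h=R_s\circ\alpha\circ R_s^{-1}$. Then $h(s)=\alpha(1)\cdot s=s$, and for any $p,q\in\mathbb{L}$, writing $q=rs$ with $r=q/s$, one has
\begin{equation*}
h(pq)=\alpha\bigl((p\cdot rs)/s\bigr)\cdot s=\alpha(p\circ_s r)\cdot s=\bigl(\alpha(p)\circ_s\alpha(r)\bigr)\cdot s=\alpha(p)\cdot\bigl(\alpha(r)s\bigr)=\alpha(p)\cdot h(q),
\end{equation*}
so $h\in\Psi$ with $h'=\alpha$. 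Thus $\mathrm{Stab}_\Psi(s)\cong H_s$ via $h\mapsto h'$, and your identification $\ker\varphi_s=\mathfrak{h}_s$ goes through without gap. This is also why the paper can assert, just before the lemma, that $\mathfrak{h}_s$ is a Lie subalgebra of $\mathfrak{p}$.
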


\begin{lemma}[{\protect\cite[Lemma 3.33 and Lemma 3.35]{GrigorianLoops}}]
Suppose $\xi \in \mathfrak{p}$ and $\eta ,\gamma \in \mathfrak{l},$ then 
%TCIMACRO{\TeXButton{beginsubeq}{\begin{subequations}} }%
%BeginExpansion
\begin{subequations}
%EndExpansion
\begin{eqnarray}
\xi \cdot \left[ \eta ,\gamma \right] ^{\left( s\right) } &=&\left[ \xi
\cdot \eta ,\gamma \right] ^{\left( s\right) }+\left[ \eta ,\xi \cdot \gamma %
\right] ^{\left( s\right) }+a_{s}\left( \eta ,\gamma ,\varphi _{s}\left( \xi
\right) \right)   \label{xilbrack} \\
\xi \cdot \varphi _{s}\left( \eta \right)  &=&\eta \cdot \varphi _{s}\left(
\xi \right) +\varphi _{s}\left( \left[ \xi ,\eta \right] _{\mathfrak{p}%
}\right) +\left[ \varphi _{s}\left( \xi \right) ,\varphi _{s}\left( \eta
\right) \right] ^{\left( s\right) }.  \label{xiphi}
\end{eqnarray}%
%TCIMACRO{\TeXButton{endsubeq}{\end{subequations}}}%
%BeginExpansion
\end{subequations}%
%EndExpansion
\end{lemma}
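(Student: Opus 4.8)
The plan is to obtain both identities by differentiating, along a one-parameter subgroup $\exp(t\xi)\subset\Psi$, the two $\Psi$-equivariance statements already in hand: the equivariance of the bracket in Lemma \ref{corLoppalghom} and the equivariance of $\varphi$ in Lemma \ref{lemGammahatsurj}. The three structural inputs I will use are: the formula (\ref{ddtbrack}), describing how $[\cdot,\cdot]^{(p)}$ varies with the base point $p$; the identification $\varphi_{s}(\gamma)=\theta(\hat{\gamma})|_{s}$, where $\hat{\gamma}$ is the fundamental vector field on $\mathbb{L}$ generated by $\gamma\in\mathfrak{p}$ (this follows at once from comparing (\ref{phis}) with the definition of $\theta$, since $\frac{d}{dt}|_{0}(\exp(t\gamma)(s))/s=\rho_{s}^{-1}\hat{\gamma}_{s}$); and the loop Maurer-Cartan equation (\ref{dtheta}).

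For (\ref{xilbrack}), I would put $h=\exp(t\xi)$ in the bracket-equivariance relation $h_{\ast}^{\prime}[\eta,\gamma]^{(s)}=[h_{\ast}^{\prime}\eta,h_{\ast}^{\prime}\gamma]^{(h(s))}$ and differentiate at $t=0$. Since $(h_{0}^{\prime})_{\ast}=\mathop{\rm id}$, the left-hand side produces $\xi\cdot[\eta,\gamma]^{(s)}$. On the right-hand side the product rule splits into three pieces: differentiating the first and second entries gives $[\xi\cdot\eta,\gamma]^{(s)}$ and $[\eta,\xi\cdot\gamma]^{(s)}$, while differentiating the superscript $h(s)$ with the entries held fixed at $\eta,\gamma$ is precisely the situation of (\ref{ddtbrack}) with path $A(t)=\exp(t\xi)(s)$, whose velocity $\frac{d}{dt}|_{0}A(t)/s$ equals $\varphi_{s}(\xi)$ by (\ref{phis}). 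That piece therefore equals $a_{s}(\eta,\gamma,\varphi_{s}(\xi))$, and summing the three pieces yields (\ref{xilbrack}).

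For (\ref{xiphi}), I would evaluate the Maurer-Cartan equation (\ref{dtheta}) on the fundamental vector fields $\hat{\xi},\hat{\eta}$ at the point $s$. By the identification above, the left-hand side is $[\varphi_{s}(\xi),\varphi_{s}(\eta)]^{(s)}$, while Cartan's formula $d\theta(\hat{\xi},\hat{\eta})=\hat{\xi}(\theta(\hat{\eta}))-\hat{\eta}(\theta(\hat{\xi}))-\theta([\hat{\xi},\hat{\eta}])$ rewrites the right-hand side in terms of the base-point derivatives $\hat{\xi}(\theta(\hat{\eta}))=\frac{d}{dt}|_{0}\varphi_{\exp(t\xi)(s)}(\eta)$ and the bracket of fundamental vector fields. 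Since the $\Psi$-action is a left action, $[\hat{\xi},\hat{\eta}]=-\widehat{[\xi,\eta]_{\mathfrak{p}}}$, so the last term contributes $+\varphi_{s}([\xi,\eta]_{\mathfrak{p}})$. It then remains to convert each base-point derivative into the algebraic action $\cdot$: differentiating the $\varphi$-equivariance (\ref{phihs}) at $h=\exp(t\xi)$ gives $\frac{d}{dt}|_{0}\varphi_{\exp(t\xi)(s)}(\eta)=\xi\cdot\varphi_{s}(\eta)-\varphi_{s}([\xi,\eta]_{\mathfrak{p}})$, since the argument $\mathop{\rm Ad}_{\exp(t\xi)}\eta$ contributes $\varphi_{s}([\xi,\eta]_{\mathfrak{p}})$ and the right-hand pushforward $(h^{\prime})_{\ast}$ contributes $\xi\cdot\varphi_{s}(\eta)$. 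Substituting the two base-point derivatives and using antisymmetry of $[\cdot,\cdot]_{\mathfrak{p}}$ collapses the $\varphi_{s}([\xi,\eta]_{\mathfrak{p}})$ terms to the single coefficient appearing in (\ref{xiphi}).

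I expect the main obstacle to be the careful bookkeeping of the two distinct actions in play — the geometric action $p\mapsto h(p)$ of $\Psi$ on points of $\mathbb{L}$ versus the induced linear action $v\mapsto h_{\ast}^{\prime}v$ on $\mathfrak{l}=T_{1}\mathbb{L}$ that defines $\cdot$ — together with the resulting sign in the fundamental-vector-field anti-homomorphism, since these fix the coefficient with which the $\varphi_{s}([\xi,\eta]_{\mathfrak{p}})$ contributions combine in (\ref{xiphi}). A secondary point needing care is justifying the product-rule splitting in each differentiation, namely that the base-point variation and the entry/argument variations decouple at $t=0$; this is legitimate because $(h_{0}^{\prime})_{\ast}=\mathop{\rm id}$ and all the maps are smooth, but it should be stated explicitly. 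Once these conventions are pinned down, the remaining steps are routine applications of (\ref{ddtbrack}), (\ref{dtheta}), and (\ref{phihs}).
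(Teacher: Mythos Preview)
The paper does not actually prove this lemma; it is quoted verbatim from \cite[Lemma~3.33 and Lemma~3.35]{GrigorianLoops} with no argument given here, so there is no in-paper proof to compare against. That said, your proposal is a correct and natural proof: differentiating the bracket-equivariance of Lemma~\ref{corLoppalghom} along $\exp(t\xi)$ and invoking (\ref{ddtbrack}) for the base-point variation yields (\ref{xilbrack}) exactly as you describe, and evaluating the loop Maurer--Cartan equation (\ref{dtheta}) on the fundamental vector fields of the $\Psi$-action, combined with the differentiated form of (\ref{phihs}), gives (\ref{xiphi}) after the sign bookkeeping you flag. One small point worth making explicit in your write-up: in (\ref{xiphi}) both $\xi$ and $\eta$ must lie in $\mathfrak{p}$ (since $\varphi_{s}$ has domain $\mathfrak{p}$ and $[\xi,\eta]_{\mathfrak{p}}$ appears), whereas the lemma's hypothesis as printed says $\eta\in\mathfrak{l}$; this is a minor abuse in the statement, and your argument correctly treats $\eta\in\mathfrak{p}$.
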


Similarly as for Lie groups, we may define a Killing form $K^{\left(
s\right) }$ on $\mathfrak{l}^{\left( s\right) }$. For $\xi ,\eta \in 
\mathfrak{l}$, we have 
\begin{equation}
K^{\left( s\right) }\left( \xi ,\eta \right) =\mathop{\rm Tr}\left( %
\mathop{\rm ad}\nolimits_{\xi }^{\left( s\right) }\circ \mathop{\rm ad}%
\nolimits_{\eta }^{\left( s\right) }\right) ,  \label{Killing}
\end{equation}%
where $\circ $ is just composition of linear maps on $\mathfrak{l}$ and $%
\mathop{\rm ad}\nolimits_{\xi }^{\left( s\right) }\left( \cdot \right) =%
\left[ \xi ,\cdot \right] ^{\left( s\right) }$. Clearly $K^{\left( s\right)
} $ is a symmetric bilinear form on $\mathfrak{l}.$ In \cite{GrigorianLoops}
it is shown that for $h\in \Psi ,$ and $\xi ,\eta \in \mathfrak{l}$ it
satisfies $K^{\left( h\left( s\right) \right) }\left( h_{\ast }^{\prime }\xi
,h_{\ast }^{\prime }\eta \right) =K^{\left( s\right) }\left( \xi ,\eta
\right) $ .

General criteria for a loop algebra to admit a non-degenerate Killing form
are currently not known, but it is known \cite{LoosMalcev} that for a \emph{%
semisimple} \emph{Malcev} algebra, the Killing form is non-degenerate. A 
\emph{Malcev }algebra is the tangent algebra of a Moufang loop and is an 
\emph{alternative} algebra that also satisfies the following identity \cite%
{Kuzmin1971,Malcev1955}:%
\begin{equation}
\left[ \xi ,\gamma ,\left[ \xi ,\eta \right] ^{\left( s\right) }\right]
^{\left( s\right) }=\left[ \left[ \xi ,\gamma ,\eta \right] ^{\left(
s\right) },\xi \right] ^{\left( s\right) }.  \label{MalcevId}
\end{equation}%
Moreover, in this case, $K^{\left( s\right) }$ is $\mathfrak{p}$-invariant
and $\mathop{\rm ad}\nolimits^{\left( s\right) }$-invariant \cite%
{GrigorianLoops}. Suppose $s\left( t\right) =\exp _{s}\left( t\gamma \right)
s,$ then from (\ref{ddtbrack}), we see that generally,

\begin{eqnarray}
\frac{dK^{\left( s\left( t\right) \right) }}{dt}\left( \xi ,\xi \right)  &=&%
\frac{d}{dt}\left( \mathop{\rm Tr}\left( \left[ \xi ,\left[ \xi ,\cdot %
\right] ^{\left( s\left( t\right) \right) }\right] ^{\left( s\left( t\right)
\right) }\right) \right)   \label{dKsdt} \\
&=&2\mathop{\rm Tr}\left( \left[ \xi ,a_{s\left( t\right) }\left( \xi ,\cdot
,\gamma \right) \right] ^{\left( s\left( t\right) \right) }\right) .  \notag
\end{eqnarray}%
In the special case of $\mathbb{L}$ being a Moufang loop, and thus every $%
\mathfrak{l}^{\left( s\right) }$ being a Malcev algebra, we have the
following.

\begin{lemma}
\label{lemKMalcev}Suppose $\mathbb{L}$ is a Moufang loop. Then, $K^{\left(
s\right) }$ is independent of $s$ and for each $\gamma \in \mathfrak{l},$
then map $\mathop{\rm ad}\nolimits_{\gamma }^{\left( s\right) }$ is
skew-adjoint with respect to $K^{\left( s\right) }.$
\end{lemma}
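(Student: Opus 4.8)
The plan is to reduce everything to the structure of the individual Malcev algebras $\mathfrak{l}^{(s)}$ and then read off the $s$-independence from the variation formula (\ref{dKsdt}). Since $\mathbb{L}$ is Moufang, each $\mathfrak{l}^{(s)}$ is a Malcev algebra, so the associator $[\cdot,\cdot,\cdot]^{(s)}$, and hence $a_s$, is totally antisymmetric. The central object I would introduce is the trilinear function
\begin{equation*}
G^{(s)}\left( \xi ,\eta ,\gamma \right) =\mathop{\rm Tr}\left( a_{s}\left( \xi ,\eta ,\cdot \right) \circ \mathop{\rm ad}\nolimits_{\gamma }^{\left( s\right) }\right) ,
\end{equation*}
where $a_{s}\left( \xi ,\eta ,\cdot \right) $ denotes the linear map $\zeta \mapsto a_{s}\left( \xi ,\eta ,\zeta \right) $. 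Both assertions of the lemma will follow once $G^{(s)}$ is shown to be totally antisymmetric in its three arguments.

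The key step, and the one I expect to be the main obstacle, is establishing this total antisymmetry, since it is exactly here that the Malcev identity (\ref{MalcevId}) enters in an essential way. From the antisymmetry of $a_s$ in its first two slots, $G^{(s)}$ is automatically antisymmetric under interchanging $\xi $ and $\eta $. To obtain antisymmetry in the remaining pair I would polarize (\ref{MalcevId}) in its repeated entry, obtaining the bilinear identity
\begin{equation*}
\left[ \xi ,\gamma ,\left[ \zeta ,\eta \right] ^{\left( s\right) }\right] ^{\left( s\right) }+\left[ \zeta ,\gamma ,\left[ \xi ,\eta \right] ^{\left( s\right) }\right] ^{\left( s\right) }=\left[ \left[ \xi ,\gamma ,\eta \right] ^{\left( s\right) },\zeta \right] ^{\left( s\right) }+\left[ \left[ \zeta ,\gamma ,\eta \right] ^{\left( s\right) },\xi \right] ^{\left( s\right) },
\end{equation*}
then view both sides as linear maps in $\eta $ and take the trace. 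Using only cyclicity of the trace and anticommutativity of the bracket, the four resulting trace terms collapse to the relation $G^{(s)}\left( \xi ,\gamma ,\zeta \right) =-G^{(s)}\left( \zeta ,\gamma ,\xi \right) $, i.e. antisymmetry under interchanging the first and third arguments. Since the transpositions $\left( 1\,2\right) $ and $\left( 1\,3\right) $ generate $S_{3}$, this together with the previous symmetry yields that $G^{(s)}$ is totally antisymmetric.

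With total antisymmetry of $G^{(s)}$ in hand, both conclusions are short. For skew-adjointness I would rewrite $\mathop{\rm ad}\nolimits_{\left[ \gamma ,\xi \right] }^{\left( s\right) }=[ \mathop{\rm ad}\nolimits_{\gamma }^{\left( s\right) },\mathop{\rm ad}\nolimits_{\xi }^{\left( s\right) }] -\mathop{\rm Jac}\nolimits^{\left( s\right) }\left( \gamma ,\xi ,\cdot \right) $, which is just a rearrangement of the definition of $\mathop{\rm Jac}\nolimits^{\left( s\right) }$, and note that by (\ref{Jac2}) and antisymmetry $\mathop{\rm Jac}\nolimits^{\left( s\right) }\left( \gamma ,\xi ,\cdot \right) =3\,a_{s}\left( \gamma ,\xi ,\cdot \right) $. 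Substituting this into $K^{\left( s\right) }\left( \left[ \gamma ,\xi \right] ^{\left( s\right) },\eta \right) +K^{\left( s\right) }\left( \xi ,\left[ \gamma ,\eta \right] ^{\left( s\right) }\right) $, the purely commutator (Lie-type) contributions cancel by cyclicity of the trace, and what remains is $-3\left( G^{(s)}\left( \gamma ,\xi ,\eta \right) +G^{(s)}\left( \gamma ,\eta ,\xi \right) \right) $, which vanishes because $G^{(s)}$ is antisymmetric in its last two entries. Hence $\mathop{\rm ad}\nolimits_{\gamma }^{\left( s\right) }$ is skew-adjoint with respect to $K^{\left( s\right) }$. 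For the $s$-independence I would set $s\left( t\right) =\exp _{s}\left( t\gamma \right) s$ and feed the antisymmetry of $a_s$ into (\ref{dKsdt}): rewriting $a_{s\left( t\right) }\left( \xi ,\cdot ,\gamma \right) =a_{s\left( t\right) }\left( \gamma ,\xi ,\cdot \right) $ shows that the right-hand side equals a multiple of $G^{(s(t))}\left( \gamma ,\xi ,\xi \right) $, which is zero since $G$ is alternating. Thus $t\mapsto K^{\left( s\left( t\right) \right) }\left( \xi ,\xi \right) $ is constant; as the exponential curves cover a neighborhood of each point (Lemma \ref{lemexpdiffeo}) and $\mathbb{L}$ is connected, $K^{\left( s\right) }$ is independent of $s$, and polarization recovers the full bilinear form.
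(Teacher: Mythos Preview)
Your proof is correct and follows essentially the same strategy as the paper's: both arguments hinge on the vanishing of $\mathop{\rm Tr}\bigl(a_s(\xi,\gamma,\cdot)\circ\mathop{\rm ad}\nolimits_\xi^{(s)}\bigr)$, which the paper derives directly from the unpolarized Malcev identity as its equation (\ref{Malcev3}), while you obtain it as the special case $G^{(s)}(\xi,\gamma,\xi)=0$ of the total antisymmetry of $G^{(s)}$. Your repackaging via the trilinear form $G^{(s)}$ is a clean way to unify the two conclusions, but the underlying trace computation and the use of (\ref{dKsdt}) and the generalized Jacobi identity are the same as in the paper.
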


\begin{proof}
If $\mathbb{L}$ is Moufang, then any $\left( \mathbb{L},\circ _{s}\right) $
is also a Moufang loop, and hence for any $s$, $\mathfrak{l}^{\left(
s\right) }$ is a Malcev algebra. Since a Malcev algebra is alternative, $%
a^{\left( s\right) }\left( \cdot ,\cdot ,\cdot \right) =2\left[ \cdot ,\cdot
,\cdot \right] ^{\left( s\right) },$ and is moreover totally skew-symmetric.
In particular, the Malcev identity (\ref{MalcevId}) can be written as%
\begin{equation}
a_{s}\left( \xi ,\gamma ,\left[ \xi ,\eta \right] ^{\left( s\right) }\right)
=\left[ a_{s}\left( \xi ,\gamma ,\eta \right) ,\xi \right] ^{\left( s\right)
}.  \label{Malcev2}
\end{equation}%
In particular, taking the trace, we get 
\begin{eqnarray}
\mathop{\rm Tr}a_{s}\left( \xi ,\gamma ,\left[ \xi ,\cdot \right] ^{\left(
s\right) }\right) &=&\mathop{\rm Tr}\left[ a_{s}\left( \xi ,\gamma ,\cdot
\right) ,\xi \right] ^{\left( s\right) }  \notag \\
&=&-\mathop{\rm Tr}\left[ a_{s}\left( \xi ,\gamma ,\left[ \xi ,\cdot \right]
\right) \right] ^{\left( s\right) }  \notag \\
&=&0.  \label{Malcev3}
\end{eqnarray}%
Then, (\ref{dKsdt}) gives $\frac{dK^{\left( s\left( t\right) \right) }}{dt}%
\left( \xi ,\xi \right) =0.$ This shows that $K^{\left( s\right) }$ is
constant on $\mathbb{L}.$

For the second part, from the generalized Jacobi identity (\ref{Jac2}), we
obtain 
\begin{eqnarray}
K^{\left( s\right) }\left( \left[ \gamma ,\eta \right] ^{\left( s\right)
},\xi \right) &=&-K^{\left( s\right) }\left( \eta ,\left[ \gamma ,\xi \right]
^{\left( s\right) }\right)  \label{Ksad1} \\
&&+\mathop{\rm Tr}\left[ \eta ,a_{s}\left( \cdot ,\xi ,\gamma \right)
+a_{s}\left( \xi ,\gamma ,\cdot \right) +a_{s}\left( \gamma ,\cdot ,\xi
\right) \right] ^{\left( s\right) }  \notag \\
&&+\mathop{\rm Tr}\left[ \xi ,a_{s}\left( \cdot ,\eta ,\gamma \right)
+a_{s}\left( \eta ,\gamma ,\cdot \right) +a_{s}\left( \gamma ,\cdot ,\eta
\right) \right] ^{\left( s\right) }.  \notag
\end{eqnarray}%
However, for an alternative algebra, this simplifies to 
\begin{eqnarray}
K^{\left( s\right) }\left( \left[ \gamma ,\eta \right] ^{\left( s\right)
},\xi \right) &=&-K^{\left( s\right) }\left( \eta ,\left[ \gamma ,\xi \right]
^{\left( s\right) }\right) \\
&&+3\mathop{\rm Tr}\left[ \eta ,a_{s}\left( \cdot ,\xi ,\gamma \right) %
\right] ^{\left( s\right) }+3\mathop{\rm Tr}\left[ \xi ,a_{s}\left( \cdot
,\eta ,\gamma \right) \right] ^{\left( s\right) }.  \notag
\end{eqnarray}%
The second line is symmetric in $\xi $ and $\eta $, so it is sufficient to
consider the case $\xi =\eta .$ Indeed, for $\xi =\eta ,$using (\ref{Malcev3}%
), this vanishes, so we get 
\begin{equation}
K^{\left( s\right) }\left( \mathop{\rm ad}\nolimits_{\gamma }^{\left(
s\right) }\eta ,\xi \right) =-K^{\left( s\right) }\left( \eta ,\mathop{\rm
ad}\nolimits_{\gamma }^{\left( s\right) }\xi \right) .  \label{adskew}
\end{equation}
\end{proof}

\begin{remark}
\label{remMalc}Note that in Lemma \ref{lemKMalcev}, we only used the trace
of the Malcev identity. The non-degeneracy of the Killing form in a
semi-simple Malcev algebra also hinges on the property (\ref{adskew}), same
as for semi-simple Lie algebras. This suggests that weaker conditions could
be sufficient for these key properties.
\end{remark}

\section{Loop-valued maps}

\setcounter{equation}{0}\label{sectMaps} Let $M$ be a smooth, $n$%
-dimensional manifold and let $s:M\longrightarrow \mathbb{L}$ be a smooth
map. The map $s$ can be used to define a product on $\mathbb{L}^{\prime }$%
-valued maps from $M$ and a corresponding bracket on $\mathfrak{l}$-valued
maps. Indeed, let $A,B:M\longrightarrow \mathbb{L}^{\prime }$ and $\xi ,\eta
:M\longrightarrow \mathfrak{l}$ be smooth maps, then at each $x\in M$,
define 
%TCIMACRO{\TeXButton{beginsubeq}{\begin{subequations}}}%
%BeginExpansion
\begin{subequations}%
%EndExpansion
\label{maniproducts} 
\begin{eqnarray}
\left. As\right\vert _{x} &=&A_{x}s_{x}\in \mathbb{L} \\
\left. A\circ _{s}B\right\vert _{x} &=&A_{x}\circ _{s_{x}}B_{x}\in \mathbb{L}
\\
\left. A/_{s}B\right\vert _{x} &=&A_{x}/_{s_{x}}B_{x}\in \mathbb{L} \\
\left. \left[ \xi ,\eta \right] ^{\left( s\right) }\right\vert _{x} &=&\left[
\xi _{x},\eta _{x}\right] ^{\left( s_{x}\right) }\in \mathfrak{l.}
\end{eqnarray}%
%TCIMACRO{\TeXButton{endsubeq}{\end{subequations}}}%
%BeginExpansion
\end{subequations}%
%EndExpansion
In particular, the bracket $\left[ \cdot ,\cdot \right] ^{\left( s\right) }$
defines the map $b_{s}:M\longrightarrow \Lambda ^{2}\mathfrak{l}^{\ast
}\otimes \mathfrak{l.}$ We also have the corresponding associator $\left[
\cdot ,\cdot ,\cdot \right] ^{\left( s\right) }$ and the left-alternating
associator map $a_{s}:M\longrightarrow \Lambda ^{2}\mathfrak{l}^{\ast
}\otimes \mathfrak{l}^{\ast }\otimes \mathfrak{l}.$ Similarly, define the
map $\varphi _{s}:M\longrightarrow \mathfrak{p}^{\ast }\otimes \mathfrak{l}.$

Then, similarly as for maps to Lie groups, we may define the (right) \emph{%
Darboux derivative} $\theta _{s}$ of $s,$ which is an $\mathfrak{l}$-valued $%
1$-form on $M$ given by the pull-back $s^{\ast }\theta $ of the
Maurer-Cartan form on $\mathbb{L}$ \cite{SharpeBook}. In particular, at
every $x\in M$, 
\begin{equation}
\left. \left( \theta _{s}\right) \right\vert _{x}=\rho _{s\left( x\right)
}^{-1}\left. ds\right\vert _{x},  \label{Darbouxf}
\end{equation}%
and for any vector $X\in T_{x}M$%
\begin{equation*}
\left. \left( \theta _{s}\right) \right\vert _{x}\left( X\right) =\left.
\theta \right\vert _{s\left( x\right) }\left( \left. ds\right\vert
_{x}\left( X\right) \right) \in \mathfrak{l.}
\end{equation*}%
It is then clear that $\theta _{s}$, being a pullback of $\theta $,
satisfies the loop Maurer-Cartan structural equation (\ref{dtheta}). In
particular, for any vectors $X,Y\in T_{x}M$, 
\begin{equation}
d\theta _{s}\left( X,Y\right) -\left[ \theta _{s}\left( X\right) ,\theta
_{s}\left( Y\right) \right] ^{\left( s\right) }=0.  \label{DarbouxMC}
\end{equation}

We can then calculate the derivatives of these maps (\ref{maniproducts}).

\begin{theorem}[{\protect\cite[Theorem 3.51]{GrigorianLoops}}]
\label{thmmaniDeriv}Let $M$ be a smooth manifold and suppose $s\in C^{\infty
}\left( M,\mathbb{L}\right) \ $and $A,B\in C^{\infty }\left( M,\mathbb{L}%
^{\prime }\right) ,$ then 
%TCIMACRO{\TeXButton{beginsubeq}{\begin{subequations}} }%
%BeginExpansion
\begin{subequations}
%EndExpansion
\begin{eqnarray}
d\left( As\right) &=&\rho _{s}\left( dA\right) +\lambda _{A}\left( ds\right)
\label{dAsB1} \\
d\left( A\circ _{s}B\right) &=&\rho _{B}^{\left( s\right) }\left( dA\right)
+\lambda _{A}^{\left( s\right) }\left( dB\right) +\left[ A,B,\theta _{s}%
\right] ^{\left( s\right) } \\
d\left( A/_{s}B\right) &=&\left( \rho _{B}^{\left( s\right) }\right)
^{-1}dA-\left( \rho _{B}^{\left( s\right) }\right) ^{-1}\left( \rho
_{A/_{s}B}^{\left( s\right) }dB\right)  \label{drquot} \\
&&-\left( \rho _{B}^{\left( s\right) }\right) ^{-1}\left[ A/_{s}B,B,\theta
_{s}\right] ^{\left( s\right) }
\end{eqnarray}%
%TCIMACRO{\TeXButton{endsubeq}{\end{subequations}}}%
%BeginExpansion
\end{subequations}%
%EndExpansion
Suppose now $\xi ,\eta \in C^{\infty }\left( M,\mathfrak{l}\right) $, then 
\begin{equation}
d\left[ \xi ,\eta \right] ^{\left( s\right) }=\left[ d\xi ,\eta \right]
^{\left( s\right) }+\left[ \xi ,d\eta \right] ^{\left( s\right)
}+a_{s}\left( \xi ,\eta ,\theta _{s}\right) .  \label{dbrack}
\end{equation}

The $\mathfrak{l}\otimes \mathfrak{p}^{\ast }$-valued map $\varphi
_{s}:M\longrightarrow $ $\mathfrak{l}\otimes \mathfrak{p}^{\ast }$ satisfies 
\begin{equation}
d\varphi _{s}=\mathop{\rm id}\nolimits_{\mathfrak{p}}\cdot \theta _{s}-\left[
\varphi _{s},\theta _{s}\right] ^{\left( s\right) },  \label{dphis0}
\end{equation}%
where $\mathop{\rm id}\nolimits_{\mathfrak{p}}\ $is the identity map of $%
\mathfrak{p}$ and $\cdot $ denotes the action of the Lie algebra $\mathfrak{p%
}$ on $\mathfrak{l} $ .

The Killing form $K^{\left( s\right) }:M\longrightarrow \mathop{\rm Sym}%
\nolimits^{2}\mathfrak{l}^{\ast }$ satisfies 
\begin{equation}
dK^{\left( s\right) }\left( \xi ,\xi \right) =2\mathop{\rm Tr}%
\nolimits\left( \left[ \xi ,a_{s}\left( \xi ,\cdot ,\theta _{s}\right) %
\right] ^{\left( s\right) }\right) .  \label{dKs}
\end{equation}
\end{theorem}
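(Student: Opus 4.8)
The plan is to regard every object in (\ref{maniproducts}) as a \emph{fixed} algebraic operation on $\mathbb{L}$ or $\mathfrak{l}$ precomposed with the smooth maps $s,A,B,\xi,\eta$, and to differentiate by the chain rule, converting each occurrence of $ds$ into $\rho_s\theta_s$ through (\ref{Darbouxf}). The only non-elementary inputs are the pointwise variation formulas already established in Section~\ref{sectLoop}: equation (\ref{ddtmodprod}) for how a modified product $\circ_s$ reacts to a moving subscript, equation (\ref{ddtbrack}) for how the bracket $[\cdot,\cdot]^{(s)}$ reacts to a moving $s$, and the quotient-derivative identity (\ref{ddtrighquot}). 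Each formula in the theorem is then obtained by accounting for the variation in every slot separately and summing.

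For the products I differentiate slot by slot. Viewing $As$ as the binary operation $(p,q)\mapsto pq$ evaluated at $(A,s)$, its differential splits into the right-translation pushforward applied to $dA$ and the left-translation pushforward applied to $ds$, giving exactly the two terms of (\ref{dAsB1}). For $d(A\circ_s B)$ there is the extra subscript slot: the $A$- and $B$-variations produce the $\circ_s$-pushforwards, while the $s$-variation yields, via (\ref{ddtmodprod}) with its $\mathfrak{l}$-entry filled by $\theta_s$, precisely the associator $[A,B,\theta_s]^{(s)}$ of (\ref{pqxiassoc}). The quotient $d(A/_sB)$ is the most computation-heavy: I apply (\ref{ddtrighquot}) to split the numerator variation (at frozen denominator) from the denominator variation, then treat the subscript as before; solving for the quotient introduces the inverse pushforward $(\rho_B^{(s)})^{-1}$ in front of every term and carries along the associator correction, yielding (\ref{drquot}). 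The bracket formula (\ref{dbrack}) is immediate: bilinearity of $[\cdot,\cdot]^{(s)}$ supplies the Leibniz terms $[d\xi,\eta]^{(s)}+[\xi,d\eta]^{(s)}$, and the moving subscript contributes $a_s(\xi,\eta,\theta_s)$ directly from (\ref{ddtbrack}).

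The structure equation (\ref{dphis0}) for $\varphi_s$ is the crux, and rather than differentiate the defining formula (\ref{phis}) head-on I would exploit the group action. Evaluating $d\varphi_s$ on $X\in T_xM$ and $\gamma\in\mathfrak{p}$, set $\zeta=\theta_s(X)$ and use surjectivity of $\varphi_s$ (Lemma~\ref{lemGammahatsurj}) to choose $\beta\in\mathfrak{p}$ with $\varphi_s(\beta)=\zeta$; then the motion $ds(X)=\rho_s\zeta$ of $s$ is exactly the value at $s$ of the fundamental vector field $X_\beta$ of the $\Psi$-action. Differentiating the equivariance identity (\ref{phihs}) with $h=\exp(t\beta)$ at $t=0$, and using $(\mathrm{Ad}_{\exp(t\beta)})_{\ast}\gamma=\gamma+t[\beta,\gamma]_{\mathfrak{p}}+O(t^2)$ on the left together with the definition of the $\mathfrak{p}$-action on the right, gives $D_{X_\beta}\varphi(\gamma)=\beta\cdot\varphi_s(\gamma)-\varphi_s([\beta,\gamma]_{\mathfrak{p}})$. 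Finally the algebraic identity (\ref{xiphi}), applied with $\xi=\beta$ and $\eta=\gamma$, rewrites $\beta\cdot\varphi_s(\gamma)$ so that the two $\varphi_s([\beta,\gamma]_{\mathfrak{p}})$ terms cancel and, after using antisymmetry of the bracket, leaves $\gamma\cdot\zeta-[\varphi_s(\gamma),\zeta]^{(s)}=\gamma\cdot\theta_s(X)-[\varphi_s(\gamma),\theta_s(X)]^{(s)}$, which is (\ref{dphis0}).

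The Killing-form identity (\ref{dKs}) then follows with no extra work from (\ref{dKsdt}): since moving $s$ along $X$ is the $t$-derivative at $0$ of $\exp_s(t\gamma)s$ with $\gamma=\theta_s(X)$, one simply replaces the direction $\gamma$ in (\ref{dKsdt}) by $\theta_s$, the fixed vector $\xi$ carrying no $d\xi$ terms. I expect the main obstacle to be (\ref{dphis0}): it is the one formula whose right-hand side is not produced by a single variation rule but requires assembling equivariance, the surjectivity of $\varphi_s$, and the identity (\ref{xiphi}) so that the spurious $\mathfrak{p}$-bracket terms cancel, and careful bookkeeping of which slot each derivative strikes, and of the antisymmetry that lands the answer on $-[\varphi_s,\theta_s]^{(s)}$ rather than its negative, is where mistakes are most likely.
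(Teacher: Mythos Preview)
The paper does not actually supply a proof of this theorem: it is quoted verbatim from \cite[Theorem 3.51]{GrigorianLoops} and the text moves on immediately after the statement. So there is no in-paper argument to compare against, and your proposal stands on its own.

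That said, your outline is sound. The product, quotient, bracket, and Killing-form formulas are indeed straightforward chain-rule computations once one feeds in the pointwise variation identities (\ref{ddtmodprod}), (\ref{ddtbrack}), (\ref{ddtrighquot}), and (\ref{dKsdt}) for the $s$-slot, exactly as you describe. Your treatment of $d\varphi_s$ via the equivariance relation (\ref{phihs}) is a genuinely efficient route: differentiating $\varphi_{\exp(t\beta)(s)}\bigl((\mathop{\rm Ad}_{\exp(t\beta)})_*\gamma\bigr)=(\exp(t\beta)')_*\varphi_s(\gamma)$ at $t=0$ gives $D_{X_\beta}\varphi(\gamma)+\varphi_s([\beta,\gamma]_{\mathfrak p})=\beta\cdot\varphi_s(\gamma)$, and then (\ref{xiphi}) cancels the $\varphi_s([\beta,\gamma]_{\mathfrak p})$ terms and leaves $\gamma\cdot\varphi_s(\beta)-[\varphi_s(\gamma),\varphi_s(\beta)]^{(s)}$, which is (\ref{dphis0}) once $\varphi_s(\beta)=\theta_s(X)$. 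The only point worth making explicit is the identification step you gloss: one needs that $d\varphi_s(X)$, a derivative on $M$, coincides with $D_{X_\beta}\varphi$ at $s$, a derivative on $\mathbb{L}$. This follows because $\varphi_s=\varphi\circ s$ and by the very definition (\ref{phis}) the fundamental vector $X_\beta|_s$ equals $\rho_s(\varphi_s(\beta))=\rho_s(\theta_s(X))=ds(X)$, so the chain rule closes the loop. With that one sentence added, the argument is complete.
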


Given $A:M\longrightarrow \mathbb{L}^{\prime }$ and $s:M\longrightarrow 
\mathbb{L},$ as shown in \cite{GrigorianLoops}, we have the following
expression for $\theta _{As}$%
\begin{equation}
\theta _{As}=\mathop{\rm Ad}\nolimits_{A}^{\left( s\right) }\theta
_{s}+\left( \rho _{A}^{\left( s\right) }\right) ^{-1}dA.  \label{thetaAs2}
\end{equation}%
Moreover, let us consider the evolution equation satisfied by $\theta
_{A\left( t\right) s}$ for $A\left( t\right) =\exp _{s}\left( t\xi \right) $
for some $\xi :M\longrightarrow \mathfrak{l}.$ This gives us the following.

\begin{lemma}
\label{lemThetaPath}Let $A\left( t\right) =\exp _{s}\left( t\xi \right) ,$
for $\xi \in C^{\infty }\left( M,\mathfrak{l}\right) ,$ then%
\begin{equation}
\frac{d\theta _{A\left( t\right) s}}{dt}=\left[ \xi ,\theta _{A\left(
t\right) s}\right] ^{A\left( t\right) s}+d\xi   \label{dthetadt}
\end{equation}%
and hence%
\begin{equation}
\theta _{A\left( t\right) s}=U_{t\xi }^{\left( s\right) }\theta _{s}+\left(
U_{t\xi }^{\left( s\right) }\int_{0}^{t}\left( U_{\tau \xi }^{\left(
s\right) }\right) ^{-1}d\tau \right) d\xi .  \label{thetaexp}
\end{equation}%
Moreover, if $\mathbb{L}$ is compact and given a metric on $M$ and an inner
product on $\mathfrak{l},$ 
\begin{equation}
\left\vert \theta _{A\left( t\right) s}\right\vert \leq e^{Ct\left\vert \xi
\right\vert }\left( \left\vert \theta _{s}\right\vert +t\left\vert d\xi
\right\vert \right) .  \label{thetaAsest}
\end{equation}
\end{lemma}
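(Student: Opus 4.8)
The plan is to first establish the evolution equation (\ref{dthetadt}), then read off (\ref{thetaexp}) as the solution of a linear inhomogeneous ODE, and finally obtain the bound (\ref{thetaAsest}) from the operator-norm estimates on the evolution operator in Lemma \ref{lemUprop}.

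For the evolution equation, the cleanest route is to package the family into a single map and invoke the loop Maurer--Cartan equation rather than to differentiate (\ref{thetaAs2}) head-on. I would regard $S:M\times I\longrightarrow \mathbb{L}$, $S(x,t)=\exp_{s(x)}(t\xi_x)s(x)=A(t)s$, as a single $\mathbb{L}$-valued map on the product manifold and consider its Darboux derivative $\Theta=S^{\ast}\theta$, an $\mathfrak{l}$-valued $1$-form on $M\times I$. By the same pullback argument used for $\theta_s$, the form $\Theta$ again satisfies the structural equation (\ref{DarbouxMC}), i.e. $d\Theta(X,Y)=[\Theta(X),\Theta(Y)]^{(S)}$ for all $X,Y$, where the base point of the bracket is $S(x,t)=A(t)s$. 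Next I identify the two components of $\Theta$: along $M$ (at fixed $t$) it restricts to $\theta_{A(t)s}$, while along $\partial_t$ the defining equation (\ref{floweq4}) gives $\partial_t S=\rho_{S}(\xi)$, hence $\Theta(\partial_t)=\rho_{S}^{-1}\partial_t S=\xi$, which is independent of $t$. Evaluating the structural equation on $X=\partial_t$ and a vector field $Y$ on $M$ (so that $[\partial_t,Y]=0$ and $Y\,\Theta(\partial_t)=d\xi(Y)$), the identity $d\Theta(\partial_t,Y)=\partial_t(\theta_{A(t)s}(Y))-d\xi(Y)$ equals $[\xi,\theta_{A(t)s}(Y)]^{(A(t)s)}$, which is exactly (\ref{dthetadt}). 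An alternative is to differentiate (\ref{thetaAs2}) directly in $t$, using (\ref{Adeq}) for the $\mathop{\rm Ad}\nolimits_{A(t)}^{(s)}\theta_s$ term and Theorem \ref{thmdtexp} for $(\rho_{A(t)}^{(s)})^{-1}dA(t)$; this works but is bookkeeping-heavy, and the Maurer--Cartan argument sidesteps the derivative of the second term entirely.

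Given (\ref{dthetadt}), I note it is the linear inhomogeneous ODE $\dot{\theta}_{A(t)s}=\mathop{\rm ad}\nolimits_{\xi}^{(A(t)s)}\theta_{A(t)s}+d\xi$, whose homogeneous part is precisely (\ref{etabrackeq}) with evolution operator $U_{t\xi}^{(s)}$ from (\ref{Utsol}), and whose initial value is $\theta_{A(0)s}=\theta_s$ since $A(0)=1$. Variation of parameters then yields $\theta_{A(t)s}=U_{t\xi}^{(s)}\theta_s+U_{t\xi}^{(s)}\int_{0}^{t}(U_{\tau\xi}^{(s)})^{-1}d\xi\,d\tau$; since the forcing term $d\xi$ is constant in $\tau$ it factors out of the integral, giving (\ref{thetaexp}).

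Finally, for the estimate I would bound the solution formula termwise, keeping the propagator $U_{t\xi}^{(s)}(U_{\tau\xi}^{(s)})^{-1}$ intact rather than splitting it. Applying the operator-norm bound (\ref{Uest}) of Lemma \ref{lemUprop} with endpoints $(0,t)$ gives $|U_{t\xi}^{(s)}|\leq e^{Ct|\xi|}$, and with endpoints $(\tau,t)$ gives $|U_{t\xi}^{(s)}(U_{\tau\xi}^{(s)})^{-1}|\leq e^{C(t-\tau)|\xi|}\leq e^{Ct|\xi|}$ for $0\leq\tau\leq t$. Hence $|\theta_{A(t)s}|\leq e^{Ct|\xi|}|\theta_s|+\bigl(\int_{0}^{t}e^{Ct|\xi|}\,d\tau\bigr)|d\xi|=e^{Ct|\xi|}(|\theta_s|+t|d\xi|)$, which is (\ref{thetaAsest}); the uniform bound on the propagator is exactly what converts the $\tau$-integral into the clean factor of $t$. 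The only genuine subtlety is in the first step---verifying that $\Theta(\partial_t)=\xi$ is $t$-independent, so that $Y\,\Theta(\partial_t)=d\xi(Y)$ with no additional terms---so that is where I would concentrate the care.
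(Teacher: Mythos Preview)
Your proof is correct. For the evolution equation (\ref{dthetadt}), you take a genuinely different route from the paper. The paper derives it by a direct, somewhat informal symbolic computation: it writes $\theta_{As}=d(As)/(As)$ and differentiates in $t$ using the quotient-derivative formula (\ref{ddtrighquot}) together with $\frac{dA}{dt}=\xi\circ_s A$, then repeatedly rewrites products and quotients until the bracket $[\xi,\theta_{As}]^{(As)}$ emerges. Your approach instead packages $S(x,t)=A(t)s$ as a single map $M\times I\to\mathbb{L}$ and reads off (\ref{dthetadt}) from the loop Maurer--Cartan equation (\ref{DarbouxMC}) applied to $S^\ast\theta$, using only the identification $\Theta(\partial_t)=\xi$ which follows immediately from the flow equation (\ref{floweq4}). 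Your argument is cleaner and more conceptual: it replaces several lines of symbolic manipulation by a single application of a structural identity already available in the paper, and it makes transparent why no associator terms appear. The paper's computation, on the other hand, stays closer to the concrete formulas (\ref{thetaAs2}) and (\ref{ddtrighquot}) and does not require introducing the product manifold. For the remaining two parts---the variation-of-parameters solution (\ref{thetaexp}) and the estimate (\ref{thetaAsest}) via (\ref{Uest})---your argument is essentially identical to the paper's.
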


\begin{proof}
We will write symbolically 
\begin{equation*}
\frac{dA}{dt}=\xi \circ _{s}A,
\end{equation*}%
so 
\begin{eqnarray*}
\frac{d\theta _{A\left( t\right) s}}{dt} &=&\frac{d}{dt}\left( d\left(
As\right) /As\right)  \\
&=&d\left( \frac{dA}{dt}s\right) /As \\
&&-\left( \left( d\left( As\right) /As\right) \cdot \left( \frac{dA}{dt}%
s\right) \right) /As \\
&=&d\left( \left( \xi \circ _{s}A\right) s\right) /As \\
&&-\left( \theta _{As}\cdot \left( \left( \xi \circ _{s}A\right) s\right)
\right) /As \\
&=&d\left( \xi \left( As\right) \right) /As \\
&&+\left( \theta _{As}\cdot \left( \xi \left( As\right) \right) \right) /As
\\
&=&d\xi +\left( \xi \cdot d\left( As\right) \right) /As \\
&&-\theta _{As}\circ _{As}\xi  \\
&=&d\xi +\left[ \xi ,\theta _{As}\right] ^{\left( As\right) }
\end{eqnarray*}%
Solving this ODE, with $\theta _{A\left( 0\right) s}=\theta _{s}$, we find (%
\ref{thetaexp}). To obtain the estimate, we first have 
\begin{equation*}
\left\vert \theta _{A\left( t\right) s}\right\vert \leq \left\vert U_{t\xi
}^{\left( s\right) }\right\vert \left\vert \theta _{s}\right\vert
+\left\vert d\xi \right\vert \int_{0}^{t}\left\vert U_{t\xi }^{\left(
s\right) }\left( U_{\tau \xi }^{\left( s\right) }\right) ^{-1}\right\vert
d\tau ,
\end{equation*}%
but from (\ref{Uest}), $\left\vert U_{t\xi }^{\left( s\right) }\right\vert
\leq e^{Ct\left\vert \xi \right\vert }\ $and $\left\vert U_{t\xi }^{\left(
s\right) }\left( U_{\tau \xi }^{\left( s\right) }\right) ^{-1}\right\vert
\leq e^{C\left( t-\tau \right) \left\vert \xi \right\vert },$ so 
\begin{equation}
\int_{0}^{t}\left\vert U_{t\xi }^{\left( s\right) }\left( U_{\tau \xi
}^{\left( s\right) }\right) ^{-1}\right\vert d\tau \leq te^{Ct\left\vert \xi
\right\vert },  \label{intUest}
\end{equation}%
since $t\geq 0,$ and hence, 
\begin{equation*}
\left\vert \theta _{A\left( t\right) s}\right\vert \leq e^{Ct\left\vert \xi
\right\vert }\left\vert \theta _{s}\right\vert +te^{Ct\left\vert \xi
\right\vert }\left\vert d\xi \right\vert ,
\end{equation*}%
and thus indeed, we obtain (\ref{thetaAsest}).
\end{proof}

In a very similar fashion we obtain the same results for $\varphi _{s}.$

\begin{lemma}
Let $A\left( t\right) =\exp _{s}\left( t\xi \right) $ and $\gamma \in 
\mathfrak{p},$ then%
\begin{equation}
\frac{d\varphi _{A\left( t\right) s}\left( \gamma \right) }{dt}=\left[ \xi
,\varphi _{A\left( t\right) s}\left( \gamma \right) \right] ^{A\left(
t\right) s}+\gamma \cdot \xi
\end{equation}%
and hence%
\begin{equation}
\varphi _{A\left( t\right) s}\left( \gamma \right) =U_{t\xi }^{\left(
s\right) }\varphi _{s}\left( \gamma \right) +\left( U_{t\xi }^{\left(
s\right) }\int_{0}^{t}\left( U_{\tau \xi }^{\left( s\right) }\right)
^{-1}d\tau \right) \left( \gamma \cdot \xi \right) .
\end{equation}%
Moreover, if $\mathbb{L}$ is compact and given a metric on $M$ and an inner
product on $\mathfrak{l},$ 
\begin{equation}
\left\vert \varphi _{A\left( t\right) s}\left( \gamma \right) \right\vert
\leq e^{Ct\left\vert \xi \right\vert }\left( \left\vert \varphi _{s}\left(
\gamma \right) \right\vert +t\left\vert \gamma \cdot \xi \right\vert \right)
.
\end{equation}
\end{lemma}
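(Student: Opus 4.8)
The plan is to mirror the proof of Lemma~\ref{lemThetaPath} essentially verbatim, replacing the symbolic quotient computation used there by the differential identity (\ref{dphis0}) for $\varphi _{s}$. First I would recall that $\varphi $ is an intrinsic map $\mathbb{L}\longrightarrow \mathfrak{l}\otimes \mathfrak{p}^{\ast }$ and that (\ref{dphis0}) is the pullback of the intrinsic identity $d\varphi =\mathop{\rm id}\nolimits_{\mathfrak{p}}\cdot \theta -\left[ \varphi ,\theta \right] ^{\left( s\right) }$, where $\theta $ is the loop Maurer-Cartan form. Along the curve $t\mapsto A\left( t\right) s=\exp _{s}\left( t\xi \right) s$, the velocity is $\frac{d}{dt}\left( A\left( t\right) s\right) =\left. \rho \left( \xi \right) \right\vert _{A\left( t\right) s}$ by (\ref{floweq4}), so $\theta $ evaluated on this velocity is $\theta \left( \rho \left( \xi \right) \right) =\xi $ by the defining property of $\theta $. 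Applying $d\varphi $ in the $t$-direction and evaluating on $\gamma \in \mathfrak{p}$ therefore gives $\frac{d}{dt}\varphi _{A\left( t\right) s}\left( \gamma \right) =\gamma \cdot \xi -\left[ \varphi _{A\left( t\right) s}\left( \gamma \right) ,\xi \right] ^{A\left( t\right) s}$, and the antisymmetry of $\left[ \cdot ,\cdot \right] ^{\left( p\right) }$ rewrites $-\left[ \varphi ,\xi \right] ^{A\left( t\right) s}$ as $\left[ \xi ,\varphi \right] ^{A\left( t\right) s}$, yielding exactly the claimed evolution equation.

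Next I would solve this as an inhomogeneous linear first-order ODE. Its homogeneous part is precisely (\ref{etabrackeq}), whose evolution operator is $U_{t\xi }^{\left( s\right) }$ as in (\ref{Utsol}), while the inhomogeneous term $\gamma \cdot \xi $ is independent of $t$ (both $\gamma \in \mathfrak{p}$ and $\xi $ are fixed). Variation of parameters with initial value $\varphi _{A\left( 0\right) s}\left( \gamma \right) =\varphi _{s}\left( \gamma \right) $ then gives $\varphi _{A\left( t\right) s}\left( \gamma \right) =U_{t\xi }^{\left( s\right) }\varphi _{s}\left( \gamma \right) +U_{t\xi }^{\left( s\right) }\int_{0}^{t}\left( U_{\tau \xi }^{\left( s\right) }\right) ^{-1}\left( \gamma \cdot \xi \right) d\tau $, and since $\gamma \cdot \xi $ does not depend on $\tau $ it factors out of the integral, producing the stated formula. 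This is the exact analog of (\ref{thetaexp}), with the inhomogeneity $d\xi $ there replaced by $\gamma \cdot \xi $ here.

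Finally, for the estimate I would invoke the operator bounds already established: by (\ref{Uest}) we have $\left\vert U_{t\xi }^{\left( s\right) }\right\vert \leq e^{Ct\left\vert \xi \right\vert }$ and $\left\vert U_{t\xi }^{\left( s\right) }\left( U_{\tau \xi }^{\left( s\right) }\right) ^{-1}\right\vert \leq e^{C\left( t-\tau \right) \left\vert \xi \right\vert }$, whence $\int_{0}^{t}\left\vert U_{t\xi }^{\left( s\right) }\left( U_{\tau \xi }^{\left( s\right) }\right) ^{-1}\right\vert d\tau \leq te^{Ct\left\vert \xi \right\vert }$ as in (\ref{intUest}). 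Applying these to the solution formula gives $\left\vert \varphi _{A\left( t\right) s}\left( \gamma \right) \right\vert \leq e^{Ct\left\vert \xi \right\vert }\left( \left\vert \varphi _{s}\left( \gamma \right) \right\vert +t\left\vert \gamma \cdot \xi \right\vert \right) $, as required.

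There is no genuine obstacle here, since the argument is a line-by-line transcription of Lemma~\ref{lemThetaPath}. The only point requiring care is the derivation of the evolution equation from (\ref{dphis0}): one must correctly identify $\theta $ evaluated on the path velocity $\rho \left( \xi \right) $ as $\xi $, and then use the antisymmetry of the loop bracket to pass from $-\left[ \varphi ,\xi \right] ^{A\left( t\right) s}$ to $\left[ \xi ,\varphi \right] ^{A\left( t\right) s}$. One could alternatively reproduce the symbolic quotient manipulation from Lemma~\ref{lemThetaPath} directly for $\varphi _{A\left( t\right) s}$, but routing through the ready-made identity (\ref{dphis0}) is cleaner and avoids re-deriving the product and quotient derivative rules.
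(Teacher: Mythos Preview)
Your proposal is correct and matches the paper's intent: the paper gives no explicit proof for this lemma, only the remark ``In a very similar fashion we obtain the same results for $\varphi_{s}$,'' and your route via the differential identity (\ref{dphis0}) is precisely such a parallel argument. Your observation that (\ref{dphis0}) is the pullback of an intrinsic identity on $\mathbb{L}$, together with $\theta(\rho(\xi))=\xi$, cleanly yields the ODE, and the remaining steps (variation of parameters and the bound via (\ref{Uest}) and (\ref{intUest})) are identical to those in Lemma~\ref{lemThetaPath}.
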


Thus we see that three important quantities $\theta _{A\left( t\right) s}$ , 
$U_{t\xi }^{\left( s\right) },$ $\varphi _{A\left( t\right) s}$ satisfy
similar ODEs:%
\begin{eqnarray*}
\frac{d\theta _{A\left( t\right) s}}{dt} &=&\left[ \xi ,\theta _{A\left(
t\right) s}\right] ^{A\left( t\right) s}+d\xi \\
\frac{dU_{t\xi }^{\left( s\right) }}{dt} &=&\left[ \xi ,U_{t\xi }^{\left(
s\right) }\right] ^{A\left( t\right) s} \\
\frac{d\varphi _{A\left( t\right) s}}{dt} &=&\left[ \xi ,\varphi _{A\left(
t\right) s}\right] ^{A\left( t\right) s}+\mathop{\rm id}\nolimits\cdot \xi .
\end{eqnarray*}%
Suppose we have an affine connection $\nabla $ on $M$, then by
differentiating the above ODEs, we can obtain expressions for derivatives of 
$\theta _{As},\ U_{\xi }^{\left( s\right) }$, and $\varphi _{As}.$ However,
first we have a helpful technical lemma.

\begin{lemma}
Suppose $A:M\longrightarrow \mathbb{L}^{\prime }$ and $s:M\longrightarrow 
\mathbb{L},$ and $\alpha _{A,s}$ is a $k$-linear map on $\mathfrak{l}.$
Then, for $\xi _{1},...,\xi _{k}\in \mathfrak{l},$ 
\begin{eqnarray}
\left( d\alpha _{A,s}\right) \left( \xi _{1},...,\xi _{k}\right) &=&\alpha
_{A,s}^{\left( 1\right) }\left( \xi _{1},...,\xi _{k},\theta _{s}\right)
\label{dalphaAs} \\
&&+\alpha _{A,s}^{\left( 2\right) }\left( \xi _{1},...,\xi _{k},\theta
_{As}\right) ,  \notag
\end{eqnarray}%
where $\alpha _{A,s}^{\left( 1\right) }$ and $\alpha _{A,s}^{\left( 2\right)
}$ are $\left( k+1\right) $-linear maps on $\mathfrak{l}.$

In particular, given a metric on $M$ and a norm on $\mathfrak{l,}$ we have
the following pointwise bound%
\begin{equation}
\left\vert \nabla ^{k}db_{s}\right\vert \leq f^{\left( k\right) }\left(
s\right) \sum_{\left( i_{1},...,i_{k+1}\right) \in I}\left\vert \theta
_{s}\right\vert ^{i_{1}}\left\vert \nabla \theta _{s}\right\vert
^{i_{2}}...\left\vert \nabla ^{k}\theta _{s}\right\vert ^{i_{k+1}}
\label{nablakbest}
\end{equation}%
where $I=\left\{ \left( i_{1},...,i_{k+1}\right) \in \mathbb{N}_{0}^{k+1},\ 
\text{such that }\sum_{m=1}^{k+1}mi_{m}=k+1\right\} $ and $f^{\left(
k\right) }:\mathbb{L}\longrightarrow \mathbb{R}_{+}\ $is a continuous
function for each $k$.
\end{lemma}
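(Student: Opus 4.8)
The plan is to treat $\alpha_{A,s}$ as a smooth map $\mathbb{L}'\times\mathbb{L}\to \mathrm{Hom}(\mathfrak{l}^{\otimes k},\mathfrak{l})$ pulled back to $M$ along $(A,s)$ and to differentiate it by the chain rule. For fixed $\xi_1,\dots,\xi_k\in\mathfrak{l}$ the value $\alpha_{A,s}(\xi_1,\dots,\xi_k)$ is a function on $M$ valued in a fixed vector space, so $d[\alpha_{A,s}(\xi_1,\dots,\xi_k)]$ is the sum of the derivative of $\alpha$ in the $s$-direction evaluated on $ds$ and its derivative in the $A$-direction evaluated on $dA$; both are linear in the respective tangent vectors. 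The first step is therefore to re-express $ds$ and $dA$ through the $\mathfrak{l}$-valued forms $\theta_s$ and $\theta_{As}$: by definition $ds=\rho_s\theta_s$, and solving (\ref{thetaAs2}) gives $dA=\rho_A^{\left(s\right)}(\theta_{As}-\mathop{\rm Ad}\nolimits_A^{\left(s\right)}\theta_s)$. Substituting these and absorbing the invertible transport maps $\rho_s$, $\rho_A^{\left(s\right)}$, $\mathop{\rm Ad}\nolimits_A^{\left(s\right)}$ into the coefficients, the $s$-derivative contributes a term linear in $\theta_s$, while the $A$-derivative contributes a term linear in $\theta_{As}$ together with a further term linear in $\theta_s$. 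Collecting the $\theta_s$-terms yields a single $(k+1)$-linear map $\alpha^{\left(1\right)}_{A,s}$ and the $\theta_{As}$-term yields $\alpha^{\left(2\right)}_{A,s}$, which is precisely (\ref{dalphaAs}).

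For the pointwise bound I would specialize to $\alpha_{A,s}=b_s$, which depends on $s$ only, so $\alpha^{\left(2\right)}=0$ and the formula reduces to $db_s=a_s(\cdot,\cdot,\theta_s)$, recovering the $\xi,\eta$ constant case of (\ref{dbrack}). I then argue by induction on $k$ that $\nabla^k db_s$ is a finite sum of terms of the schematic form $c_s\cdot \nabla^{j_1}\theta_s\otimes\cdots\otimes\nabla^{j_p}\theta_s$, where $c_s$ is a multilinear map on $\mathfrak{l}$ depending smoothly on $s$ and the total weight $\sum_i (j_i+1)$ equals $k+1$. The base case $k=0$ is $db_s=a_s(\cdot,\cdot,\theta_s)$, a single factor $\nabla^0\theta_s$ of weight $1$. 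For the inductive step I apply $\nabla$ and use the Leibniz rule: differentiating a factor $\nabla^{j_i}\theta_s$ raises it to $\nabla^{j_i+1}\theta_s$, increasing the weight by one, while differentiating the coefficient $c_s$ — which, being valued in a fixed tensor space over $\mathfrak{l}$, satisfies $\nabla c_s=dc_s$ — produces, by the first part applied to $c_s$, a new smooth-in-$s$ coefficient contracted with one extra factor $\theta_s=\nabla^0\theta_s$, again raising the weight by one. Thus every weight-$k$ term of $\nabla^{k-1}db_s$ yields only weight-$(k+1)$ terms.

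Recording the weight of a factor $\nabla^{m-1}\theta_s$ as $m$ and collecting equal factors, the constraint $\sum_i (j_i+1)=k+1$ becomes exactly $\sum_{m=1}^{k+1} m\,i_m=k+1$, i.e. the index set $I$. Taking pointwise norms with the metric on $M$ and the inner product on $\mathfrak{l}$, applying the triangle inequality over the finitely many terms, and bounding every coefficient by the function $f^{\left(k\right)}(s)$ defined as the maximum of the relevant coefficient norms — continuous in $s$, hence bounded on the compact $\mathbb{L}$ — yields (\ref{nablakbest}).

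The chain-rule bookkeeping in the first part and the termwise norm estimates are routine, relying only on (\ref{thetaAs2}), (\ref{dbrack}) and the Leibniz rule. The one place that demands care, and which I expect to be the main obstacle, is the inductive combinatorics: verifying that both differentiation mechanisms raise the weight by exactly one and that the coefficients remain continuous functions of $s$, so that the total weight stays pinned at $k+1$ and the exponents land precisely in the prescribed set $I$.
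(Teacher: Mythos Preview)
Your proposal is correct and follows essentially the same route as the paper: the chain rule with the substitutions $ds=\rho_s\theta_s$ and $dA=\rho_A^{(s)}(\theta_{As}-\mathop{\rm Ad}\nolimits_A^{(s)}\theta_s)$ from (\ref{thetaAs2}) for the first part, and the Leibniz-rule induction on the weight of the $\theta_s$-factors for the second. The paper merely spells out the $k=0,1,2$ cases explicitly before doing the same inductive step you describe, so your more streamlined presentation of the combinatorics is entirely equivalent.
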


\begin{proof}
Since $\alpha $ depends on $s$ and $A,$ $d\alpha _{A,s}$, by chain rule we
have 
\begin{eqnarray*}
d\alpha _{A,s} &=&\frac{\partial \alpha }{\partial s}\circ ds+\frac{\partial
\alpha }{\partial A}\circ dA \\
&=&\left( \frac{\partial \alpha }{\partial s}\circ \rho _{s}\right) \left(
\rho _{s}^{-1}ds\right) +\left( \frac{\partial \alpha }{\partial A}\circ
\rho _{A}^{\left( s\right) }\right) \left( \left( \rho _{A}^{\left( s\right)
}\right) ^{-1}dA\right) \\
&=&\left( \frac{\partial \alpha }{\partial s}\circ \rho _{s}+\frac{\partial
\alpha }{\partial A}\circ \rho _{A}^{\left( s\right) }\circ \mathop{\rm Ad}%
\nolimits_{A}^{\left( s\right) }\right) \theta _{s} \\
&&+\left( \frac{\partial \alpha }{\partial A}\circ \rho _{A}^{\left(
s\right) }\right) \theta _{As},
\end{eqnarray*}%
where we have used (\ref{thetaAs2}). So now we can set 
\begin{eqnarray*}
\alpha _{A,s}^{\left( 1\right) }\left( \xi _{1},...,\xi _{k},\xi
_{k+1}\right) &=&\left( \frac{\partial \alpha \left( \xi _{1},...,\xi
_{k}\right) }{\partial s}\circ \rho _{s}\right) \xi _{k+1} \\
&&+\left( \frac{\partial \alpha \left( \xi _{1},...,\xi _{k}\right) }{%
\partial A}\circ \rho _{A}^{\left( s\right) }\circ \mathop{\rm Ad}%
\nolimits_{A}^{\left( s\right) }\right) \xi _{k+1} \\
\alpha _{A,s}^{\left( 2\right) }\left( \xi _{1},...,\xi _{k},\xi
_{k+1}\right) &=&\left( \frac{\partial \alpha \left( \xi _{1},...,\xi
_{k}\right) }{\partial A}\circ \rho _{A}^{\left( s\right) }\right) \xi
_{k+1}.
\end{eqnarray*}%
Thus, we obtain (\ref{dalphaAs}).

From (\ref{dbrack}), we know that for $\xi ,\eta \in \mathfrak{l},$ 
\begin{equation*}
\left( db_{s}\right) \left( \xi ,\eta \right) =a_{s}\left( \xi ,\eta ,\theta
_{s}\right) ,
\end{equation*}%
where the alternating associator $a_{s}$ is a trilinear form on $\mathfrak{l.%
}$ Hence, we have the following estimate 
\begin{equation*}
\left\vert db_{s}\right\vert \leq C\left\vert a_{s}\right\vert \left\vert
\theta _{s}\right\vert ,
\end{equation*}%
where $C$ is some universal constant. However $a_{s}$ is smooth in $s$, so $%
\left\vert a_{s}\right\vert $ is in particular a continuous real-valued
function on $\mathbb{L}.$ Hence we can write 
\begin{equation}
\left\vert db_{s}\right\vert \leq f^{\left( 0\right) }\left( s\right)
\left\vert \theta _{s}\right\vert  \label{dbsest}
\end{equation}%
for some positive real-valued function $f^{0}.$ Now, as we have just shown, 
\begin{equation*}
da_{s}\left( \xi _{1},\xi _{2},\xi _{3},\xi _{4}\right) =a_{s}^{\left(
1\right) }\left( \xi _{1},\xi _{2},\xi _{3},\theta _{s}\right) ,
\end{equation*}%
for some $4$-linear form $a_{s}^{\left( 1\right) }.$ Therefore, for a vector
field $X_{1}$ on $M$, 
\begin{eqnarray*}
\left( \nabla _{X_{1}}db_{s}\right) \left( \xi ,\eta \right) &=&\left(
d_{X_{1}}a_{s}\right) \left( \xi ,\eta ,\theta _{s}\right) +a_{s}\left( \xi
,\eta ,\nabla _{X_{1}}\theta _{s}\right) \\
&=&a_{s}^{\left( 1\right) }\left( \xi ,\eta ,\theta _{s},\theta _{s}\left(
X_{1}\right) \right) +a_{s}\left( \xi ,\eta ,\nabla _{X_{1}}\theta
_{s}\right) ,
\end{eqnarray*}%
so, we have the following estimate 
\begin{eqnarray}
\left\vert \nabla db\right\vert &\leq &C\left( \left\vert a_{s}^{\left(
1\right) }\right\vert \left\vert \theta _{s}\right\vert ^{2}+\left\vert
a_{s}\right\vert \left\vert \nabla \theta _{s}\right\vert \right)  \notag \\
&\leq &f^{\left( 1\right) }\left( s\right) \left( \left\vert \theta
_{s}\right\vert ^{2}+\left\vert \nabla \theta _{s}\right\vert \right) ,
\label{d2best}
\end{eqnarray}%
where $f^{\left( 1\right) }\left( s\right) =C\max \left( \left\vert
a_{s}^{\left( 1\right) }\right\vert ,\left\vert a_{s}\right\vert \right) $
is continuous.

Similarly, we obtain the expression for the second derivative of $db_{s},$
for a multilinear maps $a_{s}^{\left( 2\right) }$ on $\mathfrak{l:}$ 
\begin{eqnarray*}
\left( \nabla _{X_{2}}\nabla _{X_{1}}db_{s}\right) \left( \xi ,\eta \right)
&=&a_{s}^{\left( 2\right) }\left( \xi ,\eta ,\theta _{s},\theta _{s}\left(
X_{1}\right) ,\theta _{s}\left( X_{2}\right) \right) +a_{s}^{\left( 1\right)
}\left( \xi ,\eta ,\nabla _{X_{2}}\theta _{s},\theta _{s}\left( X_{1}\right)
\right) \\
&&+a_{s}^{\left( 1\right) }\left( \xi ,\eta ,\nabla _{X_{1}}\theta
_{s},\theta _{s}\left( X_{2}\right) \right) +a_{s}^{\left( 1\right) }\left(
\xi ,\eta ,\theta _{s},\nabla _{X_{2}}\theta _{s}\left( X_{1}\right) \right)
\\
&&+a_{s}\left( \xi ,\eta ,\nabla _{X_{2}}\nabla _{X_{1}}\theta _{s}\right) ,
\end{eqnarray*}%
where $X_{1}$ and $X_{2}$ are vector fields on $M$. Hence, 
\begin{eqnarray*}
\left\vert \nabla ^{2}db_{s}\right\vert &\leq &C\left( \left\vert
a_{s}^{\left( 2\right) }\right\vert \left\vert \theta _{s}\right\vert
^{3}+3\left\vert a_{s}^{\left( 1\right) }\right\vert \left\vert \theta
_{s}\right\vert \left\vert \nabla \theta _{s}\right\vert +\left\vert
a_{s}\right\vert \left\vert \nabla ^{2}\theta _{s}\right\vert \right) \\
&\leq &f^{\left( 2\right) }\left( s\right) \left( \left\vert \theta
_{s}\right\vert ^{3}+\left\vert \theta _{s}\right\vert \left\vert \nabla
\theta _{s}\right\vert +\left\vert \nabla ^{2}\theta _{s}\right\vert \right)
,
\end{eqnarray*}%
for a continuous function $f^{\left( 2\right) }.$

Note that in these cases for $k=0,1,2,$ we can symbolically write 
\begin{equation}
\nabla ^{k}db_{s}=\sum_{\left( i_{1},...,i_{k+1}\right) \in
I}a_{i_{1}...i_{k+1}}\left( \left( \theta _{s}\right) ^{i_{1}},\left( \nabla
\theta _{s}\right) ^{i_{2}},...,\left( \nabla ^{k}\theta _{s}\right)
^{i_{k+1}}\right)  \label{nablakdbs}
\end{equation}%
where $a_{i_{1}...i_{k+1}}$ are multilinear maps that depend on $s$ and $%
I=\left\{ \left( i_{1},...,i_{k+1}\right) \in \mathbb{N}_{0}^{k+1},\ \text{%
such that }\sum_{m=1}^{k+1}mi_{m}=k+1\right\} .$ Proceeding by induction,
consider 
\begin{eqnarray}
\nabla ^{k+1}db_{s} &=&\nabla \sum_{\left( i_{1},...,i_{k+1}\right) \in
I}a_{i_{1}...i_{k+1}}\left( \left( \theta _{s}\right) ^{i_{1}},\left( \nabla
\theta _{s}\right) ^{i_{2}},...,\left( \nabla ^{k}\theta _{s}\right)
^{i_{k+1}}\right)  \label{nablakp1db} \\
&=&\sum_{\left( i_{1},...,i_{k+1}\right) \in I}a_{i_{1}...i_{k+1}}^{\left(
0\right) }\left( \left( \theta _{s}\right) ^{i_{1}+1},\left( \nabla \theta
_{s}\right) ^{i_{2}},...,\left( \nabla ^{k}\theta _{s}\right)
^{i_{k+1}}\right)  \notag \\
&&+\sum_{\left( i_{1},...,i_{k+1}\right) \in I}a_{i_{1}...i_{k+1}}^{\left(
1\right) }\left( \left( \theta _{s}\right) ^{i_{1}-1},\left( \nabla \theta
_{s}\right) ^{i_{2}+1},...,\left( \nabla ^{k}\theta _{s}\right)
^{i_{k+1}}\right)  \notag \\
&&+...+\sum_{\left( i_{1},...,i_{k+1}\right) \in
I}a_{i_{1}...i_{k+1}}^{\left( k+1\right) }\left( \left( \theta _{s}\right)
^{i_{1}},\left( \nabla \theta _{s}\right) ^{i_{2}},...,\left( \nabla
^{k-1}\theta _{s}\right) ^{i_{k}-1},\left( \nabla ^{k}\theta _{s}\right)
^{i_{k+1}+1}\right)  \notag \\
&&+\sum_{\left( i_{1},...,i_{k+1}\right) \in I}a_{i_{1}...i_{k+1}}^{\left(
k+2\right) }\left( \left( \theta _{s}\right) ^{i_{1}},\left( \nabla \theta
_{s}\right) ^{i_{2}},...,\left( \nabla ^{k}\theta _{s}\right)
^{i_{k+1}-1},\nabla ^{k+1}\theta _{s}\right)  \notag
\end{eqnarray}%
where $a_{i_{1}...i_{k+1}}^{\left( j\right) }$ are new multilinear forms.
The form $a_{i_{1}...i_{k+1}}^{\left( 0\right) }$ is obtained from $\nabla
a_{i_{1}...i_{k+1}},$ and adds another $\theta _{s}.$ Note that since $%
\sum_{m=1}^{k+1}mi_{m}=k+1,$ replacing $i_{1}$ with $i_{1}+1,$ increases
this sum by $1$.

The remaining terms in (\ref{nablakp1db}) are obtained from differentiating
the derivatives of $\theta _{s}.$ Symbolically,%
\begin{equation*}
\nabla \left( \left( \nabla ^{j}\theta \right) ^{i_{j+1}}\right) \sim \left(
\nabla ^{j}\theta \right) ^{i_{j+1}-1}\left( \nabla ^{j+1}\theta \right) ,
\end{equation*}%
so differentiation of each term decreases the power of $\nabla ^{j}\theta $
by one, and adds another $\nabla ^{j+1}\theta .$ Again, in the sum $%
\sum_{m=1}^{k+1}mi_{m}=k+1,$ replacing $ji_{j}+\left( j+1\right) i_{j+1}$
with $j\left( i_{j}-1\right) +\left( j+1\right) \left( i_{j+1}+1\right) $
increases the sum by $1.$

Overall we can then rewrite (\ref{nablakp1db}) in the form 
\begin{equation*}
\nabla ^{k+1}db_{s}=\sum_{\left( i_{1},...,i_{k+1},i_{k+2}\right) \in
I^{\prime }}a_{i_{1}...i_{k+2}}\left( \left( \theta _{s}\right)
^{i_{1}},\left( \nabla \theta _{s}\right) ^{i_{2}},...,\left( \nabla
^{k+1}\theta _{s}\right) ^{i_{k+2}}\right)
\end{equation*}%
where $I^{\prime }=\left\{ \left( i_{1},...,i_{k+2}\right) \in \mathbb{N}%
_{0}^{k+2},\ \text{such that }\sum_{m=1}^{k+2}mi_{m}=k+2\right\} ,$ hence
proving the inductive step. The estimate then follows immediately.
\end{proof}

\begin{lemma}
\label{lemThetaPWest}We have the following pointwise estimates 
\begin{equation*}
\left\vert \nabla ^{j}\theta _{A\left( t\right) s}\right\vert \leq
Ce^{C\left( j+1\right) t\left\vert \xi \right\vert }p_{j}\left( t\right) ,
\end{equation*}%
where 
\begin{equation}
p_{j}\left( t\right) =t\left\vert \nabla ^{j}d\xi \right\vert
+\sum_{J_{j}}t^{k_{1}+...+k_{j}}\left\vert \theta _{s}\right\vert
^{i_{1}}\left\vert \nabla \theta _{s}\right\vert ^{i_{2}}...\left\vert
\nabla ^{j}\theta _{s}\right\vert ^{i_{j}}\left\vert d\xi \right\vert
^{k_{1}}\left\vert \nabla d\xi \right\vert ^{k_{2}}...\left\vert \nabla
^{j-1}d\xi \right\vert ^{k_{j}},  \label{pjt}
\end{equation}%
with $J_{j}=\left\{ \left( i_{1},...,i_{j},k_{1},...,k_{j}\right) \in 
\mathbb{N}_{0}^{2j}:\sum_{m=1}^{j}mi_{m}+\sum_{m^{\prime }=1}^{j}m^{\prime
}k_{m^{\prime }}=j+1\right\} .$

In particular, for $k=1$ and $k=2,$ we have 
\begin{eqnarray}
\left\vert \nabla \theta _{A\left( t\right) s}\right\vert &\leq
&Ce^{2Ct\left\vert \xi \right\vert }\left( \left\vert \theta _{s}\right\vert
^{2}+t\left\vert d\xi \right\vert \left\vert \theta _{s}\right\vert
+t^{2}\left\vert d\xi \right\vert ^{2}+\left\vert \nabla \theta
_{s}\right\vert +t\left\vert \nabla d\xi \right\vert \right) \\
\left\vert \nabla ^{2}\theta _{A\left( t\right) s}\right\vert &\leq
&Ce^{3Ct\left\vert \xi \right\vert }\left( \left\vert \nabla ^{2}\theta
_{s}\right\vert +\left\vert \nabla \theta _{s}\right\vert \left\vert \theta
_{s}\right\vert +\left\vert \theta _{s}\right\vert ^{3}+t\left\vert \nabla
\theta _{s}\right\vert \left\vert d\xi \right\vert +t\left\vert \theta
_{s}\right\vert ^{2}\left\vert d\xi \right\vert +t\left\vert \theta
_{s}\right\vert \left\vert \nabla d\xi \right\vert \right. \\
&&\left. +t^{2}\left\vert \theta _{s}\right\vert \left\vert d\xi \right\vert
^{2}+t^{3}\left\vert d\xi \right\vert ^{3}+t\left\vert \nabla ^{2}d\xi
\right\vert \right) .  \notag
\end{eqnarray}
\end{lemma}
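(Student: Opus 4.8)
The plan is to argue by induction on $j$, taking the pointwise bound (\ref{thetaAsest}) as the base case $j=0$ (so that $p_{0}(t)=\left\vert \theta _{s}\right\vert +t\left\vert d\xi \right\vert $ with $C=1$). For the inductive step I would differentiate the evolution equation (\ref{dthetadt}) covariantly $j$ times. Writing its right-hand side as $\mathop{\rm ad}\nolimits_{\xi }^{\left( A\left( t\right) s\right) }\theta _{A\left( t\right) s}+d\xi =b_{A\left( t\right) s}\left( \xi ,\theta _{A\left( t\right) s}\right) +d\xi $ and commuting $\nabla ^{j}$ past the $t$-derivative (the connection on $M$ is $t$-independent), one obtains for $\nabla ^{j}\theta _{A\left( t\right) s}$ an inhomogeneous linear ODE in $t$ whose homogeneous part is again the operator $\mathop{\rm ad}\nolimits_{\xi }^{\left( A\left( t\right) s\right) }$ acting on the $\mathfrak{l}$-factor, i.e. exactly (\ref{etabrackeq}). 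Hence the same evolution operator $U_{t\xi }^{\left( s\right) }$ governs it, and with initial data $\nabla ^{j}\theta _{A\left( 0\right) s}=\nabla ^{j}\theta _{s}$ the solution is
\begin{equation*}
\nabla ^{j}\theta _{A\left( t\right) s}=U_{t\xi }^{\left( s\right) }\nabla ^{j}\theta _{s}+U_{t\xi }^{\left( s\right) }\int_{0}^{t}\left( U_{\tau \xi }^{\left( s\right) }\right) ^{-1}F_{j}\left( \tau \right) d\tau ,
\end{equation*}
where $F_{j}$ is the inhomogeneous term produced by the Leibniz commutator.

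Next I would identify $F_{j}$ explicitly. Besides the source $\nabla ^{j}d\xi $, the Leibniz rule applied to $b_{A\left( t\right) s}\left( \xi ,\theta _{A\left( t\right) s}\right) $ produces the terms $\left( \nabla ^{a}b_{A\left( t\right) s}\right) \left( \nabla ^{b}\xi ,\nabla ^{c}\theta _{A\left( t\right) s}\right) $ with $a+b+c=j$ and $c<j$ (the term $a=b=0,\ c=j$ being the homogeneous part already accounted for). Each factor is controlled by results already available: for $a\geq 1$ one has $\nabla ^{a}b_{A\left( t\right) s}=\nabla ^{a-1}\left( db_{A\left( t\right) s}\right) $, which by (\ref{nablakbest}), together with the identity $db_{s}=a_{s}\left( \cdot ,\cdot ,\theta _{s}\right) $ from (\ref{dbrack}), is bounded by a continuous function of $A\left( t\right) s$ times a weight-$a$ polynomial in $\theta _{A\left( t\right) s},\dots ,\nabla ^{a-1}\theta _{A\left( t\right) s}$; for $a=0$ it is bounded by $\sup_{\mathbb{L}}\left\vert b\right\vert $; the factor $\nabla ^{b}\xi $ equals $\nabla ^{b-1}d\xi $ for $b\geq 1$ and is a bare $\xi $ for $b=0$; and $\nabla ^{c}\theta _{A\left( t\right) s}$ with $c<j$ is bounded by the inductive hypothesis. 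Compactness of $\mathbb{L}$ bounds all the functions $f^{\left( \cdot \right) }$ by constants, so every factor reduces to the base quantities $\left\vert \nabla ^{m}\theta _{s}\right\vert $ and $\left\vert \nabla ^{m}d\xi \right\vert $, the undifferentiated $\left\vert \xi \right\vert $, and powers of $t$. Assigning weight $m+1$ to $\left\vert \nabla ^{m}\theta _{s}\right\vert $ and to $\left\vert \nabla ^{m}d\xi \right\vert $, a direct count shows every term of $F_{j}$ has total weight $j+1$, possibly multiplied by one bare factor $\left\vert \xi \right\vert $ coming from a $b=0$ contribution.

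Finally I would insert these bounds into the solution and apply the operator estimate (\ref{Uest}) in the form $\left\vert U_{t\xi }^{\left( s\right) }\left( U_{\tau \xi }^{\left( s\right) }\right) ^{-1}\right\vert \leq e^{C\left( t-\tau \right) \left\vert \xi \right\vert }$. The whole estimate then rests on the elementary integrals $\int_{0}^{t}e^{C\left( t-\tau \right) \left\vert \xi \right\vert }\tau ^{K}d\tau $ and $\int_{0}^{t}e^{C\left( t-\tau \right) \left\vert \xi \right\vert }\left\vert \xi \right\vert e^{Cw\tau \left\vert \xi \right\vert }\tau ^{K}d\tau $. The key bookkeeping is that a term carrying a new $d\xi $-factor (either the source $\nabla ^{j}d\xi $ or a $b\geq 1$ commutator term) has no bare $\left\vert \xi \right\vert $, and integration then supplies exactly one extra power of $t$, matching the new $d\xi $; whereas a $b=0$ term carries a bare $\left\vert \xi \right\vert $, which integration absorbs into the exponential without producing a new $t$, consistently with the absence of a new $d\xi $. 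Tracking the exponents shows the accumulated exponential never exceeds $e^{C\left( j+1\right) t\left\vert \xi \right\vert }$, the power of $t$ always equals the number of $d\xi $-factors, and the polynomial weight stays at $j+1$; this is precisely the form of $p_{j}\left( t\right) $ in (\ref{pjt}). The two displayed cases $j=1,2$ are then obtained by running this procedure explicitly.

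I expect the main obstacle to be the multi-index bookkeeping in the last two steps, namely maintaining the three invariants simultaneously — total weight $j+1$, number of $t$-factors equal to the number of $d\xi $-factors, and exponential weight at most $j+1$ — under the full Leibniz expansion of $\nabla ^{j}b_{A\left( t\right) s}\left( \xi ,\theta _{A\left( t\right) s}\right) $ and under both integration mechanisms at once, rather than in any single estimate. The $b=0$ terms, which produce a bare $\left\vert \xi \right\vert $ that must be converted into exponential growth instead of a $t$-power, are the delicate point that makes the exponent reach exactly $C\left( j+1\right) $ and no more.
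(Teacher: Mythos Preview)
Your proposal is correct and follows essentially the same approach as the paper: differentiate the evolution equation (\ref{dthetadt}) to obtain an inhomogeneous ODE for $\nabla^{j}\theta_{A(t)s}$ with the same homogeneous part, solve via the operator $U_{t\xi}^{(s)}$, bound the inhomogeneity using (\ref{nablakbest}) and the inductive hypothesis, and then split the integration into the cases with and without a bare $\left\vert \xi \right\vert$ factor (the paper's $k_{2}=0$ versus $k_{2}>0$) to obtain the correct $t$-power and exponential weight. The paper also works out $j=1,2$ explicitly before stating the inductive hypothesis in the form $p_{j}(t)$, exactly as you outline.
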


\begin{proof}
By differentiating (\ref{dthetadt}), we see that the $k$-th covariant
derivative of $\theta _{A\left( t\right) s}$ satisfies the following initial
value problem 
\begin{eqnarray*}
\frac{d\nabla ^{k}\theta _{A\left( t\right) s}}{dt} &=&%
\sum_{k_{1}+k_{2}+k_{3}=k}\left( \nabla ^{k_{1}}b_{A\left( t\right)
s}\right) \left( \nabla ^{k_{2}}\xi ,\nabla ^{k_{3}}\theta _{A\left(
t\right) s}\right) +\nabla ^{k}d\xi \\
\nabla ^{k}\theta _{A\left( 0\right) s} &=&\nabla ^{k}\theta _{s}
\end{eqnarray*}%
for $k_{1},k_{2},k_{3}\geq 0.$ In particular, 
\begin{equation*}
\frac{d\nabla ^{k}\theta _{A\left( t\right) s}}{dt}=\left[ \xi ,\nabla
^{k}\theta _{A\left( t\right) s}\right] ^{A\left( t\right) s}+\sum 
_{\substack{ k_{1}+k_{2}+k_{3}=k  \\ k_{3}<k}}\left( \nabla
^{k_{1}}b_{A\left( t\right) s}\right) \left( \nabla ^{k_{2}}\xi ,\nabla
^{k_{3}}\theta _{A\left( t\right) s}\right) +\nabla ^{k}d\xi ,
\end{equation*}%
and thus the solution of the ODE is%
\begin{eqnarray}
\nabla ^{k}\theta _{A\left( t\right) s} &=&U_{t\xi }^{\left( s\right)
}\nabla ^{k}\theta _{s}+\left( U_{t\xi }^{\left( s\right)
}\int_{0}^{t}\left( U_{\tau \xi }^{\left( s\right) }\right) ^{-1}d\tau
\right) \nabla ^{k}d\xi  \label{nabktheta} \\
&&+U_{t\xi }^{\left( s\right) }\int_{0}^{t}\left( U_{\tau \xi }^{\left(
s\right) }\right) ^{-1}\sum_{\substack{ k_{1}+k_{2}+k_{3}=k  \\ k_{3}<k}}%
\left( \nabla ^{k_{1}}b_{A\left( \tau \right) s}\right) \left( \nabla
^{k_{2}}\xi ,\nabla ^{k_{3}}\theta _{A\left( \tau \right) s}\right) d\tau . 
\notag
\end{eqnarray}%
To estimate $\left\vert \nabla ^{k}\theta _{A\left( t\right) s}\right\vert $%
, consider 
\begin{eqnarray*}
\left\vert U_{t\xi }^{\left( s\right) }\nabla ^{k}\theta _{A\left( t\right)
s}\right\vert &\leq &e^{Ct\left\vert \xi \right\vert }\left\vert \nabla
^{k}\theta _{s}\right\vert \\
\left\vert \left( U_{t\xi }^{\left( s\right) }\int_{0}^{t}\left( U_{\tau \xi
}^{\left( s\right) }\right) ^{-1}d\tau \right) \nabla ^{k}d\xi \right\vert
&\leq &te^{Ct\left\vert \xi \right\vert }\left\vert \nabla ^{k}d\xi
\right\vert
\end{eqnarray*}%
using (\ref{Uest}) and (\ref{intUest}), and moreover, 
\begin{equation*}
\left\vert \left( \nabla ^{k_{1}}b_{A\left( \tau \right) s}\right) \left(
\nabla ^{k_{2}}\xi ,\nabla ^{k_{3}}\theta _{A\left( \tau \right) s}\right)
\right\vert \leq \left\vert \nabla ^{k_{1}}b_{A\left( \tau \right)
s}\right\vert \left\vert \nabla ^{k_{2}}\xi \right\vert \left\vert \nabla
^{k_{3}}\theta _{A\left( \tau \right) s}\right\vert .
\end{equation*}%
From (\ref{nablakbest}), and using the fact that $\mathbb{L}$ is compact, we
know that 
\begin{equation}
\left\vert \nabla ^{k_{1}}b_{A\left( \tau \right) s}\right\vert \leq
C\sum_{\left( i_{1},...,i_{k_{1}}\right) \in I_{k_{1}}}\left\vert \theta
_{A\left( \tau \right) s}\right\vert ^{i_{1}}\left\vert \nabla \theta
_{A\left( \tau \right) s}\right\vert ^{i_{2}}...\left\vert \nabla
^{k_{1}-1}\theta _{A\left( \tau \right) s}\right\vert ^{i_{k_{1}}}
\end{equation}%
where $I_{k_{1}}=\left\{ \left( i_{1},...,i_{k_{1}}\right) \in \mathbb{N}%
^{k_{1}},\ \text{such that }\sum_{m=1}^{k_{1}}mi_{m}=k_{1}\right\} $ and $C$
is a constant that depends only on $\mathbb{L}.$

To proceed with an induction argument, we now need to complete the base
step. First, from (\ref{thetaAsest}), we know that 
\begin{equation*}
\left\vert \theta _{A\left( t\right) s}\right\vert \leq e^{Ct\left\vert \xi
\right\vert }\left( \left\vert \theta _{s}\right\vert +t\left\vert d\xi
\right\vert \right) .
\end{equation*}%
Thus, for $k=1$, 
\begin{eqnarray*}
\left\vert \nabla \theta _{A\left( t\right) s}\right\vert  &\leq
&e^{Ct\left\vert \xi \right\vert }\left( \left\vert \nabla \theta
_{s}\right\vert +t\left\vert \nabla d\xi \right\vert \right)  \\
&&+Ce^{Ct\left\vert \xi \right\vert }\int_{0}^{t}e^{-C\tau \left\vert \xi
\right\vert }\left( \left\vert b_{A\left( \tau \right) s}\right\vert
\left\vert d\xi \right\vert \left\vert \theta _{A\left( \tau \right)
s}\right\vert +\left\vert db_{A\left( \tau \right) s}\right\vert \left\vert
\xi \right\vert \left\vert \theta _{A\left( \tau \right) s}\right\vert
\right) d\tau  \\
&\leq &e^{Ct\left\vert \xi \right\vert }\left( \left\vert \nabla \theta
_{s}\right\vert +t\left\vert \nabla d\xi \right\vert \right)  \\
&&+Ce^{Ct\left\vert \xi \right\vert }\int_{0}^{t}e^{-C\tau \left\vert \xi
\right\vert }\left( \left\vert d\xi \right\vert \left\vert \theta _{A\left(
\tau \right) s}\right\vert +\left\vert \theta _{A\left( \tau \right)
s}\right\vert ^{2}\left\vert \xi \right\vert \right) d\tau 
\end{eqnarray*}%
where we used the estimate $\left\vert db_{A\left( \tau \right)
s}\right\vert \leq C\left\vert \theta _{A\left( \tau \right) s}\right\vert $
(\ref{dbsest}). Moreover, 
\begin{eqnarray*}
\int_{0}^{t}e^{-C\tau \left\vert \xi \right\vert }\left\vert \nabla \xi
\right\vert \left\vert \theta _{A\left( \tau \right) s}\right\vert d\tau 
&\leq &\int_{0}^{t}\left\vert \nabla \xi \right\vert \left( \left\vert
\theta _{s}\right\vert +\tau \left\vert d\xi \right\vert \right) d\tau  \\
&\leq &t\left\vert \nabla \xi \right\vert \left\vert \theta _{s}\right\vert
+t^{2}\left\vert d\xi \right\vert ^{2} \\
\int_{0}^{t}e^{-C\tau \left\vert \xi \right\vert }\left\vert \theta
_{A\left( \tau \right) s}\right\vert ^{2}\left\vert \xi \right\vert d\tau 
&\leq &\int_{0}^{t}e^{C\tau \left\vert \xi \right\vert }\left( \left(
\left\vert \theta _{s}\right\vert +\tau \left\vert d\xi \right\vert \right)
^{2}\left\vert \xi \right\vert \right) d\tau  \\
&\leq &\frac{1}{C}e^{Ct\left\vert \xi \right\vert }\left( \left\vert \theta
_{s}\right\vert +t\left\vert d\xi \right\vert \right) ^{2}.
\end{eqnarray*}%
Hence, for some new overall constant $C$, we have 
\begin{equation}
\left\vert \nabla \theta _{A\left( t\right) s}\right\vert \leq
Ce^{2Ct\left\vert \xi \right\vert }\left( \left\vert \theta _{s}\right\vert
^{2}+t\left\vert d\xi \right\vert \left\vert \theta _{s}\right\vert
+t^{2}\left\vert d\xi \right\vert ^{2}+t\left\vert \nabla d\xi \right\vert
+\left\vert \nabla \theta _{s}\right\vert \right) .  \label{nabthetaest}
\end{equation}%
Let us also complete the $k=2$ case. 
\begin{eqnarray*}
\left\vert \nabla ^{2}\theta _{A\left( t\right) s}\right\vert  &\leq
&e^{Ct\left\vert \xi \right\vert }\left( \left\vert \nabla ^{2}\theta
_{s}\right\vert +t\left\vert \nabla ^{2}d\xi \right\vert \right)  \\
&&+Ce^{Ct\left\vert \xi \right\vert }\int_{0}^{t}e^{-C\tau \left\vert \xi
\right\vert }\left( \sum_{k_{1}+k_{2}+k_{3}=2}\left\vert \nabla
^{k_{1}}b_{A\left( \tau \right) s}\right\vert \left\vert \nabla ^{k_{2}}\xi
\right\vert \left\vert \nabla ^{k_{3}}\theta _{A\left( \tau \right)
s}\right\vert \right) d\tau .
\end{eqnarray*}%
More explicitly, 
\begin{eqnarray*}
\sum_{\substack{ k_{1}+k_{2}+k_{3}=2 \\ k_{3}<2}}\left\vert \nabla
^{k_{1}}b_{A\left( \tau \right) s}\right\vert \left\vert \nabla ^{k_{2}}\xi
\right\vert \left\vert \nabla ^{k_{3}}\theta _{A\left( \tau \right)
s}\right\vert  &=&\left\vert \nabla db_{A\left( \tau \right) s}\right\vert
\left\vert \xi \right\vert \left\vert \theta _{A\left( \tau \right)
s}\right\vert  \\
&&+\left\vert db_{A\left( \tau \right) s}\right\vert \left\vert d\xi
\right\vert \left\vert \theta _{A\left( \tau \right) s}\right\vert  \\
&&+\left\vert db_{A\left( \tau \right) s}\right\vert \left\vert \xi
\right\vert \left\vert \nabla \theta _{A\left( \tau \right) s}\right\vert  \\
&&+\left\vert b_{A\left( \tau \right) s}\right\vert \left\vert \nabla d\xi
\right\vert \left\vert \theta _{A\left( \tau \right) s}\right\vert .
\end{eqnarray*}%
Let 
\begin{eqnarray*}
p_{0}\left( \tau \right)  &=&\left\vert \theta _{s}\right\vert +\tau
\left\vert d\xi \right\vert  \\
p_{1}\left( \tau \right)  &=&\left\vert \theta _{s}\right\vert ^{2}+\tau
\left\vert d\xi \right\vert \left\vert \theta _{s}\right\vert +\tau
^{2}\left\vert d\xi \right\vert ^{2}+\left\vert \nabla \theta
_{s}\right\vert +\tau \left\vert \nabla d\xi \right\vert 
\end{eqnarray*}%
so that 
\begin{eqnarray*}
\left\vert \theta _{A\left( \tau \right) s}\right\vert  &\leq &e^{C\tau
\left\vert \xi \right\vert }p_{0}\left( \tau \right)  \\
\left\vert \nabla \theta _{A\left( t\right) s}\right\vert  &\leq &Ce^{2C\tau
\left\vert \xi \right\vert }p_{1}\left( \tau \right) .
\end{eqnarray*}%
Using also (\ref{dbsest}) and (\ref{d2best}) we have 
\begin{eqnarray*}
\left\vert \nabla db_{A\left( \tau \right) s}\right\vert \left\vert \xi
\right\vert \left\vert \theta _{A\left( \tau \right) s}\right\vert  &\leq
&Ce^{3C\left\vert \xi \right\vert \tau }\left\vert \xi \right\vert
p_{0}\left( p_{0}^{2}+p_{1}\right)  \\
\left\vert db_{A\left( \tau \right) s}\right\vert \left\vert d\xi
\right\vert \left\vert \theta _{A\left( \tau \right) s}\right\vert  &\leq
&Ce^{2C\left\vert \xi \right\vert \tau }p_{0}^{2}\left\vert d\xi \right\vert 
\\
\left\vert db_{A\left( \tau \right) s}\right\vert \left\vert \xi \right\vert
\left\vert \nabla \theta _{A\left( \tau \right) s}\right\vert  &\leq
&Ce^{2C\left\vert \xi \right\vert \tau }\left\vert \xi \right\vert p_{0}p_{1}
\\
\left\vert b_{A\left( \tau \right) s}\right\vert \left\vert \nabla d\xi
\right\vert \left\vert \theta _{A\left( \tau \right) s}\right\vert  &\leq
&Ce^{C\left\vert \xi \right\vert \tau }\left\vert \nabla d\xi \right\vert
p_{0}.
\end{eqnarray*}%
Since $p_{0}$ and $p_{1}$ are non-decreasing functions of $\tau $, we can
evaluate them at $\tau =t$. In particular, 
\begin{eqnarray*}
e^{Ct\left\vert \xi \right\vert }\int_{0}^{t}e^{-C\tau \left\vert \xi
\right\vert }\left\vert \nabla db_{A\left( \tau \right) s}\right\vert
\left\vert \xi \right\vert \left\vert \theta _{A\left( \tau \right)
s}\right\vert d\tau  &\leq &C\left\vert \xi \right\vert p_{0}\left(
p_{0}^{2}+p_{1}\right) e^{Ct\left\vert \xi \right\vert
}\int_{0}^{t}e^{2C\tau \left\vert \xi \right\vert }d\tau  \\
&\leq &Ce^{3C\tau \left\vert \xi \right\vert }p_{0}\left(
p_{0}^{2}+p_{1}\right)  \\
e^{Ct\left\vert \xi \right\vert }\int_{0}^{t}e^{-C\tau \left\vert \xi
\right\vert }\left\vert db_{A\left( \tau \right) s}\right\vert \left\vert
d\xi \right\vert \left\vert \theta _{A\left( \tau \right) s}\right\vert
d\tau  &\leq &Cte^{2\left\vert \xi \right\vert t}p_{0}^{2}\left\vert d\xi
\right\vert \leq Cte^{3\left\vert \xi \right\vert t}p_{0}^{2}\left\vert d\xi
\right\vert ,
\end{eqnarray*}%
and similarly for other terms. Overall, we find 
\begin{eqnarray*}
\left\vert \nabla ^{2}\theta _{A\left( t\right) s}\right\vert  &\leq
&Ce^{3Ct\left\vert \xi \right\vert }\left( p_{0}\left(
p_{0}^{2}+p_{1}\right) +tp_{0}^{2}\left\vert d\xi \right\vert
+p_{0}p_{1}+t\left\vert \nabla d\xi \right\vert p_{0}+\left\vert \nabla
^{2}\theta _{s}\right\vert +t\left\vert \nabla ^{2}d\xi \right\vert \right) 
\\
&\leq &Ce^{3Ct\left\vert \xi \right\vert }\left( \left\vert \nabla
^{2}\theta _{s}\right\vert +\left\vert \nabla \theta _{s}\right\vert
\left\vert \theta _{s}\right\vert +\left\vert \theta _{s}\right\vert
^{3}+t\left\vert \nabla \theta _{s}\right\vert \left\vert d\xi \right\vert
+t\left\vert \theta _{s}\right\vert ^{2}\left\vert d\xi \right\vert
+t\left\vert \theta _{s}\right\vert \left\vert \nabla d\xi \right\vert
\right.  \\
&&\left. +t\left\vert \nabla ^{2}d\xi \right\vert +t^{2}\left\vert \theta
_{s}\right\vert \left\vert d\xi \right\vert ^{2}+t^{3}\left\vert d\xi
\right\vert ^{3}\right) .
\end{eqnarray*}%
Suppose for each $j<k,$ 
\begin{equation*}
\left\vert \nabla ^{j}\theta _{A\left( t\right) s}\right\vert \leq
Ce^{\left( j+1\right) Ct\left\vert \xi \right\vert }p_{j}\left( t\right) ,
\end{equation*}%
where 
\begin{equation*}
p_{j}\left( t\right) =t\left\vert \nabla ^{j}d\xi \right\vert
+\sum_{J_{j}}t^{k_{1}+...+k_{j}}\left\vert \theta _{s}\right\vert
^{i_{1}}\left\vert \nabla \theta _{s}\right\vert ^{i_{2}}...\left\vert
\nabla ^{j}\theta _{s}\right\vert ^{i_{j+1}}\left\vert d\xi \right\vert
^{k_{1}}\left\vert \nabla d\xi \right\vert ^{k_{2}}...\left\vert \nabla
^{j-1}d\xi \right\vert ^{k_{j}},
\end{equation*}%
where $J_{j}=\left\{ \left( i_{1},...,i_{j+1},k_{1},...,k_{j+1}\right) \in 
\mathbb{N}^{2j+2}:\sum_{m=1}^{j+1}mi_{m}+\sum_{m=1}^{j}mk_{m}=j+1\right\} .$

Therefore, 
\begin{eqnarray*}
\left\vert \sum_{\substack{ k_{1}+k_{2}+k_{3}=k \\ k_{3}<k}}\left( \nabla
^{k_{1}}b_{A\left( \tau \right) s}\right) \left( \nabla ^{k_{2}}\xi ,\nabla
^{k_{3}}\theta _{A\left( \tau \right) s}\right) \right\vert  &\leq
&C\sum_{\left( i_{0},i_{1},...,i_{k}\right) \in I_{k}^{\prime }}\left\vert
\nabla ^{i_{0}}\xi \right\vert \left\vert \theta _{A\left( \tau \right)
s}\right\vert ^{i_{1}}\left\vert \nabla \theta _{A\left( \tau \right)
s}\right\vert ^{i_{2}}...\left\vert \nabla ^{k-1}\theta _{A\left( \tau
\right) s}\right\vert ^{i_{k}} \\
&\leq &C\sum_{\left( i_{0},i_{1},...,i_{k}\right) \in I_{k}^{\prime
}}e^{C\left( k+1-i_{0}\right) \tau \left\vert \xi \right\vert }\left\vert
\nabla ^{i_{0}}\xi \right\vert
p_{0}^{i_{1}}p_{1}^{i_{2}}...p_{k_{1}-1}^{i_{k_{1}}} \\
&\leq &C\sum_{\substack{ \left( i_{0},i_{1},...,i_{k}\right) \in
I_{k}^{\prime } \\ i_{0}>0}}e^{C\left( k+1-i_{0}\right) \tau \left\vert \xi
\right\vert }\left\vert \nabla ^{i_{0}}\xi \right\vert
p_{0}^{i_{1}}p_{1}^{i_{2}}...p_{k-1}^{i_{k}} \\
&&+C\sum_{\left( 0,i_{1},...,i_{k}\right) \in I_{k}^{\prime }}e^{C\left(
k+1\right) \tau \left\vert \xi \right\vert }\left\vert \xi \right\vert
p_{0}^{i_{1}}p_{1}^{i_{2}}...p_{k_{1}-1}^{i_{k_{1}}},
\end{eqnarray*}%
where $I_{k}^{\prime }=\left\{ \left( i_{0},i_{1},...,i_{k}\right) \in 
\mathbb{N}_{0}^{k+1},\ \text{such that }\sum_{m=1}^{k}mi_{m}+i_{0}=k+1\right%
\} .$

Now, from (\ref{nabktheta}), we find 
\begin{eqnarray*}
\left\vert \nabla ^{k}\theta _{A\left( t\right) s}\right\vert &\leq
&Ce^{Ct\left\vert \xi \right\vert }\left\vert \nabla ^{k}\theta
_{s}\right\vert +te^{Ct\left\vert \xi \right\vert }\left\vert \nabla
^{k}d\xi \right\vert \\
&&+Ce^{Ct\left\vert \xi \right\vert }\int_{0}^{t}\sum_{\substack{ \left(
i_{1},...,i_{k_{1}}\right) \in I_{k_{1}}  \\ k_{1}+k_{2}=k,\ k_{2}>0}}%
e^{Ck_{1}\tau \left\vert \xi \right\vert
}p_{0}^{i_{1}}p_{1}^{i_{2}}...p_{k_{1}-1}^{i_{k_{1}}}\left\vert \nabla
^{k_{2}}\xi \right\vert d\tau \\
&&+Ce^{Ct\left\vert \xi \right\vert }\int_{0}^{t}\sum_{\left(
i_{1},...,i_{k}\right) \in I_{k}}e^{Ck\tau \left\vert \xi \right\vert
}p_{0}^{i_{1}}p_{1}^{i_{2}}...p_{k-1}^{i_{k}}\left\vert \xi \right\vert d\tau
\\
&\leq &Ce^{Ct\left\vert \xi \right\vert }\left\vert \nabla ^{k}\theta
_{s}\right\vert +te^{Ct\left\vert \xi \right\vert }\left\vert \nabla
^{k}d\xi \right\vert \\
&&+C\sum_{\substack{ \left( i_{1},...,i_{k_{1}}\right) \in I_{k_{1}}  \\ %
k_{1}+k_{2}=k,\ k_{2}>0}}te^{C\left( k_{1}+1\right) t\left\vert \xi
\right\vert }p_{0}^{i_{1}}p_{1}^{i_{2}}...p_{k_{1}-1}^{i_{k_{1}}}\left\vert
\nabla ^{k_{2}}\xi \right\vert \\
&&+C\sum_{\left( i_{1},...,i_{k}\right) \in I_{k}}e^{C\left( k+1\right)
t\left\vert \xi \right\vert }p_{0}^{i_{1}}p_{1}^{i_{2}}...p_{k-1}^{i_{k}},
\end{eqnarray*}%
where we have bounded $p_{i}\left( \tau \right) \leq p_{i}\left( t\right) ,$
since these functions are non-decreasing. Also, in the first integral, we
bounded $\int_{0}^{t}e^{Ck_{1}\tau \left\vert \xi \right\vert }d\tau \leq
te^{Ck_{1}t\left\vert \xi \right\vert }$ and in the second integral, we used 
$\int_{0}^{t}e^{Ck\tau \left\vert \xi \right\vert }\left\vert \xi
\right\vert d\tau \leq C^{\prime }e^{Ck\left\vert \xi \right\vert }$ for
some new constant $C^{\prime }.$ Further, we can bound%
\begin{eqnarray*}
\left\vert \nabla ^{k}\theta _{A\left( t\right) s}\right\vert &\leq
&Ce^{C\left( k+1\right) t\left\vert \xi \right\vert }\left( \left\vert
\nabla ^{k}\theta _{s}\right\vert +\left\vert \nabla ^{k}d\xi \right\vert
\right. \\
&&+\sum_{\substack{ \left( i_{1},...,i_{k_{1}}\right) \in I_{k_{1}}  \\ %
k_{1}+k_{2}=k,\ k_{2}>0}}%
tp_{0}^{i_{1}}p_{1}^{i_{2}}...p_{k_{1}-1}^{i_{k_{1}}}\left\vert \nabla
^{k_{2}}\xi \right\vert \\
&&\left. +\sum_{\left( i_{1},...,i_{k}\right) \in
I_{k}}p_{0}^{i_{1}}p_{1}^{i_{2}}...p_{k-1}^{i_{k}}\right) \\
&\leq &Ce^{C\left( k+1\right) t\left\vert \xi \right\vert }p_{k}\left(
t\right) .
\end{eqnarray*}
\end{proof}

\begin{corollary}
For $k>0,\ b_{A\left( t\right) s}\ \ $satisfies 
\begin{equation*}
\left\vert \nabla ^{k}b_{A\left( t\right) s}\right\vert \lesssim
e^{Ckt\left\vert \xi \right\vert }p_{k-1}\left( t\right) ,
\end{equation*}%
where $p_{k}\left( t\right) $ is given by (\ref{pjt}).
\end{corollary}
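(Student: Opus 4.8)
The plan is to derive this as a direct repackaging of the estimates already established in the proof of Lemma~\ref{lemThetaPWest}, followed by a short grading argument. The starting point is the pointwise bound for $\nabla^{k}b_{A(t)s}$ obtained there by applying (\ref{nablakbest}) to the section $A(t)s$ in place of $s$ and using compactness of $\mathbb{L}$ to absorb the factor $f^{(k-1)}$ into a constant:
\begin{equation*}
\left\vert \nabla^{k}b_{A(t)s}\right\vert \le C\sum_{(i_{1},\dots,i_{k})\in I_{k}}\left\vert \theta_{A(t)s}\right\vert^{i_{1}}\left\vert \nabla\theta_{A(t)s}\right\vert^{i_{2}}\cdots\left\vert \nabla^{k-1}\theta_{A(t)s}\right\vert^{i_{k}},
\end{equation*}
where $I_{k}=\{(i_{1},\dots,i_{k})\in\mathbb{N}_{0}^{k}:\sum_{m=1}^{k}m\,i_{m}=k\}$. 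Into each factor I would substitute the pointwise estimate $\left\vert \nabla^{m-1}\theta_{A(t)s}\right\vert \le Ce^{Cmt\left\vert \xi\right\vert}p_{m-1}(t)$ supplied by Lemma~\ref{lemThetaPWest}.

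The exponentials then combine cleanly: for a fixed multi-index $(i_{1},\dots,i_{k})\in I_{k}$ the product of the exponential factors is $e^{Ct\left\vert \xi\right\vert\sum_{m}m\,i_{m}}=e^{Ckt\left\vert \xi\right\vert}$, precisely because $\sum_{m}m\,i_{m}=k$ on $I_{k}$, while the scalar prefactors $C^{\sum_{m}i_{m}}$ are bounded by a constant depending only on $k$ and $\mathbb{L}$ since $\sum_{m}i_{m}\le k$. This extracts the claimed factor $e^{Ckt\left\vert \xi\right\vert}$ and reduces the statement to showing
\begin{equation*}
\sum_{(i_{1},\dots,i_{k})\in I_{k}}\prod_{m=1}^{k}p_{m-1}(t)^{i_{m}}\lesssim p_{k-1}(t).
\end{equation*}

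I would prove this remaining estimate by weight counting. Assign to each factor $\left\vert \nabla^{a}\theta_{s}\right\vert$ and $\left\vert \nabla^{a}d\xi\right\vert$ the weight $a+1$, and track separately the power of $t$, which in every monomial of $p_{j}(t)$ as defined in (\ref{pjt}) equals its number of $d\xi$-factors. Then $p_{m-1}(t)$ is homogeneous of weight $m$, so each product $\prod_{m}p_{m-1}(t)^{i_{m}}$ is homogeneous of weight $\sum_{m}m\,i_{m}=k$ with $t$-power equal to its total number of $d\xi$-factors. One checks that no such product contains a derivative of higher order than appears in $p_{k-1}(t)$: the top-order factor $\nabla^{k-1}\theta_{s}$ can arise only from $m=k$, which forces $i_{k}=1$ and all other $i_{m}=0$, i.e. the single factor $p_{k-1}(t)$ itself; for $m<k$ every $d\xi$-factor has order at most $k-2$ and so lies in the summation part of $p_{k-1}(t)$. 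Hence each monomial produced is, up to a constant, one of the finitely many monomials collected in $p_{k-1}(t)$, and summing over the finitely many multi-indices in $I_{k}$ gives the bound with a constant depending only on $k$ and $\mathbb{L}$.

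The only delicate point is this final bookkeeping: one must confirm that the weight grading and, crucially, the $t$-power count of the products match the index set $J_{k-1}$ defining $p_{k-1}(t)$ in (\ref{pjt}) exactly, so that no monomial of the wrong weight or wrong $t$-power is generated. Everything else is a mechanical substitution and recombination of the estimates from Lemma~\ref{lemThetaPWest}.
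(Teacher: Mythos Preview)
Your proposal is correct and follows essentially the same approach as the paper: apply (\ref{nablakbest}) with $A(t)s$ in place of $s$, substitute the bounds from Lemma~\ref{lemThetaPWest}, use $\sum_{m} m\,i_{m}=k$ to extract the exponential $e^{Ckt|\xi|}$, and conclude $\sum_{I_{k}}\prod_{m}p_{m-1}^{i_{m}}\lesssim p_{k-1}$. The paper asserts this last inequality without comment (it is implicit in the induction of Lemma~\ref{lemThetaPWest}); your weight-and-$t$-power bookkeeping makes explicit why it holds.
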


\begin{proof}
From (\ref{nablakbest}), 
\begin{equation}
\left\vert \nabla ^{k}b_{As}\right\vert \lesssim \sum_{\left(
i_{1},...,i_{k}\right) \in I_{k}}\left\vert \theta _{As}\right\vert
^{i_{1}}\left\vert \nabla \theta _{As}\right\vert ^{i_{2}}...\left\vert
\nabla ^{k-1}\theta _{As}\right\vert ^{i_{k}}
\end{equation}%
where $I_{k}=\left\{ \left( i_{1},...,i_{k}\right) \in \mathbb{N}_{0}^{k},\ 
\text{such that }\sum_{m=1}^{k}mi_{m}=k\right\} .$ However, from Lemma \ref%
{lemThetaPWest}, 
\begin{equation*}
\left\vert \nabla ^{j}\theta _{A\left( t\right) s}\right\vert \lesssim
e^{C\left( j+1\right) t\left\vert \xi \right\vert }p_{j}\left( t\right) ,
\end{equation*}%
where 
\begin{equation*}
p_{j}\left( t\right) =t\left\vert \nabla ^{j}d\xi \right\vert
+\sum_{J_{j}}t^{k_{1}+...+k_{j}}\left\vert \theta _{s}\right\vert
^{i_{1}}\left\vert \nabla \theta _{s}\right\vert ^{i_{2}}...\left\vert
\nabla ^{j}\theta _{s}\right\vert ^{i_{j+1}}\left\vert d\xi \right\vert
^{k_{1}}\left\vert \nabla d\xi \right\vert ^{k_{2}}...\left\vert \nabla
^{j-1}d\xi \right\vert ^{k_{j}},
\end{equation*}%
with $J_{j}=\left\{ \left( i_{1},...,i_{j+1},k_{1},...,k_{j}\right) \in 
\mathbb{N}_{0}^{2j+1}:\sum_{m=1}^{j+1}mi_{m}+\sum_{m^{\prime
}=1}^{j}m^{\prime }k_{m^{\prime }}=j+1\right\} .\ $So 
\begin{eqnarray*}
\left\vert \nabla ^{k}b_{A\left( t\right) s}\right\vert &\lesssim
&e^{Ckt\left\vert \xi \right\vert }\sum_{\left( i_{1},...,i_{k}\right) \in
I_{k}}p_{0}^{i_{1}}p_{1}^{i_{2}}...p_{k-1}^{i_{k}} \\
&\lesssim &e^{Ckt\left\vert \xi \right\vert }p_{k-1}
\end{eqnarray*}
\end{proof}

More generally, suppose we have $1$-parameter family $X\left( t\right) $ of $%
\mathfrak{l}$-valued maps that satisfies 
\begin{equation}
\left\{ 
\begin{array}{c}
\frac{dX\left( t\right) }{dt}=\left[ \xi ,X\left( t\right) \right] ^{A\left(
t\right) s}+Y \\ 
X\left( 0\right) =X_{0},%
\end{array}%
\right. ,  \label{Xivp}
\end{equation}%
where $Y$ is also an $\mathfrak{l}$-valued map. We know that 
\begin{equation}
X\left( t\right) =U_{t\xi }^{\left( s\right) }X_{0}+\left( U_{t\xi }^{\left(
s\right) }\int_{0}^{t}\left( U_{\tau \xi }^{\left( s\right) }\right)
^{-1}d\tau \right) Y,
\end{equation}%
and in particular, 
\begin{equation}
\left\vert X\left( t\right) \right\vert \lesssim e^{tC\left\vert \xi
\right\vert }\left( \left\vert X_{0}\right\vert +t\left\vert Y\right\vert
\right) .  \label{Xtest}
\end{equation}%
Differentiating (\ref{Xivp}), we obtain estimates for higher derivatives of $%
X.$

\begin{lemma}
\label{lemNablaX}Suppose $X\left( t\right) $ is a $1$-parameter family $%
X\left( t\right) $ of $\mathfrak{l}$-valued maps that satisfies (\ref{Xivp}%
). Then, 
\begin{equation*}
\left\vert \nabla ^{k}X\left( t\right) \right\vert \lesssim e^{\left(
k+1\right) Ct\left\vert \xi \right\vert }\sum_{k^{\prime }+k^{\prime \prime
}=k}p_{k^{\prime }-1}\left( t\right) \left( \left\vert \nabla ^{k^{\prime
\prime }}X_{0}\right\vert +t\left\vert \nabla ^{k^{\prime \prime
}}Y\right\vert \right) ,
\end{equation*}%
with $p_{-1}\left( t\right) =1$ and for $k\geq 0$, $p_{k}\left( t\right) $
is given by (\ref{pjt}).
\end{lemma}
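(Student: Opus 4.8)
The plan is to prove this by induction on $k$, following verbatim the scheme of the proof of Lemma~\ref{lemThetaPWest}, which is exactly the special case $X=\theta_{A(t)s}$, $X_{0}=\theta_{s}$, $Y=d\xi$ of~(\ref{Xivp}). First I would differentiate~(\ref{Xivp}) covariantly $k$ times; applying the Leibniz rule to the bracket $[\xi,X(t)]^{A(t)s}=b_{A(t)s}(\xi,X(t))$ and to the inhomogeneity gives
\begin{equation*}
\frac{d\,\nabla^{k}X(t)}{dt}=\left[\xi,\nabla^{k}X(t)\right]^{A(t)s}+\sum_{\substack{k_{1}+k_{2}+k_{3}=k\\ k_{3}<k}}\left(\nabla^{k_{1}}b_{A(t)s}\right)\left(\nabla^{k_{2}}\xi,\nabla^{k_{3}}X(t)\right)+\nabla^{k}Y,
\end{equation*}
where the top term ($k_{1}=k_{2}=0$, $k_{3}=k$) has been separated out. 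Its homogeneous part is exactly~(\ref{etabrackeq}), so the evolution operator is $U_{t\xi}^{(s)}$ and Duhamel's formula yields
\begin{equation*}
\nabla^{k}X(t)=U_{t\xi}^{(s)}\nabla^{k}X_{0}+\Bigl(U_{t\xi}^{(s)}\int_{0}^{t}\bigl(U_{\tau\xi}^{(s)}\bigr)^{-1}d\tau\Bigr)\nabla^{k}Y+U_{t\xi}^{(s)}\int_{0}^{t}\bigl(U_{\tau\xi}^{(s)}\bigr)^{-1}\Sigma_{k}(\tau)\,d\tau,
\end{equation*}
where $\Sigma_{k}(\tau)$ denotes the $k_{3}<k$ sum above.

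Next I would estimate the three terms. By~(\ref{Uest}) and~(\ref{intUest}) the first two contribute $\lesssim e^{Ct|\xi|}\bigl(|\nabla^{k}X_{0}|+t|\nabla^{k}Y|\bigr)$, which is precisely the $k'=0$, $k''=k$ summand (recall $p_{-1}=1$); the base case $k=0$ is~(\ref{Xtest}). For the convolution term I would insert into each factor of $\Sigma_{k}(\tau)$ the inductive hypothesis for $\nabla^{k_{3}}X(\tau)$ with $k_{3}<k$, together with the bound $|\nabla^{k_{1}}b_{A(\tau)s}|\lesssim e^{Ck_{1}\tau|\xi|}p_{k_{1}-1}(\tau)$ from the preceding corollary. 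Using $|U_{t\xi}^{(s)}(U_{\tau\xi}^{(s)})^{-1}|\le e^{C(t-\tau)|\xi|}$, the non-decreasing property of the $p_{j}$, and the elementary integral estimates $\int_{0}^{t}e^{Cm\tau|\xi|}\,d\tau\le te^{Cmt|\xi|}$ and $\int_{0}^{t}e^{Cm\tau|\xi|}|\xi|\,d\tau\lesssim e^{Cmt|\xi|}$ (the latter absorbing the bare factor $|\xi|$ produced by an undifferentiated $\xi$, i.e. $k_{2}=0$), every term reduces to the stated form.

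The hard part will be the combinatorial bookkeeping that reassembles these pieces into $\sum_{k'+k''=k}p_{k'-1}(t)(|\nabla^{k''}X_{0}|+t|\nabla^{k''}Y|)$. Writing $\nabla^{k_{3}}X(\tau)\lesssim\sum_{k_{3}'+k_{3}''=k_{3}}e^{(k_{3}+1)C\tau|\xi|}p_{k_{3}'-1}(\tau)(|\nabla^{k_{3}''}X_{0}|+\tau|\nabla^{k_{3}''}Y|)$, a generic summand carries coefficient weight $k_{1}+k_{2}+k_{3}'$ and source order $k_{3}''$; setting $k'=k_{1}+k_{2}+k_{3}'$ and $k''=k_{3}''$ gives $k'+k''=k$ as required, since $k_{1}+k_{2}+k_{3}=k$ and $k_{3}=k_{3}'+k_{3}''$. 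The product $p_{k_{1}-1}(\tau)\,|\nabla^{k_{2}}\xi|\,p_{k_{3}'-1}(\tau)$ is weighted-homogeneous of total weight $k'$ in the data $\{\,|\nabla^{a}\theta_{s}|,\,|\nabla^{b}d\xi|\,\}$ (here $|\nabla^{k_{2}}\xi|\simeq|\nabla^{k_{2}-1}d\xi|$ has weight $k_{2}$, and $p_{j}$ has weight $j+1$), so it is dominated by $p_{k'-1}(t)$ by exactly the absorption identity $\sum p_{0}^{i_{1}}p_{1}^{i_{2}}\cdots\lesssim p_{m-1}$ used in Lemma~\ref{lemThetaPWest}. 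Finally, collecting the exponentials, the factor $e^{C(k_{1}+k_{3}+1)\tau|\xi|}$ from the $b$-bound and the inductive $X$-term, combined with the outer $e^{C(t-\tau)|\xi|}$ and integrated in $\tau$, is $\lesssim e^{C(k+1)t|\xi|}$ because $k_{1}+k_{3}+1\le k+1$, with the extremal exponent saturated precisely in the $k_{2}=0$, $k_{1}+k_{3}=k$ case handled by the $|\xi|$-weighted integral. This completes the inductive step.
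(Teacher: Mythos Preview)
Your proposal is correct and follows essentially the same approach as the paper: differentiate (\ref{Xivp}), apply Duhamel with the evolution operator $U_{t\xi}^{(s)}$, and feed in the inductive hypothesis together with the $\nabla^{k_1}b_{A(\tau)s}$ bound, using the same integral tricks to absorb $|\xi|$ when $k_2=0$. The only cosmetic difference is that the paper packages the inductive hypothesis as $|\nabla^{j}X|\lesssim e^{(j+1)Ct|\xi|}q_j(t)$ for auxiliary non-decreasing $q_j$, derives the recursion $q_k=x_k+\sum_{k'+k''=k-1}p_{k'}q_{k''}$ (with $x_k=|\nabla^{k}X_0|+t|\nabla^{k}Y|$), and then unwinds it, whereas you carry the final form of the bound through the induction and do the regrouping $k'=k_1+k_2+k_3'$, $k''=k_3''$ directly; both routes rest on the same absorption of weighted products into $p_{k'-1}$.
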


\begin{proof}
Differentiating (\ref{Xivp}), for $k\geq 1$, we get 
\begin{equation*}
\frac{d\nabla ^{k}X\left( t\right) }{dt}=\left[ \xi ,\nabla ^{k}X\left(
t\right) \right] ^{A\left( t\right) s}+\sum_{\substack{ k_{1}+k_{2}+k_{3}=k 
\\ k_{3}<k}}\left( \nabla ^{k_{1}}b_{A\left( t\right) s}\right) \left(
\nabla ^{k_{2}}\xi ,\nabla ^{k_{3}}X\left( t\right) \right) +\nabla ^{k}Y,
\end{equation*}%
and hence, 
\begin{eqnarray*}
\nabla ^{k}X\left( t\right) &=&U_{t\xi }^{\left( s\right) }\left( \nabla
^{k}X_{0}\right) +\left( U_{t\xi }^{\left( s\right) }\int_{0}^{t}\left(
U_{\tau \xi }^{\left( s\right) }\right) ^{-1}d\tau \right) \nabla ^{k}Y \\
&&+\sum_{\substack{ k_{1}+k_{2}+k_{3}=k  \\ k_{3}<k}}\left( U_{t\xi
}^{\left( s\right) }\int_{0}^{t}\left( U_{\tau \xi }^{\left( s\right)
}\right) ^{-1}\left( \nabla ^{k_{1}}b_{A\left( \tau \right) s}\right) \left(
\nabla ^{k_{2}}\xi ,\nabla ^{k_{3}}X\left( \tau \right) \right) d\tau \right)
\end{eqnarray*}%
Let $q_{0}\left( t\right) =\left\vert X_{0}\right\vert +t\left\vert
Y\right\vert $, so that 
\begin{equation*}
\left\vert X\left( t\right) \right\vert \lesssim e^{tC\left\vert \xi
\right\vert }q_{0}\left( t\right) .
\end{equation*}%
Suppose for all $j<k$, 
\begin{equation*}
\left\vert \nabla ^{j}X\left( t\right) \right\vert \lesssim e^{\left(
j+1\right) Ct\left\vert \xi \right\vert }q_{j}\left( t\right) ,
\end{equation*}%
where $q_{j}\left( t\right) $ is non-decreasing. Then, 
\begin{eqnarray*}
\left\vert U_{t\xi }^{\left( s\right) }\int_{0}^{t}\left( U_{\tau \xi
}^{\left( s\right) }\right) ^{-1}\left( \nabla ^{k_{1}}b_{A\left( \tau
\right) s}\right) \left( \nabla ^{k_{2}}\xi ,\nabla ^{k_{3}}X\left( \tau
\right) \right) d\tau \right\vert &\lesssim &e^{Ct\left\vert \xi \right\vert
}\int_{0}^{t}e^{k_{3}C\tau \left\vert \xi \right\vert }\left\vert \nabla
^{k_{1}}b_{A\left( \tau \right) s}\right\vert \left\vert \nabla ^{k_{2}}\xi
\right\vert q_{k_{3}}\left( \tau \right) d\tau \\
&\lesssim &e^{Ct\left\vert \xi \right\vert }\int_{0}^{t}e^{\left(
k_{1}+k_{3}\right) C\tau \left\vert \xi \right\vert }p_{k_{1}-1}\left\vert
\nabla ^{k_{2}}\xi \right\vert q_{k_{3}}\left( \tau \right) d\tau
\end{eqnarray*}%
For $k_{2}=0,$ 
\begin{equation*}
e^{Ct\left\vert \xi \right\vert }\int_{0}^{t}e^{k_{3}C\tau \left\vert \xi
\right\vert }\left\vert \nabla ^{k_{1}}b_{A\left( \tau \right) s}\right\vert
\left\vert \xi \right\vert q_{k_{3}}\left( \tau \right) d\tau \lesssim
e^{\left( k+1\right) Ct\left\vert \xi \right\vert
}p_{k_{1}-1}q_{k_{3}}\left( t\right)
\end{equation*}%
For $k_{2}>0$, 
\begin{eqnarray*}
e^{Ct\left\vert \xi \right\vert }\int_{0}^{t}e^{k_{3}C\tau \left\vert \xi
\right\vert }\left\vert \nabla ^{k_{1}}b_{A\left( \tau \right) s}\right\vert
\left\vert \nabla ^{k_{2}}\xi \right\vert q_{k_{3}}\left( \tau \right) d\tau
&\lesssim &e^{\left( k_{1}+k_{3}+1\right) Ct\left\vert \xi \right\vert
}p_{k_{1}-1}\left\vert \nabla ^{k_{2}}\xi \right\vert q_{k_{3}}\left(
t\right) \\
&\lesssim &e^{\left( k+1\right) Ct\left\vert \xi \right\vert
}p_{k_{1}+k_{2}-1}q_{k_{3}}\left( t\right) .
\end{eqnarray*}%
Thus, 
\begin{equation*}
\left\vert \nabla ^{k}X\left( t\right) \right\vert \lesssim e^{\left(
k+1\right) Ct\left\vert \xi \right\vert }\left( \left\vert \nabla
^{k}X_{0}\right\vert +t\left\vert \nabla ^{k}Y\right\vert +\sum_{k^{\prime
}+k^{\prime \prime }=k-1}p_{k^{\prime }}q_{k^{\prime \prime }}\right)
\end{equation*}%
Therefore, for $k\geq 1$, 
\begin{equation*}
q_{k}=\left\vert \nabla ^{k}X_{0}\right\vert +t\left\vert \nabla
^{k}Y\right\vert +\sum_{k^{\prime }+k^{\prime \prime }=k-1}p_{k^{\prime
}}q_{k^{\prime \prime }}.
\end{equation*}%
Setting $x_{k}=\left\vert \nabla ^{k}X_{0}\right\vert +t\left\vert \nabla
^{k}Y\right\vert ,$ it is then easy to see that 
\begin{equation*}
q_{k}=\sum_{L_{k}}p_{0}^{j_{0}}p_{1}^{j_{1}}...p_{k-1}^{j_{k-1}}x_{l}
\end{equation*}%
where 
\begin{equation*}
L_{k}=\left\{ \left( j_{0},...,j_{k-1},x_{l}\right) \in \mathbb{N}%
_{0}^{k+2}:l+\sum_{m=0}^{j-1}\left( m+1\right) j_{m}=k\right\} ,
\end{equation*}%
and thus 
\begin{equation*}
q_{k}\lesssim x_{k}+\sum_{k^{\prime }+k^{\prime \prime }=k-1}p_{k^{\prime
}}x_{k^{\prime \prime }}.
\end{equation*}%
Therefore, 
\begin{equation*}
\left\vert \nabla ^{k}X\left( t\right) \right\vert \lesssim e^{\left(
k+1\right) Ct\left\vert \xi \right\vert }\sum_{k^{\prime }+k^{\prime \prime
}=k}p_{k^{\prime }-1}\left( \left\vert \nabla ^{k^{\prime \prime
}}X_{0}\right\vert +t\left\vert \nabla ^{k^{\prime \prime }}Y\right\vert
\right) ,
\end{equation*}%
with $p_{-1}=1.$
\end{proof}

We will need to be able to define loop-valued maps with Sobolev regularity.
First, let us recall Sobolev spaces $W^{k,p}$ of functions between manifolds.

\begin{lemma}
\label{lemSobManifolds}Suppose $M$ is a compact $n$-dimensional manifold and
suppose $N$ is an $l$-dimensional manifold. Let $k$ be a non-negative
integer and $r\geq 0$ such that $kr>n.$ Let $\Phi :N\longrightarrow \mathbb{R%
}^{2l}$ be a smooth embedding (by Whitney Embedding Theorem) and suppose $%
\left\{ \left( U_{\alpha },\phi _{\alpha }\right) \right\} $ is an atlas for 
$N.$ Suppose $f:M\longrightarrow N$ is a continuous map. Then, the following
are equivalent:

\begin{enumerate}
\item $f\in W^{k,r}\left( M,N\right) $

\item $\Phi \circ f\in W^{k,r}\left( M,\mathbb{R}^{2l}\right) $

\item $\phi _{\alpha }\circ f\in W^{k,r}\left( \phi _{\alpha }^{-1}\left(
U_{\alpha }\right) ,\mathbb{R}^{l}\right) $ for any chart $\left( U_{\alpha
},\phi _{\alpha }\right) $

\item $f^{\ast }\theta _{G}$ $\in W^{k-1,r}\left( M,T^{\ast }M\otimes 
\mathfrak{g}\right) $ in case when if $N=G,$ a compact Lie group, with Lie
algebra $\mathfrak{g}\ $and $\theta _{G}$ is the Maurer-Cartan form on $G.$
\end{enumerate}

In particular, conditions (2) and (3) are independent of the choice of the
embedding $\Phi $ and the atlas $\left( U_{\alpha },\phi _{\alpha }\right) ,$
respectively.
\end{lemma}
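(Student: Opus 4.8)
The plan is to treat the four conditions as two clusters: the purely geometric chain (1)$\Leftrightarrow$(2)$\Leftrightarrow$(3), which holds for an arbitrary target $N$, and the Lie-theoretic equivalence (2)$\Leftrightarrow$(4), which uses the group structure. The single analytic engine behind everything is the hypothesis $kr>n$, which by Sobolev embedding gives $W^{k,r}(M)\hookrightarrow C^{0}(M)$ and, crucially, makes available two tools: the \emph{Sobolev multiplication theorem}, stating that for $0\le j\le k$ the pointwise product extends to a continuous bilinear map $W^{k,r}\times W^{j,r}\to W^{j,r}$ (the derivative budget being $k+j-j=k>n/r$), so that in particular $W^{k,r}$ is a Banach algebra; and the \emph{composition lemma}, stating that if $F$ is smooth and $u\in W^{k,r}(M,\mathbb{R}^{a})$ takes values in a fixed compact set, then $F\circ u\in W^{k,r}$. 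Conditions (1)--(3) are three presentations of the same definition, so the real content there is their mutual consistency and independence of the auxiliary choices.

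For (2)$\Leftrightarrow$(3) I would argue locally by composition with smooth transition maps. Writing $\phi_{\alpha}\circ f=(\phi_{\alpha}\circ\Phi^{-1})\circ(\Phi\circ f)$ on $f^{-1}(U_{\alpha})$, and noting that $\phi_{\alpha}\circ\Phi^{-1}$ extends smoothly to a neighborhood of the compact image of $\Phi$, the composition lemma gives (2)$\Rightarrow$(3). Conversely, since $M$ is compact and $f$ is continuous, $f(M)$ meets only finitely many charts; choosing a subordinate partition of unity and writing $\Phi\circ f=(\Phi\circ\phi_{\alpha}^{-1})\circ(\phi_{\alpha}\circ f)$ on each patch yields (3)$\Rightarrow$(2). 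The very same composition argument, applied to a change of embedding $\Phi_{2}=(\Phi_{2}\circ\Phi_{1}^{-1})\circ\Phi_{1}$ and to chart transitions, shows independence of $\Phi$ and of the atlas, settling (1)$\Leftrightarrow$(2)$\Leftrightarrow$(3).

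For the Lie-group case I would embed $G\hookrightarrow GL(N,\mathbb{R})\subset\mathbb{R}^{N^{2}}$ and use the classical identity $df=f\cdot(f^{\ast}\theta_{G})$ arising from $\theta_{G}=g^{-1}dg$. Then (2)$\Rightarrow$(4) is immediate: $f\in W^{k,r}$ forces $df\in W^{k-1,r}$, the inverse $f^{-1}$ lies in $W^{k,r}$ by the composition lemma (its entries are smooth functions of those of $f$, with $\det f$ bounded away from $0$ on the compact $G$), and hence $f^{\ast}\theta_{G}=f^{-1}df\in W^{k,r}\cdot W^{k-1,r}\subset W^{k-1,r}$. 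The converse (4)$\Rightarrow$(2) is the substantive direction. Setting $\eta:=f^{\ast}\theta_{G}\in W^{k-1,r}$, I would bootstrap from $f\in C^{0}\subset L^{\infty}$ through $df=f\eta$: the base case $f\eta\in L^{\infty}\cdot L^{r}\subset L^{r}$ gives $f\in W^{1,r}$, and one then tries to climb one derivative at a time, feeding $df=f\eta\in W^{j,r}$ back into $f\in W^{j+1,r}$ via the multiplication theorem.

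The hard part, which I expect to be the main obstacle, is closing this bootstrap at top order with only $kr>n$ rather than $(k-1)r>n$: in the product $f\eta$ the more regular factor available at each rung is $\eta$, which sits only at level $k-1$, so the naive budget $k-1$ falls exactly short in the range $n/k<r\le n/(k-1)$. The way I would resolve this is to run the bootstrap as an a priori estimate $\left\Vert df\right\Vert_{W^{k-1,r}}=\left\Vert f\eta\right\Vert_{W^{k-1,r}}\lesssim\left\Vert f\right\Vert_{W^{k,r}}\left\Vert\eta\right\Vert_{W^{k-1,r}}$ on smooth data and then pass to the limit along a $W^{k-1,r}$-approximation of $\eta$, the integrability (Maurer--Cartan) relation $d\eta=-\tfrac{1}{2}[\eta,\eta]$ guaranteeing that the approximants remain Darboux derivatives of honest $G$-valued maps; uniform $W^{k,r}$ bounds then follow from the multiplication estimate together with a Grönwall/continuation argument in $t$ along the rescaled family $t\mapsto t\eta$. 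This is precisely the standard Sobolev-group regularity result, and once $f\in W^{k,r}$ is established, condition (2) holds. The loop-valued analogue is then obtained by replacing $g^{-1}dg$ with the loop Darboux derivative $\theta_{s}$ and invoking the evolution equation (\ref{dthetadt}).
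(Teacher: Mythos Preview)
The paper does not actually prove Lemma~\ref{lemSobManifolds}; it is stated as a known result, with the text immediately afterward noting only why the condition $kr>n$ is needed and that the loop analogue (Lemma~\ref{lemSobLoops}) will be proved next. So there is no ``paper's own proof'' to compare against for this particular statement. That said, the paper's proof of Lemma~\ref{lemSobLoops} carries out exactly the argument you would need for item~(4), citing \cite[Lemma~B.5]{WehrheimBook}, and its structure is worth contrasting with yours.

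Your treatment of (1)$\Leftrightarrow$(2)$\Leftrightarrow$(3) via the composition lemma and partitions of unity is standard and correct; this is also how the paper handles the forward direction of Lemma~\ref{lemSobLoops}.

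For (4)$\Rightarrow$(2), you correctly flag the borderline case $(k-1)r\le n<kr$ where the naive product rule in the bootstrap loses a derivative. However, your proposed remedy --- smoothing $\eta$, invoking the Maurer--Cartan structure equation to keep the approximants integrable, and running a continuation argument --- is considerably more elaborate than what is actually required. The cleaner fix, which is what the paper (following Wehrheim) does implicitly when it writes $d(\Phi\circ s)=(E\circ s)\,\theta_s$ and says ``similar estimates are obtained by considering higher derivatives,'' is simply to substitute $df=f\eta$ recursively. One obtains
\[
\nabla^{j}f \;=\; f\cdot P_{j}\bigl(\eta,\nabla\eta,\ldots,\nabla^{\,j-1}\eta\bigr),
\]
where $P_{j}$ is a universal polynomial whose monomials $(\nabla^{0}\eta)^{i_{1}}\cdots(\nabla^{\,j-1}\eta)^{i_{j}}$ all satisfy the weight condition $\sum_{m} m\,i_{m}=j$. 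Since $f$ is bounded, it suffices to bound $\lVert P_{j}\rVert_{L^{r}}$ for $j\le k$, and this is precisely what the paper's Lemma~\ref{lemSobProd} (the H\"older/Sobolev product estimate under $kr>n$) delivers, yielding $\lVert P_{j}\rVert_{L^{r}}\lesssim \lVert\eta\rVert_{W^{k-1,r}}+\lVert\eta\rVert_{W^{k-1,r}}^{j}$. No integrability hypothesis and no approximation are needed: the Maurer--Cartan equation plays no role in this direction.
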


Note that the condition $kr>n$ is needed in Lemma \ref{lemSobManifolds} due
to the Sobolev embedding $W^{k,r}\subset C^{0}$ for $kr>n.$ We will prove a
characterization of loop-valued $W^{k,p}$-maps in terms of the loop
Maurer-Cartan form that is similar to item (4) in Lemma \ref{lemSobManifolds}%
.

\begin{lemma}
\label{lemSobLoops}Suppose $M$ is a compact $n$-dimensional manifold and
suppose $\mathbb{L}$ is a smooth loop of dimension $l,$ with tangent algebra 
$\mathfrak{l}$ and $\mathfrak{l}$-valued Maurer-Cartan form $\theta $. Let $%
k $ be a non-negative integer and $r\geq 0$ such that $kr>n.$ Suppose $%
s:M\longrightarrow \mathbb{L}$ is a continuous map. Then, $s\in
W^{k,r}\left( M,\mathbb{L}\right) $ if and only if $\theta _{s}\in
W^{k-1,r}\left( M,T^{\ast }M\otimes \mathfrak{l}\right) .$
\end{lemma}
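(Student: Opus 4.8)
The plan is to transfer regularity between $s$ and $\theta_s$ through the smooth frame change furnished by $\rho$, after reducing to the Euclidean characterization of Lemma \ref{lemSobManifolds}. Fix a smooth embedding $\Phi:\mathbb{L}\hookrightarrow\mathbb{R}^{2l}$ and put $u=\Phi\circ s$; by item (2) of that lemma, $s\in W^{k,r}\left( M,\mathbb{L}\right) $ is equivalent to $u\in W^{k,r}\left( M,\mathbb{R}^{2l}\right) $, and since $kr>n$ every such $s$ is continuous, so $u$ takes values in the compact set $\Phi\left( \mathbb{L}\right) $. Because $\rho_s$ is a smooth bundle isomorphism with smooth inverse and $d\Phi_p$ is injective with a smooth fiberwise left inverse $L_p$, the Darboux derivative (\ref{Darbouxf}) can be written in the two equivalent forms
\begin{equation*}
\theta_s=F\left( s\right) \,du,\qquad du=G\left( s\right) \,\theta_s ,
\end{equation*}
where $G=d\Phi\circ\rho$ and $F=\rho^{-1}\circ L$ are smooth maps on the compact manifold $\mathbb{L}$, valued in $\mathrm{Hom}\left( \mathfrak{l},\mathbb{R}^{2l}\right) $ and $\mathrm{Hom}\left( \mathbb{R}^{2l},\mathfrak{l}\right) $ respectively. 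Thus the whole statement becomes a question about pointwise products of a smooth function of $s$ with a one-form.

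For the direction $s\in W^{k,r}\Rightarrow\theta_s\in W^{k-1,r}$ one argues directly: $u\in W^{k,r}$ gives $du\in W^{k-1,r}$, the composition theorem for Sobolev maps (valid since $kr>n$) gives $F\left( s\right) \in W^{k,r}$, and the module property $W^{k,r}\cdot W^{k-1,r}\subset W^{k-1,r}$, again valid because $kr>n$, yields $\theta_s=F\left( s\right) \,du\in W^{k-1,r}$. This half is routine.

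The substance is the converse. Assuming $\theta_s\in W^{k-1,r}$ and $s\in C^0$, I would show $du=G\left( s\right) \theta_s\in W^{k-1,r}$, hence $u\in W^{k,r}$. The key structural fact, already established in the excerpt for $b_s$ in (\ref{nablakbest}) and valid for an arbitrary smooth tensor-valued function $\alpha$ of $s$ because $d\left( \alpha\left( s\right) \right) =\left( \alpha^{\prime }\left( s\right) \rho_s\right) \theta_s$, is that $\nabla^{k-1}\left( G\left( s\right) \theta_s\right) $ expands into a finite sum of monomials $c_{\left( j\right) }\left( s\right) \,\nabla^{j_1}\theta_s\cdots\nabla^{j_N}\theta_s$ with bounded, smooth-in-$s$ coefficients and with the weight constraint $\sum_{m}\left( j_m+1\right) =k$. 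Each such monomial lies in $L^r$: writing $\nabla^{j}\theta_s\in W^{k-1-j,r}\hookrightarrow L^{p_j}$ and applying H\"older, the exponents combine to $L^q$ with $\tfrac1q=\tfrac Nr-\tfrac{k\left( N-1\right) }{n}\le\tfrac1r$, the last inequality being exactly $kr\ge n$. This gives an a priori bound $\left\Vert du\right\Vert _{W^{k-1,r}}\lesssim P\left( \left\Vert \theta_s\right\Vert _{W^{k-1,r}}\right) $ with $P$ depending only on $\left\Vert s\right\Vert _{C^0}$, which upgrades to $u\in W^{k,r}$ by bootstrapping on the order of differentiation (the freshly gained regularity of $u$ feeding the expansion) or by approximating $s$ by smooth maps and passing to the limit.

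The step I expect to require the most care is precisely this converse estimate in the \emph{sub-critical} range $\left( k-1\right) r\le n<kr$, where $\theta_s$ need not be continuous or bounded, so the naive product bound $W^{k-1,r}\cdot W^{k-1,r}\subset W^{k-1,r}$ fails. What rescues the argument is the weight constraint $\sum_m\left( j_m+1\right) =k$ built into the expansion of derivatives of a function of $s$: it forces every monomial to distribute exactly $k$ units of regularity, so the Gagliardo--Nirenberg/Sobolev bookkeeping closes in $L^r$ under the single hypothesis $kr>n$. Verifying this expansion together with the attendant multilinear estimates, rather than any individual product bound, is the heart of the proof.
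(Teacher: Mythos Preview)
Your proposal is correct and, for the converse direction, is essentially the paper's argument carried out in full: the paper writes $d(\Phi\circ s)=E(s)\,\theta_s$ with $E(p)=d\Phi|_p\,\rho_p$ (your $G$), bounds the $L^r$ norm using continuity of $s$, and then simply says ``similar estimates are obtained by considering higher derivatives,'' whereas you actually supply those estimates via the weight-$k$ expansion and the H\"older/Sobolev bookkeeping (which is exactly the content of the paper's Lemma~\ref{lemSobProd}). Your explicit bootstrap remark fills a gap the paper leaves implicit.

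The one genuine difference is in the forward direction: the paper localizes through charts of $\mathbb{L}$ and a partition of unity on $M$, pushing $\theta_s$ through each chart and invoking the smooth-composition lemma, while you stay global via the embedding, writing $\theta_s=F(s)\,du$ and using the $W^{k,r}$ composition theorem together with the module property $W^{k,r}\cdot W^{k-1,r}\subset W^{k-1,r}$. Both are valid under $kr>n$; your route is cleaner and avoids the partition of unity, at the cost of invoking the algebra/module structure of Sobolev spaces, which the paper's chart argument sidesteps.
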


\begin{proof}
Suppose $s\in W^{k,r}\left( M,\mathbb{L}\right) .$ By Lemma \ref%
{lemSobManifolds}, if $\left\{ \left( U_{\alpha },\phi _{\alpha }\right)
\right\} $ is an atlas for $\mathbb{L},$ then for each chart $\left(
U_{\alpha },\phi _{\alpha }\right) ,$ $\phi _{\alpha }\circ s\in
W^{k,r}\left( \phi _{\alpha }^{-1}\left( U_{\alpha }\right) ,\mathbb{R}%
^{l}\right) .$ Now, $\left\{ s^{-1}\left( U_{\alpha }\right) \right\} $ is
an open cover of $M,$ but using compactness of $M,$ let $\left\{
s^{-1}\left( U_{i}\right) \right\} $ be an finite subcover, and suppose $%
\left\{ u_{i}\right\} $ is a smooth partition of unity subordinate to this
subcover. Then, we can write 
\begin{eqnarray*}
\theta _{s} &=&\theta s_{\ast }=\sum_{i}\left( u_{i}\theta \right) s_{\ast }
\\
&=&\sum_{i}\left( \left( u_{i}\theta \right) \left( \phi _{i}^{-1}\right)
_{\ast }\right) \left( \left( \phi _{i}\right) _{\ast }s_{\ast }\right) .
\end{eqnarray*}%
For each $i$, $\left( \phi _{i}\right) _{\ast }s_{\ast }=\left( \phi
_{i}\circ s\right) _{\ast }\in W^{k-1,r}\left( \phi _{i}^{-1}\left(
U_{i}\right) ,\mathbb{R}^{l}\right) .$ On the other hand $\ \left(
u_{i}\theta \right) \left( \phi _{i}^{-1}\right) _{\ast }$ is a smooth
function, and hence composition with it is a continuous map $W^{k-1,r}\left(
\phi _{i}^{-1}\left( U_{i}\right) ,\mathbb{R}^{l}\right) \longrightarrow
W^{k-1,r}\left( \phi _{i}^{-1}\left( U_{i}\right) ,T^{\ast }U_{i}\otimes 
\mathfrak{l}\right) $ (using \cite[Lemma B.8]{WehrheimBook}). Overall, we
see that each term of this finite sum is bounded in the $W^{k-1,r}$ norm,
and thus $\theta _{s}\in W^{k-1,r}\left( M,T^{\ast }M\otimes \mathfrak{g}%
\right) .$

Conversely, suppose now $\theta _{s}\in W^{k-1,r}\left( M,T^{\ast }M\otimes 
\mathfrak{g}\right) .$ We will use item (2) in Lemma \ref{lemSobManifolds}
to show that $s\in W^{k,r}\left( M,\mathbb{L}\right) .$ This adapts the
proof of \cite[Lemma B.5]{WehrheimBook}. Let $\Phi :\mathbb{L}%
\longrightarrow \mathbb{R}^{2l}$ be a smooth embedding, so that $\Phi \circ
s $ is continuous. In particular, $\Phi \circ s\in L^{r}\left( M,\mathbb{R}%
^{2l}\right) .$ Now, let $x\in M$ and consider 
\begin{eqnarray*}
\left. d\left( \Phi \circ s\right) \right\vert _{x} &=&\left. d\Phi
\right\vert _{s\left( x\right) }\left. ds\right\vert _{x} \\
&=&\left( \left. d\Phi \right\vert _{s\left( x\right) }\rho _{s\left(
x\right) }\right) \left( \rho _{s\left( x\right) }^{-1}\left. ds\right\vert
_{x}\right) \\
&=&E\left( s\left( x\right) \right) \left( \theta _{s}\right) _{x},
\end{eqnarray*}%
where, for each $p\in \mathbb{L},$ we have the linear map $E\left( p\right)
=\left. d\Phi \right\vert _{p}\rho _{p}:\mathfrak{l}\longrightarrow \mathbb{R%
}^{2l}$, and the map $p\mapsto $ $E\left( p\right) $ is a smooth map from $%
\mathbb{L}$ to $\mathop{\rm Hom}\nolimits\left( \mathfrak{l},\mathbb{R}%
^{2l}\right) .$ Thus, we can write 
\begin{equation*}
d\left( \Phi \circ s\right) =\left( E\circ s\right) \theta _{s},
\end{equation*}%
with $E\circ s$ being bounded in the operator norm, since $s$ is continuous.
Hence, there exists a constant $C>0,$ such that 
\begin{equation*}
\left\Vert d\left( \Phi \circ s\right) \right\Vert _{L^{r}}\leq C\left\Vert
\theta _{s}\right\Vert _{L^{r}}\leq C\left\Vert \theta _{s}\right\Vert
_{W^{k-1,r}}.
\end{equation*}%
This shows that $\Phi \circ s\in W^{1,r}\left( M,\mathbb{R}^{2l}\right) .$
To show further that $\Phi \circ s\in W^{k,r}\left( M,\mathbb{R}^{2l}\right)
,$ $k\geq 2,$ similar estimates are obtained by considering higher
derivatives.
\end{proof}

\begin{theorem}
Let $M$ be a compact Riemannian manifold. Suppose $kr>n=\dim M.$ Let $s\in
W^{k,r}\left( M,\mathbb{L}\right) $ and $\xi \in W^{k,r}\left( M,\mathfrak{l}%
\right) ,$ and suppose $A=\exp _{s}\left( \xi \right) .$ Then, 
\begin{equation}
\left\Vert \theta _{As}\right\Vert _{W^{k-1,r}}\lesssim e^{Ck\left\Vert \xi
\right\Vert _{C^{0}}}\left( \Theta ^{k}+\Theta \right) ,  \label{ThetaAsWest}
\end{equation}%
where $\Theta =\left\Vert \theta _{s}\right\Vert _{W^{k-1,r}}+\left\Vert \xi
\right\Vert _{W^{k,r}}.$

Similarly, if $X=X\left( 1\right) ,$ where $X\left( t\right) $ is $1$%
-parameter family of $\mathfrak{l}$-valued maps that satisfies (\ref{Xivp}).
Then, 
\begin{equation*}
\left\Vert X\right\Vert _{W^{k,r}}\lesssim e^{C\left( k+1\right) \left\Vert
\xi \right\Vert _{C^{0}}}\left( \left\Vert X_{0}\right\Vert
_{W^{k,r}}+\left\Vert Y\right\Vert _{W^{k,r}}\right) \left( \Theta
^{k+1}+\Theta \right)
\end{equation*}
\end{theorem}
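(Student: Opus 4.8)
The plan is to integrate the pointwise bounds of Lemma \ref{lemThetaPWest} and Lemma \ref{lemNablaX}, evaluated at $t=1$, and to convert the resulting $L^r$ estimates on monomials in the derivatives of $\theta_s$ and $d\xi$ into the stated Sobolev bounds by means of multiplicative (Gagliardo--Nirenberg) Sobolev inequalities. Throughout I would use that $kr>n$ forces the embedding $W^{k,r}\hookrightarrow C^0$, so that $\|\xi\|_{C^0}$ is finite and bounded by $\|\xi\|_{W^{k,r}}\leq\Theta$; for $W^{k,r}$-data (rather than smooth) the pointwise identities are read off from the integral representations such as (\ref{nabktheta}), approximating by smooth maps and passing to the limit at the end. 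Since $A=\exp_s(\xi)=A(1)$, taking $t=1$ in Lemma \ref{lemThetaPWest} gives, for each $0\leq j\leq k-1$, the bound $|\nabla^j\theta_{As}|\lesssim e^{C(j+1)|\xi|}\,p_j(1)$, and because $j+1\leq k$ I can extract the uniform prefactor $e^{Ck\|\xi\|_{C^0}}$. Summing $\|\nabla^j\theta_{As}\|_{L^r}$ over $j=0,\dots,k-1$ then reduces the first estimate to bounding $\|p_j(1)\|_{L^r}$, where by (\ref{pjt}) with $t=1$ the quantity $p_j(1)$ is a finite sum of monomials in the factors $\nabla^a\theta_s$ and $\nabla^b d\xi$, each factor carrying weight $a+1$ (resp. $b+1$), with weights summing to exactly $j+1\leq k$.

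The heart of the argument is a product estimate: for building blocks $u_i\in\{\theta_s,d\xi\}\subset W^{k-1,r}$ with orders $a_i$ satisfying $\sum_i(a_i+1)\leq k$,
\[
\Bigl\|\,\textstyle\prod_i \nabla^{a_i}u_i\,\Bigr\|_{L^r}\lesssim \prod_i\|u_i\|_{W^{k-1,r}}.
\]
This follows from Gagliardo--Nirenberg interpolation combined with H\"older: each factor obeys $\|\nabla^{a_i}u_i\|_{L^{q_i}}\lesssim\|u_i\|_{W^{k-1,r}}$ with $1/q_i=\max\{0,\,1/r-(k-1-a_i)/n\}$ (the truncation at $0$ corresponding to the regime where $W^{k-1,r}\hookrightarrow C^0$), and the weight constraint $\sum_i(a_i+1)\leq k$ together with $kr>n$ is exactly what guarantees $\sum_i 1/q_i\leq 1/r$; H\"older, after passing $L^{q}\hookrightarrow L^{q'}$ on the compact manifold $M$ to equalize exponents, then yields the claim. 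Applying this to each monomial of $p_j(1)$ and bounding every building-block norm by $\Theta$ gives $\|p_j(1)\|_{L^r}\lesssim\sum_{p=1}^{j+1}\Theta^p$, where $p\geq 1$ is the number of factors (each of weight at least one). Summing over $j\leq k-1$ and using $\sum_{p=1}^{k}\Theta^p\lesssim\Theta+\Theta^k$ produces (\ref{ThetaAsWest}); in particular the finiteness of the right-hand side shows $\theta_{As}\in W^{k-1,r}$, hence $As\in W^{k,r}(M,\mathbb{L})$ by Lemma \ref{lemSobLoops}.

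For the second estimate I would run the same scheme on Lemma \ref{lemNablaX} at $t=1$, which controls $|\nabla^k X|$ by $e^{(k+1)C|\xi|}$ times $\sum_{k'+k''=k}p_{k'-1}(1)\bigl(|\nabla^{k''}X_0|+|\nabla^{k''}Y|\bigr)$. Each summand is a $\theta_s,d\xi$-monomial of total weight $k'$ multiplied by a single factor $\nabla^{k''}W$ with $W\in\{X_0,Y\}\subset W^{k,r}$. The identical Gagliardo--Nirenberg/H\"older computation applies, now with total weighted order $k'+k''=k$: assigning $W$ the exponent $1/q_W=\max\{0,\,1/r-(k-k'')/n\}$, the reciprocals once again sum to at most $1/r$ precisely because $kr>n$. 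Thus each summand has $L^r$-norm $\lesssim(\|X_0\|_{W^{k,r}}+\|Y\|_{W^{k,r}})\,\Theta^{p}$ with $p$ the number of $\theta_s,d\xi$-factors, and collecting the powers of $\Theta$ over all summands, together with the extracted factor $e^{C(k+1)\|\xi\|_{C^0}}$, yields the stated polynomial factor $\Theta^{k+1}+\Theta$ multiplying $\|X_0\|_{W^{k,r}}+\|Y\|_{W^{k,r}}$.

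The main obstacle is the multiplicative estimate of the second paragraph. Unlike the Lie-group situation one cannot simply invoke that the ambient Sobolev space is a Banach algebra, because $(k-1)r$ need not exceed $n$ and so the building blocks $\theta_s$ and $d\xi$ need not be continuous; genuine interpolation is required rather than pulling out $C^0$-norms. The delicate bookkeeping is therefore the choice of the Gagliardo--Nirenberg exponents $1/q_i=\max\{0,1/r-(k-1-a_i)/n\}$ and the verification that the weight budget (at most $k$) forces $\sum_i 1/q_i\leq 1/r$. This is the single place where the hypothesis $kr>n$ enters in an essential way, and it is exactly what allows every interpolation and H\"older step to close and the whole chain of estimates to assemble into the claimed bounds.
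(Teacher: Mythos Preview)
Your approach is correct and essentially the same as the paper's. The paper packages the key multiplicative step as a separate appendix lemma (Lemma~\ref{lemSobProd}): given weights $q_j$ with $\sum q_j m_j\le k$, H\"older with exponents $p_j=pk''/(q_jm_j)$ followed by the Sobolev embedding $W^{k-q_j,p}\hookrightarrow L^{m_jp_j}$ yields $\|\prod A_j^{m_j}\|_{L^p}\lesssim\prod\|A_j\|_{W^{k-q_j,p}}^{m_j}$, which is then applied to each monomial of $p_{j-1}$ with weight $q=i'$ assigned to $\nabla^{i'-1}\theta_s$ and $\nabla^{i'-1}d\xi$ (and, for the $X$ estimate, weight $j''$ to $\nabla^{j''}X_0$ or $\nabla^{j''}Y$). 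Your Gagliardo--Nirenberg/H\"older scheme with $1/q_i=\max\{0,1/r-(k-1-a_i)/n\}$ is exactly this same computation written out explicitly rather than packaged; the verification that $\sum 1/q_i\le 1/r$ under the weight budget is precisely the content of the paper's lemma proof.
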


\begin{proof}
From Lemma \ref{lemThetaPWest}, for each $j\leq k,$ we have the pointwise
estimate 
\begin{equation*}
\left\vert \nabla ^{j-1}\theta _{As}\right\vert \lesssim e^{Cj\left\vert \xi
\right\vert }p_{j-1},
\end{equation*}%
where 
\begin{equation}
p_{j-1}=\sum_{J_{j-1}}\left\vert \theta _{s}\right\vert ^{i_{1}}\left\vert
\nabla \theta _{s}\right\vert ^{i_{2}}...\left\vert \nabla ^{j-1}\theta
_{s}\right\vert ^{i_{j}}\left\vert d\xi \right\vert ^{k_{1}}\left\vert
\nabla d\xi \right\vert ^{k_{2}}...\left\vert \nabla ^{j-1}d\xi \right\vert
^{k_{j}},  \label{pjm1}
\end{equation}%
with $J_{j-1}=\left\{ \left( i_{1},...,i_{j},k_{1},...,k_{j}\right) \in 
\mathbb{N}_{0}^{2j}:\sum_{m=1}^{j}mi_{m}+\sum_{m=1}^{j}mk_{m}=j\right\} .$
Thus, 
\begin{equation*}
\left\Vert \nabla ^{j-1}\theta _{As}\right\Vert _{L^{r}}\lesssim
e^{Cj\left\Vert \xi \right\Vert _{C^{0}}}\left\Vert p_{j-1}\right\Vert
_{L^{r}}.
\end{equation*}%
Now, from Lemma \ref{lemSobProd}, if $\sum_{i^{\prime }=1}^{k^{\prime
}}q_{i^{\prime }}m_{i^{\prime }}\leq k,$ then 
\begin{equation}
\left\Vert \prod_{i^{\prime }=1}^{k^{\prime }}A_{i^{\prime }}^{m_{i^{\prime
}}}\right\Vert _{L^{r}}\lesssim \prod_{i^{\prime }=1}^{k^{\prime
}}\left\Vert A_{i^{\prime }}\right\Vert _{W^{k-q_{i^{\prime
}},r}}^{m_{i^{\prime }}}.
\end{equation}%
We can apply this to (\ref{pjm1}), with the weight $q_{i}=i$ for each $%
\left\vert \nabla ^{i}\theta _{s}\right\vert $ or $\left\vert \nabla
^{i-1}d\xi \right\vert $ factor. Then, 
\begin{eqnarray*}
\left\Vert p_{j-1}\right\Vert _{L^{r}} &\lesssim
&\sum_{J_{j-1}}\prod_{i^{\prime }=1}^{j-1}\left\Vert \nabla ^{i^{\prime
}-1}\theta _{s}\right\Vert _{W^{k-i^{\prime },r}}^{i_{i^{\prime
}}}\prod_{i^{\prime \prime }=1}^{j}\left\Vert \nabla ^{i^{\prime \prime
}-1}d\xi \right\Vert _{W^{k-i^{\prime \prime },r}}^{k_{i^{\prime \prime }}}
\\
&=&\sum_{J_{j-1}}\left\Vert \theta _{s}\right\Vert
_{W^{k-1,r}}^{i_{1}}...\left\Vert \nabla ^{j-1}\theta _{s}\right\Vert
_{W^{k-\left( j-1\right) ,r}}^{i_{j-1}}\left\Vert d\xi \right\Vert
_{W^{k-1,r}}^{k_{1}}...\left\Vert \nabla ^{j-1}d\xi \right\Vert
_{W^{k-j,r}}^{k_{j}}.
\end{eqnarray*}%
Since for each $i^{\prime }\leq k,$ $\left\Vert \nabla ^{i^{\prime
}-1}\theta _{s}\right\Vert _{W^{k-i^{\prime },r}}\lesssim \left\Vert \theta
_{s}\right\Vert _{W^{k-1,r}}$ and$\left\Vert \nabla ^{i^{\prime }-1}d\xi
\right\Vert _{W^{k-i^{\prime },r}}\lesssim \left\Vert \xi \right\Vert
_{W^{k,r}}$, we obtain 
\begin{equation}
\left\Vert p_{j-1}\right\Vert _{L^{r}}\lesssim \sum_{J_{j-1}}\left\Vert
\theta _{s}\right\Vert _{W^{k-1,r}}^{i_{1}+...+i_{j-1}}\left\Vert \xi
\right\Vert _{W^{k,r}}^{k_{1}+...+k_{j}}.  \label{pjLr}
\end{equation}%
The right hand-side of (\ref{pjLr}) is thus a polynomial in $\left\Vert
\theta _{s}\right\Vert _{W^{k-1,r}}$ and $\left\Vert \xi \right\Vert
_{W^{k,r}},$ and from the definition of $J_{j-1},$ the degree of this
polynomial is $j$ and the lowest order terms are $\left\Vert \theta
_{s}\right\Vert _{W^{k-1,r}}$ and $\left\Vert \xi \right\Vert _{W^{k,r}}.$
Hence, we can write 
\begin{equation*}
\left\Vert \nabla ^{j-1}\theta _{As}\right\Vert _{L^{r}}\lesssim
e^{Cj\left\Vert \xi \right\Vert _{C^{0}}}\left( \Theta ^{j}+\Theta \right)
\end{equation*}%
where $\Theta =\left\Vert \theta _{s}\right\Vert _{W^{k-1,r}}+\left\Vert \xi
\right\Vert _{W^{k,r}}.$ In particular, 
\begin{equation*}
\left\Vert \theta _{As}\right\Vert _{L^{r}}\lesssim e^{C\left\Vert \xi
\right\Vert _{C^{0}}}\Theta \lesssim e^{Cj\left\Vert \xi \right\Vert
_{C^{0}}}\left( \Theta ^{j}+\Theta \right) .
\end{equation*}%
Now, since 
\begin{equation*}
\left\Vert \theta _{As}\right\Vert _{W^{k-1,r}}\lesssim \left\Vert \theta
_{As}\right\Vert _{L^{r}}+\left\Vert \nabla ^{k-1}\theta _{As}\right\Vert
_{L^{r}}\lesssim e^{Ck\left\Vert \xi \right\Vert _{C^{0}}}\left( \Theta
^{k}+\Theta \right) ,
\end{equation*}%
which gives us (\ref{ThetaAsWest}).

Now from Lemma \ref{lemNablaX}, for each $j$, 
\begin{equation*}
\left\vert \nabla ^{j}X\right\vert \lesssim e^{\left( j+1\right) C\left\vert
\xi \right\vert }\sum_{j^{\prime }+j^{\prime \prime }=j}p_{j^{\prime
}-1}\left( \left\vert \nabla ^{j^{\prime \prime }}X_{0}\right\vert
+\left\vert \nabla ^{j^{\prime \prime }}Y\right\vert \right) ,
\end{equation*}%
Hence, 
\begin{eqnarray*}
\left\Vert \nabla ^{j}X\right\Vert _{L^{r}} &\lesssim &e^{\left( j+1\right)
C\left\Vert \xi \right\Vert _{C^{0}}}\sum_{j^{\prime }+j^{\prime \prime
}=j}\left\Vert p_{j^{\prime }-1}\left( \left\vert \nabla ^{j^{\prime \prime
}}X_{0}\right\vert +\left\vert \nabla ^{j^{\prime \prime }}Y\right\vert
\right) \right\Vert _{L^{r}} \\
&\lesssim &e^{\left( j+1\right) C\left\Vert \xi \right\Vert
_{C^{0}}}\sum_{J_{j-1}^{\prime }}\left\Vert \left\vert \theta
_{s}\right\vert ^{i_{1}}\left\vert \nabla \theta _{s}\right\vert
^{i_{2}}...\left\vert \nabla ^{j-1}\theta _{s}\right\vert
^{i_{j-1}}\left\vert d\xi \right\vert ^{k_{1}}\left\vert \nabla d\xi
\right\vert ^{k_{2}}...\left\vert \nabla ^{j-1}d\xi \right\vert
^{k_{j}}\times \right. \\
&&\left. \times \left( \left\vert \nabla ^{j^{\prime \prime
}}X_{0}\right\vert +\left\vert \nabla ^{j^{\prime \prime }}Y\right\vert
\right) \right\Vert _{L^{r}}
\end{eqnarray*}%
where 
\begin{equation*}
J_{j-1}^{\prime }=\left\{ \left( i_{1},...,i_{j},k_{1},...,k_{j},j^{\prime
\prime }\right) \in \mathbb{N}_{0}^{2j+1}:\sum_{m=1}^{j}mi_{m}+%
\sum_{m=1}^{j}mk_{m}+j^{\prime \prime }=j\right\} .
\end{equation*}%
Hence, from Lemma \ref{lemSobProd}, 
\begin{eqnarray*}
\left\Vert \nabla ^{j}X\right\Vert _{L^{r}} &\lesssim &e^{\left( j+1\right)
C\left\Vert \xi \right\Vert _{C^{0}}}\sum_{J_{j-1}^{\prime }}\left(
\left\Vert \nabla ^{j^{\prime \prime }}X_{0}\right\Vert _{W^{k-j^{\prime
\prime },r}}+\left\Vert \nabla ^{j^{\prime \prime }}Y\right\Vert
_{W^{k-j^{\prime \prime },r}}\right) \prod_{i^{\prime }=1}^{j-1}\left\Vert
\nabla ^{i^{\prime }-1}\theta _{s}\right\Vert _{W^{k-i^{\prime
},r}}^{i_{i^{\prime }}}\times \\
&&\times \prod_{i^{\prime \prime }=1}^{j}\left\Vert \nabla ^{i^{\prime
\prime }-1}d\xi \right\Vert _{W^{k-i^{\prime \prime },r}}^{k_{i^{\prime
\prime }}} \\
&\lesssim &e^{\left( j+1\right) C\left\Vert \xi \right\Vert _{C^{0}}}\left(
\left\Vert X_{0}\right\Vert _{W^{k,r}}+\left\Vert Y\right\Vert
_{W^{k,r}}\right) \sum_{J_{j-1}^{\prime }}\left\Vert \theta _{s}\right\Vert
_{W^{k-1,r}}^{i_{1}+...+i_{j}}\left\Vert \xi \right\Vert
_{W^{k,r}}^{k_{1}+...+k_{j}} \\
&\lesssim &e^{C\left( j+1\right) \left\Vert \xi \right\Vert _{C^{0}}}\left(
\left\Vert X_{0}\right\Vert _{W^{k,r}}+\left\Vert Y\right\Vert
_{W^{k,r}}\right) \left( \Theta ^{j}+\Theta \right) ,
\end{eqnarray*}%
similarly as before. Hence, we conclude that 
\begin{equation*}
\left\Vert X\right\Vert _{W^{k,r}}\lesssim e^{C\left( k+1\right) \left\Vert
\xi \right\Vert _{C^{0}}}\left( \left\Vert X_{0}\right\Vert
_{W^{k,r}}+\left\Vert Y\right\Vert _{W^{k,r}}\right) \left( \Theta
^{k+1}+\Theta \right) .
\end{equation*}
\end{proof}

\begin{corollary}
Suppose $A\in C^{0}\left( M,\mathbb{L}^{\prime }\right) $ and $s\in
W^{k,r}\left( M,\mathbb{L}\right) $, where $kr>n=\dim M.$ Then, $A\in
W^{k,r}\left( M,\mathbb{L}^{\prime }\right) $ if and only if $\theta
_{As}\in W^{k-1,r}\left( M,T^{\ast }M\otimes \mathfrak{l}\right) $.
\end{corollary}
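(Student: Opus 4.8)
The plan is to reduce the statement to Lemma~\ref{lemSobLoops} applied to the loop-valued map $As:M\longrightarrow \mathbb{L}$, rather than trying to estimate $\theta_{As}$ directly through the splitting (\ref{thetaAs2}). The point is that $As$ is a genuine map into $\mathbb{L}$ (its value at $x$ is $A_{x}s_{x}\in \mathbb{L}$), and since $\mathbb{L}^{\prime}$ and $\mathbb{L}$ share the same underlying manifold, the Sobolev regularity of $A$ may be measured in either. Thus the corollary follows once I establish the equivalence
\[
A\in W^{k,r}\left( M,\mathbb{L}^{\prime }\right) \iff As\in W^{k,r}\left( M,\mathbb{L}\right),
\]
because Lemma~\ref{lemSobLoops} applied to $As$ gives $As\in W^{k,r}\left( M,\mathbb{L}\right)$ if and only if $\theta_{As}\in W^{k-1,r}\left( M,T^{\ast}M\otimes \mathfrak{l}\right)$. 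Note $As$ is continuous (hence admissible in Lemma~\ref{lemSobLoops}) since $A,s$ are continuous and the loop product is smooth.

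For the forward implication of the equivalence I would use that the loop product $\mu:\mathbb{L}^{\prime }\times \mathbb{L}\longrightarrow \mathbb{L}$, $\left( A,s\right) \mapsto As$, is a smooth map of manifolds. Since $A\in W^{k,r}$ and $s\in W^{k,r}$ give $\left( A,s\right) \in W^{k,r}\left( M,\mathbb{L}^{\prime }\times \mathbb{L}\right)$, and since $kr>n$ ensures the embedding $W^{k,r}\hookrightarrow C^{0}$, composition with $\mu$ stays in $W^{k,r}$; hence $As=\mu \circ \left( A,s\right) \in W^{k,r}\left( M,\mathbb{L}\right)$. For the reverse implication I recover $A$ from the right quotient: pointwise $A_{x}=\left( A_{x}s_{x}\right) /s_{x}$, so $A=\left( As\right) /s$, where $/$ is the smooth right-division map on $\mathbb{L}$. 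The same composition argument applied to $As\in W^{k,r}$ and $s\in W^{k,r}$ then yields $A\in W^{k,r}$. Chaining these with Lemma~\ref{lemSobLoops} proves both directions of the corollary.

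The only genuinely technical input is the $\Omega$-type lemma that composition of a smooth map between manifolds with a $W^{k,r}$ map is again $W^{k,r}$ when $kr>n$; this is the manifold analog of the product and composition estimates already invoked in the proof of Lemma~\ref{lemSobLoops} (via \cite[Lemma B.5, Lemma B.8]{WehrheimBook}) and in Lemma~\ref{lemSobProd}, so it introduces no new difficulty. I therefore expect the bulk of a full writeup to be bookkeeping: checking that the loop product and right quotient are smooth as maps of manifolds (immediate from the definition of a smooth loop, since $R_{q}$ and its inverse are diffeomorphisms) and that the regularity of $A$ is unambiguous between the $\Psi$- and $\Psi^{\prime}$-structures because the underlying manifold is the same. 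An alternative, more computational route would instead insert (\ref{thetaAs2}) and control $\theta_{As}-\mathop{\rm Ad}\nolimits_{A}^{\left( s\right) }\theta_{s}=\left( \rho_{A}^{\left( s\right) }\right)^{-1}dA$ through the multiplicative estimate of Lemma~\ref{lemSobProd}; this is viable but requires bounding the $W^{k-1,r}$ norms of the $A$- and $s$-dependent coefficients $\mathop{\rm Ad}\nolimits_{A}^{\left( s\right) }$ and $\left( \rho_{A}^{\left( s\right) }\right)^{-1}$, and is less clean than the reduction above.
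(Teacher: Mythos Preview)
Your proposal is correct and is essentially the same argument as the paper's own proof: reduce to Lemma~\ref{lemSobLoops} applied to $As$, use smoothness of the product $\mu$ to go from $A\in W^{k,r}$ to $As\in W^{k,r}$, and use smoothness of right division to go back. The paper's proof is just a terser version of what you wrote.
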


\begin{proof}
The map $\mu :\mathbb{L}^{\prime }\times \mathbb{L}\longrightarrow \mathbb{L}
$ given by $\left( A,s\right) \mapsto As$ is smooth, hence the composition
with $\mu $ is a continuous map from $W^{k,r}\left( M,\mathbb{L}^{\prime
}\times \mathbb{L}\right) $ to $W^{k,r}\left( M,\mathbb{L}\right) .$ If $%
A\in W^{k,r}\left( M,\mathbb{L}^{\prime }\right) ,$ then since $s\in
W^{k,r}\left( M,\mathbb{L}\right) \subset C^{0}\left( M,\mathbb{L}\right) ,$ 
$As\in W^{k,r}\left( M,\mathbb{L}\right) ,$ and hence from Lemma \ref%
{lemSobLoops}, $\theta _{As}\in W^{k-1,r}\left( M,T^{\ast }M\otimes 
\mathfrak{l}\right) ,$ and thus $\theta _{A}^{\left( s\right) }\in
W^{k-1,r}\left( M,T^{\ast }M\otimes \mathfrak{l}\right) .$

Conversely, if $\theta _{As}\in W^{k-1,r}\left( M,T^{\ast }M\otimes 
\mathfrak{l}\right) ,$ then $As\in W^{k,r}\left( M,\mathbb{L}\right) .$
Since right division is a smooth map, and $s\in C^{0}\left( M,\mathbb{L}%
\right) ,$ we conclude that $A\in W^{k,r}\left( M,\mathbb{L}^{\prime
}\right) .$
\end{proof}

\section{Gauge theory}

\setcounter{equation}{0}\label{sectGauge}Let $M$ be a smooth,
finite-dimensional manifold with a $\Psi $-principal bundle $\pi :\mathcal{P}%
\longrightarrow M.$ 

\begin{definition}
Let $s:$ $\mathcal{P}\longrightarrow \mathbb{L}$ be an equivariant map. In
particular, given $p\in \mathcal{P},$ the equivalence class $\left\lfloor
p,s_{p}\right\rfloor _{\Psi }$ defines a section of the associated bundle $%
\mathcal{Q}=\mathcal{P\times }_{\Psi }\mathbb{L}$, where $\left\lfloor
p,s_{p}\right\rfloor _{\Psi }$ is the equivalence class with respect to the
action of $\Psi :$%
\begin{equation}
\left( p,s_{p}\right) \sim \left( ph,l_{h^{-1}}\left( s_{p}\right) \right)
=\left( ph,s_{ph}\right) \ \ \text{for any }h\in \Psi .
\end{equation}%
We will refer to $s$ as \emph{the defining map} or \emph{defining section}.
\end{definition}

We will define several associated bundles related to $\mathcal{P}.$ As it is
well-known, sections of associated bundles are equivalent to equivariant
maps. With this in mind, we also give properties of equivariant maps that
correspond to sections of these bundles. Let $h\in \Psi $ and, as before,
denote by $h^{\prime }$ the partial action of $h$.

\begin{equation}
\begin{tabular}{l|l|l}
\textbf{Bundle} & \textbf{Equivariant map} & \textbf{Equivariance property}
\\ \hline
$\mathcal{P}$ & $k:\mathcal{P}\longrightarrow \Psi $ & $k_{ph}=h^{-1}k_{p}$
\\ 
$\mathcal{Q}^{\prime }=\mathcal{P}\times _{\Psi ^{\prime }}\mathbb{L}%
^{\prime }$ & $q:\mathcal{P}\longrightarrow \mathbb{L}^{\prime }$ & $%
q_{ph}=\left( h^{\prime }\right) ^{-1}q_{p}$ \\ 
$\mathcal{Q}=\mathcal{P\times }_{\Psi }\mathbb{L}$ & $r:\mathcal{P}%
\longrightarrow \mathbb{L}$ & $r_{ph}=h^{-1}\left( r_{p}\right) $ \\ 
$\mathcal{A}=\mathcal{P\times }_{\Psi _{\ast }^{\prime }}\mathfrak{l}$ & $%
\eta :\mathcal{P}\longrightarrow \mathfrak{l}$ & $\eta _{ph}=\left(
h^{\prime }\right) _{\ast }^{-1}\eta _{p}$ \\ 
$\mathfrak{p}_{\mathcal{P}}=\mathcal{P\times }_{\left( \mathop{\rm Ad}%
\nolimits_{\xi }\right) _{\ast }}\mathfrak{p}$ & $\xi :\mathcal{P}%
\longrightarrow \mathfrak{p}$ & $\xi _{ph}=\left( \mathop{\rm Ad}%
\nolimits_{h}^{-1}\right) _{\ast }\xi _{p}$ \\ 
$\mathop{\rm Ad}\nolimits\left( \mathcal{P}\right) =\mathcal{P}\times _{%
\mathop{\rm Ad}\nolimits_{\Psi }}\Psi $ & $u:\mathcal{P}\longrightarrow \Psi 
$ & $u_{ph}=h^{-1}u_{p}h$%
\end{tabular}
\label{equimap2}
\end{equation}

Given equivariant maps $q,r:\mathcal{P}\longrightarrow \mathbb{L}^{\prime }$%
, define an equivariant product using $s$, given for any $p\in \mathcal{P}$
by%
\begin{equation}
\left. q\circ _{s}r\right\vert _{p}=q_{p}\circ _{s_{p}}r_{p}.
\label{equiprod}
\end{equation}%
Due to Lemma \ref{lemPseudoHom}, the corresponding map $q\circ _{s}r:%
\mathcal{P}\longrightarrow \mathbb{L}^{\prime }$ is equivariant, and hence $%
\circ _{s}$ induces a fiberwise product on sections of $\mathcal{Q}$.
Analogously, we define fiberwise quotients of sections of $\mathcal{Q}.$
Similarly, we define an equivariant bracket $\left[ \cdot ,\cdot \right]
^{\left( s\right) }$ and the equivariant map $\varphi _{s}$. Similarly, the
Killing form $K^{\left( s\right) }$ is then also equivariant.

Suppose the principal $\Psi $-bundle $\mathcal{P}$ has a principal Ehresmann
connection given by the decomposition $T\mathcal{P}=\mathcal{HP}\oplus 
\mathcal{VP}$ and the corresponding vertical $\mathfrak{p}$-valued
connection $1$-form $\omega .$ Given an equivariant map $f:\mathcal{P}%
\longrightarrow S$, define 
\begin{equation}
d^{\omega }f:=f_{\ast }\circ \mathop{\rm proj}\nolimits_{\mathcal{H}}:T%
\mathcal{P}\longrightarrow \mathcal{HP}\longrightarrow TS.  \label{dHftilde}
\end{equation}%
This is then a horizontal map since it vanishes on any vertical vectors. The
map $d^{\mathcal{\omega }}f$ is moreover still equivariant, and hence
induces a covariant derivative on sections of the associated bundle $%
\mathcal{P}\times _{\Psi }S$. If $S$ is a vector space, then this reduces to
the usual definition of the exterior covariant derivative of a vector
bundle-valued function and $d^{\mathcal{\omega }}f$ is a
vector-bundle-valued $1$-form. 

Following \cite{GrigorianLoops}, let us define the torsion of the defining
map $s$ with respect to the connection $\omega $.

\begin{definition}
\label{defTors}The $\emph{torsion}$ $T^{\left( s,\omega \right) }$ of the
defining map $s$ with respect to $\omega $ is a horizontal $\mathfrak{l}$%
-valued $1$-form on $\mathcal{P}$ given by $T^{\left( s,\omega \right)
}=\left( s^{\ast }\theta \right) \circ \mathop{\rm proj}\nolimits_{\mathcal{H%
}}$, where $\theta $ is Maurer-Cartan form of $\mathbb{L}$. Equivalently, at 
$p\in \mathcal{P}$, we have%
\begin{equation}
\left. T^{\left( s,\omega \right) }\right\vert _{p}=\left(
R_{s_{p}}^{-1}\right) _{\ast }\left. d^{\omega }s\right\vert _{p}.
\label{torsdef}
\end{equation}
\end{definition}

Thus, $T^{\left( s,\omega \right) }$ is the horizontal component of $\theta
_{s}=s^{\ast }\theta .$ We also easily see that it is $\Psi $-equivariant.
Thus, $T^{\left( s,\omega \right) }$ is a \emph{basic} (i.e. horizontal and
equivariant) $\mathfrak{l}$-valued $1$-form on $\mathcal{P}$, and thus
defines a $1$-form on $M$ with values in the associated vector bundle $%
\mathcal{A=P\times }_{\Psi _{\ast }^{\prime }}\mathfrak{l}.$ 

%
%where $\mathop{\rm id}\nolimits_{\mathfrak{p}}\ $and $\mathop{\rm id}%
%\nolimits_{\mathfrak{l}}$ are the identity maps of $\mathfrak{p}$ and $%
%\mathfrak{l},$ respectively, and $\cdot $ denotes the action of the Lie
%algebra $\mathfrak{p}$ on $\mathfrak{l}$. 
%
%\begin{equation*}
%d^{\omega }\varphi _{s}^{t}=\left[ \check{T},\varphi _{s}^{t}\right] _{%
%\mathfrak{p}}-\varphi _{s}^{t}\left( \check{T}\cdot \mathop{\rm id}\nolimits%
%_{\mathfrak{l}}\right) ,
%\end{equation*}
%
%\begin{proof}
%Equation (\ref{dhphis}) follows from \cite[Theorem 4.11]{GrigorianLoops}.
%However to obtain (\ref{dhphist}), suppose $\xi $ is an $\mathfrak{l}$%
%-valued map and $\eta $ is a $\mathfrak{p}$-valued map. Then, 
%\begin{eqnarray*}
%\left\langle \left( d^{\omega }\varphi _{s}^{t}\right) \left( \xi \right)
%,\eta \right\rangle ^{\left( s\right) } &=&\left\langle \xi ,\left(
%d^{\omega }\varphi _{s}\right) \eta \right\rangle ^{\left( s\right) } \\
%&=&\left\langle \xi ,\eta \cdot T^{\left( s,\omega \right) }-\left[ \varphi
%_{s}\left( \eta \right) ,T^{\left( s,\omega \right) }\right] ^{\left(
%s\right) }\right\rangle ^{\left( s\right) } \\
%&=&\left\langle \xi ,\check{T}\cdot \varphi _{s}\left( \eta \right) +\varphi
%_{s}\left( \left[ \eta ,\check{T}\right] _{\mathfrak{p}}\right)
%\right\rangle ^{\left( s\right) } \\
%&=&\left\langle -\varphi _{s}^{t}\left( \check{T}\cdot \xi \right) +\left[ 
%\check{T},\varphi _{s}^{t}\left( \xi \right) \right] _{\mathfrak{p}},\eta
%\right\rangle ^{\left( s\right) },
%\end{eqnarray*}%
%where we used (\ref{dhphis}) and (\ref{xiphi}), and thus we obtain (\ref%
%{dhphist}).
%\end{proof}

Recall that the curvature $F^{\left( \omega \right) }\in \Omega ^{2}\left( 
\mathcal{P},\mathfrak{p}\right) $ of the connection $\omega $ on $\mathcal{P}
$ is given by 
\begin{equation}
F^{\left( \omega \right) }=d\omega \circ \mathop{\rm proj}\nolimits_{%
\mathcal{H}}=d\omega +\frac{1}{2}\left[ \omega ,\omega \right] _{\mathfrak{p}%
},  \label{curvom}
\end{equation}%
where wedge product is implied. Given the defining map $s$, define $\hat{F}%
^{\left( s,\omega \right) }\in \Omega ^{2}\left( \mathcal{P},\mathfrak{l}%
\right) $ to be the projection of the curvature $F^{\left( \omega \right) }$
to $\mathfrak{l}$ with respect to $s$, such that for any $X_{p},Y_{p}\in
T_{p}\mathcal{P},$ 
\begin{equation}
\hat{F}^{\left( s,\omega \right) }=\varphi _{s}\left( F^{\left( \omega
\right) }\right) .  \label{Fhat}
\end{equation}

\begin{theorem}[{\protect\cite[Theorem 4.19]{GrigorianLoops}}]
\label{thmFTstruct}$\hat{F}^{\left( s,\omega \right) }$ and $T^{\left(
s,\omega \right) }$ satisfy the following structure equation 
\begin{equation}
\hat{F}^{\left( s,\omega \right) }=d^{\mathcal{\omega }}T^{\left( s,\omega
\right) }-\frac{1}{2}\left[ T^{\left( s,\omega \right) },T^{\left( s,\omega
\right) }\right] ^{\left( s\right) },  \label{dHT}
\end{equation}%
where a wedge product between the $1$-forms $T^{\left( s,\omega \right) }$
is implied.
\end{theorem}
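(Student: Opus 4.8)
The plan is to derive the structure equation (\ref{dHT}) directly from the loop Maurer--Cartan equation (\ref{DarbouxMC}) satisfied by the Darboux derivative $\theta_s=s^{\ast}\theta$, after identifying the vertical part of $\theta_s$ in terms of $\varphi_s$ and then evaluating everything on horizontal lifts so that the connection terms collapse into the curvature.

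First I would record the vertical behaviour of $\theta_s$. Since $s:\mathcal{P}\longrightarrow\mathbb{L}$ is the equivariant defining map with $s_{ph}=h^{-1}(s_p)$ (see (\ref{equimap2})), differentiating along the fundamental vector field $X_{\gamma}$ generated by $\gamma\in\mathfrak{p}$, for which $\omega(X_{\gamma})=\gamma$, gives
\[
\left.ds\right|_{p}(X_{\gamma})=\left.\frac{d}{dt}\right|_{t=0}s\!\left(p\exp(t\gamma)\right)=-\left.\frac{d}{dt}\right|_{t=0}\exp(t\gamma)(s_{p})=-\rho_{s_{p}}\varphi_{s_{p}}(\gamma),
\]
using the definition (\ref{phis}) of $\varphi_s$. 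Hence $\theta_s(X_{\gamma})=\rho_{s_{p}}^{-1}\,ds(X_{\gamma})=-\varphi_s(\gamma)$, and so on all of $T\mathcal{P}$ one has the decomposition
\[
\theta_s=T^{\left(s,\omega\right)}-\varphi_s(\omega),
\]
the first summand being the horizontal part, which is exactly the torsion by (\ref{torsdef}), and the second the vertical part.

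The main computation is then to evaluate the loop Maurer--Cartan equation (\ref{DarbouxMC}), in the form $d\theta_s(X,Y)=[\theta_s(X),\theta_s(Y)]^{\left(s\right)}$, on a pair of horizontal lifts $X,Y$ of vector fields on $M$. On such vectors $\omega(X)=\omega(Y)=0$, so $\theta_s(X)=T^{\left(s,\omega\right)}(X)$ and $\theta_s(Y)=T^{\left(s,\omega\right)}(Y)$, while the curvature enters only through the Lie bracket, via $\omega([X,Y])=-F^{\left(\omega\right)}(X,Y)$ (which follows from (\ref{curvom}), since $[\omega,\omega]_{\mathfrak{p}}$ vanishes on horizontal vectors). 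Expanding the left-hand side,
\[
d\theta_s(X,Y)=X\,\theta_s(Y)-Y\,\theta_s(X)-\theta_s([X,Y]),
\]
and using that $T^{\left(s,\omega\right)}$ annihilates vertical vectors together with the vertical formula above, the bracket term becomes $\theta_s([X,Y])=T^{\left(s,\omega\right)}([X,Y]_{\mathcal{H}})+\varphi_s\!\left(F^{\left(\omega\right)}(X,Y)\right)$. Recognising $X\,T^{\left(s,\omega\right)}(Y)-Y\,T^{\left(s,\omega\right)}(X)-T^{\left(s,\omega\right)}([X,Y]_{\mathcal{H}})$ as $d^{\omega}T^{\left(s,\omega\right)}(X,Y)$ and $\varphi_s(F^{\left(\omega\right)}(X,Y))$ as $\hat{F}^{\left(s,\omega\right)}(X,Y)$ by (\ref{Fhat}), the left-hand side equals $d^{\omega}T^{\left(s,\omega\right)}(X,Y)-\hat{F}^{\left(s,\omega\right)}(X,Y)$, while the right-hand side is $[T^{\left(s,\omega\right)}(X),T^{\left(s,\omega\right)}(Y)]^{\left(s\right)}=\tfrac12[T^{\left(s,\omega\right)},T^{\left(s,\omega\right)}]^{\left(s\right)}(X,Y)$. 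Rearranging yields (\ref{dHT}) on horizontal vectors, and since $\hat{F}^{\left(s,\omega\right)}$, $d^{\omega}T^{\left(s,\omega\right)}$ and $[T^{\left(s,\omega\right)},T^{\left(s,\omega\right)}]^{\left(s\right)}$ are all horizontal (basic) $2$-forms, equality on horizontal vectors forces equality of forms.

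The step I expect to be most delicate is the bookkeeping of the vertical contribution: getting the correct sign in $\theta_s=T^{\left(s,\omega\right)}-\varphi_s(\omega)$ from equivariance, and tracking how the curvature emerges from $\theta_s([X,Y])$ through $\omega([X,Y])=-F^{\left(\omega\right)}(X,Y)$. Evaluating on horizontal lifts is precisely what makes this clean, since it annihilates the $\varphi_s(\omega)$ term that would otherwise force a Leibniz computation of $d(\varphi_s(\omega))$ via (\ref{dphis0}); the entire vertical dependence of $\theta_s$ is thereby funnelled into the single curvature term. One could instead substitute $\theta_s=T^{\left(s,\omega\right)}-\varphi_s(\omega)$ into the Maurer--Cartan equation globally and expand using (\ref{dphis0}), but that route is computationally heavier and I would use it only as an independent check.
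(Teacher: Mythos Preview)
The paper does not supply its own proof of this theorem; it is quoted from \cite[Theorem 4.19]{GrigorianLoops} without argument. Your proof is correct and is the natural one: the decomposition $\theta_s=T^{(s,\omega)}-\varphi_s(\omega)$ follows exactly as you derive it from the equivariance $s_{ph}=h^{-1}(s_p)$ and the definition (\ref{phis}), and evaluating the pulled-back Maurer--Cartan equation (\ref{DarbouxMC}) on horizontal lifts cleanly isolates $\hat{F}^{(s,\omega)}$ via $\omega([X,Y])=-F^{(\omega)}(X,Y)$. Your identification of $d^{\omega}T^{(s,\omega)}(X,Y)$ with $X\,T(Y)-Y\,T(X)-T([X,Y]_{\mathcal H})$ is consistent with the definition (\ref{dHftilde}) of $d^{\omega}$ as horizontal projection, and the final observation that all three terms are basic $2$-forms justifies passing from equality on horizontal vectors to equality of forms. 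The sign bookkeeping you flagged as delicate is indeed the only place to be careful, and you have it right.
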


In the case of an octonion bundle over a $7$-dimensional manifold, this
relationship between the torsion and a curvature component has been shown in 
\cite{GrigorianOctobundle}.

As discussed earlier, equivariant horizontal forms on $\mathcal{P}$ give
rise to sections of corresponding associated bundles over the base manifold $%
M.$ So let us now switch perspective, and work in terms of sections of
bundles. In particular, now we will consider $s$ to be a smooth section of
the bundle $\mathcal{Q},$ so that we will say $s\in \Gamma \left( \mathcal{Q}%
\right) ,$ and will refer to it as the \emph{defining section}. Similarly,
we can also consider sections $A\in \Gamma \left( \mathcal{Q}^{\prime
}\right) ,$ which admit the partial action of $\Psi .$ The product on
elements of $\mathbb{L}^{\prime }$ and $\mathbb{L},$ then carries over to
sections of bundles, so that we have a product $\Gamma \left( \mathcal{Q}%
^{\prime }\right) \times \Gamma \left( \mathcal{Q}\right) \longrightarrow
\Gamma \left( \mathcal{Q}\right) .$

The connection $\omega $ on $\mathcal{P}$ then induces connections on the
associated bundles and correspondingly, covariant derivatives on sections of
these bundles. The torsion $T^{\left( s,\omega \right) }$, as defined
earlier, was a horizontal and equivariant $1$-form on $\mathcal{P}$ with
values in $\mathfrak{l},$ so it uniquely corresponds to a $1$-form on $M$
with values in the bundle $\mathcal{A},$ i.e., now we will consider $%
T^{\left( s,\omega \right) }\in \Omega ^{1}\left( \mathcal{A}\right) .$

In standard gauge theory, the key object is the connection, however, in the
non-associative theory, in addition to the connection $\omega $ we also the
defining section $s.$ We then make the following definition.

\begin{definition}
A \emph{non-associative gauge theory }is defined by the following objects:

\begin{enumerate}
\item A smooth loop $\mathbb{L}$ with a finite-dimensional
pseudoautomorphism Lie group $\Psi $ and tangent algebra $\mathfrak{l}$ at
identity.

\item A smooth manifold $M$ with a principal $\Psi $-bundle $\mathcal{P},$
and associated bundles $\mathcal{Q},$ $\mathcal{Q}^{\prime },$ $\mathcal{A},$
with fibers $\mathbb{L}$, $\mathbb{L}^{\prime },$ and $\mathfrak{l},$
respectively

\item A \emph{configuration }$\left( s,\omega \right) $, where $s\in \Gamma
\left( \mathcal{Q}\right) $ is a defining section and $\omega $ is a
connection on $\mathcal{P}.$ Each configuration carries torsion $T^{\left(
s,\omega \right) }\in \Omega ^{1}\left( \mathcal{A}\right) .$
\end{enumerate}
\end{definition}

As we see, the key components are the loop $\mathbb{L},$ with its
pseudoautomorphism group, and the corresponding principal bundle $\mathcal{P}%
\longrightarrow M.$ Up to a choice of the configuration $\left( s,\omega
\right) ,$ everything else follows uniquely. In particular, the associated
bundles are unique because particular actions of $\Psi $ are used to define
them.

The group $\Psi $ acts via standard gauge transformations on $\omega $ and
also acts on the section $s.$ These actions are related in the following
way, as shown in \cite{GrigorianLoops}, 
\begin{equation}
T^{\left( s,h^{\ast }\omega \right) }=\left( h_{\ast }^{\prime }\right)
^{-1}T^{\left( h\left( s\right) ,\omega \right) }\text{,}  \label{Tshs}
\end{equation}%
where $h$ is a section of $\mathop{\rm Ad}\nolimits\mathcal{P},$ so is
fiberwise in $\Psi .$ However, we will define loop gauge transformations in
the following way.

\begin{definition}
A \emph{loop gauge transformation }is a transformation of the defining
section $s$ by right multiplication by a section $A\in \Gamma \left( 
\mathcal{Q}^{\prime }\right) ,$ such that $s\mapsto As,$ and hence $%
T^{\left( s,\omega \right) }\mapsto T^{\left( As,\omega \right) }.$
\end{definition}

With respect to a loop gauge transformation, the torsion and curvature $\hat{%
F}$ transform in the following way.

\begin{lemma}[{\protect\cite[Theorem 4.28]{GrigorianLoops}}]
Suppose $A\in \Gamma \left( \mathcal{Q}^{\prime }\right) $ and $s\in \Gamma
\left( \mathcal{Q}\right) $. Then, 
%TCIMACRO{\TeXButton{beginsubeq}{\begin{subequations}} }%
%BeginExpansion
\begin{subequations}
%EndExpansion
\begin{eqnarray}
T^{\left( As,\omega \right) } &=&\left( \rho _{A}^{\left( s\right) }\right)
^{-1}d^{\omega }A+\mathop{\rm Ad}\nolimits_{A}^{\left( s\right) }T^{\left(
s,\omega \right) }  \label{Trom} \\
\hat{F}^{\left( As,\omega \right) } &=&\left( \rho _{A}^{\left( s\right)
}\right) ^{-1}\left( F^{\prime }\cdot A\right) +\mathop{\rm Ad}%
\nolimits_{A}^{\left( s\right) }\hat{F}^{\left( s,\omega \right) },
\label{From}
\end{eqnarray}%
%TCIMACRO{\TeXButton{endsubeq}{\end{subequations}}}%
%BeginExpansion
\end{subequations}%
%EndExpansion
where $F^{\prime }\cdot A$ denotes the infinitesimal action of $\mathfrak{p}$
on $\mathbb{L}.$
\end{lemma}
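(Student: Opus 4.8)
The plan is to obtain both identities by pushing the known transformation laws for $\theta_{As}$ and $\varphi_{As}$ through the horizontal projection $\mathop{\rm proj}\nolimits_{\mathcal{H}}$ determined by $\omega$, and, for the curvature, by finally evaluating on $F^{\left( \omega \right) }$. First I would dispatch the torsion identity. By Definition \ref{defTors}, $T^{\left( As,\omega \right) }=\theta _{As}\circ \mathop{\rm proj}\nolimits_{\mathcal{H}}$ and $T^{\left( s,\omega \right) }=\theta _{s}\circ \mathop{\rm proj}\nolimits_{\mathcal{H}}$, while by (\ref{dHftilde}) the covariant derivative $d^{\omega }A=dA\circ \mathop{\rm proj}\nolimits_{\mathcal{H}}$ is simply the horizontal part of $dA$. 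I would then take (\ref{thetaAs2}), which is a pointwise identity and so holds for the equivariant maps on $\mathcal{P}$,
\[
\theta _{As}=\mathop{\rm Ad}\nolimits_{A}^{\left( s\right) }\theta _{s}+\left( \rho _{A}^{\left( s\right) }\right) ^{-1}dA,
\]
and compose it on the right with $\mathop{\rm proj}\nolimits_{\mathcal{H}}$. The operators $\mathop{\rm Ad}\nolimits_{A}^{\left( s\right) }$ and $\left( \rho _{A}^{\left( s\right) }\right) ^{-1}$ are fiberwise linear maps depending only on $A_{p},s_{p}$; they act on the $\mathfrak{l}$-values and hence commute with the projection, which acts on the $T\mathcal{P}$-argument. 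This yields $T^{\left( As,\omega \right) }=\left( \rho _{A}^{\left( s\right) }\right) ^{-1}d^{\omega }A+\mathop{\rm Ad}\nolimits_{A}^{\left( s\right) }T^{\left( s,\omega \right) }$ at once.

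For the curvature identity I would first reduce it to a transformation law for $\varphi _{As}$. Since $\hat{F}^{\left( s,\omega \right) }=\varphi _{s}\left( F^{\left( \omega \right) }\right) $ and $\hat{F}^{\left( As,\omega \right) }=\varphi _{As}\left( F^{\left( \omega \right) }\right) $ by (\ref{Fhat}), it suffices to establish the exact analog of (\ref{thetaAs2}) for $\varphi $, namely
\[
\varphi _{As}\left( \gamma \right) =\mathop{\rm Ad}\nolimits_{A}^{\left( s\right) }\varphi _{s}\left( \gamma \right) +\left( \rho _{A}^{\left( s\right) }\right) ^{-1}\left( \gamma ^{\prime }\cdot A\right) ,\qquad \gamma \in \mathfrak{p},
\]
where $\gamma ^{\prime }\cdot A$ is the infinitesimal partial action of $\gamma $ on $A$, playing precisely the role that $dA$ played for $\theta $.

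To derive this law I would set $h_{t}=\exp \left( t\gamma \right) $ and use the pseudoautomorphism property (\ref{pseudoauto}) to split $h_{t}\left( As\right) =h_{t}^{\prime }\left( A\right) \,h_{t}\left( s\right) $, so that $\varphi _{As}\left( \gamma \right) =\left. \tfrac{d}{dt}\right\vert _{t=0}\big( h_{t}^{\prime }\left( A\right) \,h_{t}\left( s\right) \big) /\left( As\right) $. Differentiating with the product rule (\ref{dAsB1}) and the right-quotient derivative (\ref{ddtrighquot})--(\ref{drquot}), the $\tfrac{d}{dt}h_{t}\left( s\right) $-contribution reproduces $\varphi _{s}\left( \gamma \right) =\rho _{s}^{-1}\tfrac{d}{dt}h_{t}\left( s\right) $ transported to $As$ by $\mathop{\rm Ad}\nolimits_{A}^{\left( s\right) }$, exactly as in the derivation of (\ref{thetaAs2}), while the $\tfrac{d}{dt}h_{t}^{\prime }\left( A\right) $-contribution is the action on $A$ carried by $\left( \rho _{A}^{\left( s\right) }\right) ^{-1}$. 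Evaluating the resulting law on $\gamma =F^{\left( \omega \right) }$ then gives $\hat{F}^{\left( As,\omega \right) }=\left( \rho _{A}^{\left( s\right) }\right) ^{-1}\left( F^{\prime }\cdot A\right) +\mathop{\rm Ad}\nolimits_{A}^{\left( s\right) }\hat{F}^{\left( s,\omega \right) }$.

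The main obstacle is this last derivation: the product and quotient formulas (\ref{dAsB1}),(\ref{drquot}) introduce associator corrections of the form $\left[ A,s,\cdot \right] ^{\left( s\right) }$, and the delicate step is verifying that the $h_{t}\left( s\right) $-piece reassembles precisely into $\mathop{\rm Ad}\nolimits_{A}^{\left( s\right) }\varphi _{s}\left( \gamma \right) $ and the $h_{t}^{\prime }\left( A\right) $-piece into $\left( \rho _{A}^{\left( s\right) }\right) ^{-1}\left( \gamma ^{\prime }\cdot A\right) $, with these associator terms cancelling --- this is where all the non-associativity is concentrated, and it reuses the same identities underlying (\ref{thetaAs2}). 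As an independent cross-check I would also derive the curvature identity from the structure equation (\ref{dHT}) applied to $As$, substituting the already-proved torsion identity and using (\ref{Adbrack1a}) to convert the bracket $\left[ \cdot ,\cdot \right] ^{\left( As\right) }$ into $\left[ \cdot ,\cdot \right] ^{\left( s\right) }$; this route avoids re-deriving $\varphi _{As}$ but the $d^{\omega }$ and bracket expansions are considerably heavier.
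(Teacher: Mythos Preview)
The paper does not prove this lemma at all: it is stated with a citation to \cite[Theorem 4.28]{GrigorianLoops} and no argument is given here. So there is no ``paper's proof'' to compare against.

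Your approach is sound. For (\ref{Trom}) it is exactly right and essentially a one-liner: compose (\ref{thetaAs2}) with $\mathop{\rm proj}\nolimits_{\mathcal{H}}$, using that $\mathop{\rm Ad}\nolimits_{A}^{\left( s\right) }$ and $\left( \rho _{A}^{\left( s\right) }\right) ^{-1}$ act on the $\mathfrak{l}$-values and therefore commute with the horizontal projection acting on the $T\mathcal{P}$-argument. For (\ref{From}), your reduction to a transformation law $\varphi _{As}\left( \gamma \right) =\mathop{\rm Ad}\nolimits_{A}^{\left( s\right) }\varphi _{s}\left( \gamma \right) +\left( \rho _{A}^{\left( s\right) }\right) ^{-1}\left( \gamma ^{\prime }\cdot A\right) $ is the natural move: since $\varphi _{s}\left( \gamma \right) =\rho _{s}^{-1}\left. \tfrac{d}{dt}h_{t}\left( s\right) \right\vert _{t=0}$ with $h_{t}=\exp \left( t\gamma \right) $, the pseudoautomorphism splitting $h_{t}\left( As\right) =h_{t}^{\prime }\left( A\right) h_{t}\left( s\right) $ makes the computation formally identical to the one behind (\ref{thetaAs2}), with $\tfrac{d}{dt}h_{t}^{\prime }\left( A\right) $ replacing $dA$ and $\tfrac{d}{dt}h_{t}\left( s\right) $ replacing $ds$. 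The associator terms you worry about are the same ones that are absorbed into $\mathop{\rm Ad}\nolimits_{A}^{\left( s\right) }$ in the derivation of (\ref{thetaAs2}), so the cancellation goes through verbatim. Your alternative route via the structure equation (\ref{dHT}) would also work but is, as you say, heavier.
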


Let us fix the connection $\omega $, and suppose we have a path $\exp
_{s}\left( t\xi \right) s\in \Gamma \left( \mathcal{Q}\right) .$ Then from
Lemma \ref{lemThetaPath}, just by taking the horizontal projection, we
immediately obtain that the corresponding one-parameter family of torsions
satisfy a similar ODE.

\begin{lemma}
Suppose $T^{\left( s,\omega \right) }$ is the torsion with respect to a
defining section $s\in \Gamma \left( \mathcal{Q}\right) $ and a connection $%
\omega .$ Suppose $A_{t}=\exp _{s}\left( t\xi \right) \in \Gamma \left( 
\mathcal{Q}^{\prime }\right) ,$ then 
\begin{equation}
\frac{d}{dt}T^{\left( A_{t}s,\omega \right) }=\left[ \xi ,T^{\left(
A_{t}s,\omega \right) }\right] ^{\left( A_{t}s\right) }+d^{\omega }\xi 
\label{dTAs}
\end{equation}
\end{lemma}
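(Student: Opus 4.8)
The plan is to reduce the statement entirely to Lemma~\ref{lemThetaPath}, which already establishes the analogous ODE for the Darboux derivative $\theta_{A(t)s}$ of a loop-valued map on a manifold. The only genuinely new ingredient is the passage from loop-valued maps on $M$ to $\Psi$-equivariant maps on $\mathcal{P}$, followed by the horizontal projection that defines the torsion.

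First I would lift all the data to the total space. The section $s\in\Gamma(\mathcal{Q})$ corresponds to the $\Psi$-equivariant map $\tilde{s}:\mathcal{P}\longrightarrow\mathbb{L}$, the $\mathfrak{l}$-valued section $\xi$ to an equivariant map $\tilde{\xi}:\mathcal{P}\longrightarrow\mathfrak{l}$, and $A_t=\exp_s(t\xi)$ to the equivariant map $\exp_{\tilde{s}}(t\tilde{\xi}):\mathcal{P}\longrightarrow\mathbb{L}^{\prime}$, so that $\widetilde{A_t s}=\exp_{\tilde{s}}(t\tilde{\xi})\,\tilde{s}$. Regarding $\mathcal{P}$ itself as the base manifold, Lemma~\ref{lemThetaPath} applies verbatim and gives, as an identity of $\mathfrak{l}$-valued $1$-forms on $\mathcal{P}$,
\begin{equation*}
\frac{d}{dt}\theta_{\widetilde{A_t s}}=\left[\tilde{\xi},\theta_{\widetilde{A_t s}}\right]^{\left(\widetilde{A_t s}\right)}+d\tilde{\xi}.
\end{equation*}

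Next I would compose this identity with the horizontal projection $\mathop{\rm proj}\nolimits_{\mathcal{H}}$ determined by the fixed connection $\omega$. By Definition~\ref{defTors}, $T^{(A_t s,\omega)}=\theta_{\widetilde{A_t s}}\circ\mathop{\rm proj}\nolimits_{\mathcal{H}}$, and by the definition of the exterior covariant derivative in (\ref{dHftilde}), $d^{\omega}\xi=d\tilde{\xi}\circ\mathop{\rm proj}\nolimits_{\mathcal{H}}$. The key point—essentially the only thing requiring justification—is that $\mathop{\rm proj}\nolimits_{\mathcal{H}}$ commutes with both operations on the right-hand side: it commutes with $d/dt$ because $\omega$, and hence $\mathop{\rm proj}\nolimits_{\mathcal{H}}$, is independent of $t$; and it commutes with the bracket because the fiberwise bracket $[\,\cdot\,,\cdot\,]^{\left(\widetilde{A_t s}\right)}$ acts on the $\mathfrak{l}$-values of the form, whereas $\mathop{\rm proj}\nolimits_{\mathcal{H}}$ acts on the $1$-form argument, so the two operate on independent slots. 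Evaluating the displayed identity on a horizontal vector therefore yields
\begin{equation*}
\frac{d}{dt}T^{(A_t s,\omega)}=\left[\xi,T^{(A_t s,\omega)}\right]^{(A_t s)}+d^{\omega}\xi.
\end{equation*}

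Finally I would verify that this is an identity of basic (horizontal and equivariant) $\mathfrak{l}$-valued $1$-forms, so that it descends to the asserted identity of $\mathcal{A}$-valued $1$-forms on $M$: $T^{(A_t s,\omega)}$ is basic by the discussion following Definition~\ref{defTors}, $d^{\omega}\xi$ is basic as the covariant derivative of a section, and the fiberwise bracket of an equivariant $\mathfrak{l}$-valued section with a basic form is again basic by the equivariance of the bracket (Lemma~\ref{corLoppalghom}). I do not expect a genuine obstacle here; the entire analytic content already resides in Lemma~\ref{lemThetaPath}, and the only care needed is the bookkeeping distinguishing the horizontal projection of the form argument from the pointwise bracket acting on values.
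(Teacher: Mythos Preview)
Your proposal is correct and is precisely the paper's approach: the paper states that the lemma follows from Lemma~\ref{lemThetaPath} ``just by taking the horizontal projection,'' and you have simply spelled out that reduction in detail, including the bookkeeping that the horizontal projection commutes with $d/dt$ and with the fiberwise bracket.
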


Using (\ref{Trom}) and (\ref{thetaexp}), given $\xi \in \Gamma \left( 
\mathcal{A}\right) ,$ we get 
\begin{eqnarray}
T^{\left( \left( \exp _{s}\xi \right) s,\omega \right) } &=&U_{\xi }^{\left(
s\right) }T^{\left( s,\omega \right) }  \label{Texps} \\
&&+U_{\xi }^{\left( s\right) }\left( \int_{0}^{1}U_{\xi }^{\left( s\right)
}\left( \tau \right) ^{-1}d\tau \right) d^{\omega }\xi .  \notag
\end{eqnarray}

Now suppose the base manifold $M$ is compact and Riemannian with a metric $g$
and also that the loop $\mathbb{L}$ admits a non-degenerate Killing form on $%
\mathfrak{l}.$ Then, define the functional 
\begin{equation}
\mathcal{E}_{\omega }\left( s\right) =\int_{M}\left\vert T_{{}}^{\left(
s,\omega \right) }\right\vert _{\left( s\right) }^{2}\mathop{\rm vol}%
\nolimits_{g},  \label{Esom}
\end{equation}%
where $\left\vert {}\right\vert _{\left( s\right) }$ is a combination of the
metric $g$ on $M$ and the Killing form $\left\langle {}\right\rangle
^{\left( s\right) }$ on sections of $\mathcal{A}.$ Critical points then
become analogues of the Coulomb gauge condition in gauge theory \cite%
{DonaldsonGauge,GrigorianOctobundle,GrigorianIsoflow,GrigorianIsoFlowSurvey,GrigorianLoops,SaEarpLoubeau}

\begin{theorem}
\label{thmTcrit}Suppose $\mathbb{L}$ is a semisimple Moufang loop, then the
critical points of the functional (\ref{Esom}) with respect to deformations
of the defining section $s$ are those for which 
\begin{equation}
\left( d^{\omega }\right) ^{\ast }T^{\left( s,\omega \right) }=0.
\end{equation}
\end{theorem}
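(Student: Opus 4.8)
The plan is to compute the first variation of $\mathcal{E}_{\omega}$ along the one-parameter deformations $s\mapsto \exp_{s}\left( t\xi \right) s$, where $\xi \in \Gamma \left( \mathcal{A}\right) $. Since $\exp _{s}$ is a local diffeomorphism (Lemma \ref{lemexpdiffeo}), these exhaust the infinitesimal deformations of the defining section $s$, so it suffices to require that $\left.\frac{d}{dt}\right\vert_{t=0}\mathcal{E}_{\omega}\left( \exp_{s}\left( t\xi\right) s\right) =0$ for every $\xi$. Writing $A_{t}=\exp _{s}\left( t\xi \right) $, equation (\ref{dTAs}) gives the initial velocity of the torsion,
\begin{equation}
\left.\frac{d}{dt}\right\vert_{t=0} T^{\left( A_{t}s,\omega \right) }=\left[ \xi ,T^{\left( s,\omega \right) }\right]^{\left( s\right) }+d^{\omega }\xi .
\end{equation}

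The functional has two sources of $t$-dependence: the form $T^{\left( A_{t}s,\omega \right) }$ and the pointwise pairing $\left\vert \cdot \right\vert _{\left( A_{t}s\right) }$, the latter through the Killing form $K^{\left( A_{t}s\right) }$. Here the semisimple Moufang hypothesis enters decisively. By Lemma \ref{lemKMalcev}, $K^{\left( s\right) }$ is independent of $s$, so the metric contribution to the variation vanishes, and the induced covariant derivative $d^{\omega }$ on $\mathcal{A}$ is compatible with $K^{\left( s\right) }$ (equivalently, $dK^{\left( s\right) }=0$ by (\ref{dKs}), which follows from the trace of the Malcev identity (\ref{Malcev3})). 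Denoting by $\langle \cdot ,\cdot \rangle _{\left( s\right) }$ the fibrewise pairing on $\mathcal{A}$-valued $1$-forms induced by $g$ and $K^{\left( s\right) }$, so that $\left\vert T^{\left( s,\omega \right) }\right\vert _{\left( s\right) }^{2}=\langle T^{\left( s,\omega \right) },T^{\left( s,\omega \right) }\rangle _{\left( s\right) }$ pointwise, I would therefore obtain
\begin{equation}
\left.\frac{d}{dt}\right\vert_{t=0}\mathcal{E}_{\omega}\left( A_{t}s\right) =2\int_{M}\left\langle \left[ \xi ,T^{\left( s,\omega \right) }\right]^{\left( s\right) }+d^{\omega }\xi ,\,T^{\left( s,\omega \right) }\right\rangle_{\left( s\right) }\mathop{\rm vol}\nolimits_{g}.
\end{equation}

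Next I would dispose of the two terms separately. The bracket term vanishes pointwise: by the skew-adjointness of $\mathop{\rm ad}\nolimits_{\xi }^{\left( s\right) }$ with respect to $K^{\left( s\right) }$ (Lemma \ref{lemKMalcev}, equation (\ref{adskew})) together with the symmetry of the metric on $1$-forms, the integrand $\langle \lbrack \xi ,T^{\left( s,\omega \right) }]^{\left( s\right) },T^{\left( s,\omega \right) }\rangle _{\left( s\right) }$ is antisymmetric under interchange of the two form factors and hence is zero. For the remaining term I would integrate by parts, which is legitimate precisely because $d^{\omega }$ is $K^{\left( s\right) }$-compatible, yielding
\begin{equation}
\left.\frac{d}{dt}\right\vert_{t=0}\mathcal{E}_{\omega}\left( A_{t}s\right) =2\int_{M}\left\langle \xi ,\,\left( d^{\omega }\right)^{\ast }T^{\left( s,\omega \right) }\right\rangle_{\left( s\right) }\mathop{\rm vol}\nolimits_{g}.
\end{equation}
Since this must vanish for all $\xi \in \Gamma \left( \mathcal{A}\right) $ and $K^{\left( s\right) }$ is non-degenerate on a semisimple Malcev algebra, the fundamental lemma of the calculus of variations forces $\left( d^{\omega }\right)^{\ast }T^{\left( s,\omega \right) }=0$, and conversely this condition makes the first variation vanish.

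The main obstacle is the integration by parts, and more precisely the metric compatibility it rests on: the fibre metric $K^{\left( s\right) }$ on $\mathcal{A}$ genuinely depends on the section $s$ through the base point $s\left( x\right) \in \mathbb{L}$, and for a general loop its covariant derivative would contribute associator terms of the shape $\mathop{\rm Tr}\left[ \xi ,a_{s}\left( \xi ,\cdot ,T^{\left( s,\omega \right) }\right) \right]^{\left( s\right) }$ coming from (\ref{dKs}), as well as a nonzero metric-variation term in the first line. The Moufang condition is exactly what kills all of these via the trace identity (\ref{Malcev3}), simultaneously ensuring that $\mathop{\rm ad}\nolimits_{\xi }^{\left( s\right) }$ is $K^{\left( s\right) }$-skew and that $K^{\left( s\right) }$ is constant along $\mathbb{L}$; semisimplicity then supplies the non-degeneracy needed for the final step.
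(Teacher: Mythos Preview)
Your proposal is correct and follows essentially the same approach as the paper: deform along $s_t=\exp_s(t\xi)s$, use (\ref{dTAs}) for the variation of the torsion, invoke Lemma~\ref{lemKMalcev} to make $K^{(s)}$ constant and $\mathop{\rm ad}\nolimits^{(s)}$-skew so that the bracket term drops out, then integrate by parts and appeal to non-degeneracy. Your discussion of metric compatibility and the role of the Moufang/semisimple hypotheses is slightly more explicit than the paper's, but the argument is the same.
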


\begin{proof}
From Lemma \ref{lemKMalcev}, we know that for a Moufang loop, $K^{\left(
s\right) }$ is actually independent of $s.$ Moreover, it is invariant under $%
\mathop{\rm ad}\nolimits^{\left( s\right) }.$ Let us consider deformations
of $s$. The semisimple condition implies $K^{\left( s\right) }$ is
non-degenerate. Consider a path $s_{t}=\exp _{s}\left( t\xi \right) s$ where 
$\xi \in \Gamma \left( \mathcal{A}\right) .$ Then, 
\begin{eqnarray*}
\left. \frac{d}{dt}\mathcal{E}\left( s_{t},\omega \right) \right\vert _{t=0}
&=&\left. \frac{d}{dt}\int_{M}\left\vert T_{{}}^{\left( s_{t},\omega \right)
}\right\vert _{s}^{2}\mathop{\rm vol}\nolimits_{g}\right\vert _{t=0} \\
&=&2\int_{M}\left\langle T^{\left( s,\omega \right) },\left. \frac{d}{dt}%
T^{\left( s_{t},\omega \right) }\right\vert _{t=0}\right\rangle \mathop{\rm
vol}\nolimits_{g} \\
&=&2\int_{M}\left\langle T^{\left( s,\omega \right) },\left[ \xi ,T^{\left(
s,\omega \right) }\right] ^{\left( s\right) }+d^{\omega }\xi \right\rangle %
\mathop{\rm vol}\nolimits_{g},
\end{eqnarray*}%
where we have used (\ref{dTAs}). Note that 
\begin{eqnarray*}
\left\langle T^{\left( s,\omega \right) },\left[ \xi ,T^{\left( s,\omega
\right) }\right] ^{\left( s\right) }\right\rangle  &=&g^{ab}\left\langle
T_{a}^{\left( s,\omega \right) },\left[ \xi ,T_{b}^{\left( s,\omega \right) }%
\right] ^{\left( s\right) }\right\rangle  \\
&=&-g^{ab}\left\langle \left[ T_{a}^{\left( s,\omega \right) },T_{b}^{\left(
s,\omega \right) }\right] ,\xi ^{\left( s\right) }\right\rangle  \\
&=&0.
\end{eqnarray*}%
Hence, 
\begin{equation*}
\left. \frac{d}{dt}\mathcal{E}\left( s_{t},\omega \right) \right\vert
_{t=0}=2\int_{M}\left\langle \left( d^{\omega }\right) ^{\ast }T^{\left(
s,\omega \right) },\xi \right\rangle \mathop{\rm vol}\nolimits_{g}.
\end{equation*}%
Thus critical points of $\mathcal{E}$ with respect to deformations of $s$
satisfy 
\begin{equation}
\left( d^{\omega }\right) ^{\ast }T^{\left( s,\omega \right) }=0.
\label{divT0}
\end{equation}
\end{proof}

\begin{remark}
In Theorem \ref{thmTcrit}, we use the fact that the tangent algebra of a
Moufang loop is a Malcev algebra, i.e. is alternative and satisfies the
additional identity \ref{MalcevId}. Moreover, the semisimple condition
implies that the Killing form is non-degenerate. As noted in Remark \ref%
{remMalc}, the full Malcev algebra condition is likely to be too strong, and
a weaker assumption may be sufficient to obtain these key properties and in
fact obtain $\left( d^{\omega }\right) ^{\ast }T^{\left( s,\omega \right)
}=0 $ as the equation for critical points. On the other hand, other
techniques, such as introducing a different metric (such as the
Killing-Ricci form on Lie triple systems \cite{KikkawaKillingRicci}) or
introducing modified connections may produce similar results in other
settings.
\end{remark}

To prove existence of transformations of $s$ that lead to $\left( d^{\omega
}\right) ^{\ast }T^{\left( s,\omega \right) }=0,$ we will adapt the
procedures from \cite{Feehan2020}, and in particular will apply the Banach
Space Implicit Function Theorem (Theorem \ref{thmIFT}) . The relevant Banach
spaces for us will be spaces of sections with appropriate regularity. The
previously used notations $\Gamma $ and $\Omega ^{k}$ will always denote
smooth sections and smooth bundle-valued forms, respectively. Given a smooth
defining section $s\in \Gamma \left( \mathcal{Q}\right) $ and a smooth
connection $\omega ,$ for any $k\in \mathbb{N}$ and $q\in \left[ 1,\infty %
\right] ,$ denote by $W_{\left( s,\omega \right) }^{k,q}\left( \Lambda
^{l}T^{\ast }M\otimes \mathcal{A}\right) $ the Sobolev space of sections of $%
\Lambda ^{l}T^{\ast }M\otimes \mathcal{A}$ with the norm given by 
\begin{equation*}
\left\Vert \chi \right\Vert _{W_{\left( s,\omega \right) }^{k,q}}=\left(
\int_{M}\left\vert \chi \right\vert _{\left( s\right) }^{q}\mathop{\rm vol}%
\nolimits_{g}\right) ^{\frac{1}{q}}+\left( \int_{M}\left\vert \left(
d^{\omega }\right) ^{k}\chi \right\vert _{\left( s\right) }^{q}\mathop{\rm
vol}\nolimits_{g}\right) ^{\frac{1}{q}},
\end{equation*}%
for $1\leq q<\infty $ and 
\begin{equation*}
\left\Vert \chi \right\Vert _{W_{\left( s,\omega \right) }^{k,\infty }}=%
\mathop{\rm ess}\nolimits\sup_{X}\left\vert \chi \right\vert _{\left(
s\right) }+\mathop{\rm ess}\nolimits\sup_{X}\left\vert \left( d^{\omega
}\right) ^{k}\chi \right\vert _{\left( s\right) }.
\end{equation*}%
Similarly we will denote $W_{\left( s,\omega \right) }^{0,r}$ as $L_{\left(
s\right) }^{r}.$

By Definition \ref{defTors}, the torsion of $\left( s,\omega \right) $ is
just the horizontal component of $\theta _{s},$ so we can immediately adapt
the estimates from Section \ref{sectMaps}, we obtain the following estimates
for torsion.

\begin{lemma}
\label{lemTAsest}Suppose $\mathbb{L}$ is a smooth compact loop with tangent
algebra $\mathfrak{l}$ and pseudoautomorphism group $\Psi .$ Let $\left(
M,g\right) $ be a closed, smooth Riemannian manifold of dimension $n\geq 2,$
and let $\mathcal{P}$ be a $\Psi $-principal bundle over $M$ with and let $%
\mathcal{A}$ be the associated vector bundle to $\mathcal{P}$ with fibers
isomorphic to $\mathfrak{l}$. Let $\omega $ be a smooth connection on $%
\mathcal{P}\ $and let $s\in \Gamma \left( \mathcal{Q}\right) $ be a smooth
defining section. Also, suppose $k$ is a non-negative integer and $r\geq 0$
such that $kr>n.$ Let $\xi \in W_{\left( s,\omega \right) }^{k,r}\left( 
\mathcal{A}\right) ,$ and suppose $A=\exp _{s}\left( \xi \right) .$ Then, 
\begin{equation}
\left\Vert T^{\left( As,\omega \right) }\right\Vert _{W_{\left( s,\omega
\right) }^{k-1,r}}\lesssim e^{Ck\left\Vert \xi _{\left( s,\omega \right)
}\right\Vert _{C^{0}}}\left( \Theta ^{k}+\Theta \right) ,  \label{TAsest}
\end{equation}%
where $\Theta =\left\Vert T^{\left( s,\omega \right) }\right\Vert
_{W_{\left( s,\omega \right) }^{k-1,r}}+\left\Vert \xi \right\Vert
_{W_{\left( s,\omega \right) }^{k,r}}.$
\end{lemma}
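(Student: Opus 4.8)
The plan is to transcribe, essentially verbatim, the computation that established Theorem \ref{thmC} (the $W^{k-1,r}$ estimate for $\theta_{As}$), carrying out the substitutions $\theta_s \mapsto T^{(s,\omega)}$, $d\xi \mapsto d^{\omega}\xi$, and $\nabla^j \mapsto (d^{\omega})^j$, where $(d^{\omega})^j$ is the iterated covariant derivative defining the $W^{k,r}_{(s,\omega)}$ norm. All the structural ingredients are already available: the torsion is precisely the horizontal part of $\theta_{\tilde s}$, so equation (\ref{dTAs}) exhibits the one-parameter family $T^{(A_t s,\omega)}$ as a solution of the inhomogeneous linear ODE
\begin{equation*}
\frac{d}{dt}T^{(A_t s,\omega)}=\left[\xi,T^{(A_t s,\omega)}\right]^{(A_t s)}+d^{\omega}\xi,
\end{equation*}
which is formally identical to (\ref{dthetadt}), and its solution (\ref{Texps}) is the exact analogue of (\ref{thetaexp}). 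The fiberwise algebraic data — the bracket function $b$, the left-alternating associator $a$, and the evolution operator $U_{\xi}^{(s)}$ — are unchanged, and the operator bounds of Lemma \ref{lemUprop} depend only on the compactness of $\mathbb{L}$; since that is a fiberwise statement, the constants remain uniform over the compact $M$.

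First I would differentiate the ODE with $d^{\omega}$ in place of $\nabla$. Because the loop-flow parameter $t$ (along $\exp_s(t\xi)$) acts fiberwise and the connection $\omega$ is fixed, $\tfrac{d}{dt}$ commutes with $d^{\omega}$, so $(d^{\omega})^k T^{(A_t s,\omega)}$ satisfies the same inhomogeneous IVP as (\ref{nabktheta}), the lower-order terms being generated exactly as before by the Leibniz rule for $d^{\omega}$ on the fiberwise bracket. The single identity that makes the transfer work is the bundle analogue of (\ref{dbrack}),
\begin{equation*}
d^{\omega}[\xi,\eta]^{(s)}=[d^{\omega}\xi,\eta]^{(s)}+[\xi,d^{\omega}\eta]^{(s)}+a_s\!\left(\xi,\eta,T^{(s,\omega)}\right),
\end{equation*}
in which the horizontal part $T^{(s,\omega)}$ takes the place of $\theta_s$ precisely because $d^{\omega}$ only sees the horizontal component of $\theta_{\tilde s}$. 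Feeding this in, solving via the evolution operator, and invoking the compactness bounds on $(d^{\omega})^{k_1}b_{A_t s}$ (the analogue of (\ref{nablakbest}), with the smooth functions $f^{(k)}$ on $\mathbb{L}$ now uniformly bounded over the fibers) reproduces the pointwise estimates of Lemma \ref{lemThetaPWest} verbatim with the substitutions above. Taking $L^r$ norms and applying the Sobolev multiplication estimate (Lemma \ref{lemSobProd}) with each factor $|(d^{\omega})^i T^{(s,\omega)}|$ or $|(d^{\omega})^{i-1}d^{\omega}\xi|$ assigned weight $i$, exactly as in the proof of (\ref{ThetaAsWest}), yields (\ref{TAsest}), since the right-hand side is again a polynomial in $\left\Vert T^{(s,\omega)}\right\Vert_{W^{k-1,r}_{(s,\omega)}}$ and $\left\Vert \xi\right\Vert_{W^{k,r}_{(s,\omega)}}$ of degree $k$ with lowest-order term $\Theta$.

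The step I expect to demand the most care is precisely the verification that the base-manifold calculus of Section \ref{sectMaps} transfers cleanly to the bundle: that $d^{\omega}$ obeys the same Leibniz and commutation rules as $\nabla$ when acting on the equivariant bracket and associator (so that the differentiated ODE retains its form with $T^{(s,\omega)}$ in the associator slot), and that the fixed smooth connection contributes nothing beyond bounded coefficients. The geometry of $\omega$ — its curvature and the covariant derivatives thereof — enters only through such coefficients, which are uniformly bounded on the compact $M$ and are absorbed into the implicit constant $C$ (already permitted to depend on $\mathbb{L}$, $M$, $g$, and $\omega$); in particular the homogeneity count that fixes the degree-$k$ polynomial structure, and the exponential prefactor coming from Lemma \ref{lemUprop}, are left unchanged. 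With these identities in hand the estimate follows as a direct translation of the computation already carried out in Section \ref{sectMaps}.
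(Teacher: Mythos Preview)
Your proposal is correct and follows exactly the paper's approach: the paper does not give an explicit proof of this lemma, stating only that since the torsion is the horizontal component of $\theta_{s}$, one can ``immediately adapt the estimates from Section \ref{sectMaps}.'' Your write-up spells out precisely this adaptation, and in fact supplies more detail than the paper does---in particular the bundle version of (\ref{dbrack}) with $T^{(s,\omega)}$ in the associator slot, and the observation that the curvature of the fixed smooth $\omega$ contributes only bounded coefficients absorbed into $C$.
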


\begin{lemma}
\label{lemAreg}Now suppose that $\left( k^{\prime }-1\right) r\geq n$. Given
other hypotheses the same as in Lemma \ref{lemTAsest}, if $\xi \in W_{\left(
s,\omega \right) }^{k^{\prime },r}\left( \mathcal{A}\right) ,$ and given $%
A=\exp _{s}\left( \xi \right) $ such that 
\begin{equation}
\left( d^{\omega }\right) ^{\ast }T^{\left( As,\omega \right) }=0,
\label{domstT}
\end{equation}%
then in fact $A$ is smooth.
\end{lemma}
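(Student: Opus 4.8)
The plan is to read the equation $\left(d^{\omega}\right)^{\ast}T^{(As,\omega)}=0$ as a second-order quasilinear \emph{elliptic} equation for $\xi$ and then bootstrap. First I would use (\ref{Texps}) to write
\[
T^{(As,\omega)}=U_{\xi}^{(s)}T^{(s,\omega)}+P(\xi)\,d^{\omega}\xi,\qquad P(\xi)=U_{\xi}^{(s)}\int_{0}^{1}\bigl(U_{\tau\xi}^{(s)}\bigr)^{-1}\,d\tau ,
\]
where $U_{\xi}^{(s)}$ and $P(\xi)$ are fibrewise endomorphisms of $\mathcal{A}$ depending smoothly and pointwise (order zero) on $\xi$ and on the smooth data $s$. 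By (\ref{dexp1})--(\ref{opnorm}) one has $P(\xi)=(\rho^{(s)}_{\exp_{s}\xi})^{-1}d\exp_{s}|_{\xi}$, which is invertible. Applying $\left(d^{\omega}\right)^{\ast}$ and using that it is first order, the only contribution containing two derivatives of $\xi$ is $P(\xi)\left(d^{\omega}\right)^{\ast}d^{\omega}\xi$, whose principal symbol at a covector $\zeta$ is $-|\zeta|^{2}P(\xi)$ and is therefore invertible. After multiplying through by $P(\xi)^{-1}$ the equation becomes
\[
\left(d^{\omega}\right)^{\ast}d^{\omega}\xi=G(\xi,d^{\omega}\xi),
\]
where $G$ is a smooth fibrewise function of $\xi$ and $d^{\omega}\xi$ with coefficients built from the smooth objects $s$, $\omega$, $T^{(s,\omega)}$ and the structure maps $U$, $b_{s}$, $\rho^{(s)}$ of $\mathbb{L}$; crucially $G$ is at most quadratic in $d^{\omega}\xi$, the quadratic term coming from differentiating $P(\xi)$.

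The operator $\left(d^{\omega}\right)^{\ast}d^{\omega}$ on $\mathcal{A}$-valued functions is a second-order elliptic operator with smooth coefficients on the closed manifold $M$, so standard $L^{r}$ elliptic regularity gives a gain of two derivatives: if $\left(d^{\omega}\right)^{\ast}d^{\omega}\xi\in W^{m-1,r}$ then $\xi\in W^{m+1,r}$. The bootstrap thus reduces to showing that $G(\xi,d^{\omega}\xi)$ inherits the regularity of $\xi$. Since $(k'-1)r\geq n$ and $r>0$ force $k'r>n$, we have $\xi\in W^{k',r}\hookrightarrow C^{0}$, so composition with the smooth fibre maps preserves $W^{k',r}$, and the terms of $G$ linear in $d^{\omega}\xi$ lie in $W^{k'-1,r}$ because $W^{k',r}$ multiplies $W^{k'-1,r}$ into itself.

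The delicate term is the quadratic one $\sim (d^{\omega}\xi)\otimes(d^{\omega}\xi)$. Expanding $\nabla^{k'-1}$ by Leibniz and estimating each factor $\nabla^{a+1}\xi,\ \nabla^{b+1}\xi$ with $a+b=k'-1$ by Gagliardo--Nirenberg, the orders sum to $k'+1$ and the product lies in $L^{r}$ exactly when $(k'-1)r\geq n$; this is the hypothesis, and the estimate is delivered by Lemma \ref{lemSobProd}. Hence $G(\xi,d^{\omega}\xi)\in W^{k'-1,r}$ and elliptic regularity upgrades $\xi$ to $W^{k'+1,r}$. The same multiplication estimates then apply a fortiori at every later stage, since $(m-1)r\geq(k'-1)r\geq n$, so $\xi\in W^{m,r}$ gives $G\in W^{m-1,r}$ and hence $\xi\in W^{m+1,r}$. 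Iterating yields $\xi\in\bigcap_{m\geq k'}W^{m,r}\subset C^{\infty}(\mathcal{A})$, and since $\exp_{s}\colon\mathcal{A}\to\mathcal{Q}^{\prime}$ is a smooth fibrewise map (Lemma \ref{lemexpdiffeo}) with $s$ smooth, $A=\exp_{s}(\xi)$ is smooth. The main obstacle is precisely this base step: controlling the borderline quadratic first-derivative term at $(k'-1)r=n$ via the sharp Gagliardo--Nirenberg exponent, together with verifying that inverting $P(\xi)$ leaves the principal part elliptic so that the $L^{r}$ a priori estimate is available.
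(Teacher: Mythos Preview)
Your strategy---rewrite $(d^\omega)^*T^{(As,\omega)}=0$ as a quasilinear elliptic equation and bootstrap---is the same as the paper's, and working with $\xi$ rather than with $A$ (which the paper does via a Whitney embedding of $\mathbb{L}$ and formula (\ref{Trom})) is a perfectly good variant. The ellipticity argument via $P(\xi)$ is correct.

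There is, however, a genuine gap at the endpoint $(k'-1)r=n$. You claim the quadratic term $(d^\omega\xi)\otimes(d^\omega\xi)$ lies in $W^{k'-1,r}$, but the Sobolev multiplication $W^{k'-1,r}\cdot W^{k'-1,r}\hookrightarrow W^{k'-1,r}$ requires the strict inequality $(k'-1)r>n$. In the Leibniz expansion the extreme term $\nabla^{k'-1}(d^\omega\xi)\cdot d^\omega\xi$ needs $d^\omega\xi\in L^\infty$, and that is precisely the borderline Sobolev embedding which fails when $(k'-1)r=n$. Gagliardo--Nirenberg does not rescue this: with $a+b=k'-1$ one needs $\frac{1}{p_a}+\frac{1}{p_b}\le\frac{1}{r}$ with $\frac{1}{p_a}=\frac{1}{r}-\frac{k'-1-a}{n}$, and for $a=k'-1$ this forces $p_b=\infty$. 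Lemma~\ref{lemSobProd} likewise carries the strict hypothesis $kp>n$.

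The paper sidesteps this by \emph{not} bootstrapping in the $W^{\cdot,r}$ scale at the first step. From $(k'-1)r\ge n$ one has $d^\omega A\in W^{k'-1,r}\hookrightarrow L^q$ for every $q<\infty$, hence $|d^\omega A|^2\in L^p$ for all $p$; the right-hand side of the rewritten equation $(d^\omega)^*d^\omega A=\cdots$ is then in $L^p$ for all $p$, elliptic regularity yields $A\in W^{2,p}$ for all $p$, and therefore $A\in C^1$. Once $d^\omega A\in L^\infty$ the remaining bootstrap is routine. Your argument is fine verbatim when $(k'-1)r>n$; at the endpoint it is repaired by inserting this $L^p$-for-all-$p$ step (which works equally well for $\xi$ in place of $A$) before attempting to climb the Sobolev ladder.
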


\begin{proof}
Using the Whitney Embedding Theorem, suppose $\mathbb{L}$ is smoothly
embedded in some $\mathbb{R}^{N}.$ We can define a loop product and quotient
on the image of the embedding. Hence the bundles $\mathcal{Q}$ and $\mathcal{%
Q}^{\prime }$ can be regarded as subbundles of a vector bundle over $M$. In
particular, since $s$ is smooth and $\exp _{s}:\mathfrak{l}\longrightarrow 
\mathbb{L}$ is also a smooth map, we find that since $k^{\prime }r>n$ and $%
\xi \in W^{k^{\prime },r}\left( \mathcal{A}\right) ,$ then $A=\exp
_{s}\left( \xi \right) \in W^{k^{\prime },r}\left( \mathcal{Q}^{\prime
}\right) \subset C^{0}\left( \mathcal{Q}^{\prime }\right) .$ Using (\ref%
{Trom}), we have 
\begin{eqnarray*}
\left( d^{\omega }\right) ^{\ast }T^{\left( As,\omega \right) } &=&\left(
d^{\omega }\right) ^{\ast }\left( \left( \rho _{A}^{\left( s\right) }\right)
^{-1}d^{\omega }A+\mathop{\rm Ad}\nolimits_{A}^{\left( s\right) }T^{\left(
s,\omega \right) }\right) \\
&=&\left( \rho _{A}^{\left( s\right) }\right) ^{-1}\left( d^{\omega }\right)
^{\ast }d^{\omega }A-\left\langle d^{\omega }\left( \rho _{A}^{\left(
s\right) }\right) ^{-1},d^{\omega }A\right\rangle _{TM}+\left( d^{\omega
}\right) ^{\ast }\left( \mathop{\rm Ad}\nolimits_{A}^{\left( s\right)
}T^{\left( s,\omega \right) }\right) ,
\end{eqnarray*}%
where $\left\langle \cdot ,\cdot \right\rangle _{TM}$ is the inner product
on $TM.$ Thus, we can rewrite (\ref{domstT}) as%
\begin{eqnarray}
\left( d^{\omega }\right) ^{\ast }d^{\omega }A &=&\rho _{A}^{\left( s\right)
}\left\langle d^{\omega }\left( \rho _{A}^{\left( s\right) }\right)
^{-1},d^{\omega }A\right\rangle _{TM}-\rho _{A}^{\left( s\right) }\left(
\left( d^{\omega }\right) ^{\ast }\left( \mathop{\rm Ad}\nolimits_{A}^{%
\left( s\right) }T^{\left( s,\omega \right) }\right) \right)  \label{LapA} \\
&=&-\left\langle \left( d^{\omega }\rho _{A}^{\left( s\right) }\right)
\left( \rho _{A}^{\left( s\right) }\right) ^{-1},d^{\omega }A\right\rangle
_{TM}+\rho _{A}^{\left( s\right) }\left\langle d^{\omega }\mathop{\rm Ad}%
\nolimits_{A}^{\left( s\right) },T^{\left( s,\omega \right) }\right\rangle \\
&&-\rho _{A}^{\left( s\right) }\left( \mathop{\rm Ad}\nolimits_{A}^{\left(
s\right) }\left( d^{\omega }\right) ^{\ast }T^{\left( s,\omega \right)
}\right) .
\end{eqnarray}%
Now since $A\in C^{0}$, and $T^{\left( s,\omega \right) }$ is smooth, for
any $p>0$, 
\begin{equation}
\left\Vert \left( d^{\omega }\right) ^{\ast }d^{\omega }A\right\Vert
_{L^{p}}\leq c\left( A\right) \left( \left\Vert \left\vert d^{\omega
}A\right\vert ^{2}\right\Vert _{L^{p}}+\left\Vert d^{\omega }A\right\Vert
_{L^{p}}+\left\Vert \left( d^{\omega }\right) ^{\ast }T^{\left( s,\omega
\right) }\right\Vert _{L^{p}}\right)  \label{LapA2}
\end{equation}%
Also, $\left\Vert \left\vert d^{\omega }A\right\vert ^{2}\right\Vert
_{L^{p}}\leq \left\Vert d^{\omega }A\right\Vert _{L^{2p}}^{2}.$ Since $\frac{%
k^{\prime }-1}{n}\geq \frac{1}{r},$ we see that $\frac{k^{\prime }-1}{n}>%
\frac{1}{r}-\frac{1}{q}$ for any $q>0.$ By the Sobolev Embedding Theorem,
this shows that 
\begin{equation*}
\left\Vert d^{\omega }A\right\Vert _{L^{q}}\lesssim \left\Vert d^{\omega
}A\right\Vert _{W^{k^{\prime }-1,r}}\lesssim \left\Vert A\right\Vert
_{W^{k^{\prime },r}.}
\end{equation*}%
Thus, (\ref{LapA2}) shows that $\left\Vert \left( d^{\omega }\right) ^{\ast
}d^{\omega }A\right\Vert _{L^{p}}$ is bounded. By elliptic regularity, this
implies that $A\in W^{2,p}.$ In particular, if $p>n,$ then $k^{\prime }>%
\frac{n}{p}+1,$ and thus $A\in C^{1}.$ Bootstrapping the elliptic regularity
argument we then obtain the smoothness of $A$. In particular, note that this
does not depend on the choice of embedding.
\end{proof}

\begin{remark}
The proof of Lemma \ref{lemAreg} is an adaptation of the proof of \cite[%
Proposition 2.3.4]{DonaldsonKronheimer}, where in particular the regularity
of a gauge transformation to the Coulomb gauge was proved. In that case, $r=2
$ and $n=4,$ so the conditions $kr>n$ and $\left( k-1\right) r\geq n$ were
equivalent since $k$ is an integer. More generally, the condition that is
needed for smoothness is somewhat stronger than the one needed for
continuity.
\end{remark}

Then, we have the main theorem.

\begin{theorem}
\label{thmMain}Suppose $\mathbb{L}$ is a smooth compact loop with tangent
algebra $\mathfrak{l}$ and pseudoautomorphism group $\Psi .$ Let $\left(
M,g\right) $ be a closed, smooth Riemannian manifold of dimension $n\geq 2,$
and let $\mathcal{P}$ be a $\Psi $-principal bundle over $M$ with and let $%
\mathcal{A}$ be the associated vector bundle to $\mathcal{P}$ with fibers
isomorphic to $\mathfrak{l}$. Let $\omega $ be a smooth connection on $%
\mathcal{P}.$ Also, suppose $k$ is a non-negative integer and $r\geq 0$ such
that $kr>n.$ Then, there exist constants $\delta \in (0,1]$ and $K\in \left(
0,\infty \right) ,$ such that if $s\in \Gamma \left( \mathcal{Q}\right) $ is
a smooth defining section for which 
\begin{equation*}
\left\Vert T^{\left( s,\omega \right) }\right\Vert _{W_{\left( s,\omega
\right) }^{k-1,r}}<\delta ,
\end{equation*}%
then there exists a section $A\in W^{k,r}\left( \mathcal{Q}^{\prime }\right)
,$ such that 
\begin{equation*}
\left( d^{\omega }\right) ^{\ast }T^{\left( As,\omega \right) }=0
\end{equation*}%
and 
\begin{equation}
\left\Vert T^{\left( As,\omega \right) }\right\Vert _{W_{\left( s,\omega
\right) }^{k-1,r}}<K\left\Vert T^{\left( s,\omega \right) }\right\Vert
_{W^{k-1,r}}\left( 1+\left\Vert T^{\left( s,\omega \right) }\right\Vert
_{W^{k-1,r}}^{k-1}\right) .  \label{TAsEst2}
\end{equation}%
If moreover, $\left( k-1\right) r\geq n,$ then $A$ is smooth.
\end{theorem}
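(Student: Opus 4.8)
The plan is to recast the Coulomb condition $\left( d^{\omega }\right) ^{\ast }T^{\left( As,\omega \right) }=0$ as the vanishing of a single nonlinear map between Sobolev spaces of sections of $\mathcal{A}$, and then apply the Quantitative Implicit Function Theorem (Theorem \ref{thmIFT}) of \cite{Feehan2020}. Since $\exp _{s}$ is a local diffeomorphism (Lemma \ref{lemexpdiffeo}), writing $A=\exp _{s}\left( \xi \right) $ with $\xi \in W^{k,r}\left( \mathcal{A}\right) $ parametrizes the loop gauge transformations near the identity section, and by (\ref{Texps}) the torsion $T^{\left( \exp _{s}\left( \xi \right) s,\omega \right) }$ is an explicit expression in $\xi $, $T^{\left( s,\omega \right) }$ and the evolution operator $U_{\xi }^{\left( s\right) }$. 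I would therefore define
\[
\mathcal{M}\left( \xi \right) =\left( d^{\omega }\right) ^{\ast }T^{\left( \exp _{s}\left( \xi \right) s,\omega \right) },\qquad \mathcal{M}:W^{k,r}\left( \mathcal{A}\right) \longrightarrow W^{k-2,r}\left( \mathcal{A}\right),
\]
where the target is interpreted as a negative-order Sobolev space by duality when $k=1$. Then $\mathcal{M}\left( 0\right) =\left( d^{\omega }\right) ^{\ast }T^{\left( s,\omega \right) }$, whose $W^{k-2,r}$ norm is controlled by $\left\Vert T^{\left( s,\omega \right) }\right\Vert _{W^{k-1,r}}<\delta $, and solving $\mathcal{M}\left( \xi \right) =0$ is precisely the desired condition.

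Next I would compute the linearization at the origin. Differentiating along $t\mapsto \exp _{s}\left( t\xi \right) $ and invoking (\ref{dTAs}) gives $D\mathcal{M}\left( 0\right) \xi =\left( d^{\omega }\right) ^{\ast }d^{\omega }\xi +\left( d^{\omega }\right) ^{\ast }\left[ \xi ,T^{\left( s,\omega \right) }\right] ^{\left( s\right) }$, i.e. the connection Laplacian $\Delta ^{\omega }=\left( d^{\omega }\right) ^{\ast }d^{\omega }$ plus a term which, since $kr>n$ makes $W^{k-1,r}$ a module over $W^{k,r}$ (Lemma \ref{lemSobProd}), has operator norm $\lesssim \left\Vert T^{\left( s,\omega \right) }\right\Vert _{W^{k-1,r}}$. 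The operator $\Delta ^{\omega }$ is elliptic of index zero, but its kernel is the space of $d^{\omega }$-parallel sections of $\mathcal{A}$, which need not vanish. To remove this degeneracy I would restrict both domain and target to the $L^{2}$-orthogonal complement $\left( \ker d^{\omega }\right) ^{\perp }$; this is legitimate because $\left( d^{\omega }\right) ^{\ast }$ applied to any $1$-form is automatically $L^{2}$-orthogonal to $\ker d^{\omega }$, so $\mathcal{M}$ already maps into $\left( \ker d^{\omega }\right) ^{\perp }$, and because we need only existence of some $A$. On this complement $\Delta ^{\omega }$ is a Banach space isomorphism by elliptic regularity, and for $\delta $ small the perturbation keeps $D\mathcal{M}\left( 0\right) $ invertible with uniformly bounded inverse by a Neumann series.

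It then remains to check the remaining hypotheses of Theorem \ref{thmIFT}: that $\mathcal{M}$ is $C^{1}$ with derivative Lipschitz on a fixed ball, with constants independent of $s$. This is where the analysis of Section \ref{sectMaps} does the work. The evolution operator $U_{\xi }^{\left( s\right) }$ depends smoothly on $\xi $ with the exponential bounds of Lemma \ref{lemUprop}, and the Sobolev estimates of Lemma \ref{lemTAsest}, combined with the Banach-algebra property of $W^{k,r}$ for $kr>n$, show that $\xi \mapsto T^{\left( \exp _{s}\left( \xi \right) s,\omega \right) }$ is a smooth map $W^{k,r}\to W^{k-1,r}$ whose derivatives are all controlled by factors $e^{Ck\left\Vert \xi \right\Vert _{C^{0}}}$; composing with $\left( d^{\omega }\right) ^{\ast }$ yields the regularity of $\mathcal{M}$. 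Applying Theorem \ref{thmIFT} at the base point $\xi =0$ then produces, once $\left\Vert T^{\left( s,\omega \right) }\right\Vert _{W^{k-1,r}}<\delta $, a solution $\xi \in \left( \ker d^{\omega }\right) ^{\perp }\cap W^{k,r}$ of $\mathcal{M}\left( \xi \right) =0$ with
\[
\left\Vert \xi \right\Vert _{W^{k,r}}\lesssim \left\Vert \mathcal{M}\left( 0\right) \right\Vert _{W^{k-2,r}}\lesssim \left\Vert T^{\left( s,\omega \right) }\right\Vert _{W^{k-1,r}}.
\]

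Finally I would feed this $\xi $ back into the torsion estimate (\ref{TAsest}). Writing $\Theta _{0}=\left\Vert T^{\left( s,\omega \right) }\right\Vert _{W^{k-1,r}}$, the bound just obtained gives $\left\Vert \xi \right\Vert _{W^{k,r}}\lesssim \Theta _{0}$, and since $W^{k,r}\hookrightarrow C^{0}$ we get $\Theta \lesssim \Theta _{0}$ and $e^{Ck\left\Vert \xi \right\Vert _{C^{0}}}$ bounded for $\Theta _{0}<\delta \leq 1$. Thus (\ref{TAsest}) collapses to $\left\Vert T^{\left( As,\omega \right) }\right\Vert _{W^{k-1,r}}\lesssim \Theta _{0}^{k}+\Theta _{0}=\Theta _{0}\left( 1+\Theta _{0}^{k-1}\right) $, which is exactly (\ref{TAsEst2}) after absorbing constants into $K$. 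Smoothness of $A$ under the stronger hypothesis $\left( k-1\right) r\geq n$ is then immediate from Lemma \ref{lemAreg}. I expect the genuine obstacle to be the two intertwined analytic points in the middle steps: verifying that $\mathcal{M}$ is truly differentiable between these Sobolev spaces with a derivative that is Lipschitz uniformly in $s$ — which rests entirely on the exponential and polynomial estimates assembled in Sections \ref{sectLoop} and \ref{sectMaps} — and correctly quotienting by the parallel sections so that the linearized operator is an honest isomorphism rather than merely Fredholm.
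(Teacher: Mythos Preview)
Your strategy is sound and close to the paper's, but the paper organizes the implicit function theorem differently, and the difference matters for how cleanly Theorem \ref{thmIFT} applies. You define a one-variable map $\mathcal{M}(\xi)$ with $\mathcal{M}(0)=(d^\omega)^*T^{(s,\omega)}\neq 0$ and linearization $\Delta^\omega+(d^\omega)^*[\,\cdot\,,T^{(s,\omega)}]^{(s)}$, and then invoke Theorem \ref{thmIFT} at $\xi=0$. But that theorem, as stated, requires a two-variable map $f(x,y)$ with $f(x_0,y_0)=0$; your $\mathcal{M}$ does not vanish at the base point and you have not named the second variable. The paper resolves this by introducing an explicit auxiliary parameter $a\in W^{k-1,r}(T^*M\otimes\mathcal{A})$ standing in for the torsion, setting
\[
G(a,\xi)=(d^\omega)^*\!\left(U_\xi^{(s)}a+U_\xi^{(s)}\!\Bigl(\int_0^1\!(U_{\tau\xi}^{(s)})^{-1}d\tau\Bigr)d^\omega\xi\right),
\]
so that $G(T^{(s,\omega)},\xi)=(d^\omega)^*T^{(\exp_s(\xi)s,\omega)}$ while $G(0,0)=0$ exactly and $\partial_2G|_{(0,0)}=\Delta^\omega$ with \emph{no} perturbation term. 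This buys two things: Theorem \ref{thmIFT} applies verbatim at $(0,0)$, and the linearization is independent of $s$, so the constants $N,\beta,\zeta,\delta$ produced by the theorem are manifestly uniform in $s$; one then simply evaluates the resulting implicit function at $a=T^{(s,\omega)}$. Your route would also go through---after a Neumann series to absorb the bracket perturbation and either a quantitative inverse function theorem or a recasting with a dummy second variable---but it makes both the uniformity in $s$ and the fit with Theorem \ref{thmIFT} less direct. Everything else in your outline (restriction to $(\ker\Delta^\omega)^\perp$, the Lipschitz bound $\|\xi\|_{W^{k,r}}\lesssim\|T^{(s,\omega)}\|_{W^{k-1,r}}$ fed into Lemma \ref{lemTAsest}, and the appeal to Lemma \ref{lemAreg} for smoothness) matches the paper.
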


\begin{proof}
Consider $\xi \in $ $W_{\left( s,\omega \right) }^{k,r}\left( \mathcal{A}%
\right) \ \ $and $a\in W_{\left( s,\omega \right) }^{\left( k-1\right)
,r}\left( T^{\ast }M\otimes \mathcal{A}\right) .$ For now, let us drop the $%
\left( s,\omega \right) $ subscript in function spaces. Since $k$ and $r$
satisfy $kr>n,$ by the Sobolev Embedding Theorem, $W^{k,r}$ embeds in $C^{0}$%
. Define the function%
\begin{equation*}
G:W^{\left( k-1\right) ,r}\left( T^{\ast }M\otimes \mathcal{A}\right) \times
W^{k,r}\left( \mathcal{A}\right) \longrightarrow W^{\left( k-2\right)
,r}\left( \mathcal{A}\right) 
\end{equation*}%
by 
\begin{equation}
G\left( a,\xi \right) =\left( d^{\omega }\right) ^{\ast }\left( U_{\xi
}^{\left( s\right) }a+U_{\xi }^{\left( s\right) }\left( \int_{0}^{1}U_{\xi
}^{\left( s\right) }\left( \tau \right) ^{-1}d\tau \right) d^{\omega }\xi
\right) .  \label{Gdef}
\end{equation}%
The assumption that $\xi \in C^{0},$ together with the smoothness of $U_{\xi
}^{\left( s\right) }$ and the derivative maps, leads to the conclusion that $%
G$ is a smooth map of Banach spaces. Note that using (\ref{Texps}), we can
write 
\begin{equation}
G\left( a,\xi \right) =\left( d^{\omega }\right) ^{\ast }\left( U_{\xi
}^{\left( s\right) }\left( a-T^{\left( s,\omega \right) }\right) +T^{\left(
\left( \exp _{s}\xi \right) s,\omega \right) }\right)   \label{Gdef2}
\end{equation}%
Using the connection $\omega ,$ let us define the bundle-valued Hodge
Laplacian 
\begin{equation*}
\Delta ^{\left( \omega \right) }=\left( d^{\omega }\right) ^{\ast }d^{\omega
}+\left( d^{\omega }\right) ^{\ast }d^{\omega }.
\end{equation*}%
On $0$-forms it reduces to $\Delta ^{\left( \omega \right) }=\left(
d^{\omega }\right) ^{\ast }d^{\omega }.$ It extends as an operator of
Sobolev spaces as 
\begin{equation*}
\Delta ^{\left( \omega \right) }:W^{k,r}\left( \Lambda ^{l}T^{\ast }M\otimes 
\mathcal{A}\right) \longrightarrow W^{\left( k-2\right) ,r}\left( \Lambda
^{l}T^{\ast }M\otimes \mathcal{A}\right) ,
\end{equation*}%
and by standard elliptic theory is Fredholm with index $0$ and a closed range%
\begin{equation*}
\left( \ker \Delta ^{\left( \omega \right) }\right) ^{\perp }\cap W^{\left(
k-2\right) ,r}\left( \Lambda ^{l}T^{\ast }M\otimes \mathcal{A}\right) ,
\end{equation*}%
where $\perp $ denotes the $L^{2}$-orthogonal complement.

To be able to apply the Implicit Function Theorem (Theorem \ref{thmIFT}), in
(\ref{Gdef}), let us constrain $\xi \in \left( \ker \Delta ^{\left( \omega
\right) }\right) ^{\perp },$ and we also see that $\mathop{\rm Im}\nolimits %
G\subset \left( \ker \Delta ^{\left( \omega \right) }\right) ^{\perp }$.
This can be seen immediately. Suppose $\sigma =\left( d^{\left( \omega
\right) }\right) ^{\ast }\rho $ for some $\rho \in W^{\left( k-1\right)
,r}\left( T^{\ast }M\otimes \mathcal{A}\right) $ and $\gamma \in \ker \Delta
^{\left( \omega \right) }\subset W^{\left( k-2\right) ,r}\left( \mathcal{A}%
\right) $, then 
\begin{equation*}
\left\langle \sigma ,\gamma \right\rangle _{L^{2}}=\left\langle \left(
d^{\left( \omega \right) }\right) ^{\ast }\rho ,\gamma \right\rangle
_{L^{2}}=\left\langle \rho ,d^{\left( \omega \right) }\gamma \right\rangle
_{L^{2}}=0\,,
\end{equation*}%
since on a compact manifold, $\gamma \in \ker \Delta ^{\left( \omega \right)
}$ if and only if $d^{\left( \omega \right) }\gamma =0.$ Hence the image of $%
G$ is contained in $\left( \ker \Delta ^{\left( \omega \right) }\right)
^{\perp },$ which we'll denote for brevity by $K^{\perp }$, and so in fact, 
\begin{equation*}
G:W^{\left( k-1\right) ,r}\left( T^{\ast }M\otimes \mathcal{A}\right) \times
\left( K^{\perp }\cap W^{k,r}\left( \mathcal{A}\right) \right)
\longrightarrow K^{\perp }\cap W_{A_{1}}^{\left( k-2\right) ,r}\left( 
\mathcal{A}\right) .
\end{equation*}%
Now let us consider the differential of $G$ at $\left( a,\xi \right) =0$ in
the direction $\left( b,\eta \right) \in W^{\left( k-1\right) ,r}\left(
T^{\ast }M\otimes \mathcal{A}\right) \times \left( K^{\perp }\cap
W^{k,r}\left( \mathcal{A}\right) \right) :$ 
\begin{eqnarray*}
\left. DG\right\vert _{\left( 0,0\right) }\left( b,\eta \right)  &=&\left. 
\frac{d}{dt}\left( d^{\omega }\right) ^{\ast }\left( U_{t\eta }^{\left(
s\right) }\left( tb\right) +U_{t\eta }^{\left( s\right) }\left(
\int_{0}^{1}U_{t\eta }^{\left( s\right) }\left( \tau \right) ^{-1}d\tau
\right) d^{\omega }\left( t\eta \right) \right) \right\vert _{t=0} \\
&=&\left( d^{\omega }\right) ^{\ast }b+\left( d^{\omega }\right) ^{\ast
}d^{\omega }\eta ,
\end{eqnarray*}%
since $U_{0}^{\left( s\right) }=\mathop{\rm id}\nolimits_{\mathfrak{l}}.$ In
particular, the partial derivative in the second direction is given by%
\begin{equation*}
\left. \partial _{2}G\right\vert _{\left( 0,0\right) }\left( \eta \right)
=\Delta ^{\left( \omega \right) }\eta .
\end{equation*}%
In Theorem \ref{thmIFT}, let 
\begin{eqnarray*}
X &=&W^{\left( k-1\right) ,r}\left( T^{\ast }M\otimes \mathcal{A}\right)  \\
Y &=&K^{\perp }\cap W^{k,r}\left( \mathcal{A}\right)  \\
Z &=&K^{\perp }\cap W_{A_{1}}^{\left( k-2\right) ,r}\left( \mathcal{A}%
\right) .
\end{eqnarray*}%
Then, the map $\left. \partial _{2}G\right\vert _{\left( 0,0\right)
}:Y\longrightarrow Z$ is an isomorphism, and we define 
\begin{equation*}
N=\left\Vert \left( \left. \partial _{2}G\right\vert _{\left( 0,0\right)
}\right) ^{-1}\right\Vert _{\mathop{\rm Hom}\nolimits\left( Z,Y\right) }.
\end{equation*}%
Let%
\begin{eqnarray*}
U &=&\left\{ x\in W^{\left( k-1\right) ,r}\left( T^{\ast }M\otimes \mathcal{A%
}\right) :\ \left\Vert x\right\Vert _{W^{\left( k-1\right) ,r}}<\zeta
\right\} \subset X \\
V &=&\left\{ y\in K^{\perp }\cap W^{k,r}\left( \mathcal{A}\right) :\
\left\Vert y\right\Vert _{W^{k,r}}<\zeta \right\} \subset Y,
\end{eqnarray*}%
where $\zeta \in (0,1]$ is small enough such that 
\begin{equation*}
\sup_{\left( x,y\right) \in U\times V}\left\Vert \left. \partial
_{2}G\right\vert _{\left( x,y\right) }-\left. \partial _{2}G\right\vert
_{\left( 0,0\right) }\right\Vert _{_{\mathop{\rm Hom}\nolimits\left(
Y,Z\right) }}\leq \frac{1}{2N}.
\end{equation*}%
Also define the constant $\beta $ as 
\begin{equation*}
\beta =\sup_{\left( x,y\right) \in U\times V}\left\Vert \left. \partial
_{1}G\right\vert _{\left( x,y\right) }\right\Vert _{\mathop{\rm Hom}\nolimits%
\left( X,Z\right) }<\infty .
\end{equation*}%
Then, by the conclusion of Theorem \ref{thmIFT}, there exist an open set $%
\tilde{U}\subset U,$ given by 
\begin{equation*}
\tilde{U}=\left\{ x\in W^{\left( k-1\right) ,r}\left( T^{\ast }M\otimes 
\mathcal{A}\right) :\ \left\Vert x\right\Vert _{W^{\left( k-1\right)
,r}}<\delta \right\} \subset X,
\end{equation*}%
where $\delta \in \left( 0,\min \left\{ \zeta ,\frac{\zeta }{2\beta N}%
\right\} \right] $, and a unique smooth map 
\begin{equation*}
\xi :\tilde{U}\longrightarrow V,
\end{equation*}%
such that $\xi \left( 0\right) =0,$ and 
\begin{eqnarray*}
G\left( a,\xi \left( a\right) \right)  &=&0,\ \forall a\in \tilde{U} \\
\left. D\xi \right\vert _{a} &=&-\left( \left. \partial _{2}G\right\vert
_{\left( a,\xi \left( a\right) \right) }\right) ^{-1}\left. \partial
_{1}G\right\vert _{\left( a,\xi \left( a\right) \right) }\in \mathop{\rm Hom}%
\nolimits\left( X,Y\right) ,\ \forall a\in \tilde{U} \\
\left\Vert \xi \left( a_{1}\right) -\xi \left( a_{2}\right) \right\Vert _{Y}
&\leq &2\beta N\left\Vert a_{1}-a_{2}\right\Vert _{X,}\ \forall
a_{1},a_{2}\in \tilde{U}.
\end{eqnarray*}

In particular, for any $a\in W^{\left( k-1\right) ,r}\left( T^{\ast
}M\otimes \mathcal{A}\right) $ with $\left\Vert a\right\Vert
_{W_{{}}^{\left( k-1\right) ,r}}<\delta ,$ there exists a section $A\left(
a\right) =\exp _{s}\left( \xi \left( a\right) \right) $ with $\xi \in
W^{k,r}\left( \mathcal{A}\right) ,$ for which 
\begin{equation*}
\left( d^{\omega }\right) ^{\ast }\left( U_{\xi }^{\left( s\right) }\left(
a-T^{\left( s,\omega \right) }\right) +T^{\left( As,\omega \right) }\right)
=0,
\end{equation*}%
and 
\begin{equation*}
\left\Vert \xi \left( a\right) \right\Vert _{W^{k,r}}\leq 2\beta N\left\Vert
a\right\Vert _{W^{\left( k-1\right) ,r}}.
\end{equation*}%
Since $s$ is smooth and $\exp _{s}:\mathfrak{l}\longrightarrow \mathbb{L}$
is a smooth map, this shows that $A$ $\in W^{k,r}\left( \mathcal{Q}^{\prime
}\right) .$

Now suppose $s$ and $\omega $ are such that $\left\Vert T^{\left( s,\omega
\right) }\right\Vert _{W_{{}}^{\left( k-1\right) ,r}}<\delta ,$ then setting 
$a=T^{\left( s,\omega \right) }$ gives $\xi _{\left( s,\omega \right) }=\xi
\left( T^{\left( s,\omega \right) }\right) ,$ for which 
\begin{eqnarray*}
\left( d^{\omega }\right) ^{\ast }\left( T^{\left( As,\omega \right)
}\right) &=&0 \\
\left\Vert \xi _{\left( s,\omega \right) }\right\Vert _{W^{k,r}} &<&\zeta \\
\left\Vert \xi _{\left( s,\omega \right) }\right\Vert _{W^{k,r}} &\leq
&2\beta N\left\Vert T^{\left( s,\omega \right) }\right\Vert _{W^{\left(
k-1\right) ,r}},
\end{eqnarray*}%
where $A=\exp _{s}\left( \xi _{\left( s,\omega \right) }\right) .$ From (\ref%
{TAsest}), we have 
\begin{equation}
\left\Vert T^{\left( As,\omega \right) }\right\Vert _{W^{k-1,r}}\lesssim
e^{Ck\left\Vert \xi _{\left( s,\omega \right) }\right\Vert _{C^{0}}}\left(
\Theta ^{k}+\Theta \right) ,
\end{equation}%
where $\Theta =\left( \left\Vert T^{\left( s,\omega \right) }\right\Vert
_{W^{k-1,r}}+\left\Vert \xi _{\left( s,\omega \right) }\right\Vert
_{W^{k,r}}\right) .$ Now, using the estimate for $\xi $ in terms of $T$, we
get 
\begin{equation*}
\Theta \lesssim \left( 1+2\beta N\right) \left( \left\Vert T^{\left(
s,\omega \right) }\right\Vert _{W^{k-1,r}}\right) ,
\end{equation*}%
and since $kr>n$, $\left\Vert \xi _{\left( s,\omega \right) }\right\Vert
_{C^{0}}\lesssim \left\Vert \xi _{\left( s,\omega \right) }\right\Vert
_{W^{k,r}}<\zeta .$ Overall, combining the constants into a single constant $%
K,$ we obtain 
\begin{equation}
\left\Vert T^{\left( As,\omega \right) }\right\Vert _{W^{k-1,r}}<K\left\Vert
T^{\left( s,\omega \right) }\right\Vert _{W^{k-1,r}}\left( 1+\left\Vert
T^{\left( s,\omega \right) }\right\Vert _{W^{k-1,r}}^{k-1}\right) ,
\label{TAsEst2a}
\end{equation}%
and hence (\ref{TAsEst2}).

If $\left( k-1\right) r\geq n,$ then by Lemma \ref{lemAreg}, we see that $A$
is smooth.
\end{proof}

\section{$G_{2}$-manifolds}

\setcounter{equation}{0}\label{secG2}The general picture considered in the
previous sections can now be specialized to the case of manifolds with $%
G_{2} $-structure. The 14-dimensional group $G_{2}$ is the smallest of the
five exceptional Lie groups and is defined as the automorphism group of the
loop of unit octonions $U\mathbb{O}$. Let $M$ be a compact $7$-dimensional
manifold with vanishing first and second Stiefel-Whitney classes, so that
the manifold is both orientable and admit a spin structure. Then, as it is
well-known \cite{FernandezGray, FriedrichNPG2}, $M$ admits a $G_{2}$%
-structure, that is a reduction of the structure group of the frame bundle
to $G_{2}.$ Since $G_{2}$ is a subgroup of $SO\left( 7\right) $, the $G_{2}$%
-structure can be extended uniquely to an $SO\left( 7\right) $-structure,
and thus defines a Riemannian metric $g$ and orientation on $M.$
Equivalently, given a Riemannian metric $g$, an $SO\left( 7\right) $%
-structure on $M$ lifts to a spin structure, which is a principal $%
\mathop{\rm Spin}\nolimits\left( 7\right) $-structure. Given the spin
structure, we can then construct a spinor bundle $\mathcal{S}$ which will
necessarily admit a nowhere vanishing section. Any such spinor section will
then reduce the spin structure to a $G_{2}$-structure on $M$. Indeed, any
unit spinor will hence define a $G_{2}$-structure that is compatible with
the metric $g.$

Recall that $\mathop{\rm Spin}\nolimits\left( 7\right) $ has three
low-dimensional real irreducible representations: $1$-dimensional
representation $V_{1},$ $7$-dimensional \textquotedblleft
vector\textquotedblright\ representation $V_{7},$ and the $8$-dimensional
\textquotedblleft spinor\textquotedblright\ representation $S_{7}$ \cite%
{BaezOcto}. The representations $V_{1}$ and $V_{7}$ descend to
representations of $SO\left( 7\right) .$ Moreover, the Clifford product
gives the map%
\begin{equation}
V_{7}\times S_{7}\longrightarrow S_{7}.  \label{cliffprod}
\end{equation}%
Setting $V_{8}=V_{1}\oplus V_{7},$ we can then extend this map to $%
m:V_{8}\times S_{7}\longrightarrow S_{7}.$ This product is non-degenerate,
and fixing $\xi \in S_{7}$ allows to identify $V_{8}$ with $S_{7}.$ Both
spaces are then identified with the octonions and the product $m$ then gives
rise to octonion multiplication. The element $\xi $ is identified with $1\in 
\mathbb{O}.$ The stabilizer of $\xi \in S_{7}$ under the action of $%
\mathop{\rm Spin}\nolimits\left( 7\right) $ is isomorphic to $G_{2}.$ Note
that $V_{8}$ here then corresponds to the irreducible \textquotedblleft
vector\textquotedblright\ representation of $\mathop{\rm
Spin}\nolimits\left( 8\right) ,$ while the two copies of $S_{7}$ are
identified with the irreducible $8$-dimensional chiral spinor
representations $S_{8}^{\pm }$ of $\mathop{\rm Spin}\nolimits\left( 8\right)
,$ and thus gives the normed triality of $\mathop{\rm Spin}\nolimits\left(
8\right) $ \cite{BaezOcto}. Since the map $m$ preserves norms, it restricts
to unit spheres in $V_{8}$ and $S_{7},$ which we will denote by $U\mathbb{O}%
^{\prime }$ and $U\mathbb{O},$ respectively, because they correspond to $%
\mathbb{L}^{\prime }$ and $\mathbb{L}$ in the general theory in Section \ref%
{sectLoop}. Clearly, $U\mathbb{O}$ is a compact smooth loop.The tangent
space at $1$ to $U\mathbb{O}$ is then isomorphic to $\mathbb{R}^{7}\cong %
\mathop{\rm Im}\nolimits\mathbb{O}.$ We thus have the following
identification of objects.

\begin{equation*}
\begin{tabular}{lll}
\textbf{Object} & \textbf{Loops} & \textbf{Octonions} \\ \hline
Pseudoautomorphism group & \multicolumn{1}{|l}{$\Psi $} & 
\multicolumn{1}{|l}{$\mathop{\rm Spin}\nolimits\left( 7\right) $} \\ 
Partial pseudoautomorphism group & \multicolumn{1}{|l}{$\Psi ^{\prime }$} & 
\multicolumn{1}{|l}{$SO\left( 7\right) $} \\ 
Automorphism group & \multicolumn{1}{|l}{$H$} & \multicolumn{1}{|l}{$G_{2}$}
\\ 
Lie algebra of $\Psi $ & \multicolumn{1}{|l}{$\mathfrak{p}$} & 
\multicolumn{1}{|l}{$\mathfrak{so}\left( 7\right) $} \\ 
Loop with full action of $\Psi $ & \multicolumn{1}{|l}{$\mathbb{L}$} & 
\multicolumn{1}{|l}{$U\mathbb{O\subset }S_{7}$} \\ 
Loop with partial action of $\Psi $ & \multicolumn{1}{|l}{$\mathbb{L}%
^{\prime }$} & \multicolumn{1}{|l}{$U\mathbb{O}^{\prime }\subset V_{8}$} \\ 
Tangent algebra & \multicolumn{1}{|l}{$\mathfrak{l}$} & \multicolumn{1}{|l}{$%
\mathop{\rm Im}\nolimits\mathbb{O\cong }V_{7}\cong \mathbb{R}^{7}$}%
\end{tabular}%
\end{equation*}

Therefore, on the manifold $M$ as above, the spin structure corresponds to a
principal $\Psi $-bundle in the general theory, the unit spinor bundle $U%
\mathcal{S}$ corresponds to the bundle $\mathcal{Q}$ and the unit subbundle $%
U\mathbb{O}M\ $of $\mathbb{O}M\cong \Lambda ^{0}\oplus TM$ corresponds to $%
\mathcal{Q}^{\prime }.$ This is precisely the octonion bundle introduced in 
\cite{GrigorianOctobundle}. Hence, we have the following dictionary relating
objects in the general loop bundle theory and $G_{2}$-geometry. 
\begin{equation*}
\begin{tabular}{l|l}
\textbf{Loop} \textbf{bundles} & $G_{2}$\textbf{-geometry} \\ \hline
$\mathcal{P}$ & Spin structure: principal $\mathop{\rm Spin}\nolimits\left(
7\right) $-bundle over $M$ \\ 
$\mathcal{Q}^{\prime }=\mathcal{P}\times _{\Psi ^{\prime }}\mathbb{L}%
^{\prime }$ & Unit octonion bundle $U\mathbb{O}M$ \\ 
$\mathcal{Q}=\mathcal{P\times }_{\Psi }\mathbb{L}$ & Unit spinor bundle $U%
\mathcal{S}$ \\ 
$\mathcal{A}=\mathcal{P\times }_{\Psi _{\ast }^{\prime }}\mathfrak{l}$ & 
Bundle of imaginary octonions: $TM$ \\ 
$\mathfrak{p}_{\mathcal{P}}=\mathcal{P\times }_{\left( \mathop{\rm Ad}%
\nolimits_{\xi }\right) _{\ast }}\mathfrak{p}$ & $\mathfrak{so}\left(
7\right) $-bundle over $M$ $\cong \Lambda ^{2}T^{\ast }M$ \\ 
$\mathop{\rm Ad}\nolimits\left( \mathcal{P}\right) =\mathcal{P}\times _{%
\mathop{\rm Ad}\nolimits_{\Psi }}\Psi $ & $SO\left( 7\right) $ gauge
transformations%
\end{tabular}%
\end{equation*}

$G_{2}$-structures can also be described using differential forms since $%
G_{2}$ is alternatively defined as the subgroup of $GL\left( 7,\mathbb{R}%
\right) $ that preserves a particular $3$-form $\varphi _{0}$ \cite%
{Joycebook}.

\begin{definition}
Let $\left( e^{1},e^{2},...,e^{7}\right) $ be the standard basis for $\left( 
\mathbb{R}^{7}\right) ^{\ast }$, and denote $e^{i}\wedge e^{j}\wedge e^{k}$
by $e^{ijk} $. Then define $\varphi _{0}$ to be the $3$-form on $\mathbb{R}%
^{7}$ given by 
\begin{equation}
\varphi _{0}=e^{123}+e^{145}+e^{167}+e^{246}-e^{257}-e^{347}-e^{356}.
\label{phi0def}
\end{equation}%
Then $G_{2}$ is defined as the subgroup of $GL\left( 7,\mathbb{R}\right) $
that preserves $\varphi _{0}$.
\end{definition}

It turns out that there is a $1$-$1$ correspondence between $G_{2}$%
-structures on a $7$-manifold and smooth $3$-forms $\varphi $ for which the $%
7$-form-valued bilinear form $B_{\varphi }$ as defined by (\ref{Bphi}) is
positive definite (for more details, see \cite{Bryant-1987} and the arXiv
version of \cite{Hitchin:2000jd}). 
\begin{equation}
B_{\varphi }\left( u,v\right) =\frac{1}{6}\left( u\lrcorner \varphi \right)
\wedge \left( v\lrcorner \varphi \right) \wedge \varphi .  \label{Bphi}
\end{equation}%
Here the symbol $\lrcorner $ denotes contraction of a vector with the
differential form: $\left( u\lrcorner \varphi \right) _{mn}=u^{a}\varphi
_{amn}.$

A smooth $3$-form $\varphi $ is said to be \emph{positive }if $B_{\varphi }$
is the tensor product of a positive-definite bilinear form and a
nowhere-vanishing $7$-form. In this case, it defines a unique Riemannian
metric $g_{\varphi }$ and volume form $\mathrm{\mathop{\rm vol}\nolimits}%
_{\varphi }$ such that for vectors $u$ and $v$, the following holds 
\begin{equation}
g_{\varphi }\left( u,v\right) \mathrm{\mathop{\rm vol}\nolimits}_{\varphi }=%
\frac{1}{6}\left( u\lrcorner \varphi \right) \wedge \left( v\lrcorner
\varphi \right) \wedge \varphi .  \label{gphi}
\end{equation}

An equivalent way of defining a positive $3$-form $\varphi $, is to say that
at every point, $\varphi $ is in the $GL\left( 7,\mathbb{R}\right) $-orbit
of $\varphi _{0}$. It can be easily checked that the metric (\ref{gphi}) for 
$\varphi =\varphi _{0}$ is in fact precisely the standard Euclidean metric $%
g_{0}$ on $\mathbb{R}^{7}$. Therefore, every $\varphi $ that is in the $%
GL\left( 7,\mathbb{R}\right) $-orbit of $\varphi _{0}$ has an \emph{%
associated} Riemannian metric $g$ that is in the $GL\left( 7,\mathbb{R}%
\right) $-orbit of $g_{0}.$ The only difference is that the stabilizer of $%
g_{0}$ (along with orientation) in this orbit is the group $SO\left(
7\right) $, whereas the stabilizer of $\varphi _{0}$ is $G_{2}\subset
SO\left( 7\right) $. This shows that positive $3$-forms forms that
correspond to the same metric, i.e., are \emph{isometric}, are parametrized
by $SO\left( 7\right) /G_{2}\cong \mathbb{RP}^{7}\cong S^{7}/\mathbb{Z}_{2}$%
. Therefore, on a Riemannian manifold, metric-compatible $G_{2}$-structures
are parametrized by sections of an $\mathbb{RP}^{7}$-bundle, or
alternatively, by sections of an $S^{7}$-bundle, with antipodal points
identified. The precise parametrization of isometric $G_{2}$-structures is
given in Theorem \ref{ThmSamegfam}.

\begin{theorem}[\protect\cite{bryant-2003}]
\label{ThmSamegfam}Let $M$ be a $7$-dimensional smooth manifold. Suppose $%
\varphi $ is a positive $3$-form on $M$ with associated Riemannian metric $g$%
. Then, any positive $3$-form $\tilde{\varphi}$ for which $g$ is also the
associated metric, is given by the following expression:%
\begin{equation}
\tilde{\varphi}=\sigma _{A}\left( \varphi \right) =\left( a^{2}-\left\vert
\alpha \right\vert ^{2}\right) \varphi -2a\alpha \lrcorner \left( \ast
\varphi \right) +2\alpha \wedge \left( \alpha \lrcorner \varphi \right)
\label{phisameg1}
\end{equation}%
where $A=\left( a,\alpha \right) $ is a pair with $a$ a scalar function on $%
M $ and $\alpha $ a vector field such that 
\begin{equation}
a^{2}+\left\vert \alpha \right\vert ^{2}=1  \label{asqvsq}
\end{equation}
\end{theorem}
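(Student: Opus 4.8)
The plan is to reduce the statement to a pointwise algebraic identity and then exploit the octonionic parametrization set up above. First I would note that both hypothesis and conclusion are pointwise and algebraic in $\varphi$: the requirement that $g$ be the associated metric of a positive $3$-form is the pointwise condition (\ref{gphi}), and the map $\sigma _{A}$ in (\ref{phisameg1}) is an algebraic expression in $\varphi$, $\ast \varphi$ and $A=\left( a,\alpha \right) $ at each point. Hence it suffices to work fiberwise, identifying the fiber with $\mathbb{R}^{7}$ equipped with $\varphi _{0}$ and $g_{0}$; smoothness of all objects on $M$ then promotes the pointwise result to the stated global one.

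Second, I would identify the space of positive $3$-forms inducing the fixed metric. Since $GL\left( 7,\mathbb{R}\right) $ acts transitively on positive $3$-forms and the relation (\ref{gphi}) is $GL\left( 7,\mathbb{R}\right) $-equivariant, those $\tilde{\varphi}$ with associated metric $g_{0}$ form a single $SO\left( 7\right) $-orbit of $\varphi _{0}$. As the stabilizer of $\varphi _{0}$ in $SO\left( 7\right) $ is $G_{2}$, this orbit is $SO\left( 7\right) /G_{2}\cong \mathbb{RP}^{7}$, exactly as anticipated in the discussion preceding the theorem.

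Third -- the heart of the argument -- I would make this orbit explicit through the triality picture of Section \ref{secG2}. Lifting to $\mathop{\rm Spin}\nolimits\left( 7\right) $, which acts on the reference unit spinor $\xi $ (identified with $1\in \mathbb{O}$) with stabilizer $G_{2}$, gives $\mathop{\rm Spin}\nolimits\left( 7\right) /G_{2}\cong S^{7}=U\mathbb{O}$; the double cover $U\mathbb{O}\longrightarrow \mathbb{RP}^{7}$ then realizes the isometric $G_{2}$-structures as unit octonions modulo sign. Writing a unit octonion as $u=a+\alpha $ with $a\in \mathbb{R}$, $\alpha \in \mathop{\rm Im}\nolimits\mathbb{O}\cong \mathbb{R}^{7}$ and $a^{2}+\left\vert \alpha \right\vert ^{2}=1$, the $G_{2}$-structure determined by $u$ is the associative $3$-form built from octonion multiplication with $u$ in the role of the new identity. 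The main computation is to translate this change of identity into differential forms: expanding the octonion product into its scalar, mixed, and imaginary-imaginary parts and using the standard identities relating the product, the cross product, $\varphi _{0}$, and $\ast \varphi _{0}$, the three contributions reproduce precisely the terms $\left( a^{2}-\left\vert \alpha \right\vert ^{2}\right) \varphi $, $-2a\,\alpha \lrcorner \left( \ast \varphi \right) $, and $2\alpha \wedge \left( \alpha \lrcorner \varphi \right) $ of (\ref{phisameg1}).

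Finally, I would close the argument by checking that $\sigma _{A}=\sigma _{-A}$, matching the antipodal identification of the double cover, and that the image of $A\mapsto \sigma _{A}\left( \varphi \right) $ has dimension $7$, hence exhausts the $7$-dimensional orbit $\mathbb{RP}^{7}$; thus every isometric $\tilde{\varphi}$ is of the claimed form. I expect the translation in the third step to be the main obstacle, since carrying the octonionic action through to the explicit $3$-form requires careful bookkeeping with octonion identities. A cleaner but essentially equivalent alternative is to verify (\ref{phisameg1}) directly by computing $B_{\sigma _{A}\left( \varphi \right) }$ from (\ref{Bphi}) and decomposing the resulting $\mathop{\rm Sym}\nolimits^{2}$-valued expression into $G_{2}$-irreducible components, showing that all pieces other than a multiple of $g$ cancel exactly when $a^{2}+\left\vert \alpha \right\vert ^{2}=1$, which simultaneously gives positivity and invariance of the associated metric.
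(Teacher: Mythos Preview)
The paper does not give its own proof of this theorem: it is simply quoted from \cite{bryant-2003} and used as background. So there is no in-paper argument to compare against, and your proposal should be read as an independent proof sketch of a cited result rather than a reconstruction of something the author did.

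Your outline is sound and in fact aligns with the very framework the paper sets up around the theorem: pointwise reduction, the identification of isometric positive $3$-forms with the $SO(7)$-orbit of $\varphi_0$ modulo $G_2$, and the octonionic/spinorial parametrization via $\mathop{\rm Spin}\nolimits(7)/G_2\cong S^7$. The closing observations ($\sigma_A=\sigma_{-A}$ and the dimension count) are the right way to conclude surjectivity onto the orbit. The one genuine gap is that your third step is only a promise: the actual derivation of the three terms $(a^2-|\alpha|^2)\varphi$, $-2a\,\alpha\lrcorner(\ast\varphi)$, $2\alpha\wedge(\alpha\lrcorner\varphi)$ from the change-of-identity in the octonion algebra (or, in your alternative, the verification that $B_{\sigma_A(\varphi)}=g\,\mathrm{vol}$) is the entire content of the result, and you have not carried it out. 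To turn the outline into a proof you would need either Bryant's original computation or the explicit expansion of $\varphi_{A\xi}(X,Y,Z)=-\langle A\xi, X\cdot(Y\cdot(Z\cdot A\xi))\rangle_S$ in terms of $\varphi_\xi$ using the standard $G_2$ contraction identities for $\varphi$ and $\psi=\ast\varphi$; this is a tractable but nontrivial multilinear calculation, and without it the argument is incomplete.
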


The pair $A=\left( a,\alpha \right) $ can in fact be also interpreted as a 
\emph{unit octonion} section, where $a$ is the real part, and $\alpha $ is
the imaginary part. The relationship between octonion bundles and $G_{2}$%
-structures was developed in detail in \cite{GrigorianOctobundle}. In
particular, sections of a unit octonion bundle over $M$ parametrize $G_{2}$%
-structures that are associated to the same metric.

\begin{definition}
The \emph{octonion bundle }$\mathbb{O}M$ on $M$ is the rank $8$ real vector
bundle given by 
\begin{equation}
\mathbb{O}M\cong \Lambda ^{0}\oplus TM  \label{OMdef}
\end{equation}%
where $\Lambda ^{0}\cong M\times \mathbb{R}\ $is a trivial line bundle. At
each point $p\in M$, $\mathbb{O}_{p}M\cong \mathbb{R}\oplus T_{p}M.$
\end{definition}

The definition (\ref{OMdef}) gives a natural decomposition of octonions on $M
$ into real and imaginary parts. We may write $A=\left( \mathop{\rm Re}A,%
\mathop{\rm Im}A\right) $ or $A=\left( 
\begin{array}{c}
\mathop{\rm Re}A \\ 
\mathop{\rm Im}A%
\end{array}%
\right) $. Since $\mathbb{O}M$ is defined as a tensor bundle, the Riemannian
metric $g$ on $M$ induces a metric on $\mathbb{O}M.$ Let $A=\left( a,\alpha
\right) \in \Gamma \left( \mathbb{O}M\right) .$ Then, 
\begin{equation}
\left\vert A\right\vert ^{2}=a^{2}+\left\vert \alpha \right\vert _{g}^{2}
\label{Omet}
\end{equation}%
The metric allows to define the subbundle $U\mathbb{O}M$ of octonions of
unit norm and allows allows to define a vector cross product on $TM.$

\begin{definition}
Given the $G_{2}$-structure $\varphi $ on $M,$ we define a \emph{vector
cross product with respect to }$\varphi $ on $M.$ Let $\alpha $ and $\beta $
be two vector fields, then define%
\begin{equation}
\left\langle \alpha \times _{\varphi }\beta ,\gamma \right\rangle =\varphi
\left( \alpha ,\beta ,\gamma \right)  \label{vcrossdef}
\end{equation}%
for any vector field $\gamma $ \cite{Gray-VCP,karigiannis-2005-57}.
\end{definition}

Using the inner product and the cross product, we can now define the \emph{%
octonion product }on $\mathbb{O}M$.

\begin{definition}
Let $A,B\in \Gamma \left( \mathbb{O}M\right) .$ Suppose $A=\left( a,\alpha
\right) $ and $B=\left( b,\beta \right) $. Given the vector cross product (%
\ref{vcrossdef}) on $M,$ we define the \emph{octonion product }$A\circ
_{\varphi }B$\emph{\ with respect to }$\varphi $\emph{\ }as follows:%
\begin{equation}
A\circ _{\varphi }B=\left( 
\begin{array}{c}
ab-\left\langle \alpha ,\beta \right\rangle \\ 
a\beta +b\alpha +\alpha \times _{\varphi }\beta%
\end{array}%
\right)  \label{octoproddef}
\end{equation}
\end{definition}

If there is no ambiguity as to which $G_{2}$-structure is being used to
define the octonion product, we will simply write$\ AB$ to denote it. In
particular, $\left\vert AB\right\vert =\left\vert A\right\vert \left\vert
B\right\vert .$

Given a $G_{2}$-structure $\varphi $ with an associated metric $g$, we may
use the metric to define the Levi-Civita connection $\nabla $. The \emph{%
intrinsic torsion }of a $G_{2}$-structure is then defined by $\nabla \varphi 
$. Following \cite{GrigorianG2Torsion1,karigiannis-2007}, we can write 
\begin{equation}
\nabla _{a}\varphi _{bcd}=-2T_{a}^{\ e}\psi _{ebcd}^{{}}  \label{codiffphi}
\end{equation}%
where $T_{ab}$ is the \emph{full torsion tensor}. Similarly, we can also
write 
\begin{equation}
\nabla _{a}\psi _{bcde}=8T_{a[b}\varphi _{cde]}  \label{psitorsion}
\end{equation}%
We can also invert (\ref{codiffphi}) to get an explicit expression for $T$ 
\begin{equation}
T_{a}^{\ m}=-\frac{1}{48}\left( \nabla _{a}\varphi _{bcd}\right) \psi
^{mbcd}.
\end{equation}%
This $2$-tensor fully defines $\nabla \varphi $ \cite{GrigorianG2Torsion1}.

\begin{remark}
The torsion tensor $T$ as defined here is actually corresponds to $-T$ in 
\cite{GrigorianOctobundle}, $-\frac{1}{2}T$ in \cite{GrigorianG2Torsion1}
and $\frac{1}{2}T$ in \cite{karigiannis-2007}. Even though this requires
extra care when translating various results, it will turn out to be more
convenient.
\end{remark}

Given a unit norm spinor section $\xi \in \Gamma \left( \mathcal{S}\right) ,$
a $G_{2}$-structure $3$-form $\varphi _{\xi }$ is defined in the following
way: 
\begin{equation}
\varphi _{\xi }\left( \alpha ,\beta ,\gamma \right) =-\left\langle \xi
,\alpha \cdot \left( \beta \cdot \left( \gamma \cdot \xi \right) \right)
\right\rangle _{S},  \label{phixispin}
\end{equation}%
where $\cdot $ denotes Clifford multiplication, $\alpha ,\beta ,\gamma $ are
arbitrary vector fields and $\left\langle \cdot ,\cdot \right\rangle _{S}$
is the inner product on the spinor bundle. The Levi-Civita connection lifts
to the spinor bundle $\mathcal{S},$ giving the spinorial covariant
derivative $\nabla ^{S}.$ Then, the torsion $T^{\left( \xi \right) }$ of $%
\varphi _{\xi }$ is given by \cite[Definition 4.2 and Lemma 4.3]%
{AgricolaSpinors} 
\begin{equation}
\nabla _{X}^{S}\xi =T_{X}^{\left( \xi \right) }\cdot \xi ,  \label{LCspinxi}
\end{equation}%
Note that in \cite{AgricolaSpinors}, the torsion endomorphism is denoted by $%
S.$

Comparing with Definition \ref{defTors} and noting that the unit spinor
bundle $U\mathcal{S}$ corresponds to the loop bundle $\mathcal{Q},$ we see
that the torsion $T^{\left( \xi \right) }$ of the $G_{2}$-structure $\varphi
_{\xi }$ precisely corresponds to the torsion $T^{\left( \xi ,\nabla \right)
}$ of the section $\xi $ with respect to the Levi-Civita connection $\nabla
. $ Similarly, given a unit octonion section $A\in \Gamma \left( U\mathbb{O}%
M\right) ,$ $A\cdot \xi $ is again a unit spinor which defines a $G_{2}$%
-structure $\varphi _{A\cdot \xi }.$ Considering both $A$ and $\xi $ as
octonions in $U\mathbb{O}^{\prime }$ and $U\mathbb{O},$ respectively, this
is just octonion multiplication $A\xi ,$ and $\varphi _{A\cdot \xi }=\varphi
_{A\xi }=\sigma _{A}\left( \varphi _{\xi }\right) .$ Therefore, all
isometric $G_{2}$-structures are given by $\varphi _{A\xi }$ for some unit
octonion section $A.$ The curvature component $\hat{F}$ corresponds to the a
particular component of the Riemann curvature tensor. These relationships
are explored in detail in \cite{GrigorianOctobundle}. Thus we can
reformulate Theorem \ref{thmMain} for $G_{2}$-structures.

\begin{theorem}
\label{thmMainG2}Suppose $M$ is a closed $7$-dimensional manifold with a
smooth $G_{2}$-structure $\varphi $ with torsion $T$ with respect to the
Levi-Civita connection $\nabla .$ Also, suppose $k$ is a positive integer
and $p$ is a positive real number such that $kp>7.$ Then, there exist
constants $\delta \in (0,1]$ and $K\in \left( 0,\infty \right) ,$ such that
if $T$ satisfies 
\begin{equation*}
\left\Vert T\right\Vert _{W^{k,p}}<\delta ,
\end{equation*}%
then there exists a smooth section $V\in \Gamma \left( U\mathbb{O}M\right) ,$
such that 
\begin{equation*}
\mathop{\rm div}T^{\left( V\right) }=0
\end{equation*}%
and 
\begin{equation}
\left\Vert T^{\left( V\right) }\right\Vert _{W^{k,p}}<K\left\Vert
T\right\Vert _{W^{k,p}}\left( 1+\left\Vert T\right\Vert
_{W^{k,p}}^{k}\right) .
\end{equation}
\end{theorem}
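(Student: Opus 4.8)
The plan is to deduce Theorem~\ref{thmMainG2} directly from Theorem~\ref{thmMain} by specializing the abstract loop-bundle data to the octonionic setting through the dictionary of Section~\ref{secG2}. Concretely, I would take $\mathbb{L}=U\mathbb{O}$, which is a compact smooth (indeed Moufang) loop with tangent algebra $\mathfrak{l}=\mathop{\rm Im}\nolimits\mathbb{O}\cong\mathbb{R}^{7}$, pseudoautomorphism group $\Psi=\mathop{\rm Spin}\nolimits\left(7\right)$, and associated bundles $\mathcal{Q}=U\mathcal{S}$, $\mathcal{Q}^{\prime}=U\mathbb{O}M$, $\mathcal{A}=TM$; the connection $\omega$ is the lift of the Levi-Civita connection $\nabla$ to the spin bundle $\mathcal{P}$. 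Since a $G_{2}$-structure already forces $M$ to be orientable and spin, this data is available, and the given $\varphi$ corresponds to a unit spinor section $\xi$ with $\varphi=\varphi_{\xi}$, which I take as the defining section $s=\xi$. By~(\ref{LCspinxi}) and the identification of $T^{\left(\xi\right)}$ with $T^{\left(\xi,\nabla\right)}$, the full torsion tensor $T$ of $\varphi$ is exactly $T^{\left(s,\omega\right)}$, viewed as a section of $T^{\ast}M\otimes\mathcal{A}$.

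The second step is the index bookkeeping. I would apply Theorem~\ref{thmMain} with $n=7$, with its integer parameter set to $k+1$ and its exponent set to $p$, so that its torsion norm $W^{(k+1)-1,p}=W^{k,p}$ matches the hypothesis $\left\Vert T\right\Vert_{W^{k,p}}<\delta$. The admissibility condition $(k+1)p>7$ follows from the assumed $kp>7$, and the smoothness condition $((k+1)-1)p=kp\ge 7$ holds for the same reason, so the regularity clause (Lemma~\ref{lemAreg}) applies. I would also note that, because $U\mathbb{O}$ is Moufang, the Killing form $K^{\left(s\right)}$ is independent of $s$ (Lemma~\ref{lemKMalcev}) and is a fixed multiple of $g$ on $\mathop{\rm Im}\nolimits\mathbb{O}\cong TM$; hence the configuration-dependent norm $W^{k,p}_{\left(s,\omega\right)}$ coincides up to a uniform constant with the ordinary $W^{k,p}$ norm, so the two statements refer to the same topology.

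Then Theorem~\ref{thmMain} produces a section $A\in W^{k+1,p}\left(\mathcal{Q}^{\prime}\right)$, smooth by the smoothness clause, satisfying $\left(d^{\omega}\right)^{\ast}T^{\left(As,\omega\right)}=0$ together with the quantitative bound. Setting $V=A\in\Gamma\left(U\mathbb{O}M\right)$, the product $V\xi$ defines the isometric $G_{2}$-structure $\varphi_{V\xi}=\sigma_{V}\left(\varphi\right)$, whose torsion is $T^{\left(V\right)}=T^{\left(V\xi,\nabla\right)}=T^{\left(As,\omega\right)}$. Under the identification $\mathcal{A}=TM$ with $\omega=\nabla$, the codifferential $\left(d^{\omega}\right)^{\ast}$ acting on $TM$-valued one-forms is $-\mathop{\rm div}\nolimits$, so the equation $\left(d^{\omega}\right)^{\ast}T^{\left(As,\omega\right)}=0$ is precisely $\mathop{\rm div}\nolimits T^{\left(V\right)}=0$. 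Finally, the estimate of Theorem~\ref{thmMain}, with its exponent $k-1$ now replaced by $k$ (since its integer parameter is our $k+1$), reads $\left\Vert T^{\left(V\right)}\right\Vert_{W^{k,p}}<K\left\Vert T\right\Vert_{W^{k,p}}\left(1+\left\Vert T\right\Vert_{W^{k,p}}^{k}\right)$, which is the claimed bound.

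Since essentially all of the analysis---the quantitative implicit function theorem argument and the Sobolev estimates for $\theta_{As}$ from Section~\ref{sectMaps}---is already carried out in the proof of Theorem~\ref{thmMain}, the only genuine points requiring care here are organizational: getting the Sobolev index shift $k\mapsto k+1$ right so that the regularity and smoothness thresholds line up, and verifying that the abstract condition $\left(d^{\omega}\right)^{\ast}T=0$ coincides with the geometric divergence-free condition $\mathop{\rm div}\nolimits T^{\left(V\right)}=0$. The hard part will be purely in these translations; I do not expect a substantive analytic obstacle beyond them.
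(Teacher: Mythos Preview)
Your proposal is correct and matches the paper's approach exactly: the paper states Theorem~\ref{thmMainG2} as a direct reformulation of Theorem~\ref{thmMain} under the octonionic dictionary of Section~\ref{secG2}, and your write-up spells out precisely the identifications and the index shift $k\mapsto k+1$ that make this work, including the smoothness threshold $((k+1)-1)p=kp>7\geq 7$.
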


\begin{remark}
If we choose $p=2$ to work with Hilbert spaces, then for a smooth section $V,
$ we need $k\geq 4,$ so the condition on $T$ is to be sufficiently small in
the $W^{4,2}$-norm.
\end{remark}

\appendix

\section{Appendix}

\setcounter{equation}{0}

\begin{lemma}
\label{lemSobProd}Let $k,$ $k^{\prime },n$ be positive integers and $kp>n$,
for a positive real number $p,$ and let $A_{1},...,A_{k^{\prime }}$ be
real-valued functions on a compact $n$-dimensional Riemannian manifold $M$.
Also, suppose $m_{1},...,m_{k^{\prime }}$ are non-negative integers and $%
q_{1},...,q_{k^{\prime }}$ are positive integers such that $%
\sum_{j=1}^{k^{\prime \prime }}q_{j}m_{j}\leq k,$ then 
\begin{equation}
\left\Vert \prod_{j=1}^{k^{\prime }}A_{j}^{m_{j}}\right\Vert
_{L^{p}}\lesssim \prod_{j=1}^{k^{\prime }}\left\Vert A_{j}\right\Vert
_{W^{k-q_{j},p}}^{m_{j}}.  \label{Sobprodformula}
\end{equation}
\end{lemma}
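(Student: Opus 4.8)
The plan is to deduce \eqref{Sobprodformula} from two classical facts on the compact manifold $M$: the generalized Hölder inequality and the Sobolev embedding theorem. First I would discard all factors with $m_j=0$ (if every $m_j$ vanishes the left-hand side is a constant function and the estimate is trivial), so that I may assume $m_j\ge 1$ for each surviving index $j$. For such $j$ the hypothesis $\sum_i q_i m_i\le k$ gives $q_j\le q_j m_j\le k$, hence $k-q_j\ge 0$ and the space $W^{k-q_j,p}$ is meaningful. The strategy is to partition the $L^p$-norm of the product among its factors: I assign to each $A_j$ a finite exponent $t_j\ge p$, apply the generalized Hölder inequality, which gives
\[
\left\Vert \prod_{j} A_j^{m_j}\right\Vert_{L^p}\le \prod_j \left\Vert A_j\right\Vert_{L^{t_j}}^{m_j}
\]
provided $\sum_j m_j/t_j=1/p$, and then control each $\Vert A_j\Vert_{L^{t_j}}$ by $\Vert A_j\Vert_{W^{k-q_j,p}}$ via Sobolev embedding. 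Everything thus reduces to producing exponents $t_j$ that simultaneously satisfy the Hölder partition identity and the Sobolev admissibility condition.

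The crux of the argument is the explicit choice
\[
\frac{1}{t_j}=\frac{q_j}{p\,\sum_i m_i q_i}.
\]
These exponents are positive and finite, satisfy $t_j\ge p$ (since $\sum_i m_i q_i\ge m_j q_j\ge q_j$), and obey the partition identity automatically, because $\sum_j m_j/t_j=\bigl(\sum_j m_j q_j\bigr)/\bigl(p\sum_i m_i q_i\bigr)=1/p$; hence Hölder applies. To justify the embedding $W^{k-q_j,p}\hookrightarrow L^{t_j}$ I must verify $1/t_j\ge 1/p-(k-q_j)/n$, the strict positivity $1/t_j>0$ simultaneously taking care of the borderline case $(k-q_j)p=n$ and the case $(k-q_j)p>n$ (where $W^{k-q_j,p}\hookrightarrow L^\infty\hookrightarrow L^{t_j}$). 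Since $\sum_i m_i q_i\le k$ I have $1/t_j\ge q_j/(kp)$, so it suffices to show $q_j/(kp)\ge 1/p-(k-q_j)/n$. Rearranging, this reads $(k-q_j)/(kp)\le (k-q_j)/n$, which for $q_j<k$ is immediate from $kp>n$ and for $q_j=k$ is the trivial identity $0\le 0$. This is exactly the step where both structural hypotheses, $kp>n$ and $\sum_i q_i m_i\le k$, enter, and it is the main obstacle of the proof.

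With admissible exponents in hand, the two inequalities chain together,
\[
\left\Vert \prod_j A_j^{m_j}\right\Vert_{L^p}\le \prod_j \left\Vert A_j\right\Vert_{L^{t_j}}^{m_j}\lesssim \prod_j \left\Vert A_j\right\Vert_{W^{k-q_j,p}}^{m_j},
\]
with an implied constant depending only on $M$, $n$, $p$, $k$ and the fixed data $(q_j,m_j)$, which is the claimed bound. I expect the only remaining subtlety to be the bookkeeping at the borderline Sobolev exponents, which the strict inequality $kp>n$ and the positivity of each $1/t_j$ resolve cleanly; note that neither compactness of $\mathbb{L}$ nor any loop structure is used, since this is a purely analytic statement about real-valued functions on $M$.
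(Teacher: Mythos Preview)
Your proof is correct and follows essentially the same approach as the paper: the paper also applies the generalized H\"older inequality with exponents $p_j=\frac{pk''}{q_jm_j}$ (where $k''=\sum_i q_i m_i$), so that the paper's $m_jp_j$ coincides exactly with your $t_j$, and then verifies the Sobolev embedding $W^{k-q_j,p}\hookrightarrow L^{m_jp_j}$ via the same inequality $\frac{k-q_j}{n}\ge\frac{1}{p}-\frac{1}{m_jp_j}$ deduced from $kp>n$ and $k''\le k$. Your treatment of the borderline case $q_j=k$ and the supercritical regime $(k-q_j)p\ge n$ is slightly more explicit than the paper's, but the argument is otherwise identical.
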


\begin{proof}
Let $k^{\prime \prime }=\sum_{j=1}^{k^{\prime \prime }}q_{j}m_{j}\leq k.$
Then suppose $p_{j}=\frac{pk^{\prime \prime }}{q_{j}m_{j}}$ for all $j$ for
which $m_{j}>0,$ so that $\frac{1}{p_{j}}=\frac{q_{j}m_{j}}{pk^{\prime
\prime }},$ and hence $\sum_{j=1}^{k^{\prime }}\frac{1}{p_{j}}=\frac{1}{p}.$
Thus, from H\"{o}lder's inequality, we have 
\begin{equation*}
\left\Vert \prod_{j=1}^{k^{\prime }}A_{j}^{m_{j}}\right\Vert
_{L^{p}}\lesssim \prod_{j=1}^{k^{\prime }}\left\Vert A_{j}\right\Vert
_{L^{m_{j}p_{j}}}^{m_{j}}.
\end{equation*}%
Now note that using the definition of $p_{j}$, $\frac{q_{j}}{k^{\prime
\prime }}=\frac{p}{p_{j}m_{j}}\leq 1,$ and hence 
\begin{eqnarray*}
\frac{k-q_{j}}{n} &=&\frac{k}{n}\left( 1-\frac{q_{j}}{k}\right) \\
&\geq &\frac{k}{n}\left( 1-\frac{q_{j}}{k^{\prime \prime }}\right) \\
&=&\frac{k}{n}\left( 1-\frac{p}{p_{j}m_{j}}\right) .
\end{eqnarray*}%
Since by assumption, $\frac{k}{n}>\frac{1}{p}$, we obtain%
\begin{equation*}
\frac{k-q_{j}}{n}>\frac{1}{p}-\frac{1}{p_{j}m_{j}}.
\end{equation*}%
Using a version of the Sobolev Embedding Theorem, this shows that indeed, 
\begin{equation*}
\left\Vert A_{j}\right\Vert _{L^{m_{j}p_{j}}}\lesssim \left\Vert
A_{j}\right\Vert _{W^{k-q_{j},p}},
\end{equation*}%
and (\ref{Sobprodformula}) follows.

%The estimate (\ref{Sobprod2}) follows by
%replacing $A_{j}$ with $\left\vert \nabla ^{q_{j}}\alpha _{j}\right\vert ,$
%and noting the embedding $W^{k,p}\hookrightarrow W^{k-q_{j},p}.$
\end{proof}

\begin{theorem}[{Banach space uantitative implicit function theorem%
\protect\cite[Theorem F.1]{Feehan2020}}]
\label{thmIFT}Let $k\geq 1$ be an integer or $\infty ,$ and let $X,Y,Z$ be
real Banach spaces. Suppose $U\subset X$ and $V\subset Y$ are open
neighborhoods of points $x_{0}\in X$ and $y_{0}\in Y$ and $f:U\times
V\longrightarrow Z$ is a $C^{k}$ map such that $f\left( x_{0},y_{0}\right)
=0 $ and the partial derivative of $f$ at $\left( x_{0},y_{0}\right) $ with
respect to the second variable, $\left. \partial _{2}f\right\vert _{\left(
x_{0},y_{0}\right) }\in \newline
func{Hom}\left( Y,Z\right) $ is an isomorphism of Banach spaces. Define 
\begin{equation*}
N=\left\Vert \left( \left. \partial _{2}f\right\vert _{\left(
x_{0},y_{0}\right) }\right) ^{-1}\right\Vert _{\mathop{\rm Hom}%
\nolimits\left( Z,Y\right) }.
\end{equation*}%
Let $\zeta \in (0,1]$ be small enough such that the open ball $B_{\zeta
}\left( x_{0}\right) \subset U$ and $B_{\zeta }\left( y_{0}\right) \subset
V, $ and assume 
\begin{eqnarray*}
\sup_{\left( x,y\right) \in B_{\zeta }\left( x_{0}\right) \times B_{\zeta
}\left( y_{0}\right) }\left\Vert \left. \partial _{2}f\right\vert _{\left(
x,y\right) }-\left. \partial _{2}f\right\vert _{\left( x_{0},y_{0}\right)
}\right\Vert _{_{\mathop{\rm Hom}\nolimits\left( Y,Z\right) }} &\leq &\frac{1%
}{2N} \\
\beta &=&\sup_{\left( x,y\right) \in B_{\zeta }\left( x_{0}\right) \times
B_{\zeta }\left( y_{0}\right) }\left\Vert \left. \partial _{1}f\right\vert
_{\left( x,y\right) }\right\Vert _{\mathop{\rm Hom}\nolimits\left(
X,Z\right) }<\infty .
\end{eqnarray*}%
Then there exist a constant $\delta \in \left( 0,\min \left\{ \zeta ,\frac{%
\zeta }{2\beta N}\right\} \right] $ and unique $C^{k}$ map $g:B_{\delta
}\left( x_{0}\right) \longrightarrow B_{\zeta }\left( y_{0}\right) $ such
that $y_{0}=g\left( x_{0}\right) $ and 
\begin{eqnarray*}
f\left( g\left( x\right) ,x\right) &=&0,\ \forall x\in B_{\delta }\left(
x_{0}\right) \\
\left. Dg\right\vert _{x} &=&-\left( \left. \partial _{2}f\right\vert
_{\left( x,g\left( x\right) \right) }\right) ^{-1}\left. \partial
_{1}f\right\vert _{\left( x,g\left( x\right) \right) }\in \mathop{\rm Hom}%
\nolimits\left( X,Y\right) ,\ \forall x\in B_{\delta }\left( x_{0}\right) \\
\left\Vert g\left( x_{1}\right) -g\left( x_{2}\right) \right\Vert _{Y} &\leq
&2\beta N\left\Vert x_{1}-x_{2}\right\Vert _{X,}\ \forall x_{1},x_{2}\in
B_{\delta }\left( x_{0}\right) .
\end{eqnarray*}
\end{theorem}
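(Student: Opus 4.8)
The plan is to prove this as a quantitative version of the standard implicit function theorem, by realizing the solution map $g$ as the parameter-dependent fixed point of a contraction and then upgrading its regularity. Write $L=\partial_2 f|_{(x_0,y_0)}$, so that $N=\|L^{-1}\|$, and observe that solving $f(x,y)=0$ for $y$ near $y_0$ is equivalent to finding a fixed point of the map $\Phi_x(y)=y-L^{-1}f(x,y)$, since $L^{-1}$ is injective and hence $L^{-1}f(x,y)=0$ iff $f(x,y)=0$. The idea is to show that for each $x$ in a small ball $B_\delta(x_0)$ the map $\Phi_x$ carries the closed ball $\overline{B_\zeta(y_0)}$ into itself as a contraction, and to define $g(x)$ to be its unique fixed point. (Since $f$ takes its first argument in $X$ and its second in $Y$, the equation actually solved is $f(x,g(x))=0$, with $g:B_\delta(x_0)\to B_\zeta(y_0)$.)

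For the contraction estimate I would compute the derivative in $y$, namely $\partial_y\Phi_x(y)=I-L^{-1}\partial_2 f|_{(x,y)}=L^{-1}\bigl(L-\partial_2 f|_{(x,y)}\bigr)$; the hypothesis $\sup\|\partial_2 f|_{(x,y)}-L\|\le 1/(2N)$ together with $\|L^{-1}\|=N$ gives $\|\partial_y\Phi_x(y)\|\le \tfrac12$ on the convex set $B_\zeta(y_0)$, so the mean value inequality makes $\Phi_x$ Lipschitz there with constant $\le\tfrac12$. For the self-mapping property I would estimate $\Phi_x(y_0)-y_0=-L^{-1}f(x,y_0)$; since $f(x_0,y_0)=0$, the mean value inequality in the first variable bounds $\|f(x,y_0)\|\le\beta\|x-x_0\|$, whence $\|\Phi_x(y_0)-y_0\|\le N\beta\|x-x_0\|$. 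Combining, for $y\in\overline{B_\zeta(y_0)}$ one gets $\|\Phi_x(y)-y_0\|\le \tfrac12\zeta+N\beta\|x-x_0\|$, which is $<\zeta$ as soon as $\|x-x_0\|<\delta\le \zeta/(2\beta N)$. The Banach fixed point theorem then produces a unique $g(x)\in\overline{B_\zeta(y_0)}$ with $f(x,g(x))=0$ and $g(x_0)=y_0$. The Lipschitz bound on $g$ follows by writing $g(x_1)-g(x_2)=\bigl(\Phi_{x_1}(g(x_1))-\Phi_{x_1}(g(x_2))\bigr)+\bigl(\Phi_{x_1}(g(x_2))-\Phi_{x_2}(g(x_2))\bigr)$, bounding the first bracket by $\tfrac12\|g(x_1)-g(x_2)\|$ and the second, equal to $-L^{-1}\bigl(f(x_1,g(x_2))-f(x_2,g(x_2))\bigr)$, by $N\beta\|x_1-x_2\|$; absorbing the half then yields $\|g(x_1)-g(x_2)\|\le 2\beta N\|x_1-x_2\|$.

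The part requiring the most care, and the main obstacle, is the $C^k$ regularity of $g$. First I would note that the perturbation bound $\|\partial_2 f|_{(x,y)}-L\|\le 1/(2N)$ forces $\partial_2 f|_{(x,y)}$ to be invertible with $\|(\partial_2 f|_{(x,y)})^{-1}\|\le 2N$ throughout $B_\zeta(x_0)\times B_\zeta(y_0)$, via a Neumann series for the factor $I-L^{-1}(L-\partial_2 f|_{(x,y)})$; this makes the candidate derivative $-(\partial_2 f|_{(x,g(x))})^{-1}\partial_1 f|_{(x,g(x))}$ well defined. To prove $g$ is differentiable with this derivative I would expand $0=f(x+h,g(x+h))-f(x,g(x))$ using the $C^1$ property of $f$ into its first-order Taylor form and invoke the established Lipschitz bound $\|g(x+h)-g(x)\|\le 2\beta N\|h\|$ to show the remainder is $o(\|h\|)$; solving the resulting linear relation for $g(x+h)-g(x)$ then gives exactly the stated formula for $Dg|_x$. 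Finally, $C^k$ regularity comes by bootstrapping: that formula expresses $Dg$ as a composition of $g$, the identity, the $C^{k-1}$ maps $\partial_1 f$ and $\partial_2 f$, and the smooth inversion map on invertible operators, so continuity of $g$ gives $g\in C^1$, and inductively $g\in C^j$ gives $Dg\in C^{\min(k-1,j)}$, hence $g\in C^{\min(k,j+1)}$, the induction terminating at $g\in C^k$.
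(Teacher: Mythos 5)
This theorem is not proved in the paper at all: it is imported verbatim as \cite[Theorem F.1]{Feehan2020}, so there is no internal proof to compare against. Your blind proof is correct and is the standard argument behind such quantitative implicit function theorems (and the one underlying Feehan's version): the fixed-point reformulation $\Phi_x(y)=y-L^{-1}f(x,y)$ with $L=\partial_2 f|_{(x_0,y_0)}$, the $\tfrac12$-Lipschitz bound from $\left\Vert \partial_2 f|_{(x,y)}-L\right\Vert \le \tfrac{1}{2N}$, the self-mapping bound via $\left\Vert \partial_1 f\right\Vert \le \beta$ giving the threshold $\delta \le \min\{\zeta,\tfrac{\zeta}{2\beta N}\}$, the resulting Lipschitz constant $2\beta N$, and the Neumann-series invertibility of $\partial_2 f|_{(x,g(x))}$ feeding the $C^k$ bootstrap all reproduce exactly the stated constants and conclusions. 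You were also right to flag that, given the declared domains $U\subset X$ and $V\subset Y$, the conclusion must read $f(x,g(x))=0$; the expression $f(g(x),x)=0$ in the statement is a typo carried over in the citation.
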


\begin{acknowledgement}
This work was supported by the National Science Foundation grant DMS-1811754.
\end{acknowledgement}

\bibliographystyle{habbrv}
\bibliography{refs2}

\end{document}